\documentclass[10pt]{article}
\usepackage{tikz}
\usepackage[english]{babel}
\usepackage{hyperref}
\usepackage{amsmath,amsfonts,amssymb,amsthm,enumerate,mathabx}
\usepackage{latexsym}
\usepackage{makeidx}
\usepackage{amscd}
\usepackage{hyperref}
\usepackage{vmargin}
\usepackage{cite}
\input xy
\xyoption{all}

\setmarginsrb{23mm}{16mm}{23mm}{16mm}%
            {14mm}{8mm}{14mm}{14mm}

\numberwithin{equation}{section}

\theoremstyle{plain}
\newtheorem{definition}{Definition}[section]
\newtheorem{lemma}[definition]{Lemma}
\newtheorem{theorem}[definition]{Theorem}
\newtheorem{corollary}[definition]{Corollary}

\newtheorem{proposition}[definition]{Proposition}

\theoremstyle{definition}
\newtheorem{example}[definition]{Example}
\newtheorem{remark}[definition]{Remark}

\def\A{\mathcal A}
\def\B{\mathcal B}
\def\C{\mathcal C}

\def\F{\mathcal F}

\def\N{\mathbb N}

\def\U{\mathcal U}

\def\Z{\mathbb Z}

\renewcommand{\hom}{\mathrm{Hom}}
\def\Hom{\mathrm{Hom}}
\def\End{\mathrm{End}}

\def\coker{\mathrm{CoKer}}
\renewcommand\ker{\mathrm{Ker}}
\def\Im{\mathrm{Im}}
\def\Ext{\mathrm{Ext}}
\def\id{id}
\def\Ob{\mathrm{Ob}}
\def\ev{\mathrm{ev}}
\def\ext{\mathrm{ext}}
\def\mor{\mathrm{Mor}}
\def\Flat{\mathrm{Flat}}

\def\Ch{{\bf Ch}}

\def\Add{\mathrm{Add}}
\def\Ab{\mathrm{Ab}}
\def\Ring{\mathrm {Ring}}
\def\CRing{\mathrm {Comm.Ring}}
\def\lFlat{\mathrm{L.Flat}}
\global\def\mod#1{\mathrm{Mod}(#1)}
\global\def\fp#1{\mathrm{mod}\text{-}#1}

\global\def\mod#1{\mathrm{Mod}\text{-}#1}
\global\def\lmod#1{#1\text{-}\mathrm{Mod}}

\global\def\cmod#1{\mathrm{Mod}_{\mathrm{cart}}(#1)}

\title{Cartesian modules over representations of small categories}

\author{Sergio Estrada\footnote{The first named  author was supported by the research grant 18394/JLI/13 of the Fundaci\'on S\'eneca-Agencia de Ciencia y Tecnolog\'{\i}a de la Regi\'on de Murcia in the framework of III PCTRM 2011-2014. Furthermore, he is grateful to the Department of Mathematics of the M.I.T. for its hospitality.}  \and Simone Virili\footnote{The second named author was partially supported by Fondazione Cassa di Risparmio di Padova e Rovigo (Progetto di Eccellenza ``Algebraic structures and their applications'').}}

\begin{document}

\maketitle

\abstract{We introduce the new concept of cartesian module over a pseudofunctor $R$ from a small category to the category of small preadditive categories. Already the case when $R$ is a (strict) functor taking values in the category of commutative rings is sufficient to cover the classical construction of quasi-coherent sheaves of modules over a scheme. On the other hand, our general setting allows for a good theory of contravariant additive locally flat functors, providing a geometrically meaningful extension of Crawley-Boevey's Representation Theorem. As an application, we relate and extend some previous constructions of the pure derived category of a scheme.}

\smallskip\noindent
{\footnotesize{\bf 2010 Mathematics Subject Classification.}  Primary: 18E05, 18G15, 18F20, 18A25.}

\medskip\noindent
{\footnotesize\noindent{\bf Key words and phrases.} Preadditive category, cartesian module, representation, pure derived category.}

\addtocontents{toc}{\setcounter{tocdepth}{2}}
\tableofcontents

\section{Introduction}

Let $X$ be a small site (that is, a small category whose Grothendieck topology is defined by a pretopology, see \cite{Verdier}). The usual way of defining a ringed site is by considering pairs $(X, \mathcal O_X)$, where $\mathcal O_X$ is a sheaf of commutative rings. More generally a ringed category $(X,\mathcal O_X)$ is a pair such that $X$ is a small category and $\mathcal O_X$ is a presheaf of commutative rings on $X$. Then, the category of presheaves of $\mathcal O_X$-modules on $X$ can be defined. On the other hand, a (not necessarily commutative) ring may be regarded as a special case of a small preadditive category, that is, a small category $R$ such that $R(a,b)$ is an Abelian group, for each $a,b\in \Ob R$, and morphism composition distributes over addition. So the category of modules over a preadditive category naturally arises. There are many sources in the literature which deal with this generalization. Quoting from \cite{rings_with_several_objects}, ``{\em[...] there have been several papers concerned with replacing theorems about rings by theorems about additive\footnote{In \cite{rings_with_several_objects} Mitchell uses the term ``additive" for what is usually known as ``preadditive".} categories. What does not seem to be generally realized is the degree of completeness to which the program can be carried out,[...]\,}". In fact, most facts about module theory have their counterpart in the theory of ``modules over rings with several objects".

This paper attempts to take Mitchell's quote one step further. Indeed, given a small category $\mathcal C$, we take a {\em representation} $R\colon \mathcal C\to \Add$ to be a pseudofunctor from $\C$ to the category of small preadditive categories (see Definition \ref{def.repres}). In case $R\colon \C\to \Add$ is indeed a functor, we call $R$ a \emph{strict} representation. In fact, we will be mainly concerned with strict representations in case  $\Add$ is just the category $\Ring$ of (non necessarily commutative) rings. The prototypical example of a strict representation is constructed letting $\C$ be the small category attached to the poset of open affines on a scheme $X$, while the functor $R$ arises from the structure sheaf $\mathcal O_X$ (see also Example \ref{geo_ex}).

Given a representation $R\colon \C\to \Add$, we define the category $\mod R$ of ``right modules" over $R$ (see Definition \ref{def_mod}), whose main properties are studied in Subsection \ref{sub.sec.cat.mod}.

\medskip\par\noindent
{\bf Theorem. }{\em Let $\C$ be a small category and let $R$ be a representation of $\C$. Then $\mod R$ is an $(Ab.4^*)$ Grothendieck category. If furthermore $\C$ is a poset, then $\mod R$ has a projective generator. }
\begin{proof}
See Theorem \ref{theor.mod.Grot}.
\end{proof}

In Subsection \ref{sub.sec.flat}, we define a tensor product in $\mod R$ (taking values in the category of Abelian groups) and hence we get a natural notion of \emph{flatness} in $\mod R$. Its definition yields the following:

\medskip\par\noindent \label{prop_introduction}
{\bf Proposition.} {\em Let $R$ be a representation of the small category $\C$ and let $M$ be a right $R$-module. If $M_c$ is flat in $(R_c^{op},\Ab)$ for all $c\in \Ob \C$, then $M$ is flat. 

If furthermore $\C$ is a poset and $R$ is left flat, then also the converse is true, that is, $M$ is flat in $\mod R$ if and only if $M_c$ is flat in $(R_c^{op},\Ab)$, for all $c\in \Ob\C$.}
\begin{proof}
See Proposition \ref{flat_obj_loc}.
\end{proof}

%%%%%%%%%%%%%%%%%%%%%%%%%%%%%%%%%%%%%%%%%%%5

However (extending work from \cite{SergioQcoh}), in this paper we are mainly concerned with the full subcategory $\cmod R$ of $\mod R$ of the so-called {\em cartesian right $R$-modules} (see Definition \ref{def_cart_mod}). For some special small sites $\mathcal C$, the category of cartesian modules may be regarded as the category ${\rm Qcoh}(R)$ of quasi-coherent modules, where $R$ corresponds to a sheaf of commutative rings on $\mathcal C$. This is certainly the case of the small category $\mathcal C$ of the poset of all open affines on a scheme $X$, when  $R$ is the structure sheaf $\mathcal O_X$.
In this general setting we can prove the following Theorem, recovering \cite[Corollary 4.4]{SergioQcoh} and \cite[Corollary 3.5]{Sergio_Enochs_Rel_homo}:

\medskip\par\noindent
{\bf Theorem.}
{\em Let $\C$ be a small category and consider a right flat representation $R\colon \C\to \Add$ (see Definition \ref{def.flat.rep}). Then, the category $\cmod R$ is Grothendieck.}
\begin{proof}
See Theorem \ref{cart_groth}.
\end{proof}

The abstraction of $\cmod R$ has some nice consequences that we consider in this paper.  For instance it enables to define a ``good" theory of contravariant additive flat functors in a rather general context that encompasses categories of quasi-coherent sheaves of modules. The classical theory of contravariant additive flat functors is related with locally finitely presented additive categories. We recall that a locally finitely presented additive category $\mathcal A$ (in the sense of \cite{Craw})
is an
additive category with direct limits such that every object is a direct
limit of
finitely presented objects, and the class of finitely presented objects is
skeletally small. A sequence $0\to L\stackrel{f}{\to}M
\stackrel{g}{\to}N\to 0$ in $\mathcal A$ is {\it pure} if $$0\to \Hom
(T,L)\to \Hom(T,M)\to \Hom(T,N)\to 0$$ is exact, for each finitely
presented $T$
of $\mathcal A$ (in this case $g$ is said to be a \emph{pure epimorphism} and $f$ a \emph{pure monomorphism}).
Therefore, locally finitely presented additive categories come equipped
with a
canonical notion of flat object (in the sense of Stenstr\"om \cite{S}) and of pure-injective object. Namely,
$F $ is \emph{flat} if every epimorphism $M\to F$ is a pure epimorphism. And $E$ is \emph{pure-injective} if every pure monomorphism $E\to M$ splits.

%%%%%%%%%%%%%%%%%%%%%%%%%%%%%%%%%%%%%%%%%%%%%%%%%%%%%%%%%%%%%%

Let us elaborate a bit more on the classical representation theory of locally finitely presented additive categories. We start from the case of the category of right $R$-modules ($R$ associative ring with identity), which is certainly locally finitely presented and additive.
The subcategory $\fp R$ of finitely presented right $R$-modules is a skeletally small additive category in which idempotents split. A representation theorem of Crawley-Boevey \cite[Theorem 1.4]{Craw} asserts, in particular, that the category $\mod R$ of right $R$-modules is equivalent to the category $\Flat((\fp R)^{op},\Ab) $ of contravariant additive flat functors from $(\fp R)^{op}$ to the category of Abelian groups. Under such equivalence, the pure short exact sequences correspond to short exact sequences in $\Flat((\fp R)^{op},\Ab)$. Similarly, pure-injective modules correspond to flat functors $F$ such that every short exact sequence in $\Flat((\fp R)^{op},\Ab)$ whose first non-zero term is $F$ splits (see \cite{Herzog}). Modules satisfying this homological condition are called cotorsion. They were first introduced by Harrison in \cite{Harr} for Abelian groups as a homological generalization of the algebraically compact Abelian groups.

For a more general locally finitely presented additive category $\A$, one obtains an equivalence between $\A$ and the category $\Flat({\rm fp}(\A)^{op},\Ab)$ of contravariant additive flat functors from the additive and skeletally small category of finitely presented objects of $\A$ into Abelian groups. On the other hand, the category $({\rm fp}(\A)^{op},\Ab)$ of all contravariant additive functors, may be regarded as the category $\mod S$, the category of all unitary right $S$-modules, for a certain ring $S$ with enough idempotents; under this identification, $\Flat({\rm fp}(\A)^{op},\Ab)$ corresponds to the class of all unitary flat right $S$-modules $\Flat(S)$. Consequently, the study of locally finitely presented additive categories is encoded in the study of the category of flat right $S$-modules for a ring with enough idempotents (see \cite{GuilHer,GuilHer2,GuilHer3} for an extension to flat cotorsion modules of the classical theory of pure-injective modules). As an application of this techniques, Herzog showed in \cite{Herzog} the existence of pure-injective envelopes for any object in a locally finitely presented additive category.

For many schemes $X$ that occur in practice (e.g., quasi-compact and quasi-separated), it is known that the category of quasi-coherent modules on $X$ is a locally finitely presented Abelian category (see \cite[I.6.9.12]{GD}, and \cite[Proposition 7]{Garkusha} for the explicit statement). This is also the case for the category of quasi-coherent modules on a Noetherian stack (see \cite[Lemma 3.9]{Lurie}) and on a concentrated Deligne-Mumford stack (see \cite[Theorem A and Proposition 2.7]{Rydh}). However the general theory explained above, when applied to this setting, confronts a major problem: the usual (categorical) notion of purity in a locally finitely presented additive category is not in general well-behaved in categories of sheaves. As an evidence of this, it was shown in \cite[Theorem 4.4]{ES} that for a wide class of projective schemes the only flat object coming from the categorical notion of purity is the zero sheaf. This suggests that a different notion of purity, better reflecting the local nature of the scheme, should be considered. The papers \cite{EEO,EGO} deal with purity in terms of modules of sections rather than categorical purity. This ``local" notion of purity gives back the classical notion of flat sheaf in terms of the stalks (or equivalently  in terms of  flatness of the module of sections on each open affine); one can show that, unless the scheme is affine, this notion of flatness does not coincide with the categorical flatness described above. Working with the local notion of purity, it was shown in \cite{EEO} that pure injective envelopes exist, and in \cite{EGO} that a good notion of pure derived category on a quasi-separated scheme can be defined from an injective Quillen model category structure in the category of unbounded complexes of quasi-coherent modules. From that perspective, this paper continues the ongoing program on the study of the  properties of this purity by presenting a new version of the classical representation theorem of locally finitely presented additive categories in terms of contravariant additive flat functors.

More precisely, we exhibit a ``local" version of the Yoneda extension functor that allows to embed $\cmod R$ (with $R\colon \C\to \Ring$ a strict representation) into the category of cartesian modules over a new representation $R_{fp}\colon \C\to \Add$ defined as follows (see Section \ref{sect.repres} and, in particular, Definition \ref{ind_rep_def}):

\medskip\par\noindent
{\bf Definition.} 
{\em Let $R\colon \C\to \Ring$ be a strict representation. We define the pseudofunctor $R_{fp}\colon \C\to \Add$ as follows:
\begin{enumerate}[\rm --]
\item $R_{fp}(c)=\fp {R_c}$, for all $c\in \Ob\C$;
\item $R_{fp}(\alpha)=-\otimes_{R_c}R_d\colon \fp {R_c}\to \fp {R_d}$, for all $(\alpha\colon c\to d)\in \C$;
\item given $(\alpha\colon c\overset{}{\to} d)$ and $(\beta\colon d\overset{}{\to} e)\in\C$, we let $\mu_{\beta,\alpha}\colon R_{fp}(\beta)R_{fp}(\alpha)\to R_{fp}(\beta\alpha)$ be the natural isomorphism such that, for all $F\in \fp {R_c}$, 
\begin{align*}\mu_{\beta,\alpha}\colon R_{fp}(\beta)R_{fp}(\alpha)F=(F\otimes_{R_{c}}R_d)\otimes_{R_d}R_e&\longrightarrow F\otimes_{R_c}R_e=R_{fp}(\beta\alpha)F\\
(f\otimes r_1)\otimes r_2&\longmapsto f\otimes R_\beta(r_1)r_2\,;\end{align*}
\item given $c\in \Ob\C$, $\delta_c\colon \id_{R_{fp}(c)}\to R_{fp}(\id_c)$ is the natural isomorphism such that, for all $F\in\fp {R_{c}}$,
\begin{align*}
(\delta_c)_F\colon \id_{R_{fp}(c)}F=F&\longrightarrow F\otimes_{R_c}R_c=R_{fp}(\id_c)\\
f&\longmapsto f\otimes 1_{R_c}\,.
\end{align*}
\end{enumerate}}

\bigskip

In this way, we obtain a new category of cartesian modules over $R_{fp}$. It would be desirable for this category to share the same categorical properties as the original $\cmod R$. This turns out to be case:

\medskip\par\noindent
{\bf Lemma.}
{\em In the notation of the previous definition, $R$ is a right flat representation if and only if $R_{fp}$ is right flat. Consequently, for such $R$, both $\cmod R$ and $\cmod {R_{fp}}$ are Grothendieck categories.}

\begin{proof}
See Lemma \ref{always_left_flat}.
\end{proof}
%To see that $\cmod {R_{fp}}$ is truly a well defined Grothendieck category takes some effort. Section 2 of the paper is devoted to pursue this goal. So we define the %category of (cartesian) modules $\cmod R$ over a representation $R:\C\to \Add$ and show in Theorem 2.19 that it is a Grothendieck category, provided that the %representation $R:\C\to \Add$ is right flat (see Definition 2.3). In particular we recover \cite[Corollary 4.4]{SergioQcoh} and \cite[Corollary 3.5]{Sergio_Enochs_Rel_homo}.

Let $\lFlat(\cmod {R_{fp}})$ be the full subcategory of $\cmod {R_{fp}}$ whose objects are the cartesian modules $M$ such that $M_c\in \Flat((\fp {R_c})^{op},\Ab)$, that is, $M_c$ is a contravariant additive flat functor in $((\fp {R_c})^{op},\Ab)$, for each $c\in \Ob\C$ (see Proposition on page \pageref{prop_introduction}). 
Our main theorem asserts that the category of cartesian modules over a strict representation $R\colon \C\to \Ring$ is equivalent to $\lFlat(\cmod {R_{fp}})$. Notice that the definition of flatness used here is the analog in $\cmod {R_{fp}}$ to the definition of a flat quasi-coherent sheaf. Notice also that the ambient category  $\cmod {R_{fp}}$ does not have in general enough projectives (as opposed to the ambient category $({\rm fp}(\A)^{op},\Ab)$ of additive contravariant functors associated to a locally finitely presented additive category $\mathcal A$). Finally, the categories $\lFlat(\mod {R_{fp}})\subseteq \cmod {R_{fp}}$ need not be locally finitely presented (again, as opposed to the usual additive contravariant functor categories $\Flat({\rm fp}(\A)^{op},\Ab)\subseteq ({\rm fp}(\A)^{op},\Ab)$).
\bigskip\par\noindent
{\bf Representation Theorem.}
{\em Let $\C$ be a small category and let $R\colon \C\to \Ring$ be a strict representation. Then the Yoneda functor $Y\colon \mod R\to \mod R_{fp}$ induces equivalences
$$\mod R\cong \lFlat(\mod {R_{fp}})\ \  \text{ and }\ \ \cmod R\cong \lFlat(\cmod {R_{fp}})\,.$$}
\begin{proof}
See Theorem \ref{locally_CB_rep}.
\end{proof}

%It is interesting to point out that, in case $\mathcal C$ is a poset and $R$ is a left flat representation, the class $\lFlat(\cmod R)$ coincides with the class of %cartesian $R$-modules that are flat as a modules in $\mod R$ (see Proposition \ref{flat_obj_loc}). This extends the well-known case of flat quasi-coherent sheaves on a %scheme.

Under this equivalence a short exact sequence $0\to L\to M\to N\to 0$ in $\cmod R$ is pure (that is, $0\to L_c\to M_c\to N_c\to 0$ is pure in $\mod R_c$, for each $c\in \C$) if and only if $0\to Y(L)\to Y(M)\to Y(N)\to 0$ is exact in  $\lFlat(\cmod {R_{fp}})$. Consequently, the study of pure-injective objects in $\cmod R$ is equivalent to the study of flat cotorsion objects in $\lFlat(\cmod {R_{fp}})$. One main advantage of this is the following: it easily follows by the results of \cite{Sergio_Enochs_Rel_homo} that the category $\cmod {R_{fp}}$  admits covers with respect to the class $\lFlat(\cmod {R_{fp}})$. A standard argument then yields the existence of cotorsion envelopes in $\cmod {R_{fp}}$. In fact if $Y(M)\in \lFlat(\cmod {R_{fp}})$ then its cotorsion envelope lies in $\lFlat(\cmod {R_{fp}})$ and it corresponds to the pure-injective envelope of $M$ in $\cmod R$. Thus we get the following:

\bigskip\par\noindent
{\bf Corollary.}
{\em Let $\C$ be a small category and let $R\colon \C\to \Ring$ be a strict representation. Then, every $M\in \cmod R$ has a pure-injective envelope.}

\bigskip\par\noindent
\begin{proof} 
See Theorem \ref{theor.purei.env}.
\end{proof}
Another application of the Representation Theorem is that the pure homological algebra we just defined in $\cmod R$ is encompassed by Quillen's homotopical algebra. That is, the category of unbounded chain complexes of objects in $\cmod R$ has a model structure that reproduces the pure homological algebra. In \cite[\S 1. Corollary]{EGO} the authors define the pure derived category on a quasi-separated scheme. There, they also give an alternative approach for defining the pure derived category in \cite[\S 1. Theorem C]{EGO} by showing that there is an injective model category structure on the category of unbounded chain complexes of flat sheaves. This second approach is the one followed by Murfet and Salarian in \cite{MS} to define the pure derived category on a Noetherian separated scheme. We point out that these pure derived categories are necessarily different from the ones introduced in \cite{Krause} and \cite{CH} (as they make reference to a different notion of purity).
Our Representation Theorem enables us to infer that the two constructions are related in the sense that the pure derived category of a scheme $X$ in \cite[\S 1. Corollary]{EGO} is the homotopy category of the injective model category structure on the category of unbounded chain complexes of locally flats in $\cmod {R_{fp}}$ (i.e. the modules in $\lFlat (\cmod {R_{fp}})$), where $R_{fp}$ is the representation associated  to the structure sheaf of $X$. Hence, we can define the pure derived category in the general setting of cartesian modules on a strict representation of rings so, in particular, with no restriction on the scheme.

\bigskip\par\noindent
{\bf Corollary}.
{\em Let $\C$ be a small category and let $R\colon \C\to \Ring$ be a strict representation. There exists an exact Quillen model category structure in the category of unbounded complexes of objects in $\cmod R$ where the weak equivalences are the pure quasi-isomorphisms. Its corresponding homotopy category is the pure derived category of cartesian $R$-modules.}
\bigskip\par\noindent
\begin{proof} 
See Theorem \ref{theor.der.pur}.
\end{proof}

% and since $\Flat(\cmod {R_{fp}})$ is closed under cotorsion envelopes,   Then it is possible to use known theorems to infer, for instance, the existence of pure-injective %envelopes
%in $\cmod R$ from the fact that cotorsion 

%The theorem 
%One of the most interesting features of this equivalence is that 
% is such that transforms into short exact sequences in $\cmod {R_{fp}}$ that 
%One consequence of this theorem is then the existence of pure injective 

%Thus Sections 2 and 3 of the paper are devoted to  

\section{Preliminaries: Rings with several objects and their modules}

In this paper we will be concerned with small preadditive categories and modules over them. In what follows we state some basic results that will be needed in later sections.

\subsection{Generalities}
\subsubsection{Preadditive categories}

We denote by $\Add$ the category of {\em small preadditive categories}, while we denote by $\Ring$ (resp., $\CRing$) the full sub-category of $\Add$ of unitary associative (commutative) rings. By $R$ we  usually denote a small preadditive category. By $r\in R$ we  mean that $r$ is a {\em morphism} in $R$, while we  use the notation $a\in \Ob R$ when $a$ is an object in $R$. 

\begin{definition}
Let $R$ and $S$ be two small preadditive categories. The {\em tensor product} $R\otimes S$ is the following small preadditive category:
\begin{enumerate}[\rm --]
\item the set of object $\Ob(R\otimes S)$ is just $\Ob(R)\times \Ob(S)$;
\item given $(r_1,s_1),\, (r_2,s_2)\in\Ob(R\otimes S)$ we let 
$$\hom_{R\otimes S}((r_1,s_1),(r_2,s_2))=\hom_R(r_1,r_2)\otimes_\Z\hom_S(s_1,s_2)\,;$$
\item composition of morphisms in $R\otimes S$ is defined by the following law:
$$(f_1\otimes g_1)\circ (f_2\otimes g_2)=(f_1\circ f_2)\otimes (g_1\circ g_2)\,.$$
\end{enumerate}
\end{definition}
Using the properties of the tensor product of Abelian groups, it is not difficult to show that the tensor product $R\otimes S$ in the above definition is again a preadditive category. One can also show that the tensor product of preadditive categories is associative using the same property for the tensor product of Abelian groups.

\subsubsection{Modules} 
A {\em right} (resp., {\em left}) {\em module} $M$ over a small preadditive category $R$ is a (always additive) functor $M\colon R^{op}\to \Ab$ (resp., $M\colon R\to \Ab$). A morphism (a natural transformation) $\phi\colon M\to N$ between right $R$-modules consists of a family of morphisms
$\phi_a\colon M(a)\to N(a)$ (of Abelian groups), with $a$ ranging in $\Ob R$, such that the following squares commute for all $(r\colon a\to b)\in R$:
$$
\xymatrix{
M(a)\ar[r]^{\phi_a}&N(a)\\
M(b)\ar[u]^{M(r)}\ar[r]^{\phi_b}&N(b)\ar[u]_{N(r)}
}
$$
We denote by  $(R^{op},\Ab)$ (resp., $(R,\Ab)$) the category of right (resp., left) $R$-modules. We often say just module to mean right $R$-module when no confusion is possible. When $R$ is a ring we usually write $\mod R$ and $\lmod R$ for $(R^{op},\Ab)$ and $(R,\Ab)$, respectively. 

\begin{lemma}\label{mod_is_groth}
Let $R$ be a small preadditive category. The category of modules $(R^{op},\Ab)$ is a Grothendieck category with a family of small projective generators. 
\end{lemma}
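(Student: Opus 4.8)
The plan is to verify the three claimed properties of $(R^{op},\Ab)$ separately, each reducing to the corresponding well-understood fact for module categories over a ring, leveraging the representable functors as the generators.

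First I would construct the candidate family of small projective generators. For each object $a\in\Ob R$, consider the representable right module $P_a:=\hom_R(-,a)\colon R^{op}\to \Ab$; additivity of $R$ makes this an additive functor, hence a genuine right $R$-module. The key input is the Yoneda lemma, which in this enriched/additive setting gives a natural isomorphism $\hom_{(R^{op},\Ab)}(P_a,M)\cong M(a)$ for every right module $M$. I would first deduce from this that each $P_a$ is projective: the functor $M\mapsto M(a)$ is exact (kernels, cokernels, and images in $(R^{op},\Ab)$ are computed objectwise, since $\Ab$ is abelian and limits/colimits of functors are pointwise), so $\hom(P_a,-)$ is exact, which is exactly projectivity. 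Next, I would show $\{P_a\}_{a\in\Ob R}$ is a generating family: given a nonzero morphism $\phi\colon M\to N$, there is some $a$ with $\phi_a\neq 0$, and by Yoneda this $\phi_a$ corresponds to the composite of $\phi$ with a map out of $P_a$, so the $P_a$ jointly detect nonzero morphisms; equivalently the canonical map $\bigoplus_{a,x\in M(a)}P_a\to M$ is an epimorphism. Smallness (compactness) of each $P_a$ follows again from Yoneda: $\hom(P_a,-)\cong\mathrm{ev}_a$ commutes with direct sums (and all colimits) because colimits in $(R^{op},\Ab)$ are computed pointwise.

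Then I would establish that $(R^{op},\Ab)$ is Grothendieck. Being an additive functor category into the abelian category $\Ab$, it is abelian with all limits and colimits computed pointwise; in particular it is cocomplete. The (AB5) axiom---exactness of filtered colimits---holds because filtered colimits are exact in $\Ab$ and everything is computed objectwise. Having already produced a generating set $\{P_a\}$, their coproduct $\bigoplus_{a}P_a$ is a single generator, so the category satisfies the definition of a Grothendieck category. I expect these verifications to be essentially routine, the only care needed being to confirm that the additive (as opposed to plain) functor category still has pointwise (co)limits, which holds because $\Ab$ is additive and the forgetful considerations are standard.

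The main obstacle, though a mild one, is purely at the level of bookkeeping: one must make sure that the Yoneda lemma is invoked in its correct additive form (so that $P_a$ is an additive functor and $\hom_{(R^{op},\Ab)}(P_a,M)\cong M(a)$ as abelian groups, naturally), and that the representables are indeed \emph{small}, i.e. that $\mathrm{ev}_a$ preserves arbitrary coproducts rather than merely finite ones. Since the isomorphism $\hom(P_a,-)\cong\mathrm{ev}_a$ is natural and $\mathrm{ev}_a$ manifestly commutes with all pointwise colimits, smallness is immediate, so this obstacle dissolves once the Yoneda isomorphism is set up correctly. No deep machinery is required beyond the enriched Yoneda lemma and the pointwise computation of (co)limits in functor categories.
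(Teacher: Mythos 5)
Your proof is correct and follows essentially the same route as the paper's own sketch: pointwise computation of (co)limits and exactness, plus the additive Yoneda isomorphism $\hom_{(R^{op},\Ab)}(H_a,M)\cong M(a)$ exhibiting the representables $\hom_R(-,a)$ as a family of small projective generators. Your write-up is in fact more detailed than the paper's (which omits the verification of smallness and projectivity), but the underlying argument is identical.
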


The above lemma is well-known, but let us give some hint for the proof. Indeed, the zero object $0$ in $(R^{op},\Ab)$ corresponds to the trivial functor $R^{op}\to \Ab$, which sends any object to $0$ and any morphism to the zero morphism. Given a morphism $\phi\colon M\to N$ in $(R^{op},\Ab)$, the (co)kernel of $\phi$ is constructed sending $a\in\Ob R$ to the (co)kernel of $\phi_a$ and sending $(r\colon a\to b)\in R$ to the unique map $\ker(\phi_a)\to \ker(\phi_b)$ (resp., $\coker(\phi_a)\to \coker(\phi_b)$) given by the universal property of (co)kernels. By this description we  see that a sequence $0\to N\to M\to M/N\to 0$ in $(R^{op},\Ab)$ is short exact if and only if $0\to N(a)\to M(a)\to (M/N(a))(=M(a)/N(a))\to 0$ is a short exact sequence in $\Ab$, for all $a\in \Ob R$. \\
Analogously, arbitrary products and coproducts are induced componentwise by the products and coproducts in $\Ab$. By this description we see that $(R^{op},\Ab)$ is a
bicomplete Abelian category with exact products and exact colimits.\\
To see that $(R^{op},\Ab)$ is Grothendieck, it remains to describe a family of generators. In fact more is true, that is, $(R^{op},\Ab)$ has a family of small projective generators that correspond to the corepresentable functors
$$\hom_R(-,a)\colon R^{op}\to \Ab$$
with $a\in\Ob R$. In what follows we  use the notation
$$\hom_R(-,a)=H_a\,.$$
Let $M$ be a right $R$-module. By the Yoneda Lemma, there is an isomorphism of Abelian groups 
$$\hom_{(R^{op},\Ab)}(H_a,M)\cong M(a)\,,$$ for all $a\in \Ob R$. This justifies the following definition:

\begin{definition}
Let $R$ be a small preadditive category and let $M$ be a right $R$-module. Given $a\in \Ob R$, an {\em $a$-element} of $M$ is a morphism $m\colon H_a\to M$, while an {\em element} of $M$ is an $a$-element for some $a\in\Ob R$. We write $m\in M$ to mean that $m$ is an element of $M$. Furthermore, we let 
$$|M|=\sum_{a\in \Ob R}|\hom(H_a,M)|=\sum_{a\in \Ob R}|M(a)|$$
\end{definition}

The following lemma comes from the analogous properties in $\Ab$, so we omit its proof.

\begin{lemma}\label{card_lemma}
Let $R$ be a small preadditive category. Then, 
\begin{enumerate}[\rm (1)]
\item given a short exact sequence $0\to N\to M\to M/N\to 0$, in $(R^{op},\Ab)$,
$$|M|=|N|\cdot |M/N|\leq \max\{|\N|,|N|,|M/N|\}\,.$$
\item given a set $I$ and $M_i\in (R^{op},\Ab)$ for all $i\in I$,
$$\left|\bigoplus_{i\in I}M_i\right|=\sup\left\{\sum_{i\in F}|M_i|:F\subseteq I \text{ finite}\right\}\leq \sup\{|\N|,|I|,|M_i|:i\in I\}\,.$$
\end{enumerate}
\end{lemma}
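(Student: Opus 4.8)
The plan is to reduce both statements to the corresponding well-known facts about cardinalities of abelian groups, exploiting that every construction involved in $(R^{op},\Ab)$ is computed objectwise. Indeed, the discussion following Lemma \ref{mod_is_groth} shows that a sequence is short exact in $(R^{op},\Ab)$ exactly when it is short exact at each $a\in\Ob R$, and that coproducts are formed componentwise, $(\bigoplus_i M_i)(a)=\bigoplus_i M_i(a)$. Since by definition $|M|=\sum_{a\in\Ob R}|M(a)|$ is a sum of cardinals, the whole problem comes down to estimating cardinalities of abelian groups and then assembling these estimates through cardinal arithmetic over the index set $\Ob R$.

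For part (1), I would first fix $a\in\Ob R$ and use that the objectwise short exact sequence of abelian groups $0\to N(a)\to M(a)\to (M/N)(a)\to 0$ splits as a sequence of sets (pick a set-theoretic section of the surjection), giving a bijection $M(a)\cong N(a)\times (M/N)(a)$ and hence $|M(a)|=|N(a)|\cdot|(M/N)(a)|$. Summing over $a$ and comparing the ``diagonal'' sum with the full double sum yields
$$|M|=\sum_{a\in\Ob R}|N(a)|\cdot|(M/N)(a)|\ \le\ \Big(\sum_{a}|N(a)|\Big)\Big(\sum_{a}|(M/N)(a)|\Big)=|N|\cdot|M/N|.$$
For the complementary estimate I would use that the monomorphism $N(a)\to M(a)$ and the epimorphism $M(a)\to(M/N)(a)$ give $|M|\ge|N|$ and $|M|\ge|M/N|$, so $|M|\ge\max\{|N|,|M/N|\}$; when this maximum is infinite, infinite cardinal arithmetic gives $|N|\cdot|M/N|=\max\{|N|,|M/N|\}$, which pins $|M|$ exactly and yields the bound $|M|\le\max\{|\N|,|N|,|M/N|\}$, the factor $|\N|$ absorbing the finite case.

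For part (2), I would describe $\bigoplus_i M_i(a)$ as the set of finitely supported tuples: it is the union, over the finite subsets $F\subseteq I$, of the sets $\prod_{i\in F}M_i(a)$. There are at most $\max\{|\N|,|I|\}$ such $F$, and each contributes $\prod_{i\in F}|M_i(a)|$; summing over $a\in\Ob R$ and using that $|M_i|=\sum_a|M_i(a)|\ge|\Ob R|$ (so that $\sup_i|M_i|$ already dominates $|\Ob R|$), cardinal arithmetic collapses the estimate to $\sup\{|\N|,|I|,|M_i|:i\in I\}$, which is the asserted bound.

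The routine part is the objectwise reduction; \emph{the main obstacle is the cardinal-arithmetic bookkeeping}. The displayed equalities must be read with care: a ``sum of products'' need not equal a ``product of sums'' (part (1)), and the finitely supported elements of a direct sum are counted by products $\prod_{i\in F}|M_i(a)|$ rather than by the sums $\sum_{i\in F}|M_i|$ written in (part (2)), while the number of finite supports contributes the extra factor $|I|$. For instance, when $R$ has two non-interacting objects one already sees $|M|\neq|N|\cdot|M/N|$ for suitable finite modules. All of these discrepancies wash out under infinite cardinal arithmetic ($\kappa\cdot\lambda=\kappa+\lambda=\max\{\kappa,\lambda\}$ for infinite cardinals) or are absorbed by the terms $|\N|$ and $|I|$ in the stated bounds. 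Since the robust content, and what is actually invoked in the L\"owenheim--Skolem--type cardinality estimates later in the paper, is the final $\le$ bound in each item, I would organise the argument around establishing those inequalities cleanly and recover the equalities a posteriori in the infinite regime.
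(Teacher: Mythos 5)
Your proposal is correct in substance and follows the only natural route, which is also the paper's: the paper in fact \emph{omits} the proof, saying only that the lemma ``comes from the analogous properties in $\Ab$'', and your componentwise reduction (exactness and coproducts in $(R^{op},\Ab)$ are computed at each $a\in\Ob R$, then cardinal arithmetic over $\Ob R$, using $|N(a)|\geq 1$ so that $|N|\geq|\Ob R|$ to absorb the index set) is precisely the argument being gestured at. What you add beyond the paper is genuinely valuable: you correctly observe that the displayed \emph{equalities} in the lemma are not literally true. For (1), with two non-interacting objects and finite modules one gets $|M|=\sum_a|N(a)|\cdot|(M/N)(a)|$ strictly smaller than $\bigl(\sum_a|N(a)|\bigr)\bigl(\sum_a|(M/N)(a)|\bigr)$, as you note; and for (2) the failure is worse than your closing sentence suggests. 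Taking $I$ of uncountable cardinality $\kappa$ and $M_i=\Z/2$ (over a one-object $R$) gives $\bigl|\bigoplus_{i\in I}M_i\bigr|=\kappa$ while $\sup\bigl\{\sum_{i\in F}|M_i|:F\subseteq I\text{ finite}\bigr\}=|\N|$, so the equality in (2) cannot be ``recovered a posteriori in the infinite regime'': the left-hand side genuinely carries the factor $|I|$ (the number of finite supports), exactly the phenomenon your own bookkeeping identifies. Only the final inequality in each item is correct as stated, and, as you say, that is all the paper ever uses (in Corollary \ref{coro_pre_gen}, Lemma \ref{tech_gen}, and the Appendix); so organizing the proof around the $\leq$ bounds, with the equalities demoted to the infinite case of (1), is the right call, and on this point your write-up is more careful than the source.

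One microscopic edge case if you want the bounds airtight: in (2) with $I=\emptyset$ the zero module has $|0|=|\Ob R|$ (each component contributes one element), which can exceed $\sup\{|\N|,|I|\}$ when $\Ob R$ is uncountable; your step ``$\sup_i|M_i|$ dominates $|\Ob R|$'' needs $I\neq\emptyset$. This is a defect of the statement rather than of your argument, and it is harmless in all later applications.
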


Let $R$ be a small preadditive category, let $M$ be a right $R$-module, and let $N\leq M$ be a submodule. By the description of the Abelian structure given after Lemma \ref{mod_is_groth}, we can identify $N(a)$ with a subgroup of $M(a)$, for all $a\in \Ob R$. Suppose now that $M'$ is another right $R$-modules and let $N'\leq M'$. Given a morphism $\phi\colon M\to M'$, we say that $\psi\colon N\to N'$ is a restriction of $\phi$ if the following square commutes
$$
\xymatrix{
M\ar[r]^{\phi}&M'\\
N\ar[u]\ar[r]^{\psi}&N'\ar[u]
}
$$
where the vertical arrows are the canonical inclusions of $N\to M$ and $N'\to M'$.

\begin{lemma}\label{restriction_map}
In the above notation, the following are equivalent
\begin{enumerate}[\rm (1)]
\item a restriction $\psi\colon N\to N'$ exists;
\item for all $a\in\Ob R$, $\phi_a(N_a)\subseteq N_a'$;
\item for all $a\in\Ob R$ and all $f\colon H_a\to N\to M$, there exists $g\colon H_a\to N'\to M'$  such that $\phi\circ f=g$.
\end{enumerate}
\end{lemma}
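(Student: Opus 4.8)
The plan is to establish the two equivalences $(1)\Leftrightarrow(2)$ and $(2)\Leftrightarrow(3)$ separately, treating the second as a pure translation through the Yoneda Lemma and concentrating the real (if routine) work on the implication $(2)\Rightarrow(1)$.

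For $(1)\Rightarrow(2)$ I would simply evaluate the commuting square defining a restriction at each object $a\in\Ob R$: this yields a commuting square of Abelian groups whose vertical arrows are the inclusions $N_a\hookrightarrow M_a$ and $N_a'\hookrightarrow M_a'$. Chasing an arbitrary $x\in N_a$ around it gives $\phi_a(x)=\psi_a(x)$ inside $M_a'$, and the right-hand side lies in $N_a'$; hence $\phi_a(N_a)\subseteq N_a'$. The converse $(2)\Rightarrow(1)$ is the main step. Assuming $\phi_a(N_a)\subseteq N_a'$ for all $a$, I would define $\psi_a\colon N_a\to N_a'$ as the corestriction of the homomorphism $\phi_a\vert_{N_a}$, which is legitimate precisely by the hypothesis, and then verify that the family $(\psi_a)_{a\in\Ob R}$ is natural. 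The key input here is that, $N\leq M$ and $N'\leq M'$ being submodules (i.e.\ subfunctors), the structure maps $N(r)$ and $N'(r)$ of $(r\colon a\to b)\in R$ are the corestrictions of $M(r)$ and $M'(r)$; consequently the naturality square for $(\psi_a)$ collapses, inside $M_a'$, to the identity $\phi_a(M(r)(x))=M'(r)(\phi_b(x))$ for $x\in N_b$, which is exactly naturality of $\phi$ at $r$. Since each $\psi_a$ is by construction the corestriction of $\phi_a$, the square defining a restriction commutes componentwise, hence commutes as a diagram of $R$-module morphisms, giving the required $\psi$.

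Finally, for $(2)\Leftrightarrow(3)$ I would invoke the Yoneda Lemma in the form recorded before the statement: the $a$-elements $H_a\to M$ correspond naturally to the elements of $M_a$, and under this correspondence an $a$-element factors through the inclusion $N\hookrightarrow M$ exactly when the associated element of $M_a$ lies in $N_a$, while post-composition with $\phi$ corresponds to applying $\phi_a$. Thus the morphisms $f\colon H_a\to N\to M$ range precisely over the elements of $N_a$, and the existence of $g\colon H_a\to N'\to M'$ with $\phi\circ f=g$ says exactly that $\phi_a$ sends that element into $N_a'$. Quantifying over all $a$ and all such $f$, condition $(3)$ becomes verbatim the inclusions $\phi_a(N_a)\subseteq N_a'$ for every $a$, i.e.\ condition $(2)$. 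The only genuine obstacle is the naturality verification inside $(2)\Rightarrow(1)$, and even that reduces entirely to the naturality of $\phi$ once one notes that submodule structure maps are restrictions of the ambient ones.
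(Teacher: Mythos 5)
Your proof is correct: the paper states this lemma without proof, treating it as routine, and your argument is exactly the intended verification. Defining $\psi_a$ as the corestriction of $\phi_a\vert_{N_a}$ and checking naturality via the fact that subfunctor structure maps are restrictions of the ambient ones handles $(1)\Leftrightarrow(2)$, while the Yoneda identification $\hom_{(R^{op},\Ab)}(H_a,M)\cong M(a)$ recorded just before the lemma (together with uniqueness of factorizations through the monomorphism $N'\to M'$) makes $(2)\Leftrightarrow(3)$ a pure translation, as you say.
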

%\begin{proof}
%Indeed, just notice that condition \eqref{rest} is equivalent to say that, for all $a\in\Ob R$, given $x\in M'_a$ there exists a (necessarily unique) $y\in N'_a$ such that $\phi_a((\iota_M)_a(x))=(\iota_N)_a(y)$, equivalently, $\phi_a((\iota_N)_a(M'_a))\subseteq N'_a$, that is, each map $\phi_a:M_a\to N_a$ can be restricted to a morphism $\phi'_a:M'_a\to N_a'$. It is now easy to check that we can define a morphism $\phi':M'\to N'$ letting $\phi'_a:M'_a\to N'_a$,  for all $a\in\Ob R$. 
%\end{proof}

\subsubsection{Bimodules} Let $R$ and $S$ be two small preadditive categories. A {\em (left $R$)-(right $S$)-bimodule $M$} is a functor
$$M\colon S^{op}\otimes R\to \Ab\,.$$
Notice that $\hom_R(-,-)\colon R^{op}\otimes R\to \Ab$ is a (left $R$)-(right $R$)-bimodule. Furthermore, given a functor $\phi\colon R\to S$, we obtain the following (left $R$)-(right $S$)-bimodule
$$\hom_S(-,\phi(-))\colon S^{op}\otimes R\to \Ab\,.$$
Given a (left $R$)-(right $S$)-bimodule $M\colon S^{op}\otimes R\to \Ab$, for all $a\in \Ob R$ we obtain a right $S$-module
$$M(-,a)\colon S^{op}\to \Ab\,.$$
Similarly, we obtain a left $R$-module $M(b,-)$ for all $b\in \Ob S$.

\medskip
A homomorphism of (left $R$)-(right $S$)-bimodules is the same as a homomorphism of left $R^{op}\otimes S$-modules, so that the category of (left $R$)-(right $S$)-bimodules is naturally equivalent to the category $(R^{op}\otimes S,\Ab)$.

\subsection{Tensor product and flat modules}

\subsubsection{The tensor product functor} Let $R$ be a small preadditive category. As for the case when $R$ has one object (so $R$ is a ring), there is a {\em tensor product functor}
$$(R^{op},\Ab)\times(R,\Ab)\longrightarrow \Ab$$
which satisfies many natural properties. The tensor product can be characterized by a universal property, but we prefer the following more explicit definition. 

\begin{definition}
Let $R$ be a small preadditive category and let $M\in (R^{op},\Ab)$, $N\in (R,\Ab)$, then
$$M\otimes_{R}N=\left(\bigoplus_{a\in\Ob R}M(a)\otimes_{\Z}N(a)\right)/T\,,$$
where $T$ is the subgroup generated by the elements of the form\footnote{Contrarily to $N$, $M$ is contravariant, so that $M(r)\colon M(b)\to M(a)$, while $N(r)\colon N(a)\to N(b)$.}
$$M(r)(x)\otimes y-x\otimes N(r)(y)\,,\ \ \ \ r\in\hom_{R}(a,b)\,,\ \ x\in M(b)\,,\ \ y\in N(a)\,.$$
Furthermore, given two morphisms $\phi\colon M\to M'$ in $(R^{op},\Ab)$ and $\psi\colon N\to N'$ in $(R,\Ab)$, we define the following homomorphism of Abelian groups
$$\phi\otimes_R\psi\colon M\otimes_RN\to M'\otimes_RN'$$
as the morphism induced on the quotient by the diagonal morphism
$$\bigoplus_{a\in\Ob R}\phi_a\otimes_{\Z}\psi_a\colon \bigoplus_{a\in\Ob R}M(a)\otimes_{\Z}N(a)\to\bigoplus_{a\in\Ob R}M'(a)\otimes_{\Z}N'(a)\,.$$
\end{definition}

The following natural properties of the tensor product can be easily verified by hand.

\begin{lemma}\label{basic_tensor}
Let $R$ be a small preadditive category. Then,
\begin{enumerate}[\rm (1)]
\item the tensor product $$-\otimes_R-\colon (R^{op},\Ab)\times (R,\Ab)\to \Ab$$ is cocontinuous (it commutes with colimits) in both variables;
\item $H_a\otimes_R N\cong N(a)$, for all $a\in \Ob R$ and $N\in(R,\Ab)$;
\item letting $H^*_a=\hom_R(a,-)$, $M\otimes_RH^*_a\cong M(a)$, for all $a\in \Ob R$ and $M\in(R^{op},\Ab)$.
\end{enumerate}
\end{lemma}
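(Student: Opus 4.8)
\textbf{Proof proposal for Lemma \ref{basic_tensor}.}

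The plan is to prove the three assertions in increasing order of dependence, reducing everything to the defining quotient $M\otimes_R N = \left(\bigoplus_{a\in\Ob R} M(a)\otimes_\Z N(a)\right)/T$ together with the fact that both colimits in $(R^{op},\Ab)$ and $(R,\Ab)$ are computed objectwise (as established after Lemma \ref{mod_is_groth}) and that the tensor product of Abelian groups is itself cocontinuous. For part (1), I would fix one variable, say $M$, and show that $M\otimes_R -$ preserves colimits; by symmetry the argument for the other variable is identical. Given a colimit $N=\varinjlim_i N_i$ in $(R,\Ab)$, the colimit is computed so that $N(a)=\varinjlim_i N_i(a)$ for each $a\in\Ob R$. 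Since $\bigoplus_{a}M(a)\otimes_\Z(-)$ commutes with colimits of Abelian groups (direct sums and $-\otimes_\Z-$ are both cocontinuous), and since passing to the quotient by the relation subgroup $T$ is again a cokernel, hence a colimit construction, the canonical comparison map $\varinjlim_i (M\otimes_R N_i)\to M\otimes_R N$ is an isomorphism. The one point requiring care is that the relation subgroup $T$ is itself the image of a map that is natural in $N$, so that forming the quotient commutes with the filtered or arbitrary colimit; I would phrase this as saying that $M\otimes_R N$ is the cokernel of an explicit map of functors and invoke that cokernels commute with colimits.

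For parts (2) and (3) the strategy is a direct computation from the definition, which then also re-proves part (1) is not circular. For (2), take $M=H_a=\hom_R(-,a)$ and compute $H_a\otimes_R N = \left(\bigoplus_{b\in\Ob R}\hom_R(b,a)\otimes_\Z N(b)\right)/T$. The natural candidate isomorphism to $N(a)$ sends a generator $r\otimes y$ with $r\in\hom_R(b,a)$ and $y\in N(b)$ to $N(r)(y)\in N(a)$; one checks this is well-defined (it kills $T$ precisely because $H_a(r')(f)=f\circ r'$ reflects the action, matching the defining relations) and that the inverse sends $z\in N(a)$ to the class of $\id_a\otimes z$. Part (3) is the mirror image: with $H^*_a=\hom_R(a,-)$ viewed as a left module one computes $M\otimes_R H^*_a$ and sends $x\otimes r$ to $M(r)(x)$, with inverse $x\mapsto [x\otimes \id_a]$. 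Both are instances of the Yoneda Lemma applied to the tensor, so I would remark that they are the ``tensor form'' of the co/representability already used for $\hom_{(R^{op},\Ab)}(H_a,M)\cong M(a)$.

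The main obstacle, and the only step that is more than bookkeeping, is verifying in part (1) that quotienting by $T$ genuinely commutes with the colimit, i.e. that the subgroup of relations in $M\otimes_R N$ is the colimit of the subgroups of relations in the $M\otimes_R N_i$. The cleanest way to handle this is to exhibit $M\otimes_R N$ as the cokernel of a single morphism of Abelian-group-valued functors,
\[
\bigoplus_{(r\colon a\to b)} M(b)\otimes_\Z N(a)\ \xrightarrow{\ \partial\ }\ \bigoplus_{a\in\Ob R} M(a)\otimes_\Z N(a)\ \longrightarrow\ M\otimes_R N\ \longrightarrow 0,
\]
where $\partial$ sends a generator indexed by $r\colon a\to b$ to $M(r)(x)\otimes y - x\otimes N(r)(y)$, both the source and the middle term being cocontinuous functors of $N$; since $\partial$ is natural in $N$ and cokernels are colimits, $M\otimes_R(-)$ is a composite of cocontinuous functors and hence cocontinuous. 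Once this presentation is in place, parts (2) and (3) require only the elementary well-definedness and inverse-checking described above, which I would state can be verified directly and omit the routine verifications.
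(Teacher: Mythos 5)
Your proof is correct: the cokernel presentation of $M\otimes_R N$ makes the cocontinuity in (1) rigorous (including the delicate point that quotienting by $T$ commutes with colimits), and the explicit mutually inverse maps in (2) and (3) are exactly right, with well-definedness following from functoriality of $N$ (resp.\ contravariance of $M$). The paper omits the proof entirely, stating the lemma ``can be easily verified by hand,'' and your argument is precisely the standard direct verification the authors have in mind, so no comparison beyond this is needed.
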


Let us conclude with the following

\begin{definition}
Let $R$ be a small preadditive category and let $N\leq M\in (R^{op},\Ab)$. We say that $N$ is {\em pure} in $M$ if the sequence $0\to N\otimes_R K\to M\otimes_R K$ is exact for all $K\in (R,\Ab)$.
\end{definition}

The above notion of purity can be ``reduced" to purity in categories of modules over a unitary ring; this reduction, and some of its consequences, is described in the Appendix. 

\subsubsection{Tensor product of bimodules} 
We have seen that the tensor product is a functor $(R^{op},\Ab)\times (R,\Ab)\to \Ab$. If we want a tensor product that takes values in categories of (bi)modules other that $\Ab$ we need to start with categories of bimodules, instead of categories of modules.

\begin{definition}
Let $R$, $S$ and $T$ be small preadditive categories, let $M$ be a (left $T$)-(right $S$)-bimodule and let $N$ be a (left $S$)-(right $R$)-bimodule. We define the (left $T$)-(right $R$)-bimodule $$M\otimes_S N\colon R^{op}\otimes T\to \Ab$$ as follows:
\begin{enumerate}[\rm (1)]
\item for all $t\in\Ob T$ and $r\in \Ob R$, $(M\otimes_S N)(r,t)=M(-,t)\otimes_S N(r,-)$;
\item given morphisms $(r\colon a_1\to a_2)\in R^{op}$ and $(t\colon b_1\to b_2)\in T$, we define 
$$(M\otimes_S N)(r\otimes t)\colon (M\otimes_S N)(a_1,b_1)\to (M\otimes_S N)(a_2,b_2)$$
to be the tensor product over $S$ of the two morphisms $M(t)\colon M(-,b_1)\to M(-,b_2)$ (of right $S$-modules) and $N(r)\colon N(a_1,-)\to N(a_2,-)$ (of left $S$-modules).
\end{enumerate}
Given two homomorphism $\phi\colon M\to M'$ (of (left $T$)-(right $S$)-bimodules) and $\psi\colon N\to N'$ (of (left $S$)-(right $R$)-bimodules) we define the following homomorphism of (left $T$)-(right $R$)-bimodules: 
$$\phi\otimes \psi\colon M\otimes_S M'\to N\otimes_SN'\,,$$
such that $(\phi\otimes \psi)_{(r,t)}=\phi_{(-,t)}\otimes \psi_{(r,-)}$.
\end{definition}

One verifies that the above definition gives a functor
$$-\otimes_S-\colon (S^{op}\otimes T,\Ab)\times  (R^{op}\otimes S,\Ab)\to (R^{op}\otimes T,\Ab)\,.$$

\begin{lemma}\label{assoc_tensor}
Let $R$, $S$, $T$ and $U$ be small preadditive categories, and consider bimodules $M\colon S^{op}\otimes R\to \Ab$, $N\colon T^{op}\otimes S\to \Ab$ and $K\colon U^{op}\otimes T\to \Ab$. There is a natural isomorphism of (left $R$)-(right $U$)-bimodules
$$(M\otimes_S N)\otimes_T K\overset{\sim}{\longrightarrow} M\otimes_S(N\otimes_T K)\,,$$
such that, given $r\in R$ and $u\in U$, the component
$$((M\otimes_S N)\otimes_T K)(r,u)\longrightarrow (M\otimes_S(N\otimes_T K))(r,u)\,,$$
is determined by the assignement $(l\otimes h)\otimes k\mapsto l\otimes (h\otimes k)$, for all $l\in M(r,s)$, $h\in N(s,t)$, $k\in K(t,u)$, $s\in S$, $t\in T$. 
\end{lemma}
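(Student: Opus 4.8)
The plan is to reduce the associativity of the bimodule tensor product to the already-established associativity of the tensor product of abelian groups, componentwise. First I would fix objects $r \in \Ob R$ and $u \in \Ob U$ and compute both sides of the claimed isomorphism explicitly as abelian groups, using the definition of the bimodule tensor product. The left-hand side $((M\otimes_S N)\otimes_T K)(r,u)$ unwinds to $(M\otimes_S N)(-,u)\otimes_T K(r,-)$, where the first factor is itself the right $T$-module $s,t \mapsto M(-,u)\otimes_S N(\cdot,-)$ evaluated appropriately; the right-hand side $(M\otimes_S(N\otimes_T K))(r,u)$ unwinds symmetrically. Writing out the definition of $-\otimes_S-$ and $-\otimes_T-$ via the explicit quotient construction, both groups are presented as quotients of $\bigoplus_{s\in\Ob S,\,t\in\Ob T} M(s,u)\otimes_\Z N(t,s)\otimes_\Z K(r,t)$ by the subgroup generated by the $S$- and $T$-balancing relations.

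The key step is then to verify that the assignment $(l\otimes h)\otimes k \mapsto l\otimes(h\otimes k)$ on generators descends to a well-defined isomorphism. Concretely I would check that this map respects the defining relations $T$ of each tensor product: the relations coming from morphisms $s\colon s_1\to s_2$ in $S$ (governing the middle factor's interaction with $M$ and $N$) and those coming from morphisms $t\colon t_1\to t_2$ in $T$ (governing $N$ and $K$) must match up on both sides. Since the underlying construction is the associativity isomorphism for abelian groups applied after taking a direct sum over the objects of $S$ and $T$, the map is a bijection on the level of the free groups $\bigoplus M(s,u)\otimes_\Z N(t,s)\otimes_\Z K(r,t)$ and carries the $S$-relations and $T$-relations of one side exactly onto those of the other. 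Hence it induces an isomorphism on the quotients.

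Finally I would check naturality: that the collection of these componentwise isomorphisms, as $(r,u)$ ranges over $\Ob R^{op}\otimes U$, commutes with the structure maps $(M\otimes_S N)\otimes_T K)(r\otimes u)$ and $(M\otimes_S(N\otimes_T K))(r\otimes u)$ for morphisms $r\in R$, $u\in U$. Since those structure maps are, by definition, induced by tensoring the morphisms $M(u)$, $N(\cdot)$, $K(r)$ in the respective slots, the required square reduces again to the naturality of the abelian-group associativity isomorphism, which is standard.

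I expect the main obstacle to be purely bookkeeping rather than conceptual: keeping the four categories $R,S,T,U$ and the variance conventions (each bimodule $M\colon S^{op}\otimes R\to\Ab$ is contravariant in its $S$-slot and covariant in its $R$-slot) straight so that the balancing relations over $S$ and over $T$ are paired correctly on both sides. Once the indexing is set up carefully, the verification that $(l\otimes h)\otimes k\mapsto l\otimes(h\otimes k)$ respects all relations and is natural follows formally from Lemma \ref{basic_tensor} and the associativity of $\otimes_\Z$, so I would present these checks briefly and leave the routine diagram-chases to the reader.
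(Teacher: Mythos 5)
Your plan is correct, and in fact the paper gives no proof of Lemma \ref{assoc_tensor} at all (it is stated as a routine verification), so your argument is exactly the intended one: present both iterated products as quotients of the triple direct sum $\bigoplus_{s\in\Ob S,\,t\in\Ob T} M(s,r)\otimes_\Z N(t,s)\otimes_\Z K(u,t)$ by the $S$- and $T$-balancing relations (using right exactness of $\otimes_\Z$ to pull the inner quotient out through the second tensoring), check that the $\Z$-associator carries each side's relation subgroup onto the other's, and verify naturality in $(r,u)$ componentwise on generators. One bookkeeping slip to fix: since $M\colon S^{op}\otimes R\to \Ab$ and $K\colon U^{op}\otimes T\to \Ab$, the summands in your display should read $M(s,r)$ and $K(u,t)$, not $M(s,u)$ and $K(r,t)$ --- harmless to the structure of the argument, but worth correcting given that, as you rightly note, keeping the four categories and the variances straight is the only real content of this proof (the paper's own statement is itself loose on this ordering, writing $M(r,s)$ and $K(t,u)$).
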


\subsubsection{Flat modules}
Let us start with the following

\begin{definition}
Let $R$ be a small preadditive category and let $M\in (R^{op},\Ab)$. Then $M$ is {\em flat} if the functor
$$M\otimes_R-\colon (R,\Ab)\longrightarrow \Ab$$
is exact.
\end{definition}

Given a functor $\alpha\colon \A\to \B$ and  $b\in \Ob\B$, the {\em fiber} of $\alpha$ over $b$ is the category $\A_{/b}$ such that
\begin{enumerate}[\rm --]
\item the objects of $\A_{/b}$ are pairs $(a,f)$, where $a$ is an object in $\A$ and $f\colon \alpha(a)\to b$ is a morphism in $\B$;
\item a morphism $T\colon (a,f)\to (a',f')$ in $\A_{/b}$ is a morphism $T\colon a\to a'$ in $\A$ such that the following diagram commutes in $\B$
$$\xymatrix{\alpha(a)\ar[d]_{\alpha(T)}\ar[rr]^{f}&&  b\\
\alpha(a')\ar@/_10pt/[rru]_{f'}}$$
\end{enumerate}
A non-empty category $\A$ is {\em filtered from above} if it satisfies the following conditions 
\begin{enumerate}[\rm --]
\item given two objects $a,a'\in\A$, there is an object $b\in\A$ such that $\hom_\A(a,b)\neq\emptyset\neq\hom_\A(a',b)$;
\item given two parallel arrows $\phi,\phi'\colon a\rightrightarrows a'$, there is an arrow $\psi\colon a'\to b$, such that $\psi\phi=\psi\phi'$.
\end{enumerate}
The following classical result characterizes flat modules.
\begin{lemma}{\rm \cite{flat_func}}
Let $R$ be a small preadditive category and consider the Yoneda functor
$$Y\colon R\to (R^{op},\Ab)\,.$$
A right $R$-module $M$ is flat if and only if the fiber $R_{/M}$ of $Y$ over $M$ is filtered from above. 
\end{lemma}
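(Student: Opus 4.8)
The plan is to prove the two implications separately, using as the two workhorses that every representable $H_a$ is flat---since $H_a\otimes_R-\cong\ev_a$ is exact by Lemma~\ref{basic_tensor}(2)---and that $-\otimes_R-$ is cocontinuous in each variable by Lemma~\ref{basic_tensor}(1). Conceptually this is the additive analogue of the classical statement that a set-valued presheaf is flat exactly when its category of elements is filtered, the category of elements here being the fiber $R_{/M}$. A point I would flag at the outset is that the \emph{additive} structure is used throughout: finite biproducts (equivalently, working with finite direct sums of representables, i.e.\ in the additive hull of $R$) are what make both the conical density presentation $M\cong\varinjlim_{(a,f)\in R_{/M}}H_a$ legitimate and the binary-cocone axiom automatic.

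For the implication \emph{``$R_{/M}$ filtered $\Rightarrow$ $M$ flat''} I would start from the density presentation $M\cong\varinjlim_{(a,f)\in R_{/M}}H_a$ given by the tautological cocone $\{f\colon H_a\to M\}$. Tensoring with an arbitrary $N\in(R,\Ab)$ and invoking cocontinuity yields a natural isomorphism $M\otimes_RN\cong\varinjlim_{(a,f)\in R_{/M}}N(a)$. Each evaluation $N\mapsto N(a)$ is exact, so $M\otimes_R-$ is a colimit, indexed by $R_{/M}$, of exact functors; since $R_{/M}$ is filtered from above and $\Ab$ satisfies (AB5), this filtered colimit is exact, whence $M\otimes_R-$ is exact and $M$ is flat.

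For the converse \emph{``$M$ flat $\Rightarrow$ $R_{/M}$ filtered''} I would verify the two axioms directly. Nonemptiness is immediate, as $(a,0)\in R_{/M}$ for every $a\in\Ob R$. The first (binary cocone) axiom requires \emph{no} flatness and is absorbed by the biproducts: given $y\in M(a)$ and $y'\in M(a')$, take $c=a\oplus a'$, let $z=(y,y')\in M(a\oplus a')\cong M(a)\oplus M(a')$, and use the two structural inclusions $a\to c$, $a'\to c$, which carry $z$ back to $y$ and $y'$ respectively. The genuine content of the lemma is the second (coequalizer) axiom, which is where I expect the main obstacle to lie.

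For that axiom I am given parallel arrows $r,r'\colon a\rightrightarrows a'$ of $R_{/M}$, i.e.\ morphisms of $R$ with $M(r)(y')=y=M(r')(y')$, so that $y'\in\ker\big(M(r)-M(r')\colon M(a')\to M(a)\big)$. The arrows $r,r'$ induce maps $H^*_{a'}\to H^*_a$ in $(R,\Ab)$; let $P$ be the kernel of their difference, so that $P(c)=\{g\colon a'\to c : gr=gr'\}$. Here flatness enters in full strength: exactness of $M\otimes_R-$ identifies $M\otimes_RP$ with the kernel above, hence $y'$ lies in $M\otimes_RP$, and writing this element as a finite sum gives $y'=\sum_i M(p_i)(z_i)$ with $p_i\colon a'\to c_i$ satisfying $p_ir=p_ir'$ and $z_i\in M(c_i)$. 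Merging these finitely many witnesses through the biproduct $c=\bigoplus_i c_i$ by setting $t=\langle p_i\rangle\colon a'\to c$ and $z=(z_i)\in M(c)$, one gets $M(t)(z)=y'$ and $tr=tr'$, so $t$ coequalizes the pair in $R_{/M}$. The delicate part---and the bulk of the work---is precisely this kernel-preservation-to-relation-lifting step (the equational criterion for flatness): converting ``$y'$ annihilates $r-r'$ in $M$'' into ``$r-r'$ becomes an honest relation of $R$ after postcomposition with $t$'', and doing the bookkeeping that combines the several witnesses into a \emph{single} $t$ and object $c$.
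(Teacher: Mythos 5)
Your argument is the standard Lazard--Oberst--R\"ohrl proof; note that the paper itself offers no competing argument, since the lemma is quoted from \cite{flat_func} without proof. Modulo the point below, both implications are carried out correctly: in the first half, the identification $M\otimes_RN\cong\varinjlim_{(a,f)}N(a)$ and the (AB5) argument are sound; in the second half, the identification $M\otimes_RP\cong\ker\bigl(M(r-r')\colon M(a')\to M(a)\bigr)$, the extraction of finitely many witnesses $p_i$ with $p_ir=p_ir'$, and their merging through $c=\bigoplus_ic_i$ are exactly the equational criterion for flatness, correctly executed.

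The gap is that the lemma, as stated, assumes only that $R$ is \emph{preadditive}, and your two uses of biproducts ($c=a\oplus a'$ for the cocone axiom, $c=\bigoplus_ic_i$ for the coequalizer axiom) are then unavailable. Your parenthetical ``equivalently, working in the additive hull of $R$'' does not repair this: the fiber in the statement is taken over $R$ itself, and filteredness of the additive-hull fiber does not imply filteredness of $R_{/M}$. In fact the literal statement is false for merely preadditive $R$: take $R=\Z$ (one object) and $M=\Z\oplus\Z$, which is flat; objects of $R_{/M}$ are elements $y\in\Z^2$, a morphism $y\to y'$ is an integer $n$ with $y'n=y$, and $(1,0)$, $(0,1)$ admit no common upper bound. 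So ``flat $\Rightarrow$ filtered'' genuinely requires $R$ to have finite biproducts --- the setting of \cite{flat_func}, and of every application in this paper, where the index categories are the $\fp{R_c}$ and hence additive; the looseness is thus already in the printed statement, but your proof should state the hypothesis rather than gesture at it. It is worth noting that your first half actually needs no amendment even in the preadditive case: if $R_{/M}$ is filtered, the tautological cocone is automatically colimiting, since surjectivity of $\varinjlim_{(a,y)}H_a\to M$ is the usual tautology, while injectivity follows from the coequalizer axiom applied to the parallel pair $g,0\colon(c,0)\rightrightarrows(a,y)$ whenever $M(g)(y)=0$. The defect is confined to, and is intrinsic to, the ``only if'' direction.
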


\subsection{Change of base}\label{change_of_base_sec}

Let $R$ and $S$ be two small preadditive categories and let $\phi\colon R\to S$ be a functor. The {\em restriction of scalars along $\phi$} is the functor
$$\phi^*=(-\circ \phi)\colon (S^{op},\Ab)\to (R^{op},\Ab)\,,$$
which is defined just composing a functor $N\colon S^{op}\to \Ab$ with $\phi$, to obtain a functor $\phi^*(N)=N\circ\phi\colon R^{op}\to \Ab$. On the other hand, one defines the {\em extension of scalars along $\phi$} 
$$\phi_!=(-\otimes_R\hom_S(-,\phi(-)))\colon (R^{op},\Ab)\to (S^{op},\Ab)\,,$$
just as the tensor product over $R$ by the (left $R$)-(right $S$)-bimodule $\hom_S(-,\phi(-))$. One can show that the extension of scalars is left adjoint to the restriction of scalars. In fact, the unit 
$$\eta_\phi\colon \id_{(R^{op},\Ab)}\to \phi^*\phi_!$$ 
is defined as follows:  for all $M\colon R^{op}\to \Ab$ and all $a\in\Ob R$, the component at $a$ of $(\eta_\phi)_M\colon M\to \phi^*\phi_!M$ is the unique homomorphism of Abelian groups 
$$M_a\to((M\otimes_R\hom_S(-,\phi(-)))\circ \phi)_a=(M\otimes_R\hom_S(-,\phi(-)))_{\phi(a)}=\bigoplus_{b\in \Ob R} M_b\otimes_{\Z}\hom_S(\phi(a),\phi(b))/T$$
which sends $m\in M_a$ to (the image in the quotient of) $m\otimes \id_{\phi(a)}\in M_{a}\otimes_\Z\hom_S(\phi(a),\phi(a))$. Similarly, the counit 
$$\varepsilon_\phi\colon  \phi_!\phi^* \to \id_{(S^{op},\Ab)}$$ 
is defined as follows: for all $N\colon S^{op}\to \Ab$ and all $c\in\Ob S$, the component at $c$ of $(\varepsilon_\phi)_N\colon \phi_!\phi^*N\to N$ is the unique homomorphism of Abelian groups 
$$((N\circ\phi)\otimes_R\hom_S(-,\phi(-)))_c=\bigoplus_{b\in\Ob R}N_{\phi(b)}\otimes_\Z\hom_S(c,\phi(b))/T\to N_c$$
which sends $n\otimes f$ to $N_f(n)\in N_c$, for all $n\in N_{\phi(b)}$, $f\colon c\to \phi(b)$, and $b\in \Ob R$. 

\begin{definition}
Let $R$ and $S$ be two small preadditive categories and let $\phi\colon R\to S$ be an additive functor. The adjunction $(\phi_!,\phi^*)$ is said to be the {\em change of base adjunction along $\phi$}.
\end{definition}

By definition, the scalar restriction $\phi^*$ along any functor $\phi$ is exact. On the other hand, the scalar extension $\phi_!$ is only right exact in general. 

\begin{definition}
Let $\phi\colon R\to S$ be a functor between small preadditive categories. We say that $\phi$ is {\em right flat} if the scalar extension $\phi_!$ along $\phi$ is exact. Similarly, $\phi$ is {\em left flat} if $\phi^{op}\colon R^{op}\to S^{op}$ right flat.
\end{definition}

Let now $\phi\colon R\to S$ and $\psi\colon S\to T$ be functors between preadditive categories. Just by definition, $\phi^*\psi^*=(\psi\phi)^*$, so that there is a natural isomorphism $\psi_!\phi_!\cong (\psi\phi)_!$ (as these functors are adjoint to the same functor). In the following lemma we give an explicit description of such an isomorphism:

\begin{lemma}\label{functoriality}
Let $\phi\colon R\to S$ and $\psi\colon S\to T$ be functors between preadditive categories. Given $M\in(R^{op},\Ab)$, there is a natural isomorphism
\begin{equation}\label{natural_2}(M\otimes_R\hom_S(-,\phi(-)))\otimes_S\hom_T(-,\psi(-))\longrightarrow M\otimes_R\hom_T(-,\psi\phi(-))\end{equation}
defined composing the natural isomorphism 
$$(M\otimes_R\hom_S(-,\phi(-)))\otimes_S\hom_T(-,\psi(-))\to M\otimes_R(\hom_S(-,\phi(-))\otimes_S\hom_T(-,\psi(-)))$$
described in Lemma \ref{assoc_tensor}, with the following natural isomorphism:
\begin{align*}\hom_S(-,\phi(r))\otimes_S\hom_T(t,\psi(-))\overset{\sim}{\longrightarrow}&\hom_T(t,\psi\phi(r))\\
(f\colon s\to \phi(r))\otimes (g\colon t\to \psi(s))\longmapsto&(\psi(f)\circ g\colon t\to\psi\phi(r))\,.\end{align*}
which holds, naturally, for all $r\in \Ob R$ and $t\in \Ob T$. More explicitly, the map in \eqref{natural_2} is defined by the assignements
$$(m\otimes (f\colon b\to \phi(r)))\otimes (g\colon t\to \psi (b))\longmapsto m\otimes (\psi(f)\circ g)\,.$$
\end{lemma}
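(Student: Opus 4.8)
The plan is to factor the desired isomorphism \eqref{natural_2} as the composite of the associativity isomorphism from Lemma \ref{assoc_tensor} with the functor $M\otimes_R(-)$ applied to a ``composition of hom-bimodules'' isomorphism, whose verification is the only genuine content.

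First I would establish, as an isomorphism of (left $R$)-(right $T$)-bimodules (that is, of functors $T^{op}\otimes R\to\Ab$),
\[
\hom_S(-,\phi(-))\otimes_S\hom_T(-,\psi(-))\;\overset{\sim}{\longrightarrow}\;\hom_T(-,\psi\phi(-))\,.
\]
Evaluating the left-hand side at a pair $(t,r)\in\Ob T\times\Ob R$ gives $\hom_S(-,\phi(r))\otimes_S\hom_T(t,\psi(-))$. The key observation is that the right $S$-module $\hom_S(-,\phi(r))$ is exactly the corepresentable module $H_{\phi(r)}$, so Lemma \ref{basic_tensor}(2) supplies a canonical isomorphism $H_{\phi(r)}\otimes_S N\cong N(\phi(r))$ for every left $S$-module $N$. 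Taking $N=\hom_T(t,\psi(-))$ yields $\hom_T(t,\psi(\phi(r)))=\hom_T(t,\psi\phi(r))$, and unwinding the Yoneda isomorphism of Lemma \ref{basic_tensor}(2) shows that it is given on generators by $f\otimes g\mapsto \psi(f)\circ g$, as claimed.

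Next I would check that these componentwise isomorphisms are natural in both $r$ (covariantly) and $t$ (contravariantly), so that they assemble into the stated isomorphism of bimodules. Concretely, I would first verify directly that $f\otimes g\mapsto\psi(f)\circ g$ respects the tensor relations over $S$: since $\hom_S(-,\phi(r))$ acts by precomposition and $\hom_T(t,\psi(-))$ acts by $\psi(h)\circ(-)$, both sides of the defining relation $(f\circ h)\otimes g = f\otimes(\psi(h)\circ g)$ map to $\psi(f)\circ\psi(h)\circ g$, using functoriality of $\psi$. Then I would check that the map commutes with the bimodule actions induced by morphisms $r\to r'$ in $R$ and $t\to t'$ in $T$; after unwinding, each compatibility reduces to the functoriality of $\psi$ and associativity of composition in $T$. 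This is the step requiring the most bookkeeping, and the one I expect to be the main (if purely formal) obstacle, but it involves no essentially new idea beyond the naturality already packaged in Lemma \ref{basic_tensor}(2).

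Finally I would apply $M\otimes_R(-)$ to the bimodule isomorphism above to obtain
\[
M\otimes_R\bigl(\hom_S(-,\phi(-))\otimes_S\hom_T(-,\psi(-))\bigr)\;\cong\;M\otimes_R\hom_T(-,\psi\phi(-))\,,
\]
and precompose with the associativity isomorphism of Lemma \ref{assoc_tensor}. Both constituents are natural in $M$---the associativity isomorphism by Lemma \ref{assoc_tensor}, and the second map because $M\otimes_R(-)$ is a functor---so the composite \eqref{natural_2} is natural in $M$ as well. The explicit formula then follows by tracing a generator $(m\otimes f)\otimes g$: the associativity isomorphism sends it to $m\otimes(f\otimes g)$, and the composition isomorphism sends this to $m\otimes(\psi(f)\circ g)$, which is exactly the asserted assignment.
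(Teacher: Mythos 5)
Your proposal is correct and follows exactly the decomposition the paper itself builds into the statement of Lemma \ref{functoriality}: the associativity isomorphism of Lemma \ref{assoc_tensor} composed with the hom-composition isomorphism $f\otimes g\mapsto\psi(f)\circ g$, whose well-definedness and naturality you verify (your observation that it is the instance of Lemma \ref{basic_tensor}(2) with $H_{\phi(r)}\otimes_S\hom_T(t,\psi(-))\cong\hom_T(t,\psi\phi(r))$ is precisely the right way to see it). The generator trace $(m\otimes f)\otimes g\mapsto m\otimes(f\otimes g)\mapsto m\otimes(\psi(f)\circ g)$ recovers the paper's explicit formula, so nothing is missing.
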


\subsection{Crawley-Boevey's classical results}

Let $R$ be a ring and denote by $\fp R$ the full subcategory of finitely presented modules in $\mod R$. Consider the {\em contravariant Yoneda functor} 
$$Y\colon \mod R\to ((\fp R)^{op},\Ab) \ \ \text{ such that }\ \ M\mapsto \hom_R(-,M)\restriction_{\fp R}\,,$$
for any $M\in \mod R$ and, similarly, $Y(\phi)=\hom_R(-,\phi)\restriction_{\fp R}$, for any homomorphism $\phi$ of right $R$-modules. Consider also the following {\em evaluation at $R$ functor}, which goes in the opposite direction:
$$Y^{-1}\colon ((\fp R)^{op},\Ab)\to \mod R\ \ \text{ such that }\ \ \mathcal M\mapsto \mathcal M(R)\, ,$$ 
for all $\mathcal M\colon (\fp R)^{op}\to \Ab$, where $\mathcal M(R)\colon R^{op}\to \Ab$ sends $r\in R$ to the morphism $\mathcal M(r\cdot -)\colon \mathcal M(R)\to \mathcal M(R)$, 
and $(r\cdot -)\colon R\to R$ is the endomorphism of $R$ (considered as a right $R$-module) such that $s\mapsto rs$. Similarly, given a natural transformation $\Phi\colon \mathcal M\to \mathcal N$, we let $Y^{-1}(\Phi)=\Phi_R$.
%Since the assignment $r\mapsto \mathcal M(r\cdot -)$ is a ring anti-homomorphism $R\to\End_\Z(\mathcal M(R))$ (being $\mathcal M$ contravariant), it gives  $\mathcal M(R)$ the structure of a right $R$-module. 

\begin{lemma}
In the above notation, $Y^{-1}$ is a left adjoint to $Y$. In fact, 
\begin{enumerate}[\rm (1)]
\item the counit of the adjunction $\varepsilon\colon Y^{-1}Y\rightarrow \id_{\mod R}$ is defined by 
$$\varepsilon_M\colon \hom_R(R,M)\to M\ \ \ \text{such that }\ \ (\phi\colon R\to M)\mapsto \phi(1)\,,$$
for all $M\in\mod R$;
\item the unit of the adjunction $\eta\colon \id_{((\fp R)^{op},\Ab)}\rightarrow YY^{-1}$ is defined by 
$$(\eta_{\mathcal M})_K\colon \mathcal M(K)\to \hom_{R}(K,\mathcal M(R))\,,$$
for all $\mathcal M\colon (\fp R)^{op}\to \Ab$ and $K\in \fp R$, such that 
$$(\eta_{\mathcal M})_K(t)(k)=\mathcal M(k)(t)\,,$$
for all $t\in \mathcal M(K)$ and $k\in K$, where $k$ is identified with the homomorphism $R\to K$ sending $1\mapsto k$.
\end{enumerate}
\end{lemma}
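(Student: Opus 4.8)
The plan is to verify directly that the two families $\varepsilon=(\varepsilon_M)_M$ and $\eta=(\eta_{\mathcal M})_{\mathcal M}$ displayed in the statement are natural transformations $\varepsilon\colon Y^{-1}Y\to\id_{\mod R}$ and $\eta\colon\id_{((\fp R)^{op},\Ab)}\to YY^{-1}$, and then to check the two triangle identities $Y\varepsilon\circ\eta Y=\id_Y$ and $\varepsilon Y^{-1}\circ Y^{-1}\eta=\id_{Y^{-1}}$; by the unit--counit characterization of adjunctions this proves that $Y^{-1}$ is left adjoint to $Y$. The whole argument rests on one bookkeeping device: for $K\in\fp R$ and $k\in K$, write $\overline k\colon R\to K$ for the right $R$-homomorphism $1\mapsto k$ (so $r\mapsto kr$). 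Three identities are then checked once and used throughout: $\overline{k+k'}=\overline k+\overline{k'}$; for $r\in R$, $\overline{kr}=\overline k\circ(r\cdot-)$; and for $g\colon K'\to K$ in $\fp R$ and $k'\in K'$, $\overline{g(k')}=g\circ\overline{k'}$. Recall also that the right $R$-action on $Y^{-1}\mathcal M=\mathcal M(R)$ is $t\cdot r=\mathcal M(r\cdot-)(t)$, and that $Y^{-1}$ sends a natural transformation $\Phi$ to its $R$-component $\Phi_R$.

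First I would dispose of the counit. Since $Y^{-1}Y(M)=\hom_R(R,M)$, the map $\varepsilon_M\colon\phi\mapsto\phi(1)$ is the classical evaluation isomorphism; its additivity and naturality in $M$ are routine, and it is $R$-linear because $\phi\cdot r=\phi\circ(r\cdot-)$ gives $\varepsilon_M(\phi\cdot r)=\phi(r)=\phi(1)\,r=\varepsilon_M(\phi)\,r$.

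The substantial part is the unit. For fixed $\mathcal M$ and $K\in\fp R$ I must check that $(\eta_{\mathcal M})_K(t)\colon k\mapsto\mathcal M(\overline k)(t)$ is a right $R$-homomorphism $K\to\mathcal M(R)$: additivity in $k$ uses $\overline{k+k'}=\overline k+\overline{k'}$ together with additivity of $\mathcal M$, while $R$-linearity is exactly the identity $\overline{kr}=\overline k\circ(r\cdot-)$, which upon applying the contravariant $\mathcal M$ turns into $\mathcal M(\overline{kr})=\mathcal M(r\cdot-)\circ\mathcal M(\overline k)$, i.e. $(\eta_{\mathcal M})_K(t)(kr)=\mathcal M(\overline k)(t)\cdot r$. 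That $(\eta_{\mathcal M})_K$ is itself additive in $t$ follows since each $\mathcal M(\overline k)$ is. Naturality of $\eta_{\mathcal M}$ in $K$ then follows from $\overline{g(k')}=g\circ\overline{k'}$ and contravariance of $\mathcal M$, and naturality of $\eta$ in $\mathcal M$ is precisely the naturality square of $\Phi\colon\mathcal M\to\mathcal N$ evaluated on the morphisms $\overline k\colon R\to K$.

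Finally the triangle identities. For $Y\varepsilon\circ\eta Y=\id_Y$, evaluating at $M$, $K$ and $\alpha\in\hom_R(K,M)$ one unwinds $(\eta_{Y(M)})_K(\alpha)(k)=Y(M)(\overline k)(\alpha)=\alpha\circ\overline k$ and then applies $Y(\varepsilon_M)=\varepsilon_M\circ(-)$ to obtain $(\alpha\circ\overline k)(1)=\alpha(k)$, so the composite returns $\alpha$. For $\varepsilon Y^{-1}\circ Y^{-1}\eta=\id_{Y^{-1}}$, note $Y^{-1}(\eta_{\mathcal M})=(\eta_{\mathcal M})_R$, and here the case $K=R$ gives the identification $\overline r=(r\cdot-)$, whence $(\eta_{\mathcal M})_R(t)(r)=\mathcal M(r\cdot-)(t)=t\cdot r$; composing with $\varepsilon_{\mathcal M(R)}\colon\phi\mapsto\phi(1)$ yields $(\eta_{\mathcal M})_R(t)(1)=\mathcal M(\id_R)(t)=t$. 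I expect the main obstacle to be purely organizational rather than conceptual: keeping the contravariance of $\mathcal M$ and the action $t\cdot r=\mathcal M(r\cdot-)(t)$ consistent, the one genuinely delicate point being the well-definedness of $\eta_{\mathcal M}$, where $R$-linearity of $(\eta_{\mathcal M})_K(t)$ hinges on the single identity $\overline{kr}=\overline k\circ(r\cdot-)$.
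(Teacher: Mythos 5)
Your proposal is correct and follows exactly the route the paper indicates: the paper declares the proof ``an exercise [that] consists in verifying the counit-unit identities,'' and you carry out precisely that verification (naturality of $\varepsilon$ and $\eta$, well-definedness of $(\eta_{\mathcal M})_K(t)$ as an $R$-homomorphism via $\overline{kr}=\overline k\circ(r\cdot-)$, and both triangle identities $Y\varepsilon\circ\eta Y=\id_Y$ and $\varepsilon Y^{-1}\circ Y^{-1}\eta=\id_{Y^{-1}}$). All the computations check out, including the correct handling of the contravariance of $\mathcal M$ and of the action $t\cdot r=\mathcal M(r\cdot-)(t)$.
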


The proof of the above lemma is an exercise and it consists in verifying the counit-unit identities. Let us recall also the following classical result:
%The proof of the above lemma is an exercise, but let us at least sketch its proof. Let $M\in\mod R$ and $\mathcal M:(\fp R)^{op}\to \Ab$ and let us construct an isomorphism
%$$\Phi:\hom_{((\fp R)^{op},\Ab)}(\mathcal M,Y(M))\to \hom_{\mod R}(Y^{-1}(\mathcal M),M)\,.$$
%Indeed, given a natural transformation $F:\mathcal M\to Y(M)$, we let $\Phi(F)=F_R$. It is not difficult to show that $\Phi$ is a natural injective group homomorphism. To see that it is surjective, one should construct and inverse
%$$\Psi:\hom_{\mod R}(Y^{-1}(\mathcal M),M)\to \hom_{((\fp R)^{op},\Ab)}(\mathcal M,Y(M))\,,$$
%such that $\Psi$ assigns to a given morphism $f:Y^{-1}(\mathcal M)\to M$ the natural transformation $\Psi(f):\mathcal M\to Y(M)$ such that, given $K\in\fp R$, $\Psi(f)_K:\mathcal M(K)\to \hom_R(K,M)$ is the group homomorphism such that
%$$ \Psi(f)_K(t)(k)=f(\mathcal M(k)(t))\,,$$
%for all $t\in \mathcal M(K)$ and $k\in K$, where we identified $k$ with the homomorphism $R\to K$ mapping $1\mapsto k$.

\begin{lemma}[Yoneda]
The counit $\varepsilon\colon Y^{-1}Y\rightarrow \id_{\mod R}$ of the above adjunction $(Y^{-1},Y)$ is a natural isomorphism of functors.
\end{lemma}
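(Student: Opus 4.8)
The plan is to reduce the statement to a pointwise bijectivity check. By the previous lemma we already know that $\varepsilon\colon Y^{-1}Y\to\id_{\mod R}$ is a natural transformation between functors $\mod R\to\mod R$, with $\varepsilon_M(\phi)=\phi(1)$ for $\phi\in\hom_R(R,M)=Y^{-1}Y(M)$. Since a natural transformation all of whose components are isomorphisms is automatically a natural isomorphism, it suffices to prove that each $\varepsilon_M$ is an isomorphism in $\mod R$; and because $\varepsilon_M$ is already known to be a morphism in $\mod R=(R^{op},\Ab)$, it is enough to show that it is bijective on underlying Abelian groups, its inverse then being automatically $R$-linear.

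First I would write down an explicit inverse. For $m\in M$, let $\lambda_m\colon R\to M$ be the map $r\mapsto mr$; this is a homomorphism of right $R$-modules, so $\lambda_m\in\hom_R(R,M)$, and $m\mapsto\lambda_m$ defines a map $M\to\hom_R(R,M)$. I claim it is a two-sided inverse of $\varepsilon_M$. On one side, $\varepsilon_M(\lambda_m)=\lambda_m(1)=m$. On the other side, for $\phi\in\hom_R(R,M)$ and any $r\in R$, the right $R$-linearity of $\phi$ gives $\lambda_{\phi(1)}(r)=\phi(1)r=\phi(1\cdot r)=\phi(r)$, so $\lambda_{\varepsilon_M(\phi)}=\lambda_{\phi(1)}=\phi$. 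Hence $\varepsilon_M$ is bijective, and therefore an isomorphism in $\mod R$; being defined component-wise in $M$, the inverse assignment is clearly natural, which re-confirms that $\varepsilon$ is a natural isomorphism.

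There is no genuine obstacle here: the content is precisely the classical identification $\hom_R(R,M)\cong M$ of a ring with the free module of rank one. The only point deserving attention is to keep the two $R$-module structures straight, namely the structure that $Y^{-1}$ places on $\hom_R(R,M)$ — where $r\in R$ acts by $\phi\mapsto\phi\circ(r\cdot-)$, i.e. $(\phi\cdot r)(s)=\phi(rs)$ — versus the given structure on $M$. Verifying that $\varepsilon_M$ intertwines these is subsumed in the previous lemma, but it is the one computation worth double-checking before invoking bijectivity.
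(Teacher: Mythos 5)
Your proof is correct: the paper states this lemma as a classical result without proof, and your argument is precisely the standard one it implicitly invokes, namely the identification $\hom_R(R,M)\cong M$ via $\phi\mapsto\phi(1)$ with inverse $m\mapsto\lambda_m$, combined with the fact that a natural transformation whose components are all isomorphisms is a natural isomorphism. You also correctly isolate the one point worth care, the right $R$-module structure that $Y^{-1}$ places on $\hom_R(R,M)$ via $(\phi\cdot r)(s)=\phi(rs)$, and verify that $\varepsilon_M$ intertwines it with the given structure on $M$.
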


We can now state the following celebrated result from \cite{Craw}:

\begin{theorem}[Crawley-Boevey]\label{CB_rep_Th}
Given a ring $R$,  the essential image of the contravariant Yoneda functor $Y\colon \mod R\to ((\fp R)^{op},\Ab)$ coincides with the full  subcategory of $((\fp R)^{op},\Ab)$, whose objects are the flat functors. Furthermore, the restriction $Y^{-1}\colon \Flat((\fp R)^{op},\Ab)\to \mod R$ is a quasi-inverse for $Y\colon \mod R\to \Flat((\fp R)^{op},\Ab)$.
\end{theorem}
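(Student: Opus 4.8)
The plan is to leverage the two facts already in hand — that $(Y^{-1},Y)$ is an adjoint pair and that the counit $\varepsilon\colon Y^{-1}Y\to\id_{\mod R}$ is a natural isomorphism — and to reduce everything to identifying the essential image of $Y$. Since $\varepsilon$ is invertible, $Y$ is fully faithful, hence an equivalence onto its essential image $\mathcal E\subseteq((\fp R)^{op},\Ab)$. Moreover, for any $\mathcal F\cong Y(M)$ in $\mathcal E$ the triangle identity $Y(\varepsilon_M)\circ\eta_{Y(M)}=\id_{Y(M)}$, together with the invertibility of $\varepsilon_M$, forces the unit $\eta_{Y(M)}$ to be invertible too. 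Thus, once I show $\mathcal E=\Flat((\fp R)^{op},\Ab)$, it follows automatically that $\varepsilon$ is invertible everywhere and $\eta$ is invertible on $\Flat$, so that $Y\colon\mod R\to\Flat((\fp R)^{op},\Ab)$ and the restriction $Y^{-1}\colon\Flat((\fp R)^{op},\Ab)\to\mod R$ are mutually quasi-inverse. Hence the whole statement reduces to the equality $\mathcal E=\Flat((\fp R)^{op},\Ab)$.

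For the inclusion $\mathcal E\subseteq\Flat$, I would compute the fiber of the Yoneda functor $\fp R\to((\fp R)^{op},\Ab)$ over $Y(M)$. By the Yoneda Lemma its objects are the pairs $(K,\xi)$ with $K\in\fp R$ and $\xi\in Y(M)(K)=\hom_R(K,M)$, so the fiber is exactly the comma category $\fp R\downarrow M$ of finitely presented modules mapping into $M$. This comma category is filtered from above — this is the standard local finite presentability of $\mod R$ — so the quoted lemma characterizing flatness through the fiber yields that $Y(M)$ is flat.

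For the reverse inclusion $\Flat\subseteq\mathcal E$, let $\mathcal F$ be flat. The canonical density (co-Yoneda) decomposition presents $\mathcal F$ as the colimit of the representables $\hom_R(-,K)\restriction_{\fp R}=Y(K)$ indexed over its fiber $(\fp R)_{/\mathcal F}$, and by the same flatness lemma that fiber is filtered from above, so this is a \emph{filtered} colimit of representables. Now $Y^{-1}$ is evaluation at $R$, which commutes with all colimits (these are computed pointwise in functor categories) and sends $Y(K)$ to $\hom_R(R,K)\cong K$ via $\varepsilon_K$; hence $M:=Y^{-1}(\mathcal F)=\varinjlim_{(K,\xi)}K$ is a genuine right $R$-module. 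On the other hand $Y$ preserves filtered colimits, because for each $K\in\fp R$ the functor $\hom_R(K,-)$ commutes with filtered colimits; evaluating pointwise gives $Y(M)=\varinjlim_{(K,\xi)}Y(K)=\mathcal F$. Therefore $\mathcal F\cong Y(M)\in\mathcal E$, which establishes $\mathcal E=\Flat$ and closes the argument.

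I expect the main obstacle to lie in the careful bookkeeping of the two colimit-preservation facts in the additive, filtered setting: first, that the canonical presentation of $\mathcal F$ as a colimit of representables over its fiber is genuinely filtered — this is precisely where the hypothesis of flatness is used, through the quoted fiber criterion — and second, that $Y$ commutes with that filtered colimit, which rests on the finite presentability of the indexing objects $K$. Everything else is the formal adjunction bookkeeping of the first paragraph, which turns the set-theoretic identification $\mathcal E=\Flat$ into the asserted quasi-inverse equivalence.
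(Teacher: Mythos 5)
Your proposal is correct, but there is nothing internal to compare it with: the paper states this theorem as a recalled classical result and offers only the citation to Crawley-Boevey \cite{Craw}, proving nothing itself beyond the surrounding lemmas (the adjunction $(Y^{-1},Y)$ and the invertibility of the counit $\varepsilon$). What you have written is essentially a self-contained reconstruction of the classical proof: your first paragraph correctly extracts full faithfulness of the right adjoint $Y$ from the invertibility of $\varepsilon$, and the triangle identity $Y(\varepsilon_M)\circ\eta_{Y(M)}=\id_{Y(M)}$ does force $\eta$ to be invertible on the essential image, so the theorem does reduce to identifying that image with $\Flat((\fp R)^{op},\Ab)$. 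The inclusion $\mathcal E\subseteq\Flat$ via the fiber criterion is right, though ``standard local finite presentability'' is the wrong justification for filteredness of $\fp R\downarrow M$: what you actually need is that $\fp R$ is closed under finite direct sums (for the first filteredness axiom) and under cokernels (for the coequalizer axiom, applied to $\coker(T-T')$), both of which are elementary.

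One step deserves a genuine caution. In the $\Ab$-enriched setting the ``canonical density (co-Yoneda) decomposition'' does \emph{not} say that an arbitrary additive functor is the conical colimit of representables over its fiber; enriched density is a statement about weighted colimits, and the conical comparison map $\varinjlim_{(K,\xi)}H_K\to \mathcal F$ can fail to be injective for general $\mathcal F$. Your argument survives because you only invoke it for flat $\mathcal F$, where the fiber is filtered from above: surjectivity of the comparison map is immediate from the definition of the fiber, and injectivity uses precisely the coequalizer half of filteredness (an element killed in $\mathcal F$ is identified with the class of $\id_b$ at $(b,0)$, and filteredness supplies $\psi$ with $\psi\circ\id_b=\psi\circ 0$, i.e.\ $\psi=0$). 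So you should present this isomorphism as a consequence of flatness, not of density. With that repair, the rest — $Y^{-1}$ commuting with (pointwise) colimits, $Y^{-1}Y(K)\cong K$ via $\varepsilon_K$, and $Y$ preserving filtered colimits because each $K\in\fp R$ is finitely presented — is correct and is exactly how the result is proved in the literature your argument parallels.
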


The following consequence of the above theorem allows for a good ``purity theory" in $\mod R$:

\begin{corollary}\label{purity_th}
Let $R$ be a ring. Then a short exact sequence $0\to N\to M\to M/N\to 0$ is pure exact in $\mod R$ if and only if $0\to Y(N)\to Y(M)\to Y(M/N)\to 0$ is exact in $((\fp R)^{op},\Ab)$. 
\end{corollary}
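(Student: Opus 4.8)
The plan is to reduce the statement to a pointwise (objectwise) computation in the functor category $((\fp R)^{op},\Ab)$ and then to recognize the resulting condition as the very definition of purity. First I would recall, from the description of the abelian structure given after Lemma \ref{mod_is_groth} (applied with the small preadditive category $\fp R$ in place of $R$), that a sequence in $((\fp R)^{op},\Ab)$ is exact precisely when it becomes exact in $\Ab$ after evaluation at every object $T\in\fp R$. Since $Y(L)(T)=\hom_R(T,L)$ by definition of the contravariant Yoneda functor, evaluating $0\to Y(N)\to Y(M)\to Y(M/N)\to 0$ at a finitely presented $T$ returns exactly $\hom_R(T,-)$ applied to the original short exact sequence, namely
\[
0\to \hom_R(T,N)\to \hom_R(T,M)\to \hom_R(T,M/N)\to 0\,.
\]

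Next I would observe that the left-hand portion of this sequence is automatic: for each fixed $T$ the functor $\hom_R(T,-)$ is left exact, so applying it to $0\to N\to M\to M/N\to 0$ always yields an exact sequence $0\to \hom_R(T,N)\to \hom_R(T,M)\to \hom_R(T,M/N)$. Hence the only condition that can fail is surjectivity of $\hom_R(T,M)\to \hom_R(T,M/N)$, and the full three-term sequence with the trailing $\to 0$ is exact if and only if this last map is onto. Ranging over all $T\in\fp R$, the sequence $0\to Y(N)\to Y(M)\to Y(M/N)\to 0$ is therefore exact in $((\fp R)^{op},\Ab)$ if and only if $\hom_R(T,M)\to \hom_R(T,M/N)$ is surjective for every finitely presented $T$.

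Finally I would match this with purity. By the definition recalled in the Introduction, the sequence $0\to N\to M\to M/N\to 0$ is pure exactly when $0\to \hom_R(T,N)\to \hom_R(T,M)\to \hom_R(T,M/N)\to 0$ is exact for every finitely presented $T$, which — again using left exactness of $\hom_R(T,-)$ — amounts precisely to the same surjectivity condition for all $T\in\fp R$. Comparing the two characterizations gives the claimed equivalence.

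The argument is essentially formal, so the only real point to get right is the first reduction: exactness in $((\fp R)^{op},\Ab)$ must be detected objectwise, and one must note that evaluating $Y$ at a finitely presented $T$ literally returns $\hom_R(T,-)$ applied to the given sequence. The substantive content is then just the recognition that $Y$ fails to be right exact in general, and that the single obstruction to right exactness — surjectivity of $\hom_R(T,M)\to\hom_R(T,M/N)$ for all finitely presented $T$ — is exactly the defining condition of a pure epimorphism. In particular, beyond the identification $Y(L)(T)=\hom_R(T,L)$, no appeal to the full Crawley-Boevey equivalence (Theorem \ref{CB_rep_Th}) is needed; its role here is only to frame the corollary within the representation-theoretic picture.
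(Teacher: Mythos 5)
Your argument is correct, and it is genuinely more elementary than the route the paper takes: the paper states this corollary with no written proof, presenting it as a consequence of Crawley-Boevey's Theorem \ref{CB_rep_Th} (the intended reading being that under the equivalence $\mod R\cong\Flat((\fp R)^{op},\Ab)$ pure exact sequences correspond to exact sequences of flat functors), whereas you bypass the representation theorem entirely. Your reduction is sound on every point: exactness in $((\fp R)^{op},\Ab)$ is indeed detected objectwise (kernels and cokernels are computed componentwise, as in the discussion after Lemma \ref{mod_is_groth}), evaluation of $Y$ at $T\in\fp R$ literally returns $\hom_R(T,-)$ applied to the sequence, and left exactness of $\hom_R(T,-)$ isolates surjectivity of $\hom_R(T,M)\to\hom_R(T,M/N)$ as the sole obstruction. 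What your approach buys is transparency: the corollary is seen to be a formal statement about $Y$ and the Hom-definition of purity, independent of flatness. What the paper's framing buys is the contextual identification of purity with exactness \emph{inside the flat subcategory}, which is what gets used later (e.g.\ in Theorem \ref{theor.purei.env}). One caveat you should make explicit: the definition of purity formally given in Section 2 of the paper (and in the Appendix) is the tensor-theoretic one --- $N$ is pure in $M$ if $0\to N\otimes_R K\to M\otimes_R K$ is exact for all $K$ --- while you invoke the Hom-characterization via finitely presented objects recalled in the Introduction. For modules over a ring these two notions agree, but that equivalence is a classical nontrivial fact (due essentially to Cohn/Stenstr\"om) that the paper does not prove; your proof is complete as written only under the Hom-definition, and otherwise needs one citation to close the loop.
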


%Denote by $\Flat((\fp R)^{op},\Ab)$ the full subcategory of $((\fp R)^{op},\Ab)$ whose objects are the flat modules. By the above theorem the restriction of the Yoneda functor $Y:\mod R\to \Flat((\fp R)^{op},\Ab)$, so that it has a quasi-inverse
%$$Y^{-1}:\Flat((\fp R)^{op},\Ab)\to \mod R\,.$$
%Recall also that a quasi-inverse is both a right and a left adjoint. We can describe $Y^{-1}$ quite explicitly, in fact, 
%

\section{Modules over representations of small categories}

\subsection{Representations of small categories}

\begin{definition}\label{def.repres}
Let $\C$ be a small category, a {\em representation} of $\C$ is a pseudofunctor $R\colon \C\to \Add$, that is, $R$ consists of the following data:
\begin{enumerate}[\rm --]
\item for each object $c\in\Ob \C$, a preadditive category $R_c$;
\item for all $c,\,d\in \Ob \C$ and any morphism $\alpha\colon c\to d$, an additive functor $R_\alpha\colon R_c\to R_d$;
\item for any object $c\in \Ob\C$, an isomorphism of functors $\delta_{c}\colon \id_{R_c}\overset{\sim}{\longrightarrow}R_{\id_c} $;
\item for any pair of composable morphism $\alpha$ and $\beta$ in $\C$, an isomorphism of functors $\mu_{\beta,\alpha}\colon R_\beta R_\alpha \overset{\sim}{\longrightarrow}R_{\beta\alpha}$.
\end{enumerate}
Furthermore, we suppose that the following axioms hold
\begin{enumerate}[\rm (Rep.1)]
\item given three composable morphisms $c\overset{\alpha}{\to}d\overset{\beta}{\to}e\overset{\gamma}{\to}f$ in $\C$, the following diagram commutes
$$\xymatrix{
R_{\gamma}R_\beta R_{\alpha}\ar[d]_{\mu_{\gamma,\beta}}\ar[rr]^{R_\gamma(\mu_{\beta,\alpha})}&&R_{\gamma}R_{\beta\alpha}\ar[d]^{\mu_{\gamma,\beta\alpha}}\\
R_{\gamma\beta}R_\alpha\ar[rr]^{\mu_{\gamma\beta,\alpha}}&&R_{\gamma\beta\alpha}
}$$
\item given a homomorphism $(\alpha\colon c\to d)\in\mor \C$, the following diagram commutes
$$\xymatrix{
&R_\alpha\ar@{=}[dd]\ar[dr]^{\delta_d}\ar[dl]_{\delta_{c}}\\
R_\alpha R_{\id_c}\ar[dr]_{\mu_{\alpha,\id_c}}&&R_{\id_d}R_{\alpha}\ar[dl]^{\mu_{\id_d,\alpha}}\\
&R_\alpha
}$$
\end{enumerate}
A representation $R\colon \C\to \Add$ is said to be {\em strict} if it is a functor, that is, $R_{\id_c}=\id_{R_c}$, $R_{\beta\alpha}=R_\beta R_\alpha $, and $\delta$ and $\mu$ are identities. 

\smallskip
Given a representation $R\colon \C\to \Add$ and $(\alpha\colon c\to d)\in\mor\C$, we denote by $$\alpha_!\colon (R_c^{op},\Ab)\rightleftarrows(R_d^{op},\Ab)\colon \alpha^*$$ the change of base adjunction induced by $R_\alpha$.
\end{definition}

We will often consider representations of a small category $\C$ on some full subcategory of $\Add$ as, for example, the category of (commutative) rings.

\begin{example}\label{geo_ex}\cite{SergioQcoh,Sergio_Enochs_Rel_homo}
In this example we list three representations of small categories naturally arising in geometric contexts:
\begin{enumerate}[\rm (1)]
\item let $(X,\mathcal O_X)$ be a scheme, choose an affine open cover $\U$ (e.g., the family of all the affine opens) of $X$ an let $\C$ be the category associated with the poset $\U$, ordered by reverse inclusion. Then there is a natural representation $\mathcal O_X\colon \U\to \CRing$ such that $U\mapsto \mathcal O_X(U)$;
\item let $\mathcal X$ be an algebraic stack with structure sheaf of rings $\mathcal O_{\mathcal X}$.
In this case we consider $\C$ to be a small skeleton of the category of affine
schemes smooth
over $\mathcal X$ and we let $R$ act as the sheaf of rings $\mathcal O_{\mathcal X}$.;
\item let $\mathcal X$ be a Deligne-Mumford stack with structure sheaf of rings $\mathcal O_{\mathcal X}$. We take $\C$ a small skeleton of the
category of affine schemes that are \'etale over $\mathcal X$ (such a small skeleton must exist as
\'etale morphisms are of finite type) and we let $R$ act as the sheaf of rings $\mathcal O_{\mathcal X}$.
\end{enumerate}
\end{example}

Let $\C$ be a small category, and let $R\colon \C\to \Add$ be a representation. Let also $c\overset{\alpha}{\longrightarrow} d\overset{\beta}{\longrightarrow} e$ be morphisms in $\C$, call $\gamma=\beta\alpha$ their composition and consider the base change adjunctions $(\alpha_!,\alpha^*)$, $(\beta_!,\beta^*)$ and $(\gamma_!,\gamma^*)$, relative to the additive functors $R_\alpha\colon R_c\to R_d$, $R_\beta\colon R_d\to R_e$, and $R_\gamma\colon R_c\to R_e$, respectively. In what follows we will describe some precise relations among these three adjunctions. 

\medskip
Recall first the notion of {horizontal} pasting of two natural transformations. Indeed, let $F_1,F_2\colon R\to S$ and $G_1,G_2\colon S^{op}\to T$ be functors, and let $\alpha\colon F_1\rightarrow F_2$ and $\beta\colon G_2\rightarrow G_1$ be natural transformations. The {\em horizontal pasting} $$\alpha*\beta\colon G_2F_2\rightarrow G_1F_1$$ is a natural transformation such that $(\alpha*\beta)_a=G_1(\alpha_a)\circ\beta_{F_2(a)}$,  for all $a\in\Ob R$.

\begin{lemma}\label{change_of_two}
Let $\C$ be a small category, let $R\colon \C\to \Add$ be a representation, let $c\overset{\alpha}{\longrightarrow} d\overset{\beta}{\longrightarrow} e$ be morphisms in $\C$ and let $\gamma=\beta\alpha$. There are natural isomorphisms of functors: 
$$\sigma_{\beta,\alpha}\colon \alpha^*\beta^*\longrightarrow \gamma^*$$
such that $(\sigma_{\beta,\alpha})_M=(\mu_{\beta,\alpha}^{-1}*\id_{N})\colon \gamma^*N\to \alpha^*\beta^*N$, for all $N\colon R_e^{op}\to \Ab$, and 
$$\tau_{\beta,\alpha}\colon \beta_!\alpha_!\longrightarrow \gamma_!\,,$$
where, for all $M\colon R_c^{op}\to \Ab$, $(\tau_{\beta,\alpha})_M\colon \beta_!\alpha_!M\to \gamma_!M$ is the following composition:
$$(M\otimes_{R_c}\hom_{R_d}(-,R_\alpha))\otimes_{R_d}\hom_{R_e}(-,R_\beta)\overset{(*)}{\to} M\otimes_{R_c}\hom_{R_e}(-,R_\beta R_\alpha)\overset{(**)}{\to}  M\otimes_{R_c}\hom_{R_e}(-,R_\gamma)\,,$$
where the map $(*)$ is described in Lemma \ref{functoriality}, and $(**)$ is the map $\id_M\otimes_{R_c} \hom_{R_e}(-,(\mu_{\beta,\alpha})_{(-)})$. 
\end{lemma}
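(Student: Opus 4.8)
The plan is to handle the two natural isomorphisms separately, in each case reducing the claim to the single structural input that $\mu_{\beta,\alpha}$ is an isomorphism (by the pseudofunctor axioms (Rep.1)--(Rep.2)), combined with the explicit associativity and functoriality isomorphisms already established in Lemmas \ref{assoc_tensor} and \ref{functoriality}. No new homological content is needed: both transformations are obtained by whiskering or tensoring a known isomorphism, so invertibility will be automatic once the construction is pinned down.

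For $\sigma_{\beta,\alpha}$ I would first unwind the two sides by the definition of restriction of scalars: $\gamma^*N=N\circ R_\gamma$ and $\alpha^*\beta^*N=(N\circ R_\beta)\circ R_\alpha=N\circ(R_\beta R_\alpha)$, for every $N\colon R_e^{op}\to\Ab$. Since $\mu_{\beta,\alpha}\colon R_\beta R_\alpha\to R_\gamma$ is a natural isomorphism of functors $R_c\to R_e$ and $N$ is contravariant, I would whisker $\mu_{\beta,\alpha}^{-1}$ by $N$ in the sense of the horizontal pasting recalled above, taking $F_1=R_\gamma$, $F_2=R_\beta R_\alpha$ and $G_1=G_2=N$; this produces a transformation $N\circ(R_\beta R_\alpha)\to N\circ R_\gamma$ whose component at $a\in\Ob R_c$ is $N((\mu_{\beta,\alpha}^{-1})_a)$, matching the prescribed $(\sigma_{\beta,\alpha})_N=\mu_{\beta,\alpha}^{-1}*\id_N$. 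Naturality in $N$ then amounts to the naturality of any $\theta\colon N\to N'$ evaluated on the components of $\mu_{\beta,\alpha}^{-1}$ (the interchange law for horizontal pasting), and invertibility holds because $N$ carries each isomorphism $(\mu_{\beta,\alpha}^{-1})_a$ to an isomorphism of Abelian groups, so every component is invertible.

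For $\tau_{\beta,\alpha}$ I would first expand $\beta_!\alpha_!M$ by the definition of extension of scalars as $(M\otimes_{R_c}\hom_{R_d}(-,R_\alpha))\otimes_{R_d}\hom_{R_e}(-,R_\beta)$. The map $(*)$ is then precisely the instance of Lemma \ref{functoriality} with $\phi=R_\alpha$, $\psi=R_\beta$, whose target is $M\otimes_{R_c}\hom_{R_e}(-,R_\beta R_\alpha)$ and which is already established there to be a natural isomorphism in $M$; this is the key point, since identifying $(*)$ with that lemma spares any separate verification of naturality or invertibility. For $(**)$ I would observe that $\hom_{R_e}(-,(\mu_{\beta,\alpha})_{(-)})$ is a fixed morphism of $(R_c,R_e)$-bimodules $\hom_{R_e}(-,R_\beta R_\alpha)\to\hom_{R_e}(-,R_\gamma)$, and that tensoring it on the left with $\id_M$ gives $(**)\colon M\otimes_{R_c}\hom_{R_e}(-,R_\beta R_\alpha)\to\gamma_!M$; this is natural in $M$ by functoriality of $-\otimes_{R_c}-$ and is an isomorphism because $\mu_{\beta,\alpha}$ is, whence $\hom_{R_e}(-,(\mu_{\beta,\alpha})_{(-)})$ is an isomorphism of bimodules and the tensor product preserves it. Setting $\tau_{\beta,\alpha}=(**)\circ(*)$ therefore exhibits it as a composite of natural isomorphisms, and the element-level description is obtained by post-composing the explicit formula of Lemma \ref{functoriality} with $\mu_{\beta,\alpha}$.

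The computations are routine, so the only points demanding care are bookkeeping ones. The main subtlety is tracking variance when whiskering with the contravariant functor $N$: this is exactly what forces $\mu_{\beta,\alpha}^{-1}$ (rather than $\mu_{\beta,\alpha}$) into $\sigma_{\beta,\alpha}$ and fixes the direction $\alpha^*\beta^*\to\gamma^*$; a sign error here would reverse the arrow. For $\tau_{\beta,\alpha}$ the only thing to check carefully is that $(*)$ is literally the Lemma \ref{functoriality} map, after which everything is formal. As an optional conceptual cross-check, one may verify that $\tau_{\beta,\alpha}$ is the mate of $\sigma_{\beta,\alpha}^{-1}$ under the adjunctions $(\beta_!\alpha_!,\alpha^*\beta^*)$ and $(\gamma_!,\gamma^*)$, which by uniqueness of adjoints gives a diagram-free reason that it is an isomorphism and confirms the two transformations are compatible.
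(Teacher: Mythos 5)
Your proof is correct and is exactly the verification the paper leaves implicit: Lemma \ref{change_of_two} is stated there without proof, and the intended argument is precisely your assembly of the horizontal pasting of $\mu_{\beta,\alpha}^{-1}$ with $\id_N$ for $\sigma_{\beta,\alpha}$, and of the Lemma \ref{functoriality} isomorphism followed by tensoring with the bimodule isomorphism $\hom_{R_e}(-,(\mu_{\beta,\alpha})_{(-)})$ for $\tau_{\beta,\alpha}$, with naturality and invertibility checked componentwise from the pseudofunctor axioms. You also correctly resolve a typo in the statement itself: with the paper's pasting convention, $\mu_{\beta,\alpha}^{-1}*\id_N$ goes $\alpha^*\beta^*N\to\gamma^*N$ (matching the claimed direction of $\sigma_{\beta,\alpha}$), not $\gamma^*N\to\alpha^*\beta^*N$ as the displayed component suggests, and your variance bookkeeping makes this explicit.
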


Notice that, for a strict representation, $\sigma_{\beta,\alpha}$ is just the identity, while $\tau_{\beta,\alpha}$ reduces to the isomorphism described in Lemma \ref{functoriality}.

\smallskip
Let us now use the natural isomorphisms constructed in the above lemma to relate units and counits relative to different change of base adjunctions. The proof of the following lemma consists in checking the statement on elements.

\begin{lemma}\label{unit_of_two}
Let $\C$ be a small category, let $R\colon \C\to \Add$ be a representation, let $c\overset{\alpha}{\longrightarrow} d\overset{\beta}{\longrightarrow} e$ be morphisms in $\C$ and let $\gamma=\beta\alpha$. Denote by $\eta_\alpha$, $\eta_\beta$, and $\eta_\gamma$ (resp., $\varepsilon_\alpha$, $\varepsilon_\beta$, and $\varepsilon_\gamma$) be the units (resp., counits) of the change of base adjunctions $(\alpha_!,\alpha^*)$, $(\beta_!,\beta^*)$ and $(\gamma_!,\gamma^*)$, respectively. Then there are commutative squares:
$$
\xymatrix{
\beta_!\alpha_!\alpha^*\beta^* \ar[rr]^{\beta_!(\varepsilon_\alpha)_{\beta^*}}\ar[d]_{\beta_!\alpha_!(\sigma_{\beta,\alpha})}&&
\beta_!\beta^*\ar[dd]^{\varepsilon_\beta}&&&
\id_{(R_c^{op},\Ab)}\ar[rr]^{\eta_\gamma}\ar[dd]_{\eta_\alpha}&&
\gamma^*\gamma_!\ar[d]^{\gamma^*(\tau_{\beta,\alpha}^{-1})}
\\
\beta_!\alpha_!\gamma^*\ar[d]_{(\tau_{\beta,\alpha})_{\gamma^*}}&&
 &&&
 &&
 \gamma^*\beta_!\alpha_!\ar[d]^{(\sigma_{\beta,\alpha}^{-1})_{\alpha^*\beta^*}}
 \\
\gamma_!\gamma^*\ar[rr]^{\varepsilon_\gamma}&&
\id_{(R_e^{op},\Ab)}&&&
\alpha^*\alpha_! \ar[rr]^{\alpha^*(\eta_\beta)_{\alpha_!}}&&
\alpha^*\beta^*\beta_!\alpha_!
}
$$
\end{lemma}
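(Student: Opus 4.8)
The plan is to verify both squares directly on elements, exploiting the completely explicit descriptions of all the maps involved. Every arrow in the two diagrams is a natural transformation between additive endofunctors of $(R_e^{op},\Ab)$ (left square) or of $(R_c^{op},\Ab)$ (right square), so it suffices to fix a module, evaluate at an object, and compare the images of a generating element along the two paths. The ingredients I would assemble first are: the formulas for the unit and counit of a change of base adjunction from Subsection \ref{change_of_base_sec}, namely $(\eta_\phi)_M(m)=m\otimes\id_{\phi(a)}$ and $(\varepsilon_\phi)_N(n\otimes f)=N_f(n)$; the explicit shape of $\tau_{\beta,\alpha}$ from Lemma \ref{change_of_two}, built from the functoriality isomorphism of Lemma \ref{functoriality} (which sends $(m\otimes f)\otimes g\mapsto m\otimes(R_\beta(f)\circ g)$) followed by post-composition with $\mu_{\beta,\alpha}$; and the description of $\sigma_{\beta,\alpha}$ as the map induced on the restricted modules by $\mu_{\beta,\alpha}^{-1}$.

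For the left square I would start from a generator $(n\otimes h)\otimes g$ of $\beta_!\alpha_!\alpha^*\beta^* N$ evaluated at $x\in\Ob R_e$, where $n\in N_{R_\beta R_\alpha(a)}$, $h\colon b\to R_\alpha(a)$ in $R_d$ and $g\colon x\to R_\beta(b)$ in $R_e$. Along the upper path, $\beta_!(\varepsilon_\alpha)_{\beta^*}$ contracts $n\otimes h$ to $N_{R_\beta(h)}(n)$, and then $\varepsilon_\beta$ contracts with $g$, producing $N_{R_\beta(h)\circ g}(n)\in N_x$. Along the lower path, $\sigma_{\beta,\alpha}$ replaces $n$ by $N_{\mu_{\beta,\alpha}^{-1}}(n)$, the map $\tau_{\beta,\alpha}$ rewrites the double tensor as $N_{\mu_{\beta,\alpha}^{-1}}(n)\otimes(\mu_{\beta,\alpha}\circ R_\beta(h)\circ g)$, and finally $\varepsilon_\gamma$ yields $N_{\mu_{\beta,\alpha}\circ R_\beta(h)\circ g}\bigl(N_{\mu_{\beta,\alpha}^{-1}}(n)\bigr)$. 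Using that $N$ is contravariant, the occurrences $N_{\mu_{\beta,\alpha}}\circ N_{\mu_{\beta,\alpha}^{-1}}=\id$ cancel, and this collapses to $N_{R_\beta(h)\circ g}(n)$, matching the upper path.

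The right square is handled symmetrically with units in place of counits, the only extra preliminary being to pin down the inverses $\tau_{\beta,\alpha}^{-1}$ and $\sigma_{\beta,\alpha}^{-1}$. From the formula for $\tau_{\beta,\alpha}$ one checks on generators that $\tau_{\beta,\alpha}^{-1}(m\otimes k)=(m\otimes\id_{R_\alpha(a)})\otimes(\mu_{\beta,\alpha}^{-1}\circ k)$, while $\sigma_{\beta,\alpha}^{-1}$ is induced by $\mu_{\beta,\alpha}$. Tracing the generator $m\in M_a$: the lower-left path sends $m\mapsto m\otimes\id_{R_\alpha(a)}\mapsto (m\otimes\id_{R_\alpha(a)})\otimes\id_{R_\beta R_\alpha(a)}$ via $\eta_\alpha$ and then $\alpha^*(\eta_\beta)_{\alpha_!}$. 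Along the upper-right path, $\eta_\gamma$ produces $m\otimes\id_{R_\gamma(a)}$, then $\gamma^*(\tau_{\beta,\alpha}^{-1})$ turns it into $(m\otimes\id_{R_\alpha(a)})\otimes\mu_{\beta,\alpha}^{-1}$, and $(\sigma_{\beta,\alpha}^{-1})_{\alpha^*\beta^*}$, which acts as the $R_e$-module structure of $\beta_!\alpha_! M$ along $\mu_{\beta,\alpha}$ and hence precomposes the hom-component by $\mu_{\beta,\alpha}$, gives $(m\otimes\id_{R_\alpha(a)})\otimes(\mu_{\beta,\alpha}^{-1}\circ\mu_{\beta,\alpha})=(m\otimes\id_{R_\alpha(a)})\otimes\id_{R_\beta R_\alpha(a)}$. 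The two paths agree.

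The only genuine difficulty is the bookkeeping: correctly tracking the many objects, the contravariance of the modules under restriction of scalars, and in particular producing the explicit forms of $\tau_{\beta,\alpha}^{-1}$ and $\sigma_{\beta,\alpha}^{-1}$ and identifying the action of the $R_e$-module structure in the last step of the right square. Once these are correctly set up the verification is immediate, since both squares reduce to the single identity $\mu_{\beta,\alpha}\circ\mu_{\beta,\alpha}^{-1}=\id$ (equivalently $\mu_{\beta,\alpha}^{-1}\circ\mu_{\beta,\alpha}=\id$); I note that the coherence axioms (Rep.1)--(Rep.2) are not needed for these particular one-step compatibilities, only the invertibility of $\mu_{\beta,\alpha}$ together with the explicit formulas recalled above.
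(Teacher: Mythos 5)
Your proposal is correct and takes essentially the same route as the paper: the paper's proof is exactly the one-line remark that the lemma ``consists in checking the statement on elements,'' and your computation supplies those omitted details accurately, using the explicit unit/counit formulas from Subsection \ref{change_of_base_sec} together with the descriptions of $\sigma_{\beta,\alpha}$ and $\tau_{\beta,\alpha}$ from Lemmas \ref{change_of_two} and \ref{functoriality}, including the correct explicit inverses $\tau_{\beta,\alpha}^{-1}(m\otimes k)=(m\otimes\id_{R_\alpha(a)})\otimes(\mu_{\beta,\alpha}^{-1}\circ k)$ and $(\sigma_{\beta,\alpha}^{-1})_N=N(\mu_{\beta,\alpha})$. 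Your closing observation is also right: both squares collapse to $\mu_{\beta,\alpha}^{-1}\circ\mu_{\beta,\alpha}=\id$ under contravariant functoriality and the tensor relations, and the coherence axioms (Rep.1)--(Rep.2) play no role here.
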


The following condition on a representation is fundamental in defining the category of cartesian modules:

\begin{definition}\label{def.flat.rep}
A representation $R$ of a small category $\C$ is {\em right} (resp., {\em left}) {\em flat} if, for any $\alpha\in\C$, the functor $R_\alpha$ is right (resp., left) flat.  
\end{definition}
The three representations described in Example \ref{geo_ex} are all (left and right) flat (see \cite{SergioQcoh,Sergio_Enochs_Rel_homo}).

\subsection{The category of modules}\label{sub.sec.cat.mod}

\begin{definition}\label{def_mod}
Let $R\colon \C\to \Add$ be a representation of the small category $\C$. A {\em right $R$-module} $M$ consists of the following data:
\begin{enumerate}[\rm --]
\item for all $c\in \Ob\C$, a right $R_c$-module $M_c\colon R_c^{op}\to \Ab$;
\item for any morphism $\alpha\colon c\to d$ in $\C$, a homomorphism $M_\alpha\colon M_c\to \alpha^*M_d$.
\end{enumerate}
Furthermore, we suppose that the following axioms hold:
\begin{enumerate}[\rm (Mod.1)]
\item given two morphisms $(\alpha\colon c\to d),\, (\beta\colon d\to e)$ in $\C$, the following diagram commutes:
$$\xymatrix{
M_c\ar@/_10pt/[rrrrd]_{M_{\beta\alpha}}\ar[rr]^{M_\alpha}&&\alpha^*M_d\ar[rr]^{\alpha^*M_{\beta}}&&\alpha^*\beta^*M_e\\
&&&&(\beta\alpha)^*M_e\ar[u]_{\mu_{\alpha,\beta}*\id_{M_e}}
}$$
\item $(\delta_c*\id_{M_c})\circ M_{\id_c}=\id_{M_c}$, for all $c\in \Ob\C$;
\end{enumerate}
where $\mu_{\alpha,\beta}*\id_{M_e}$ and $\delta_c*\id_{M_c}$ are horizontal pastings, as described before Lemma \ref{change_of_two}. 
Given two right $R$-modules $M$ and $N$, a morphism $\phi\colon M\to N$ consists of a family of morphisms  $\phi_c\colon M_c\to N_c$ (one for any $c\in\Ob\C$) in $(R(c)^{op},\Ab)$ such that the following square commutes for any morphism $\alpha\colon c\to d$ in $\C$:
$$\xymatrix{
M_c\ar[rr]^{\phi_c}\ar[d]_{M_\alpha}&&N_c\ar[d]^{N_\alpha}\\
\alpha^*(M_d)\ar[rr]^{\alpha^*(\phi_d)}&&\alpha^{*}(N_d)
}$$
\end{definition}

We denote by $\mod R$ the category of right $R$-modules. Notice that one can define analogously the category $\lmod {R}$ of left $R$-modules. When no confusion is possible we will say module to mean right $R$-module. Notice that, if $R$ is a strict representation, the axioms (Mod.1) and (Mod.2) for a right $R$-module $M$, boil down to the conditions: $M_{\beta\alpha}=\alpha^*M_{\beta}\circ M_{\alpha}$ and $M_{\id_c}=\id_{M_c}$.

\begin{definition}
Let $R$ be a representation of a small category $\C$ and let $M\in\mod R$. Then we let
$$|M|=\sum_{c\in\Ob \C}|M_c|\,.$$
\end{definition}

\begin{lemma}
Let $\C$ be a small category and let $R$ be a representation of $\C$. Then $\mod R$ is a bicomplete Abelian category. Furthermore, $\mod R$ is $(Ab.4^*)$ and $(Ab.5)$.
\end{lemma}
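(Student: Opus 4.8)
The plan is to show that all the relevant structure of $\mod R$ is computed \emph{componentwise}, i.e. that for each $c\in\Ob\C$ the evaluation functor $e_c\colon \mod R\to (R_c^{op},\Ab)$, $M\mapsto M_c$, creates kernels, cokernels, products and coproducts, and that these functors jointly detect exactness. Since each $(R_c^{op},\Ab)$ is a Grothendieck category with exact products by Lemma \ref{mod_is_groth} (and the discussion following it), every assertion of the statement will then descend from the corresponding property in the fibre categories $(R_c^{op},\Ab)$. The one point requiring genuine verification---and the place where the structure of $R$ is really used---is that each componentwise construction carries a canonical family of transition maps $M_\alpha$ satisfying (Mod.1) and (Mod.2); this rests entirely on the exactness of the restriction functors $\alpha^*$ together with the fact that each $\alpha^*$, being precomposition with $R_\alpha$, commutes with all limits and colimits (these being computed pointwise in the target functor category).

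First I would treat the abelian structure. The zero object is the module with $M_c=0$ for all $c$ and zero transition maps. Given a morphism $\phi\colon M\to N$, I set $(\ker\phi)_c=\ker(\phi_c)$ and $(\coker\phi)_c=\coker(\phi_c)$ in $(R_c^{op},\Ab)$. Since $\alpha^*$ is exact we may identify $\alpha^*\ker(\phi_d)=\ker(\alpha^*\phi_d)$ and $\alpha^*\coker(\phi_d)=\coker(\alpha^*\phi_d)$; the transition map $M_\alpha$ then restricts to $\ker(\phi_c)\to\alpha^*\ker(\phi_d)$ and $N_\alpha$ induces $\coker(\phi_c)\to\alpha^*\coker(\phi_d)$, because the defining square of $\phi$ shows that $M_\alpha$ carries $\ker\phi_c$ into $\ker(\alpha^*\phi_d)$ and that the composite $N_c\to\alpha^*N_d\to\coker(\alpha^*\phi_d)$ annihilates the image of $\phi_c$ (here one may invoke Lemma \ref{restriction_map}). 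A routine diagram chase, using (Mod.1)--(Mod.2) for $M$ and $N$ and the naturality of the coherence isomorphisms $\sigma_{\beta,\alpha}$ and $\delta_c$, confirms that $\ker\phi$ and $\coker\phi$ are genuine $R$-modules with the universal properties in $\mod R$. In particular a sequence in $\mod R$ is exact if and only if it is exact in each component, and the coimage--image comparison map is componentwise an isomorphism; being componentwise an isomorphism it is an isomorphism in $\mod R$, so $\mod R$ is abelian.

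Next I would construct arbitrary products and coproducts, again componentwise: $(\prod_i M^i)_c=\prod_i M^i_c$ and $(\coprod_i M^i)_c=\bigoplus_i M^i_c$, with transition maps obtained from $\prod_i M^i_\alpha$ (resp. $\bigoplus_i M^i_\alpha$) composed with the canonical isomorphism expressing that $\alpha^*$ commutes with products (resp. coproducts). Verifying (Mod.1)--(Mod.2) and the universal properties is again componentwise and routine. Since $\mod R$ is abelian and has arbitrary products and coproducts, it has all equalizers (as kernels of differences) and coequalizers, hence is bicomplete.

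Finally, exactness of the pertinent (co)limits transfers directly: because products, coproducts and filtered colimits in $\mod R$ are all computed componentwise, and exactness in $\mod R$ is detected componentwise, the exactness of products in each $(R_c^{op},\Ab)$ yields $(Ab.4^*)$, while the exactness of filtered colimits in each Grothendieck category $(R_c^{op},\Ab)$ yields $(Ab.5)$. The main obstacle throughout is purely bookkeeping: ensuring that the componentwise constructions carry well-defined transition maps compatible with the coherence isomorphisms $\mu$ and $\delta$ of the pseudofunctor $R$. This is precisely where exactness of $\alpha^*$ and its commutation with limits and colimits are indispensable, and no deeper input is required.
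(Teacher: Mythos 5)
Your proposal is correct and follows essentially the same route as the paper's proof: the paper likewise constructs the zero object, kernels, cokernels, products and coproducts componentwise, using the exactness of the functors $\alpha^*$ and their commutation with products and coproducts, and then deduces $(Ab.4^*)$ and $(Ab.5)$ from the fact that exactness in $\mod R$ is detected componentwise. Your careful verification that the componentwise constructions carry well-defined transition maps satisfying (Mod.1)--(Mod.2) is detail the paper leaves implicit, not a different argument.
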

\begin{proof}
Let us start showing that $\mod R$ is a bicomplete Abelian category. Indeed, $\mod R$ has a zero-object (the object with $0$ in each component). Arbitrary products and coproducts can be taken componentwise  (use the fact that the functors $\alpha^*$ commute with both products and coproducts). Similarly, kernels and cokernels can be taken componentwise (use the fact that the functors $\alpha^*$ are exact), thus it is also clear that the canonical morphism between image and coimage is an isomorphism, as it is an isomorphism in each component. By this description of kernels, cokernels, products and coproducts, it follows that products and colimits are exact in $\mod R$, being exact componentwise.
\end{proof}

The subtle point in proving that the category of modules is a Grothendieck category, is to show that it has a generator.

The lemma below generalizes the following well-known fact in module theory: let $R$ and $S$ be rings and let $\phi$ be a ring homomorphism, given a right $R$-module $M$ and an element $x\in M\otimes_R S$, there are elements $m_1,\dots, m_n\in M$ and $s_1,\dots, s_n\in S$ such that $x=\sum_{i=1}^nm_i\otimes s_i$.

\begin{lemma}
Let $R$ and $S$ be small preadditive categories, let $\phi\colon R\to S$ be a functor and let  $M\in(R^{op},\Ab)$. Then, 
\begin{enumerate}[\rm (1)]
\item  there exists a family $\{a_i: i\in I\}$ of objects of $R$, a family of sets $\{A_i:i\in I\}$ and an epimorphism in $(S^{op},\Ab)$
$$\bigoplus_{i\in I}(H_{\phi(a_i)})^{(A_i)}\to \phi_!(M)\,;$$
\item given $b\in \Ob S$ and a $b$-element $f\colon H_b\to \phi_!(M)$ there exists a finite set $\{a_1,\dots, a_n\}$ of objects of $R$ and a morphism $f':\bigoplus_{i=1}^nH_{a_i}\to M$ such that $f$ factors through $\phi_!(f')$, that is
$$\xymatrix{
&&\bigoplus_{i=1}^nH_{\phi(a_i)}\ar[d]|{\phi_!(f')}\\
H_b\ar@/_-10pt/@{.>}[rru]|\exists\ar[rr]|{f}&&\phi_!M
}$$
\end{enumerate}
\end{lemma}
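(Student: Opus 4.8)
The statement to prove concerns the extension of scalars $\phi_!$ along a functor $\phi\colon R\to S$ between small preadditive categories, and establishes two finiteness/generation properties for elements of $\phi_!(M)$.

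The plan is to unwind the explicit definition of $\phi_!(M)=M\otimes_R\hom_S(-,\phi(-))$ given in the change-of-base subsection. Recall that for each $b\in\Ob S$,
$$\phi_!(M)(b)=\Big(\bigoplus_{a\in\Ob R}M(a)\otimes_\Z\hom_S(b,\phi(a))\Big)\big/T\,,$$
where $T$ is the subgroup generated by the tensor relations coming from the $R$-action. For part (1), I would produce the promised epimorphism by covering each generator. Concretely, for each $a\in\Ob R$ the corepresentable $H_a=\hom_R(-,a)$ is a projective generator of $(R^{op},\Ab)$ by Lemma~\ref{mod_is_groth}, so there is an epimorphism $\bigoplus_{i}H_{a_i}^{(A_i)}\twoheadrightarrow M$ in $(R^{op},\Ab)$. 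Since $\phi_!$ is a left adjoint it is right exact and in particular preserves epimorphisms and coproducts; applying $\phi_!$ and using the natural isomorphism $\phi_!(H_a)\cong H_{\phi(a)}$ (which follows from Lemma~\ref{basic_tensor}(2) together with the definition of $\phi_!$, since $H_a\otimes_R\hom_S(-,\phi(-))\cong\hom_S(-,\phi(a))=H_{\phi(a)}$) yields precisely the desired epimorphism $\bigoplus_i H_{\phi(a_i)}^{(A_i)}\twoheadrightarrow\phi_!(M)$. This first part is essentially formal once the identification $\phi_!(H_a)\cong H_{\phi(a)}$ is in hand.

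For part (2), the point is local finiteness. A $b$-element $f\colon H_b\to\phi_!(M)$ corresponds, by the Yoneda Lemma, to an actual element $x\in\phi_!(M)(b)$. By the explicit description of the tensor product, $x$ is represented by a finite sum $\sum_{j=1}^n m_j\otimes g_j$ with $m_j\in M(a_j)$ and $g_j\colon b\to\phi(a_j)$ in $S$, for finitely many objects $a_1,\dots,a_n\in\Ob R$. Each $m_j\in M(a_j)$ corresponds via Yoneda to a morphism $m_j\colon H_{a_j}\to M$, and together these assemble into $f'\colon\bigoplus_{j=1}^n H_{a_j}\to M$. I would then verify that $f$ factors through $\phi_!(f')\colon\bigoplus_j H_{\phi(a_j)}\to\phi_!(M)$: the factoring map $H_b\to\bigoplus_j H_{\phi(a_j)}$ is given on the $j$-th summand by the element $g_j\in\hom_S(b,\phi(a_j))=H_{\phi(a_j)}(b)$, again via Yoneda. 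Chasing the definitions, $\phi_!(f')$ sends the generator corresponding to $g_j$ to the class of $m_j\otimes g_j$, so the composite recovers $x$, giving the required commutative triangle.

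The main obstacle I anticipate is bookkeeping rather than conceptual: one must carefully track the various Yoneda identifications (between elements of abelian groups, morphisms out of corepresentables, and classes in the tensor-product quotient) and confirm that the map $\phi_!(f')$ indeed carries the Yoneda-image of $g_j$ to the class of $m_j\otimes g_j$ in the quotient. This requires checking that the relations defining $T$ are respected and that the naturality of $\eta_\phi$ (the unit described in the change-of-base subsection) is consistent with the chosen factorization. Because $H_b$ is finitely generated (indeed corepresentable), only finitely many summands can contribute, which is what forces the finiteness of $\{a_1,\dots,a_n\}$; I would make this explicit by noting that the image of the single generator $\id_b\in H_b(b)$ lands in a finite partial sum of the coproduct.
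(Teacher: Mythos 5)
Your proof is correct, and it splits into two halves of different character relative to the paper. Part (1) is exactly the paper's argument: cover $M$ by an epimorphism $\bigoplus_i (H_{a_i})^{(A_i)}\to M$, apply $\phi_!$ (cocontinuous and right exact, being a left adjoint), and identify $\phi_!(H_a)\cong H_{\phi(a)}$, which the paper also gets from the computation $(\phi_!H_a)(b)=\hom(-,a)\otimes\hom(b,\phi(-))\cong\hom(b,\phi(a))$. Part (2), however, is where you genuinely diverge: the paper deduces (2) formally from (1), noting that $H_b$ is projective (so $f$ lifts along the epimorphism of part (1)) and finitely presented (so the lift factors through a finite sub-coproduct, and the epimorphism restricted to that sub-coproduct is $\phi_!$ of the corresponding restriction of the covering map --- a point the paper leaves tacit but which is needed to land in the stated form $\phi_!(f')$). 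You instead work at the level of elements: the $b$-element $f$ corresponds to a class represented by a finite sum $\sum_{j}m_j\otimes g_j$ in the tensor-product quotient, each $m_j\in M(a_j)$ yields $m_j\colon H_{a_j}\to M$ by Yoneda, and the factoring map $H_b\to\bigoplus_j H_{\phi(a_j)}$ is assembled from the $g_j$. The key verification you flag does go through: under $\phi_!(H_{a_j})\cong H_{\phi(a_j)}$ the element $g_j$ corresponds to the class of $\id_{a_j}\otimes g_j$, and $\phi_!(f')=f'\otimes\id$ sends the class of $u\otimes g$ to $M(u)(m_j)\otimes g$, hence $\id_{a_j}\otimes g_j\mapsto m_j\otimes g_j$, so the composite recovers $f_b(\id_b)$ and Yoneda gives commutativity of the triangle. (Your aside about checking naturality of $\eta_\phi$ is superfluous --- the direct computation above suffices, with no appeal to the unit.) What each route buys: the paper's is shorter and purely formal, leaning on the abstract properties of $H_b$ in $(S^{op},\Ab)$; yours is more elementary and explicit, constructs the factorization concretely, and makes transparent that the finiteness of $\{a_1,\dots,a_n\}$ comes simply from the finite support of a representing tensor rather than from the abstract finite presentability of $H_b$ --- at the price of the Yoneda bookkeeping, which you carry out correctly.
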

\begin{proof}
(1) Choose a family $\{a_i:i\in I\}$ of objects of $R$, a family of sets $\{A_i:i\in I\}$ and an epimorphism in $(R^{op},\Ab)$
$$\bigoplus_{i\in I}(H_{a_i})^{(A_i)}\to M\,.$$
Since $\phi_!$ is a left adjoint, it is cocontinuous and right exact, thus the following morphism is an epimorphism
$$\bigoplus_{i\in I}(\phi_!H_{a_i})^{(A_i)}\to \phi_!(M)\,.$$
To conclude it is enough to show that $\phi_!(H_{a_i})\cong H_{\phi(a_i)}$ for all $i\in I$. But in fact, for all $b\in \Ob S$, 
$$(\phi_!(H_{a_i}))(b)= \hom(-,a_i)\otimes\hom(b,\phi(-))\cong \hom(b,\phi(a_i))=H_{\phi(a_i)}(b)\,.$$
(2) follows by part (1), by the fact that $H_b$ is a projective object (so that $f$ can be lifted along the epimorphism constructed in (1)) and since it is finitely presented (so that our lifting factors through a finite sub-coproduct).  
\end{proof}

Let $R$ be a small category, in what follows we denote by $\mathrm{Mor}(R)$ the set of all morphisms in $R$. 

\begin{corollary}\label{coro_pre_gen}
Let $R$ and $S$ be small preadditive categories, let $\kappa=\max\{|\N|,|\mathrm{Mor}(R)|, |\mathrm{Mor}(S)|\}$ and let $\phi\colon R\to S$ be a right flat functor. Let $M\in(R^{op},\Ab)$ and let $X$ be a set of elements of $\phi_!(M)$ such that $|X|\leq \kappa$. Then, there exists $M'\leq M$ such that 
\begin{enumerate}[\rm (1)]
\item $|M'|\leq \kappa$;
\item for all $x\in X$, $x$ is an element of $\phi_!(M')(\leq \phi_!(M))$;
\item $M'$ is pure in $M$ and $\phi_!(M')$ is pure in $\phi_!(M)$.
\end{enumerate}
\end{corollary}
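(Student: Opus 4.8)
The plan is to realize $M'$ as the union of an increasing $\omega$-chain of submodules of $M$, each of cardinality at most $\kappa$, built so as to capture $X$ at the bottom and to resolve, in the limit, every obstruction to the two required purities. First I would reduce both purity conditions to finitely presented test modules. By Lemma \ref{basic_tensor}(1) the tensor product is cocontinuous in the second variable, and every object of $(R,\Ab)$ (resp.\ $(S,\Ab)$) is a directed colimit of finitely presented modules, those categories being locally finitely presented; since directed colimits are exact in $\Ab$, a submodule $N\le P$ is pure as soon as $N\otimes K\to P\otimes K$ is injective for every \emph{finitely presented} $K$. As $|\mathrm{Mor}(R)|,|\mathrm{Mor}(S)|\le\kappa$, there are at most $\kappa$ isomorphism classes of finitely presented modules on each side, and for a submodule $P$ of cardinality $\le\kappa$ and such a test module $K$ one has $|P\otimes K|\le\kappa$ by Lemma \ref{card_lemma}. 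This is what keeps the bookkeeping inside $\kappa$.

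For the seed $M'_0$: for each $x\in X$, viewed as a $b_x$-element $H_{b_x}\to\phi_!(M)$, the previous Lemma (part (2)) yields a finite coproduct $\bigoplus_i H_{a_i}$ and a map $f'_x$ to $M$ through whose image under $\phi_!$ the element $x$ factors; I take $M'_0$ to be the submodule of $M$ generated by the images of all the $f'_x$. Since $|X|\le\kappa$ and each $H_a$ satisfies $|H_a|\le|\mathrm{Mor}(R)|\le\kappa$, Lemma \ref{card_lemma} gives $|M'_0|\le\kappa$, and by construction every $x\in X$ is an element of $\phi_!(M'_0)$. Here I use that $\phi$ is right flat, so $\phi_!$ is exact and carries the inclusion $M'_0\le M$ to an inclusion $\phi_!(M'_0)\le\phi_!(M)$; the same applies to every member of the chain, which is precisely where the right-flatness hypothesis enters.

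Given $M'_n$ of cardinality $\le\kappa$, I construct $M'_{n+1}$ by adjoining finitely many elements for each of the (at most $\kappa$) potential obstructions. For purity of $M'$ in $M$: for each finitely presented $K\in(R,\Ab)$ and each $\xi\in M'_n\otimes_R K$ mapping to $0$ in $M\otimes_R K$, this vanishing is witnessed by a finite combination of the defining relations of $M\otimes_R K$, involving finitely many elements of $M$; I adjoin these (and close up to a subfunctor), so that $\xi$ already dies in $M'_{n+1}\otimes_R K$. For purity of $\phi_!(M')$ in $\phi_!(M)$: for each finitely presented $L\in(S,\Ab)$ and each element of $\phi_!(M'_n)\otimes_S L$ dying in $\phi_!(M)\otimes_S L$, the finite witness now involves finitely many elements of $\phi_!(M)$; using part (2) of the previous Lemma I pull each such element back through a map $\bigoplus H_{a_i}\to M$ and add these finitely generated images to $M'_{n+1}$. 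Each step adjoins at most $\kappa$ elements of $M$, so $|M'_{n+1}|\le\kappa$.

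Finally set $M'=\bigcup_n M'_n$, so $|M'|\le\aleph_0\cdot\kappa=\kappa$, giving (1), while (2) holds because $M'_0\le M'$ and $\phi_!$ preserves inclusions. For (3), given a finitely presented $K$ and $\xi\in M'\otimes_R K$ dying in $M\otimes_R K$, cocontinuity of the tensor product writes $M'\otimes_R K=\varinjlim_n(M'_n\otimes_R K)$, so $\xi$ is the image of some $\xi_n\in M'_n\otimes_R K$, which also dies in $M\otimes_R K$ and was therefore killed at stage $n+1$; hence $\xi=0$. The identical argument with $L$, $\phi_!$, and the pulled-back witnesses yields purity of $\phi_!(M')$ in $\phi_!(M)$. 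I expect the main obstacle to be this second purity clause: one must check that a finite certificate of vanishing in $\phi_!(M)\otimes_S L$ transports to a genuinely finite enlargement of $M'_n$ inside $M$, which is exactly what part (2) of the previous Lemma supplies, and that the transported relations remain relations in $\phi_!(M'_{n+1})\otimes_S L$, which holds because right-flatness keeps $\phi_!(M'_{n+1})$ a subfunctor of $\phi_!(M)$ closed under the $S$-action.
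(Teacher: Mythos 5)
Your proof is correct, but it takes a genuinely different route from the paper's. Your seed $M'_0$ is exactly the paper's submodule $N=\Im\bigl(\bigoplus_{x\in X}f_x\bigr)$, but from there the paper proceeds in one shot: it invokes Theorem \ref{card_purification} (the Appendix purification theorem, itself proved by transporting the problem to unitary modules over a ring with enough idempotents and citing \cite{flat_cover_conj}) to pick $M'\supseteq N$ pure in $M$ with $|M'|\leq\kappa$, and then gets the purity of $\phi_!(M')$ in $\phi_!(M)$ \emph{for free}: for any $K\in(S,\Ab)$, associativity of the bimodule tensor product (Lemma \ref{assoc_tensor}) gives $\phi_!(M)\otimes_S K\cong M\otimes_R K'$ with $K'=\hom_S(-,\phi(-))\otimes_S K\in(R,\Ab)$, naturally in the first variable, so injectivity of $\phi_!(M')\otimes_S K\to\phi_!(M)\otimes_S K$ is literally an instance of the purity of $M'$ in $M$. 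This observation makes the entire second half of your inductive step---pulling finite certificates of vanishing in $\phi_!(M)\otimes_S L$ back through part (2) of the previous lemma---redundant: \emph{any} pure $M'\leq M$ containing $N$ automatically satisfies clause (3) in full, and the argument works for arbitrary $K$ at once, so the reduction to finitely presented test objects is also unnecessary. What your construction buys in exchange is self-containedness: your $\omega$-chain of obstruction-killing in effect reproves Theorem \ref{card_purification} inline, so you never need the Appendix's detour through $R_\C$ and $R_\C^*$. The points you treat informally (that a vanishing in a tensor product is certified by finitely many relators whose first coordinates can be absorbed into a subfunctor closed under the $R$-action, and that there are at most $\kappa$ isomorphism classes of finitely presented test objects) are the standard ones and do go through; likewise your use of right flatness to keep $\phi_!(M'_n)\leq\phi_!(M)$ an inclusion, so that certificates transport, matches where the hypothesis enters in the paper.
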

\begin{proof}
For any $x\in X$, there is a finite family $\{a_1(x),\dots, a_{n_x}(x)\}$ of objects of $R$ and a morphism $f_x\colon \bigoplus_{i=1}^{n_x}H_{a_i(x)}\to M$ such that $x$ factors through $\phi_!(f_x)$. Consider the disjoint union
$$B=\bigsqcup_{x\in X}\{a_1(x),\dots, a_{n_x}(x)\}\,,$$
let $f=\bigoplus_{x\in X} f_x\colon \bigoplus_{x\in X}(\bigoplus_{i=1}^{n_x}H_{a_i(x)})\to M$, and set $N=\Im(f)\leq M$. Consider the following observations:
\begin{enumerate}[\rm --]
\item $|H_r|\leq |\mathrm{Mor(R)}|\leq \kappa$ for all $r\in \Ob R$;
\item $|B|\leq \max\{|\N|,|X|\}\leq \kappa$ (as there is an obvious finite-to-one map $B\to X$).
\end{enumerate}
By Lemma \ref{card_lemma}, $|N|\leq \kappa$. Using Theorem \ref{card_purification}, we can take $M'\leq M$ such that $N\leq M'$, $M'$ is pure and $|M'|\leq \kappa$. To conclude it is enough to show that $\phi_!(M')$ is pure in $\phi_!(M)$. Indeed, given $K\in (S,\Ab)$, we have to show that $0\to \phi_!(M')\otimes_S K\to \phi_!(M)\otimes_S K$ is exact. But in fact, 
$$\phi_!(M)\otimes_S K=(M\otimes_R\hom(-,\phi(-)))\otimes_S K\cong M\otimes_R(\hom(-,\phi(-))\otimes_S K)\,,$$
and the same holds for $M'$, so that, letting $K'=\hom(-,\phi(-))\otimes_S K\in (R,\Ab)$, we obtain that $0\to \phi_!(M')\otimes_S K\to \phi_!(M)\otimes_S K$ is exact if and only if $0\to M'\otimes_RK'\to M\otimes_RK'$
is exact, which is true by the purity of $M'$ in $M$.
\end{proof}

%Before proceeding to do so, we want to simplify a little our task. Indeed, let $\R$  be a representation of a small category $\C$. Since left adjoints are determined up to natural isomorphism and since in the definition of modules, for any given $\alpha\in \C$, only the right adjoint $\alpha^*$ is mentioned, it does not seem that the left adjoint $\alpha_!$ plays a real role. This is made precise in the following lemma.
%
%\begin{lemma}
%Let $\R$ and $\R'$ be two representations of the same small category $\C$ and suppose that
%\begin{enumerate}[\rm (1)]
%\item $R=R'$, that is, $\R$ and $\R'$ have the same underlying functors $\C\to \Add$;
%\item $\alpha^*=(\alpha')^*$, for all $\alpha\in\C$;
%\item $\mu(\beta,\alpha)=\mu'(\beta,\alpha)$, for all composable $\alpha,\beta\in\C$. 
%\end{enumerate}
%Then there is a natural equivalence of categories $\mod \R\cong \mod \R'$.
%\end{lemma}
%
%By the above lemma we are allowed to assume that $\alpha_!=R(\alpha)_!$, without any loss in the generality of our results. Notice that, under this assumption,
%$$\beta_!\alpha_!=(\beta\alpha)_!\,.$$
%Let us go back to our proof that $\mod \R$ is a Grothendieck category. 
%
%Now, for all $(\alpha:c\to d)\in \mor \C$, the structural map $M_\alpha:M_c\to \alpha^*(M_d)$ consists of a family of homomorphisms of abelian groups $(M_\alpha)_a:(M_c)_a\to (\alpha^*(M_d))_a$, for all $a\in \Ob\C$. Notice also that, just by definition, $(\alpha^*(M_d))_a=(M_d)_{R(\alpha)(a)}$, so that $M_\alpha$ consists of a family of morphisms $(M_\alpha)_a:(M_c)_a\to (M_d)_{R(\alpha)(a)}$ in $\Ab$. 

\begin{definition}
Let $R$ be a representation of a small category $\C$ and let $M$ be a right $R$-module. Given $\alpha\in\C$, $c\in\Ob \C$ and $a\in \Ob R_c$, the morphism $M_\alpha$ induces a homomorphism of Abelian groups
$$M_\alpha\colon \Hom_{(R_c^{op},\Ab)}(H_a, M_c)\to \Hom_{(R_d^{op},\Ab)}(H_{R_\alpha(a)}, M_d)$$
which sends $f\in\Hom_{(R_c^{op},\Ab)}(H_a, M_c)$ to the adjoint morphism  of $M_\alpha\circ f$ with respect to the adjunction $(\alpha_!,\alpha^*)$ (we are using implicitly the natural isomorphism $\alpha_!H_a\cong H_{R_\alpha(a)}$).
\end{definition}

%If we have two composable arrows $\alpha:c\to d$ and $\beta:d\to e$ in $\mor \C$ and $f\in M_c(a)$, for some $a\in\Ob R(c)$, then 
%\begin{equation}\label{compo_element}
%M_{\beta\alpha}(f)=M_{\beta}(M_{\alpha}(f))\in (M_e)_{R(\beta\alpha)(a)}\,.
%\end{equation}
The following lemma is the technical ingredient needed to prove that $\mod R$ is a Grothendieck category, which will be concluded in the successive corollary.

\begin{lemma}\label{tech_gen}
Let $R$ be a representation of a small category $\C$, and let 
$$\kappa=\sup\{|\N|,|\mor \C|, |\mor R_c|:c\in \Ob\C\}\,.$$ 
Let also $M\in\mod R$, $f\in M_c$ and let $N\leq M$ be a submodule such that $|N|\leq \kappa$. Then, there exists a submodule $N'$ of $M$ such that $|N'|\leq\kappa$, $N\leq N'$ and $f\in N'_c$.
\end{lemma}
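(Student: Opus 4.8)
Lemma \ref{tech_gen} asks us to enlarge a small submodule $N$ (with $|N|\le\kappa$) to a still-small submodule $N'$ that contains a prescribed element $f\in M_c$. Let me plan the proof.

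The plan is to build $N'$ component-by-component, but the subtlety is that a submodule of an $R$-module is not just a choice of subgroups $N'_c\le M_c$: the structure maps $M_\alpha\colon M_c\to\alpha^*M_d$ must restrict, i.e.\ by Lemma \ref{restriction_map} we need $(M_\alpha)_{(-)}(N'_c)\subseteq (\alpha^*N'_d)$ in every component, for every morphism $\alpha\colon c\to d$ of $\C$. So adding $f$ to the $c$-component forces us to also add its images under all the structure maps into the $d$-components, and those images in turn generate new elements that must be pushed forward along further morphisms out of $d$, and so on. The heart of the argument is to show this ``propagation'' closes off at size $\le\kappa$.

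First I would isolate the one-step generation. Starting from $N$ together with the single element $f\in M_c$, I consider the set of all elements of $M$ obtained by repeatedly applying the induced maps $M_\alpha\colon \Hom(H_a,M_c)\to\Hom(H_{R_\alpha(a)},M_d)$ (as in the Definition immediately preceding this lemma) to elements already collected, starting from $N$ and from $f$. Concretely, I would define an increasing chain $N=N^{(0)}\le N^{(1)}\le\cdots$ where $N^{(n+1)}$ is the submodule generated, in each component $e$, by $N^{(n)}_e$ together with all $M_\alpha(g)$ for $g$ an element of $N^{(n)}$ already ``reachable'' and $\alpha$ a morphism of $\C$ with target $e$; then set $N'=\bigcup_n N^{(n)}$. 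By construction $N'$ is closed under all the structure maps, so it is a genuine submodule (this is exactly criterion (2) or (3) of Lemma \ref{restriction_map}), it contains $N$, and it contains $f$.

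The main obstacle, and the real content, is the cardinality bound $|N'|\le\kappa$. Here I would argue by a standard closure-cardinal estimate. At each stage, the component $N^{(n)}_e$ is the subgroup of $M_e$ generated by at most $\kappa$-many elements coming from $N^{(n-1)}$: indeed $|\mor\C|\le\kappa$ bounds the morphisms $\alpha$ we apply, and by Lemma \ref{card_lemma} a subgroup of $M_e$ generated by $\le\kappa$ elements has cardinality $\le\kappa$ (using $|\mor R_e|\le\kappa$ to bound each $|H_a|$ and hence each $M_e(a)$ contribution). Starting from $|N|\le\kappa$ and applying Lemma \ref{card_lemma} together with the fact that $\kappa\ge|\N|$ is infinite, induction gives $|N^{(n)}|\le\kappa$ for every $n$, and then $|N'|=|\bigcup_n N^{(n)}|\le\aleph_0\cdot\kappa=\kappa$. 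The only delicate points are (a) checking that the submodule generated in a single component really stays $\le\kappa$ — which follows because generating a subfunctor amounts to closing a generating set under the group operations and under the module action of the morphisms of $R_e$, all controlled by $\kappa$ — and (b) making sure the indexing of the iterative process (morphisms times reachable elements) contributes only a factor bounded by $\kappa$ at each stage. Both reduce to the arithmetic of the infinite cardinal $\kappa$ via Lemma \ref{card_lemma}, so no genuinely new idea beyond careful bookkeeping is needed.
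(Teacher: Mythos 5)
Your proof is correct, but it takes a different route from the paper's. You perform the generic closure-by-iteration argument: an $\omega$-indexed chain $N=N^{(0)}\le N^{(1)}\le\cdots$, where each stage adjoins the images $M_\alpha(g)$ of all elements collected so far, followed by the estimate $|\bigcup_n N^{(n)}|\le \aleph_0\cdot\kappa=\kappa$. The paper instead produces $N'$ in a \emph{single} step: it sets $N'_c$ to be the $R_c$-submodule of $M_c$ generated by $N_c$ together with the orbit $\{M_\alpha(f):\alpha\in\End_\C(c)\}$, and then for each $d$ lets $N'_d$ be generated by $N_d$ and $\bigcup_{\alpha\in\hom_\C(c,d)}M_\alpha(N'_c)$; closure under an arbitrary structure map $M_\beta\colon M_d\to\beta^*M_e$ then follows because, by the module axiom (Mod.1), $M_\beta M_\alpha$ agrees with $M_{\beta\alpha}$ up to the coherence isomorphisms, and all morphisms $c\to e$ (including the endomorphisms of $c$, which is why the $\End_\C(c)$-orbit is built into $N'_c$) are already accounted for in the definition of $N'_e$ — so no iteration is needed. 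What each approach buys: the paper's construction is shorter and makes visible exactly where the pseudofunctoriality of $M$ enters, whereas your iteration never invokes (Mod.1) and would work verbatim to close a small subobject off under any $\kappa$-bounded family of maps, at the modest cost of the extra bookkeeping over $\N$ and the appeal to $\aleph_0\cdot\kappa=\kappa$. Your cardinality accounting via Lemma \ref{card_lemma} (bounding the morphisms $\alpha$ by $|\mor\C|\le\kappa$ and the submodule generated by $\le\kappa$ elements using $|\mor R_e|\le\kappa$) matches the paper's unproved claim ``$|N'_d|\le\kappa$'' and is, if anything, spelled out more carefully.
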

\begin{proof}
Let $N'_c$ be the $R_c$-submodule of $M_c$ generated by $N_c$ and $\{M_\alpha(f): \alpha\in\End_\C(c)\}$. Furthermore, we let $N'_d$ be the $R_d$-submodule of $M_d$ generated by $N_d$ and $\bigcup_{\alpha\in\hom_\C(c,d)}M_\alpha(N_c')$ (where $M_\alpha(N_c')=\{M_\alpha(f):f\in N_c'\}$ is a set of elements of $N_d'$).  It is not difficult to check that $|N'_d|\leq \kappa$, for all $d\in \Ob\C$.

Let now $(\beta\colon d\to e)\in \C$, we want to show that $M_\beta\colon M_{d}\to \beta^*M_e$ restricts to a morphism $N'_\beta\colon N'_d\to \beta^*N_e$. By Lemma \ref{restriction_map} we should check that, for all $a\in\Ob R_d$ and all $f\colon H_a\to N'_d\to M_d$, there exists $g\colon H_a\to \beta^*N'_e\to \beta^*M_e$ such that $M_\beta\circ f=g$. Indeed, by definition of $N_e'$, the morphism $M_\beta(f)\colon H_{R_\beta(a)}\to M_e$ factors through the inclusion $N_e'\to M_e$, the desired map $g\colon H_a\to \beta^*N_e'\to \beta^*M_e$ is the adjoint morphism to $M_\beta(f)$ along the adjunction $(\beta_!,\beta^*)$.
\end{proof}

\begin{corollary}
Let $R$ be a representation of a small category $\C$, then $\mod R$ has a generator.
\end{corollary}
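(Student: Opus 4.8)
The plan is to exhibit a single generator for $\mod R$ as a coproduct of a representative set of ``small'' modules, the word \emph{small} meaning of cardinality at most $\kappa$, where $\kappa=\sup\{|\N|,|\mor\C|,|\mor R_c|:c\in\Ob\C\}$ is the cardinal appearing in Lemma \ref{tech_gen}. The whole point of that lemma is that a bounded set of elements can be closed off under all the transition maps $M_\alpha$ and all the $R_c$-actions without leaving the bound $\kappa$, and this module-theoretic content is exactly what makes the classical ``generating by small subobjects'' argument go through in this setting.

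First I would check the set-theoretic bounding: up to isomorphism there is only a \emph{set} of right $R$-modules $N$ with $|N|\le\kappa$. Indeed $\C$ is small and each $R_c$ is small, so such an $N$ is determined by the abelian groups $N_c(a)$ (each of underlying cardinality $\le\kappa$, hence realizable inside one fixed set of size $\kappa$) together with the structure maps $N_c(r)$ and the transition morphisms $N_\alpha$, and there is only a set of such data. Let $\{G_j:j\in J\}$ be a set of representatives of these isomorphism classes and set $G=\bigoplus_{j\in J}G_j$, which exists because $\mod R$ is cocomplete.

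Next I would show that every $M\in\mod R$ is covered by its submodules of cardinality $\le\kappa$, in the sense that each element lies in one of them. The zero submodule already satisfies $|0|\le\kappa$: since $|\Ob R_c|\le|\mor R_c|\le\kappa$ and $|\Ob\C|\le|\mor\C|\le\kappa$, one gets $|0|=\sum_{c}|\Ob R_c|\le\kappa\cdot\kappa=\kappa$ (using $\kappa\ge|\N|$). Hence, for any $c\in\Ob\C$ and any element $f\in M_c$, Lemma \ref{tech_gen} applied with $N=0$ produces a submodule $N'_f\le M$ with $|N'_f|\le\kappa$ and $f\in(N'_f)_c$. These bounded submodules are moreover upward directed, since the sum of two of them is a quotient of their direct sum and so again has cardinality $\le\kappa$ by the componentwise form of Lemma \ref{card_lemma}; thus $M$ is their directed union, consistently with $\mod R$ being $(Ab.5)$.

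Finally I would verify that $G$ is a generator through the standard proper-subobject criterion, and here I expect the genuine subtlety to be not the verification itself but the set-theoretic bounding of the previous paragraph—making sure that ``$|N|\le\kappa$'' really collapses the proper class of $R$-modules to a set of isomorphism types. Given a proper submodule $L\subsetneq M$, there is some $c$ with $L_c\subsetneq M_c$, and because the corepresentables $\{H_a\}_{a\in\Ob R_c}$ generate $(R_c^{op},\Ab)$ there is an $a$-element $f\colon H_a\to M_c$ not factoring through $L_c$. Choosing $N'_f\le M$ as above we have $N'_f\not\le L$ and $N'_f\cong G_j$ for some $j$; composing the inclusion $N'_f\hookrightarrow M$ with this isomorphism and extending by zero on the remaining summands yields a morphism $G\to M$ that does not factor through $L$. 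Hence $G$ is a generator, and together with the bicompleteness and $(Ab.5)$ already established this identifies $\mod R$ as a Grothendieck category.
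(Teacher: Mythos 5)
Your proof is correct and follows essentially the same route as the paper: Lemma \ref{tech_gen} shows every module is the (directed) union of its submodules of cardinality at most $\kappa$, and the coproduct of a set of representatives of such bounded modules is then a generator. You have merely made explicit two points the paper leaves implicit, namely that the modules $N$ with $|N|\le\kappa$ form only a set up to isomorphism, and the verification of the proper-subobject criterion via an element $f\in M_c$ not lying in $L_c$.
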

\begin{proof}
Notice first that, by Lemma \ref{tech_gen}, any $R$-module $M$ is the sum of its submodules $N$ such that $|N|\leq \kappa$ (for $\kappa$ as in the statement of the lemma). Then, if we take a family of representatives $\F$ of all the $R$-modules $N$ such that $|N|\leq \kappa$ up to isomorphism, then $\F$ is a family of generators. Since $\mod R$ is cocomplete, $\bigoplus\F$ is the generator we are looking for.
\end{proof}

Remember that a poset is a small category $\C$ such that $|\hom_\C(c,d)|\leq 1$ for all $c,d\in \Ob\C$. In the particular case when $\C$ is a poset we can prove that, given a representation $R$ of $\C$, $\mod R$ is not only Grothendieck, but it has also a projective generator (see Corollary \ref{generatori_proj}). 

\begin{definition}
Let $\C$ be a poset, let $c\in \Ob \C$ and let $R$ be a representation of $\C$. The {\em extension by zero functor at $c$}, $\ext_c\colon (R_c^{op},\Ab)\to \mod R$ is defined by sending an $R_c$-module $M$ to $\ext_c(M)$, where
$$\ext_c(M)_d=\begin{cases}
\alpha_!M &\text{if $\emptyset\neq \hom_{\C}(c,d)\ni\alpha$;}\\
0&\text{otherwise.} 
\end{cases}$$
and where the structural map $\ext_c(M)_\beta$ is given by the following composition:
$$\xymatrix{\ext_c(M)_\beta\colon (\alpha_d)_!M\ar[rr]^{(\varepsilon_\beta)_{(\alpha_d)_!M}}&&\beta^*\beta_!(\alpha_d)_!M\ar[rr]^{\beta^*\tau_{\beta,\alpha_d}}&&  \beta^*(\alpha_e)_!M}\,,$$ 
for all $\beta\in\hom_\C(d,e)$,  $\alpha_d\in \hom_\C(c,d)$ and $\alpha_e\in \hom_\C(c,e)$.
%, where:
%\begin{enumerate}[\rm --]
%\item the map $(\alpha_d)_!M\to\beta^*\beta_!(\alpha_d)_!M$ is the component at $(\alpha_d)_!M$ of the unit of the adjunction $(\beta_!,\beta^*)$ (as described in the first part of Section \ref{change_of_base_sec});
%\item the map $\beta^*\beta_!(\alpha_d)_!M\to \beta^*(\beta\alpha_d)_!M$ is induced by the natural isomorphism described in Lemma \ref{functoriality};
%\item the map $\beta^*(\beta\alpha_d)_!M\to \beta^*(\alpha_e)_!M$ is induced by $\mu_{\beta,\alpha_d}:R_\beta R_{\alpha_d}\to R_{\beta\alpha_d}=R_{\alpha_e}$.
%\end{enumerate}
More explicitly, the component at a given $b\in \Ob R_d$ of the above composition sends a standard generator $m\otimes f\in ((\alpha_d)_!M)_b=\bigoplus_{a\in \Ob R_c}M_a\otimes_\Z \hom_{R_d}(b,R_{\alpha_d}(a))/T$ to $m\otimes ((\mu_{\beta,\alpha_d})_a\circ R_\beta(f))\in \beta^*(\alpha_e)_!M$.

\smallskip
Furthermore, given a homomorphism $\phi\colon M\to N$ in $(R_c^{op},\Ab)$ and $d\in \Ob\C$, we let 
$$\ext_c(\phi)_d=\begin{cases}
\alpha_!\phi&\text{if $\alpha\in\hom_\C(c,d)\neq\emptyset$;}\\
0&\text{otherwise.}
\end{cases}$$ 
One defines analogously the functor $\ext^*_c\colon (R_c,\Ab)\to \lmod R$.
\end{definition}

To verify that $\ext_c(M)$ is a module one has to check the axioms (Mod.1) and (Mod.2). Indeed, the axiom (Mod.1) consists in verifying the commutativity of the following diagram:
$$\xymatrix{
\ext_c(M)_d\ar@/_10pt/[rrrrd]_{\ext_c(M)_{\gamma\beta}}\ar[rr]^{\ext_c(M)_\beta}&&\beta^*\ext_c(M)_e\ar[rr]^{\beta^*\ext_c(M)_\gamma}&&\beta^*\gamma^*\ext_c(M)_f\\
&&&&(\gamma\beta)^*\ext_c(M)_f\ar[u]_{\mu_{\gamma,\beta}*\id_{\ext_c(M)_f}}
}$$
where $\alpha_d\colon c\to d$, $\alpha_e\colon c\to e$, $\alpha_f\colon c\to f$, $\beta\colon d\to e$, and $\gamma\colon e\to f$. Let us check commutativity on an element $m\otimes h\in ((\alpha_d)_!M)_b$, (where $m\in M_a$ and $h\colon b\to R_{\alpha_d}(a)$, for some $a\in\Ob R_c$):
$$\xymatrix{
m\otimes h\ar@{|->}@/_10pt/[rrrrdd]\ar@{|->}[rr]&&m\otimes ((\mu_{\beta,\alpha_d})_a\circ R_\beta(h))\ar@{|->}[rr]&&m\otimes ((\mu_{\gamma,\alpha_e})_a\circ R_\gamma(\mu_{\beta,\alpha_d})_a\circ R_\gamma R_\beta(h))\\
&&&&m\otimes ((\mu_{\gamma\beta,\alpha_d})_a\circ R_{\gamma\beta}(h)\circ (\mu_{\gamma,\beta})_{b})\ar@{=}[u]\\
&&&&m\otimes ((\mu_{\gamma\beta,\alpha_d})_a\circ R_{\gamma\beta}(h))\ar@{|->}[u]
}$$
where equality holds by the commutativity of the following diagram:
$$\xymatrix{
R_\gamma R_\beta(b)\ar[rr]^{R_\gamma R_\beta(h)}\ar[d]_{(\mu_{\gamma,\beta})_b}&&R_\gamma R_\beta R_{\alpha_{d}}(a)\ar[rr]^{R_\gamma(\mu_{\beta,\alpha_d})_a}\ar[d]|{(\mu_{\gamma,\beta})_{R_{\alpha_d}(a)}}&&R_\gamma R_{\beta\alpha_d}(a)\ar[d]^{(\mu_{\gamma,\beta\alpha_d})_a}\\
R_{\gamma\beta}(b)\ar[rr]^{R_{\gamma\beta}(h)}&&R_{\gamma \beta}R_{\alpha_d}(a)\ar[rr]^{(\mu_{\gamma\beta,\alpha_d})_a}&&R_{\gamma\beta\alpha_d}(a)
}$$
where the left-hand square commutes since $\mu_{\gamma,\beta}$ is a natural transformation, while the right-hand square commutes by the axiom (Rep.1).

\medskip
Let us now verify (Mod.2). Indeed, given $\alpha_d\colon c\to d$, we have to show that, for all $b\in\Ob R_c$, 
$$((\delta_d*\id_{\ext_c(M)_d})\circ \ext_c(M)_{\id_d})_b=(\id_{\ext_c(M)_d})_b\,.$$
This amounts to say that, given $m\otimes h\in ((\alpha_d)_!M)_b$ (with $m\in M_a$ and $h\colon a\to R_{\alpha_d}(b)$ for some $a\in\Ob R_d$),
$$m\otimes ((\mu_{\id_d,\alpha_d})_b\circ R_{\id_d}(h)\circ (\delta_d)_a)=m\otimes h$$
and this equality follows by the commutativity of the following diagram:
$$\xymatrix{
a\ar[d]_{(\delta_d)_a}\ar[rr]^h&& R_{\alpha_d}(b)\ar[d]|{(\delta_{d})_{R_{\alpha_d}(b)}}\ar@/_-10pt/[drr]^{\id_{R_{\alpha_d}}}\\
R_{\id_d}a\ar[rr]^{R_{\id_d}(h)} &&R_{\id_d}R_{\alpha_d}(b)\ar[rr]^{(\mu_{\id_d,\alpha_d})_b}&&R_{\alpha_d}(b)
}$$
where the left-hand square commutes since $\delta_d$ is a natural transformation, while the triangle commutes by the axiom (Rep.2).

% it is enough to remember the formula for the unit of the composition of two adjunctions. Indeed, given $c\overset{\alpha}{\to}d\overset{\beta}{\to}e\overset{\gamma}{\to}f$ in $\C$ and letting $\hom_\C(c,e)=\{\alpha_e\}$, the component at $\alpha_!M$ of the unit of $((\gamma\beta)_!,(\gamma\beta)^*)\cong (\gamma_!\beta_!,\beta^*\gamma^*)$ is
%$$\beta^*(\eta_{\gamma})_{(\alpha_e)_!M}\circ(\eta_\beta)_{\alpha_!M}\,,$$
%where $\eta_\beta$ and $\eta_\gamma$ are the units respectively of $(\beta_!,\beta^*)$ and $(\gamma_!,\gamma^*)$.
%
\begin{remark}\label{rem_ex}
Let $\C$ be a poset and let $R$ be a right (resp., left) flat representation of $\C$. Then, just by definition, $\ext_c$ (resp., $\ext_c^*$) is an exact functor.
\end{remark}

Let $\C$ be a small category and let $R$ be a representation of $\C$. Then there is a canonical functor 
$$\ev_c\colon \mod R\to (R_c^{op},\Ab)$$
called {\em evaluation at $c$}, that sends a right $R$-module $M$ to $M_c$ and a morphism $\phi$ to its $c$-component $\phi_c$. In the following proposition we are going to show that, in case $\C$ is a poset, $\ext_c$ is left adjoint to $\ev_c$. 

\begin{proposition}\label{aggiunto_ev}
Let $\C$ be a poset and let $R$ be a representation of $\C$. Then, for all $M\in\mod R$ and $N\in (R_c^{op},\Ab)$ there is a natural isomorphism
$$\hom_{\mod R}(\ext_cN,M)\cong \hom_{(R_c^{op},\Ab)}(N,M_{c})\,.$$ 
\end{proposition}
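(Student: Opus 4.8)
The plan is to prove the stronger statement that $\ext_c$ is left adjoint to the evaluation functor $\ev_c\colon \mod R\to (R_c^{op},\Ab)$; the asserted natural isomorphism is then exactly the hom-set bijection of this adjunction, and naturality in $M$ and $N$ comes for free. Since $\C$ is a poset, for every $d$ with $\hom_\C(c,d)\neq\emptyset$ there is a \emph{unique} morphism $\alpha_d\colon c\to d$, and whenever $\beta\colon d\to e$ is a morphism for which $\alpha_d$ is defined one automatically has $\alpha_e$ defined with $\alpha_e=\beta\alpha_d$ (conversely, if $\alpha_e$ is undefined for $\beta\colon d\to e$, then $\alpha_d$ is undefined too). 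This uniqueness is what makes the componentwise formulas below unambiguous and confines all nontrivial checks to indices lying above $c$.

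First I would write down the unit and the counit. For the unit $\iota_N\colon N\to \ev_c\ext_c(N)=\ext_c(N)_c=(\id_c)_!N$ I take the canonical isomorphism induced by $\delta_c\colon \id_{R_c}\to R_{\id_c}$: extension of scalars along $R_{\id_c}$ is naturally isomorphic to extension along $\id_{R_c}$ (a natural isomorphism of functors induces one of the associated scalar extensions), which in turn is isomorphic to the identity functor via its unit, so $\iota_N$ is an isomorphism. For the counit $\pi_M\colon \ext_c(M_c)\to M$ I set $(\pi_M)_d=0$ when $\hom_\C(c,d)=\emptyset$ and otherwise
$$(\pi_M)_d=(\varepsilon_{\alpha_d})_{M_d}\circ (\alpha_d)_!(M_{\alpha_d})\colon (\alpha_d)_!M_c\longrightarrow M_d,$$
i.e. the transpose of the structural map $M_{\alpha_d}\colon M_c\to \alpha_d^*M_d$ under the adjunction $((\alpha_d)_!,\alpha_d^*)$. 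Defining the counit this way is convenient because, for a general $\psi\colon N\to M_c$, the candidate morphism $\pi_M\circ \ext_c(\psi)$ is then automatically a morphism in $\mod R$, being a composite of such; so the only well-definedness issue is whether $\pi_M$ itself respects the module structure.

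The main obstacle is precisely this last point: verifying that for every $\beta\colon d\to e$ (with $\alpha_d$, hence $\alpha_e=\beta\alpha_d$, defined) one has
$$\beta^*\bigl((\pi_M)_e\bigr)\circ \ext_c(M_c)_\beta=M_\beta\circ (\pi_M)_d.$$
To attack this I would expand $\ext_c(M_c)_\beta$ through its definition (the unit $\eta_\beta$ followed by $\beta^*\tau_{\beta,\alpha_d}$), rewrite $M_{\alpha_e}=M_{\beta\alpha_d}$ via axiom (Mod.1) in terms of $M_\beta$, $M_{\alpha_d}$ and the coherence isomorphism $\sigma_{\beta,\alpha_d}$, and then reconcile the three counits $\varepsilon_{\alpha_d}$, $\varepsilon_\beta$, $\varepsilon_{\beta\alpha_d}$ using the commutative squares of Lemma \ref{unit_of_two} together with the isomorphisms $\tau_{\beta,\alpha_d}$ and $\sigma_{\beta,\alpha_d}$ of Lemma \ref{change_of_two}. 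This is a pure diagram chase; if the abstract manipulation becomes unwieldy, it can instead be verified on the standard generators $m\otimes f$ using the explicit element descriptions of $\ext_c(M_c)_\beta$, of $\tau$, and of the counits recorded in Subsection \ref{change_of_base_sec}, in the same spirit as the element check announced for Lemma \ref{unit_of_two}.

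Finally I would check the two triangle identities. The identity $\ev_c(\pi_M)\circ \iota_{M_c}=\id_{M_c}$ reduces, after unwinding $(\pi_M)_c=(\varepsilon_{\id_c})_{M_c}\circ(\id_c)_!(M_{\id_c})$, to axiom (Mod.2), which asserts $(\delta_c*\id_{M_c})\circ M_{\id_c}=\id_{M_c}$, combined with the triangle identity of the change-of-base adjunction along $R_{\id_c}$. The identity $\pi_{\ext_c N}\circ \ext_c(\iota_N)=\id_{\ext_c N}$ is checked componentwise and rests on the compatibility of the structural map with the unit, namely the identification $\ext_c(N)_{\alpha_d}\circ \iota_N=(\eta_{\alpha_d})_N$ up to the canonical isomorphisms, which again follows from Lemma \ref{change_of_two} and axiom (Rep.2). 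Once both triangle identities hold we obtain $\ext_c\dashv\ev_c$, and the desired natural isomorphism is the associated adjunction bijection $\phi\mapsto \ev_c(\phi)\circ\iota_N=\phi_c\circ\iota_N$, with naturality in both variables automatic.
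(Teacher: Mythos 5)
Your proposal is correct and is essentially the paper's own argument repackaged in unit--counit form: your counit $\pi_M$ is exactly the morphism $\psi$ that the paper constructs in its surjectivity step (with $\phi=\id_{M_c}$), and the single hard verification---that the transposed structural maps assemble into a morphism in $\mod R$---is discharged, just as in the paper, by axiom (Mod.1) applied to $M$ together with Lemma \ref{unit_of_two}, while your triangle identities correspond to the paper's injectivity diagram and its claim $\psi_c=\phi$. The only marginal gain of your formulation is that naturality of the isomorphism (and the full adjunction $\ext_c\dashv\ev_c$, which the paper announces before the proposition) comes for free, whereas the paper's explicit bijection $\Phi(\phi)=\phi_c$ leaves naturality implicit.
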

\begin{proof}
Define a map $\Phi\colon \hom_{\mod R}(\ext_cN,M)\to \hom_{(R_c^{op},\Ab)}(N,M_{c})$ such that 
$$\Phi(\phi\colon \ext_cN\to M)=(\phi_c\colon N\to M_c)\,.$$
It is easily seen that $\Phi$ is a group homomorphism. Let us show that $\Phi$ is bijective.\\
{\em $\Phi$ is injective.} Take $\phi\in\ker(\Phi)$, that is, $\phi_c=0$. Let $d\in \Ob \C$, we want to show that $\phi_d=0$. This is clear in case $\hom_\C(c,d)=\emptyset$, so suppose $\alpha\in \hom_\C(c,d)\neq\emptyset$. Notice that there is a commutative diagram
$$
\xymatrix{
\alpha_!N\ar[rr]^{\alpha_!(\phi_c)=\alpha_!(0)=0}\ar@/_4.5pc/[dd]_\id\ar[d]_{\alpha_!((\eta_\alpha)_N)}&&\alpha_!M_c\ar[d]^{\alpha_!(M_\alpha)}\\
\alpha_!\alpha^*\alpha_!N\ar[rr]^{\alpha_!\alpha^*(\phi_d)}\ar[d]_{\varepsilon_{\alpha_!(N)}}&&\alpha_!\alpha^*M_d\ar[d]^{\varepsilon_{M_d}}\\
\alpha_!N\ar[rr]^{\phi_d}&&M_d\\
}
$$
where $\varepsilon$ and $\eta$ are respectively the counit and the unit of the adjunction $(\alpha_!,\alpha^*)$. This implies that $\phi_d=0$, so that $\phi=0$, as desired. \\
{\em $\Phi$ is surjective.} Let $\phi\colon N\to M_c$. We are going to define a morphism $\psi\colon \ext_c(N)\to M$ such that $\psi_c=\phi$:
$$\psi_d=\begin{cases}
(\varepsilon_{\alpha_d})_{M_d}\circ(\alpha_d)_!(M_{\alpha_d}\circ \phi)&\text{if $\emptyset\neq\hom_{\C}(c,d)\ni\alpha_d$;}\\
0&\text{otherwise;}
\end{cases}$$
where $\varepsilon_{\alpha_d}$ is the counit of the adjunction $((\alpha_d)_!,(\alpha_d)^*)$. Let us show that this defines a homomorphism of right $R$-modules. Indeed, consider the following diagram:
$$\includegraphics[height=550pt]{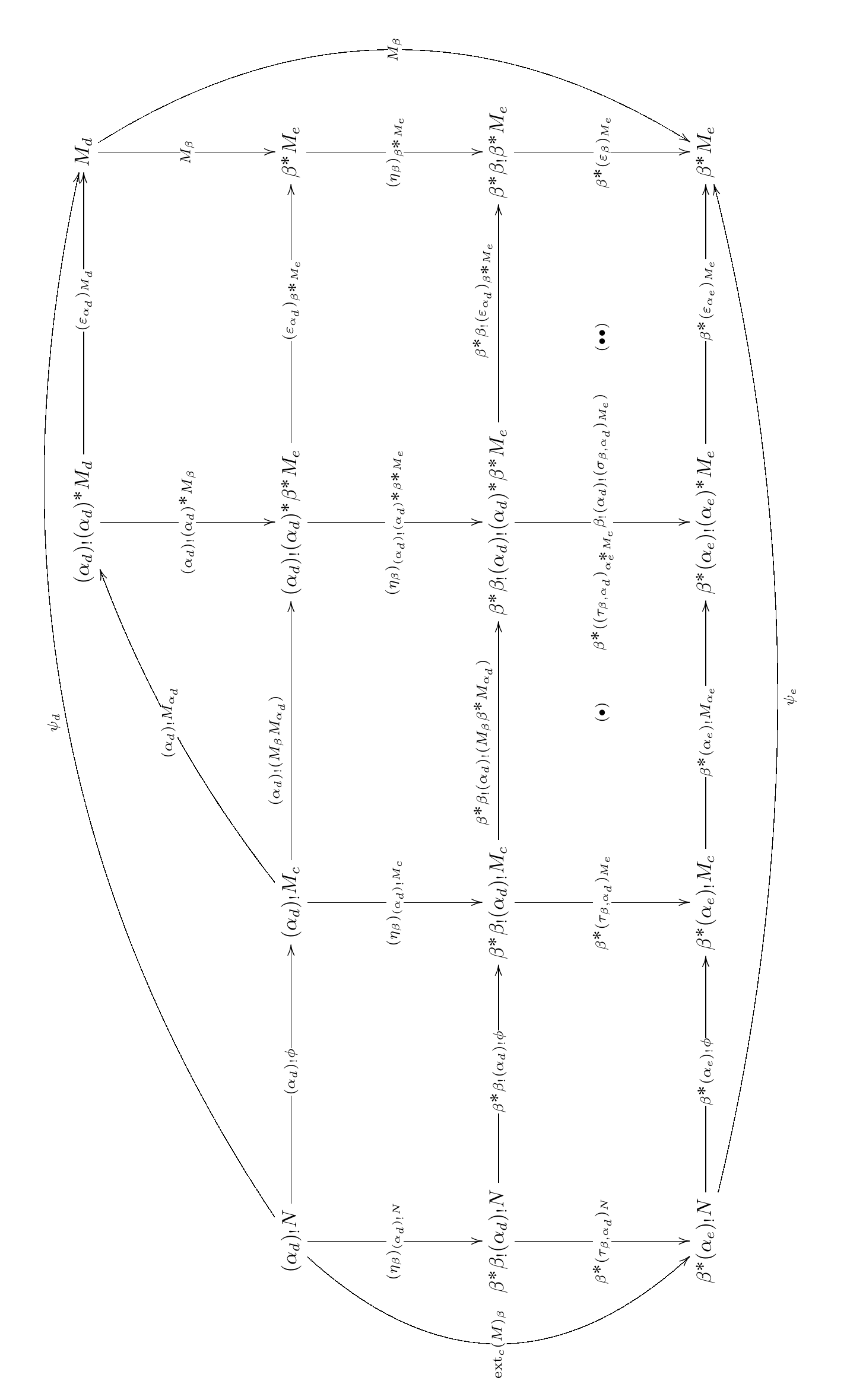}$$
%
%$$\xymatrix{
%&&
%&&
%(\alpha_d)_!(\alpha_d)^*M_d\ar[d]|{(\alpha_d)_!(\alpha_d)^*M_\beta}\ar[rr]|(0.6){(\varepsilon_{\alpha_d})_{M_d}}&&
%M_d\ar[d]|{M_\beta}\ar@/_-40pt/[ddd]|{M_\beta}
%\\
%(\alpha_d)_!N\ar@/_-30pt/[urrrrrr]^{\psi_d}\ar@/_46pt/[dd]|{\ext_c(M)_\beta}\ar[rr]|{(\alpha_d)_!\phi}\ar[d]|{(\eta_{\beta})_{(\alpha_d)_!N}}&&
%(\alpha_d)_!M_c\ar[d]|{(\eta_\beta)_{(\alpha_d)_!M_c}}\ar[rr]^(0.4){(\alpha_d)_!(M_\beta M_{\alpha_d})}\ar@/_-10pt/[rru]|{(\alpha_d)_!M_{\alpha_d}}&&
%(\alpha_d)_!(\alpha_d)^*\beta^*M_e\ar[d]|{(\eta_\beta)_{(\alpha_d)_!(\alpha_d)^*\beta^*M_e}}\ar[rr]|(0.6){(\varepsilon_{\alpha_d})_{\beta^*M_e}}&&
%\beta^*M_e\ar[d]|{(\eta_\beta)_{\beta^*M_e}}
%\\
%\beta^*\beta_!(\alpha_d)_!N\ar[rr]|{\beta^*\beta_!(\alpha_d)_!\phi}\ar[d]|{\beta^*(\tau_{\beta,\alpha_d})_N}&&
%\beta^*\beta_!(\alpha_d)_!M_c\ar[d]|{\beta^*(\tau_{\beta,\alpha_d})_{M_e}}\ar[rr]^(0.42){\beta^*\beta_!(\alpha_d)_!(M_\beta \beta^*M_{\alpha_d})}\ar@{}[rrd]|(0.4){(\bullet)}&&
%\beta^*\beta_!(\alpha_d)_!(\alpha_d)^*\beta^*M_e\ar[rr]^(0.6){\beta^*\beta_!(\varepsilon_{\alpha_d})_{\beta^*M_e}}\ar[d]|{\beta^*((\tau_{\beta,\alpha_d})_{\alpha_e^*M_e}\beta_!(\alpha_d)_!(\sigma_{\beta,\alpha_d})_{M_e})}\ar@{}[rrd]|(0.6){(\bullet\bullet)}&&
%\beta^*\beta_!\beta^*M_e\ar[d]|{\beta^*(\varepsilon_\beta)_{M_e}}
%\\
%\beta^*(\alpha_e)_!N\ar@/_25pt/[rrrrrr]_{\psi_e}\ar[rr]|{\beta^*(\alpha_e)_!\phi}&&
%\beta^*(\alpha_e)_!M_c\ar[rr]|(0.45){\beta^*(\alpha_e)_!M_{\alpha_e}}&&
%\beta^*(\alpha_e)_!(\alpha_e)^*M_e\ar[rr]|(0.6){\beta^*(\varepsilon_{\alpha_e})_{M_e}}&&
%\beta^*M_e
%}$$
where everything commutes either by definition, by the naturality of units and counits or by the unit-counit equations, apart from the squares marked by $(\bullet)$ and $(\bullet \bullet)$. But in fact, $(\bullet)$ commutes by the definition of module applied to $M$, while the commutativity of $(\bullet\bullet)$ is proved in Lemma \ref{unit_of_two}.
\end{proof}

As a consequence we obtain the following

\begin{corollary}\label{generatori_proj}
Let $\C$ be a poset, let $c\in \Ob \C$ and let $R$ be a representation of $\C$. For all $a\in \Ob R_c$, 
$$\hom_{\mod R}(\ext_c(H_a),M)\cong M_c(a)\,.$$ 
In particular, $\{H_a^c:c\in \Ob\C,\ a\in \Ob R_c\}$ is a set of projective generators of $\mod R$. 
\end{corollary}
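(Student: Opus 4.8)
The plan is to deduce the displayed isomorphism directly from the adjunction of Proposition \ref{aggiunto_ev} together with the Yoneda Lemma, and then to read off both projectivity and the generating property from this one natural isomorphism. Throughout I write $H_a^c:=\ext_c(H_a)$.

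First I would apply Proposition \ref{aggiunto_ev} with $N=H_a$ to obtain a natural isomorphism
$$\hom_{\mod R}(\ext_c(H_a),M)\cong \hom_{(R_c^{op},\Ab)}(H_a,M_c)\,.$$
Since $H_a=\hom_{R_c}(-,a)$ is the corepresentable functor at $a$, the Yoneda Lemma identifies the right-hand side with $M_c(a)$, and chasing the two isomorphisms shows that the composite is natural in $M$. This establishes the first assertion.

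Next, for projectivity, I would observe that the functor $M\mapsto M_c(a)$ is exact: it factors as $\ev_c\colon \mod R\to (R_c^{op},\Ab)$ followed by evaluation at $a$, where the former is exact because kernels and cokernels in $\mod R$ are computed componentwise (as established when showing that $\mod R$ is Abelian), and the latter is exact because exactness in $(R_c^{op},\Ab)$ is tested componentwise (as noted after Lemma \ref{mod_is_groth}). By the natural isomorphism of the previous paragraph, $\hom_{\mod R}(\ext_c(H_a),-)\cong (-)_c(a)$ is therefore exact, so each $H_a^c$ is projective.

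Finally, for the generating property, I would show that for every $M\in\mod R$ the canonical morphism
$$\bigoplus_{c\in\Ob\C}\ \bigoplus_{a\in\Ob R_c}\ \bigoplus_{m\in M_c(a)}\ext_c(H_a)\longrightarrow M$$
is an epimorphism, where the summand indexed by $m$ is the morphism corresponding to $m\in M_c(a)\cong\hom_{\mod R}(\ext_c(H_a),M)$ under the first isomorphism. Since cokernels in $\mod R$ are computed componentwise, this map is epi as soon as it is surjective on $M_d(b)$ for every $d\in\Ob\C$ and $b\in\Ob R_d$; and this is immediate, because for a fixed $x\in M_d(b)$ the summand indexed by $(d,b,x)$ provides a morphism whose $d$-component realizes $x$ (note that $\ext_d(H_b)_d\cong H_b$, using $\delta_d\colon\id_{R_d}\cong R_{\id_d}$, and that the adjunction of Proposition \ref{aggiunto_ev} identifies its $d$-component with the Yoneda map picking out $x$). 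Hence $\{H_a^c\}$ generates the cocomplete category $\mod R$, and being projective it is a set of projective generators. I expect this last paragraph to be the only genuinely technical point: verifying that the $d$-component of the morphism attached to $x$ really hits $x$ requires unwinding the explicit description of $\ext_d$ and of the unit/counit of $((\id_d)_!,(\id_d)^*)$ on the corepresentable $H_b$, whereas everything else is a formal consequence of the adjunction and of the componentwise nature of (co)kernels.
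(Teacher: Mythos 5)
Your proposal is correct and follows exactly the route the paper intends: the paper derives this corollary directly from Proposition \ref{aggiunto_ev} (take $N=H_a$ and apply the Yoneda Lemma), with projectivity and the generating property then read off from exactness of $M\mapsto M_c(a)$ and the componentwise computation of (co)kernels in $\mod R$, just as you do. Your careful unwinding of the $d$-component of the morphism attached to $x$ (via the explicit $\psi_d=(\varepsilon_{\alpha_d})_{M_d}\circ(\alpha_d)_!(M_{\alpha_d}\circ\phi)$ and axiom (Mod.2)) is a legitimate filling-in of details the paper leaves implicit, and it checks out.
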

The above results can be summarized as follows:

\begin{theorem}\label{theor.mod.Grot}
Let $\C$ be a small category and let $R$ be a representation of $\C$. Then $\mod R$ is an $(Ab.4^*)$ Grothendieck category. If furthermore $\C$ is a poset, then $\mod R$ has a projective generator. 
\end{theorem}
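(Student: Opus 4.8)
The plan is to assemble the statement from the component facts already established in this subsection, since the two assertions are precisely the culmination of the preceding lemmas and corollaries. The genuine work has already been carried out; what remains for the theorem itself is to recognize that the right pieces fit together, and to make the (routine but necessary) passage from a \emph{set} of generators to a single generator.

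First I would recall that the earlier Lemma shows $\mod R$ to be a bicomplete Abelian category that is simultaneously $(Ab.4^*)$ and $(Ab.5)$, the key point being that kernels, cokernels, products and coproducts are all computed componentwise and each $\alpha^*$ is exact. By definition a Grothendieck category is an $(Ab.5)$ Abelian category possessing a generator, so to obtain the first assertion it suffices to supply a generator. This is exactly the content of the Corollary following Lemma \ref{tech_gen}: every $M\in\mod R$ is the directed sum of its submodules $N$ with $|N|\le\kappa$, whence a set $\F$ of representatives up to isomorphism of such small modules is generating, and the cocompleteness of $\mod R$ turns $\bigoplus\F$ into a single generator. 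Combining the $(Ab.5)$ property with this generator establishes that $\mod R$ is Grothendieck, while $(Ab.4^*)$ is read off directly from the same Lemma. This proves the first sentence of the theorem.

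For the second assertion I would specialize to the case in which $\C$ is a poset and invoke Corollary \ref{generatori_proj}. That result exhibits $\{\ext_c(H_a): c\in\Ob\C,\ a\in\Ob R_c\}$ as a set of projective generators: each $\ext_c(H_a)$ is projective because, by Proposition \ref{aggiunto_ev}, $\ext_c$ is left adjoint to the exact evaluation functor $\ev_c$, so that $\hom_{\mod R}(\ext_c(H_a),M)\cong M_c(a)$ is an exact functor of $M$; and the family is generating since these objects detect all elements of every module. Because $\mod R$ is cocomplete, the coproduct $\bigoplus_{c,\,a}\ext_c(H_a)$ is once more projective (a coproduct of projectives is projective) and once more a generator (a coproduct of a generating set is a generator), hence a single projective generator, as required.

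I do not expect any real obstacle in this assembly; the only point meriting attention is that implicit reduction from a \emph{set} of (projective) generators to a \emph{single} one, which is legitimate precisely because $\mod R$ is cocomplete and coproducts preserve projectivity. The substantive difficulties---constructing generators of bounded cardinality in the general (non-poset) case via Lemma \ref{tech_gen}, and verifying the adjunction $\ext_c\dashv\ev_c$ through the large structural diagram in the proof of Proposition \ref{aggiunto_ev}---have already been discharged, so here they may simply be cited.
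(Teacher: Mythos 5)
Your proof is correct and takes essentially the same approach as the paper, which states Theorem \ref{theor.mod.Grot} as a direct summary of the preceding results: the Lemma giving that $\mod R$ is bicomplete Abelian, $(Ab.4^*)$ and $(Ab.5)$, the Corollary after Lemma \ref{tech_gen} supplying a generator, and Corollary \ref{generatori_proj} supplying the set of projective generators $\ext_c(H_a)$ in the poset case. Your explicit handling of the passage from a set of (projective) generators to a single one via coproducts is exactly the routine step the paper leaves implicit, and your justification of projectivity through the adjunction $\ext_c\dashv\ev_c$ and the exactness of componentwise evaluation matches the paper's reasoning.
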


Let us underline another consequence of Proposition \ref{aggiunto_ev}:

\begin{corollary}
Let $\C$ be a poset and let $R$ be a representation of $\C$. If $E$ is an injective object in $\mod R$, then $E_c$ is injective in $(R_c^{op},\Ab)$ for all $c\in \Ob\C$.
\end{corollary}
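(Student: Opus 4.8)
The engine of the proof is the adjunction $\ext_c\dashv\ev_c$ established in Proposition \ref{aggiunto_ev}, under which $E_c=\ev_c(E)$. The plan is to deduce the injectivity of $E_c$ from that of $E$ via the standard categorical principle that a right adjoint sends injective objects to injective objects as soon as its left adjoint preserves monomorphisms. Concretely, given a monomorphism $\iota\colon N'\hookrightarrow N$ in $(R_c^{op},\Ab)$, the natural isomorphism $\hom_{(R_c^{op},\Ab)}(-,E_c)\cong\hom_{\mod R}(\ext_c(-),E)$ of Proposition \ref{aggiunto_ev} identifies the restriction map $\hom_{(R_c^{op},\Ab)}(N,E_c)\to\hom_{(R_c^{op},\Ab)}(N',E_c)$ with the map $\hom_{\mod R}(\ext_cN,E)\to\hom_{\mod R}(\ext_cN',E)$ induced by $\ext_c(\iota)$. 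If $\ext_c(\iota)$ is a monomorphism, then injectivity of $E$ makes the latter map surjective, hence so is the former; as $\iota$ was arbitrary, $E_c$ is injective.

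Everything therefore reduces to the single point that $\ext_c$ preserves monomorphisms. I would first pass to components: since kernels in $\mod R$ are computed componentwise, a morphism is monic exactly when each $\ev_d$ of it is monic, so it suffices that each $\ext_c(\iota)_d$ be a monomorphism. By the very definition of $\ext_c$, this component is $\alpha_!(\iota)$ whenever $\hom_\C(c,d)\ni\alpha$ (and the zero map otherwise), so the task becomes showing that each scalar–extension functor $\alpha_!$ out of $c$ preserves monomorphisms, i.e.\ is left exact.

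This is precisely where Remark \ref{rem_ex} enters: when $R$ is right flat, $\ext_c$ is exact, in particular left exact, so $\ext_c(\iota)$ is again a monomorphism and the argument above goes through verbatim. I expect this to be the main (indeed the only essential) obstacle, and it is also where the flatness hypothesis is indispensable, so the statement should be read as carrying the standing assumption that $R$ is right flat. Without it the conclusion genuinely fails: for $\C$ the two–element poset $c<d$ and the strict representation given by the quotient $R_\alpha\colon\Z\to\Z/p$, the module $E$ with $E_c=\Z/p$, $E_d=\Z/p$ and $E_\alpha=\id$ satisfies $\hom_{\mod R}(M,E)\cong\hom_{(R_d^{op},\Ab)}(M_d,\Z/p)$ naturally in $M$, so $\hom_{\mod R}(-,E)$ is exact and $E$ is injective in $\mod R$; yet $E_c=\Z/p$ is not injective (not divisible) over $\Z=R_c$, reflecting exactly the failure of $\alpha_!$ to preserve monomorphisms in the non-flat case.
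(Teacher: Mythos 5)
Your proof follows exactly the route the paper intends: the corollary is presented there as a direct consequence of Proposition~\ref{aggiunto_ev}, i.e.\ the standard fact that the right adjoint $\ev_c$ preserves injective objects as soon as its left adjoint $\ext_c$ preserves monomorphisms, the latter checked componentwise via the identification $\ext_c(\iota)_d=\alpha_!(\iota)$ and the exactness of $\ext_c$ from Remark~\ref{rem_ex}. The genuine value you add is the diagnosis that this last step requires $R$ to be right flat, a hypothesis the corollary as printed does not carry, and your counterexample shows the omission sits in the statement and not merely in the proof: for $\C=\{c<d\}$, $R_c=\Z$, $R_d=\Z/p$, $R_\alpha$ the quotient map and $E=(\Z/p,\Z/p,\id)$, the compatibility square forces $\phi_c=\phi_d\circ M_\alpha$ for every $\phi\colon M\to E$, whence $\hom_{\mod R}(M,E)\cong\hom_{\mod{R_d}}(M_d,\Z/p)$ naturally in $M$; since monomorphisms in $\mod R$ are componentwise and $\Z/p$ is injective over the field $\Z/p$, the functor $\hom_{\mod R}(-,E)$ sends monomorphisms to epimorphisms, so $E$ is injective in $\mod R$, while $E_c=\Z/p$ is not divisible, hence not injective in $\mod \Z$. (No alternative adjoint can rescue the general case: a right adjoint of $\ev_c$ would transport injectives in the wrong direction.) So your argument is correct once the statement is read with the right-flatness hypothesis --- which holds for the geometric representations of Example~\ref{geo_ex} --- and you have correctly identified that this hypothesis must be added to the corollary as stated.
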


\subsection{Flat modules}\label{sub.sec.flat}

In this subsection we introduce a tensor product for modules over a representation and then we study the induced notion of flatness. Let us start with the following

\begin{definition}\label{def_flat_module_rep}
Let $R\colon \C\to \Add$ be a representation of the small category $\C$, the {\em tensor product} over $R$ is a functor
$$-\otimes_R-\colon \mod R\times \lmod {R}\to \Ab$$
such that 
\begin{enumerate}[\rm (1)]
\item $M\otimes_R N=\bigoplus_{c\in \Ob \C}M_c\otimes_{R_c} N_c$, for $M\in \mod R$ and $N\in\lmod {R}$;
\item given two morphisms $\phi\colon M_1\to M_2$, $\psi\colon N_1\to N_2$, respectively of right $R$-modules and left ${R}$-modules, we define 
$$\phi\otimes_R \psi=\bigoplus_{c\in\Ob\C}\phi_c\otimes_{R_c}\psi_c\colon M_1\otimes_R N_1\to M_2\otimes_R N_2\,.$$
\end{enumerate}
A right $R$-module $M$ is said to be {\em flat} if the functor $(M\otimes_R-)\colon \lmod {R}\to \Ab$ is exact.
\end{definition}

Let us remark that the map $\phi\otimes_R \psi$ in the above definition, when written in matricial form, gives a $|\Ob\C|\times |\Ob\C|$ diagonal matrix, whose diagonal entry corresponding to $c\in \Ob\C$ is  
$$\phi_c\otimes_{R_c}\psi_c\colon (M_1)_c\otimes_{R_c}(M_2)_c\to (N_1)_c\otimes_{R_c}(N_2)_c\,.$$

\begin{proposition}\label{flat_obj_loc}
Let $R$ be a representation of the small category $\C$ and let $M$ be a right $R$-module. If $M_c$ is flat in $(R_c^{op},\Ab)$ for all $c\in \Ob \C$, then $M$ is flat. 

If furthermore $\C$ is a poset and $R$ is left flat, then also the converse is true, that is, $M$ is flat in $\mod R$ if and only if $M_c$ is flat in $(R_c^{op},\Ab)$, for all $c\in \Ob\C$.  
\end{proposition}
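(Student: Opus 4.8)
The plan is to prove the two implications separately, using throughout that short exactness in $\mod R$ and in $\lmod R$ is tested componentwise—this is how the Abelian structure was built, kernels and cokernels being formed componentwise—together with the componentwise formula $M\otimes_R(-)=\bigoplus_{c\in\Ob\C}M_c\otimes_{R_c}(-)_c$ of Definition~\ref{def_flat_module_rep}. For the first implication, assume each $M_c$ is flat in $(R_c^{op},\Ab)$ and take a short exact sequence $0\to N'\to N\to N''\to 0$ in $\lmod R$. Componentwise this is short exact in each $(R_c,\Ab)$; tensoring with the flat $M_c$ preserves exactness, and reassembling via the exact direct sum functor on $\Ab$ shows that $0\to M\otimes_R N'\to M\otimes_R N\to M\otimes_R N''\to 0$ is exact. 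Thus $M$ is flat, and no hypothesis on $R$ or on $\C$ is needed here.

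For the converse, assume $\C$ is a poset, $R$ is left flat, and $M$ is flat; fix $c\in\Ob\C$ and a short exact sequence $0\to N'\to N\to N''\to 0$ in $(R_c,\Ab)$. I would push it through the extension by zero $\ext_c^*\colon(R_c,\Ab)\to\lmod R$ and then through $M\otimes_R(-)$. Since $\C$ is a poset and $R$ is left flat, Remark~\ref{rem_ex} makes $\ext_c^*$ exact, so applying $\ext_c^*$ and then the exact functor $M\otimes_R(-)$ produces an exact sequence $0\to M\otimes_R\ext_c^*(N')\to M\otimes_R\ext_c^*(N)\to M\otimes_R\ext_c^*(N'')\to 0$ in $\Ab$.

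The crux, and the step I expect to be delicate, is to read off $M_c\otimes_{R_c}(-)$ from $M\otimes_R\ext_c^*(-)$. By the componentwise tensor product,
$$M\otimes_R\ext_c^*(N)=\bigoplus_{d\in\Ob\C}M_d\otimes_{R_d}\ext_c^*(N)_d,$$
and $\ext_c^*(N)_d=0$ unless there is a morphism $c\to d$, which in a poset is unique when it exists. The summand at $d=c$ uses $\ext_c^*(N)_c\cong N$ (extension of scalars along $R_{\id_c}\cong\id_{R_c}$), so it is naturally isomorphic to $M_c\otimes_{R_c}N$; moreover the maps induced by $\ext_c^*$ and then by $M\otimes_R(-)$ are diagonal for this decomposition. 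Hence the exact sequence of the previous paragraph is a direct sum, indexed by the $d$ with $c\to d$, of complexes in $\Ab$, and such a direct sum is exact if and only if each summand is. Projecting onto the $d=c$ coordinate yields exactness of $0\to M_c\otimes_{R_c}N'\to M_c\otimes_{R_c}N\to M_c\otimes_{R_c}N''\to 0$, proving $M_c$ flat. The main obstacle is precisely this bookkeeping: confirming the direct-sum decomposition and the diagonality of the induced maps, so that the $d=c$ block is a retract computing $M_c\otimes_{R_c}(-)$. The poset hypothesis is essential both to guarantee $\ext_c^*(N)_c\cong N$ and to prevent distinct morphisms $c\to d$ from entangling the coordinates, while left flatness is exactly what makes $\ext_c^*$ exact.
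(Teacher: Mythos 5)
Your proof is correct and follows essentially the same route as the paper's: the first implication via componentwise exactness and the exactness of direct sums in $\Ab$, and the converse by pushing the sequence through $\ext_c^*$ (exact by Remark \ref{rem_ex} under the poset and left-flatness hypotheses), tensoring with $M$, and projecting onto the $c$-summand of the decomposition $M\otimes_R\ext_c^*(-)=\bigoplus_d M_d\otimes_{R_d}\ext_c^*(-)_d$. The bookkeeping you flag as the delicate point is exactly how the paper concludes as well, observing that the induced morphisms are diagonal matrices so that the canonical projections $(\pi_i)_c$ carry the exact sequence to $0\to M_c\otimes_{R_c}N'\to M_c\otimes_{R_c}N\to M_c\otimes_{R_c}N''\to 0$.
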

\begin{proof}
Suppose first that $M_c$ is flat in $(R_c^{op},\Ab)$ for all $c\in \Ob \C$ and let 
$$0\to N_1\to N_2\to N_3\to 0$$
be a short exact sequence in $\lmod R$. We should prove that $0\to M\otimes_R N_1\to M\otimes_R  N_2\to M\otimes_R  N_3\to 0$ is exact in $\Ab$, but this is clear since the coproduct is an exact functor and each $M_c\otimes_{R_c}-$ is an exact functor. 

Suppose now that $\C$ is a poset, that $R$ is left flat and that $M$ is flat in $\mod R$. Choose $c\in\Ob \C$ and let us show that $M_c$ is flat in $(R(c)^{op},\Ab)$. Indeed, let 
$$0\to A_1\to A_2\to A_3\to 0$$
be a short exact sequence in $(R_c,\Ab)$. Then, 
$$0\to \ext^*_c(A_1)\to \ext^*_c(A_2)\to \ext^*_c(A_3)\to 0$$
is a short exact sequence in $\lmod R$, and so, by the flatness of $M$, the sequence
$$0\to M\otimes_R\ext^*_c(A_1)\to M\otimes_R\ext^*_c(A_2)\to M\otimes_R\ext^*_c(A_3)\to 0$$
is exact in $\Ab$. Since the morphisms in the above short exact sequence are diagonal matrices, it follows that the following sequence is short exact
$$0\to (\pi_1)_c(M\otimes_R\ext^*_c(A_1))\to (\pi_2)_c(M\otimes_R\ext^*_c(A_2))\to (\pi_3)_c(M\otimes_R\ext^*_c(A_3))\to 0\,,$$
where $(\pi_i)_c\colon \bigoplus_{d\in\Ob\C}M_d\otimes_{R_d}\ext^*_c(A_i)_d\to M_c\otimes_{R_c}\ext^*_c(A_i)_c=M_c\otimes_{R_c}A_i$  (for $i=1,2,3$) is the canonical projection from the coproduct in $\Ab$.
Thus, we obtained that the following sequence is short exact
$$0\to M_c\otimes_{R_c}A_1\to M_c\otimes_{R_c}A_2\to M_c\otimes_{R_c}A_3\to 0\,,$$
proving that $M_c$ is flat.
\end{proof}

\subsection{The category of cartesian modules}\label{sub.sec.cat.mod.cart}

\begin{definition}\label{def_cart_mod}
Given a representation $R\colon \C\to \Add$ of a small category $\C$, the category of {\em cartesian} modules $\cmod R$ is the full subcategory of $\mod R$ whose objects are the modules $M$ such that, for any given morphism $\alpha\colon c\to d$ in $\C$, the adjoint morphism $(M_\alpha)_!\colon \alpha_!(M_c)\to M_d$ to the structural morphism $M_\alpha\colon M_c\to \alpha^*(M_d)$ is an isomorphism. 
\end{definition}

The main result of this subsection is to show that the category of cartesian modules over a flat representation is Grothendieck. 

\begin{theorem}\label{cart_groth}
Let $\C$ be a small category and consider a right flat representation $R\colon \C\to \Add$. Then, the category $\cmod R$ is Grothendieck. 
\end{theorem}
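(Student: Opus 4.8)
The plan is to present $\cmod R$ as a full abelian subcategory of the Grothendieck category $\mod R$ (Theorem \ref{theor.mod.Grot}) which is closed under all colimits and inherits $(Ab.5)$, and then to build a generator by a cardinality-bounded approximation argument. First I would verify that $\cmod R$ is abelian, with kernels, cokernels and colimits computed as in $\mod R$. Since all of these are formed componentwise and each $\alpha_!$ is a left adjoint, hence right exact and colimit-preserving, a cokernel or a colimit of cartesian modules is again cartesian, and here right flatness is not yet needed. The hypothesis that $R$ be right flat (Definition \ref{def.flat.rep}) enters at the kernel: exactness of $\alpha_!$ gives $\alpha_!(\ker\phi_c)=\ker(\alpha_!\phi_c)$, and comparing this with the structural isomorphisms of the cartesian modules $M$ and $N$ shows that $\ker\phi$ is again cartesian. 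Consequently the inclusion $\cmod R\hookrightarrow\mod R$ is exact, finite limits and arbitrary colimits in $\cmod R$ agree with those in $\mod R$, and the $(Ab.5)$ property descends from $\mod R$.

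It then remains to produce a generator, which is the crux of the proof. I would fix the cardinal $\kappa=\sup\{|\N|,|\mor\C|,|\mor R_c|:c\in\Ob\C\}$ of Lemma \ref{tech_gen} and show that every cartesian module $M$ is the directed union of its cartesian submodules of cardinality at most $\kappa$. Granting this, a set $\F$ of representatives of the isomorphism classes of cartesian modules of cardinality $\le\kappa$ is a generating set, since each cartesian $M$ is an epimorphic image of the coproduct of the members of $\F$ mapping into it. As coproducts of cartesian modules are cartesian, $\bigoplus\F$ is an object of $\cmod R$ and is the desired generator.

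The hard part will be the construction of a small cartesian submodule $N\le M$ through a prescribed element $x\in M_c$. Note first that injectivity of the comparison map $(N_\alpha)_!\colon\alpha_!(N_c)\to N_d$ is automatic for any submodule $N$, since it factors the injection $\alpha_!(N_c)\to\alpha_!(M_c)\cong M_d$ obtained by applying the exact functor $\alpha_!$ to $N_c\hookrightarrow M_c$. Thus the only obstruction to cartesianness of $N$ is surjectivity of $(N_\alpha)_!$ onto $N_d$ for every $\alpha\colon c\to d$. To arrange this I would run an iteration of countable length producing an increasing chain $N^0\le N^1\le\cdots$ of submodules of cardinality $\le\kappa$: starting from a submodule containing $x$ built as in Lemma \ref{tech_gen}, at each stage I alternate a \emph{saturation} step, which closes up under the structural maps so that the image of $\alpha_!((N^n)_c)$ in $M_d$ lands in $(N^{n+1})_d$, with a \emph{purification} step, which---applying Corollary \ref{coro_pre_gen} with $X$ the set of elements of $(N^n)_d$ regarded inside $\alpha_!(M_c)$ via the cartesian isomorphism of $M$---enlarges the components to small pure submodules pulling those elements back into $\alpha_!((N^{n+1})_c)$. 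Taking $N=\bigcup_n N^n$ then yields $|N|\le\kappa$, $x\in N_c$, each component pure in $M$ (a union of a chain of pure submodules is pure), and each $(N_\alpha)_!$ surjective, hence, together with the automatic injectivity above, an isomorphism, so that $N\in\cmod R$. The delicate point, and the main obstacle, is the simultaneous bookkeeping needed to carry out saturation and purification for all morphisms $\alpha$ of $\C$ while keeping every intermediate module of cardinality at most $\kappa$, so that the process closes off after $\omega$ steps; this is precisely where the right flatness of $R$, through Corollary \ref{coro_pre_gen}, is indispensable.
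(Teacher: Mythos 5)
Your proposal is correct and takes essentially the same approach as the paper: the same verification that $\cmod R$ is an Abelian subcategory of $\mod R$ closed under colimits (with right flatness needed only for kernels), and the same generator construction via cartesian submodules of cardinality at most $\kappa$ through each element, built by alternating the saturation of Lemma \ref{tech_gen} with the pullback/purification of Corollary \ref{coro_pre_gen} and taking a union. The only divergence is bookkeeping: the paper runs a lexicographic induction over $\N\times\mor\C$, keeping each stage exactly cartesian at one morphism at a time (Lemma \ref{tech_coh}) and concluding by cofinality of $\{Y_{n,\beta}\}_{n\in\N}$, whereas you run a plain $\omega$-chain treating all morphisms simultaneously and conclude via your correct observation that, for a submodule of a cartesian module over a right flat representation, injectivity of $(N_\alpha)_!$ is automatic, so only surjectivity need be secured in the limit --- a sound and slightly streamlined variant of the same argument.
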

\begin{proof}
We start showing that $\cmod R$ is an Abelian subcategory of $\mod R$. In fact, it is clear that the $0$-module is cartesian. Let $\phi\colon M\to N$ be a morphism between two cartesian modules, one has to show that $\ker(\phi)$ and $\coker(\phi)$ are again cartesian. This is always true for $\coker(\phi)$, and it can be shown to be true for $\ker(\phi)$ using the flatness of $R$. Similarly, given a family $\{M_i:i\in I\}$ of cartesian modules, one can show that the coproduct $\bigoplus_I M_i$ taken in $\mod R$ is cartesian, so that $\bigoplus_I M_i$ is a coproduct in $\cmod R$.\\
As for the category of modules, the most subtle point is to verify that $\cmod R$ has a generator; the rest of the subsection is devoted to the proof of this fact.
\end{proof}

Let us denote by $[1]=\{0\overset{}{\longrightarrow} 1\}$ the category with two objects and just one non-identical morphism between them. A representation $R\colon [1]\to \Add$ is completely determined by the functor $\phi\colon R_0\to R_1$. This allows us to consider morphisms between small preadditive categories just as a representation of $[1]$. Notice also that an $R$-module $M$ is determined by a pair of modules $M_i\in (R_i^{op},\Ab)$ ($i=0,1$) and a morphism $M_\phi\colon M_0\to \phi^*(M_1)$. Furthermore, $M$ is cartesian provided the adjoint morphism $(M_\phi)_!$ is an isomorphism. 

\begin{lemma}\label{tech_coh}
Let $R_0$ and $R_1$ be two small preadditive categories, let $\kappa=\max\{\N,|\mathrm{Mor}(R_0)|, |\mathrm{Mor}(R_1)|\}$, let also $R=\{\phi\colon R_0\to R_1\}$ be a right flat representation of $[1]$ and let $M=\{M_0\to \phi^*(M_1)\}$ be a cartesian module over $R$. Given two families $X_0$ and $X_1$ of elements of $M_0$ and $M_1$ respectively, such that $|X_0|,|X_1|\leq \kappa$, there exists a cartesian submodule $N=\{N_0\to \phi^{*}(N_1)\}\leq M$ such that $|N|\leq \kappa$, $X_0\subseteq N_0$ and $X_1\subseteq N_1$. Furthermore, we can take $N_i$ pure in $M_i$ ($i=0,1$).
\end{lemma}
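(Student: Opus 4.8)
The plan is to exploit the cartesian structure of $M$ in order to reduce the entire problem to the choice of a single small pure submodule $N_0\le M_0$; the second component $N_1$ is then forced. Since $M$ is cartesian, the adjoint $(M_\phi)_!\colon \phi_!(M_0)\to M_1$ is an isomorphism, so the prescribed family $X_1$ of elements of $M_1$ pulls back along $(M_\phi)_!^{-1}$ to a family $X_1'$ of elements of $\phi_!(M_0)$ with $|X_1'|=|X_1|\le\kappa$. The key observation is that once we have a submodule $N_0\le M_0$ with $X_0\subseteq N_0$, with $\phi_!(N_0)$ containing $X_1'$, pure in $\phi_!(M_0)$, and of cardinality $\le\kappa$, then we may simply set $N_1:=(M_\phi)_!(\phi_!(N_0))\le M_1$. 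Indeed $X_1\subseteq N_1$ holds automatically, $N_1$ is pure in $M_1$ because an isomorphism carries pure submodules to pure submodules, and the restriction of $(M_\phi)_!$ to $\phi_!(N_0)$ is an isomorphism onto $N_1$, which is exactly the cartesian condition for the pair $(N_0,N_1)$.

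First I would produce $N_0$. The hypotheses of Corollary \ref{coro_pre_gen} hold, since $\phi$ is right flat and $\kappa=\max\{|\N|,|\mathrm{Mor}(R_0)|,|\mathrm{Mor}(R_1)|\}$. Running the construction in its proof, but seeding the auxiliary coproduct of representables both with the $a$-elements comprising $X_0$ and with the finite data making each element of $X_1'$ factor through $\phi_!$ of a finitely generated submodule (the factoring property of $\phi_!$ established just before that corollary), I obtain a submodule $\widetilde N\le M_0$ with $|\widetilde N|\le\kappa$ (by Lemma \ref{card_lemma}), with $X_0\subseteq\widetilde N$ and $X_1'\subseteq\phi_!(\widetilde N)$. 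Applying Theorem \ref{card_purification} to $\widetilde N\le M_0$ yields a pure submodule $N_0\le M_0$ with $\widetilde N\le N_0$ and $|N_0|\le\kappa$. Exactness of $\phi_!$ gives $\phi_!(\widetilde N)\le\phi_!(N_0)$, so $X_1'\subseteq\phi_!(N_0)$, while the tensor--hom associativity computation from the proof of Corollary \ref{coro_pre_gen} shows that purity of $N_0$ in $M_0$ forces $\phi_!(N_0)$ to be pure in $\phi_!(M_0)$.

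Next I would check that $N=\{N_0\to\phi^*(N_1)\}$, with $N_1:=(M_\phi)_!(\phi_!(N_0))$, is genuinely a cartesian submodule of $M$. Here I use that $M_\phi$ factors as $\phi^*((M_\phi)_!)\circ\eta_{M_0}$, where $\eta$ is the unit of $(\phi_!,\phi^*)$; naturality of $\eta$ along the inclusion $N_0\hookrightarrow M_0$ shows that $\eta_{M_0}$ carries $N_0$ into $\phi^*\phi_!(N_0)$, whence $M_\phi(N_0)\subseteq\phi^*(N_1)$ and the structural map restricts to $N_\phi\colon N_0\to\phi^*(N_1)$. By construction $(N_\phi)_!$ is the restriction of the isomorphism $(M_\phi)_!$ to $\phi_!(N_0)$, hence an isomorphism $\phi_!(N_0)\overset{\sim}{\to}N_1$, so $N$ is cartesian. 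Finally $|N_1|=|\phi_!(N_0)|\le\kappa$ follows from the explicit description of $\phi_!$ together with Lemma \ref{card_lemma}, giving $|N|=|N_0|+|N_1|\le\kappa$, and $N_0,N_1$ are pure in $M_0,M_1$ as arranged.

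The step I expect to be the main obstacle is arranging $N_0$ to meet all four requirements simultaneously within the single cardinality budget $\kappa$, namely $X_0\subseteq N_0$, purity of $N_0$, $X_1'\subseteq\phi_!(N_0)$, and purity of $\phi_!(N_0)$. This is precisely what the (suitably seeded) construction of Corollary \ref{coro_pre_gen} is designed to deliver, so the genuine work lies in verifying that its output still controls $\phi_!$ after the purification step, and that the forced component $N_1$ inherits both the elements $X_1$ and its purity through the cartesian isomorphism $(M_\phi)_!$.
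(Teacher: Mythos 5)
Your proof is correct and follows essentially the same route as the paper's: identify $M_1$ with $\phi_!(M_0)$ via the cartesian isomorphism, use Corollary \ref{coro_pre_gen} together with Theorem \ref{card_purification} to produce a pure submodule $N_0\leq M_0$ of cardinality at most $\kappa$ capturing $X_0$ and the pulled-back family $X_1'$, and set $N_1=\phi_!(N_0)$ with $N_\phi$ the restriction of $M_\phi$. The only difference is expository: the paper invokes Corollary \ref{coro_pre_gen} as a black box and declares the remaining verifications easy, whereas you spell them out explicitly (the purity transfer along $\phi_!$ via the tensor associativity computation, and the cartesian condition for $N$ via the unit factorization of $M_\phi$), which is a faithful expansion of the same argument rather than a new one.
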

\begin{proof}
By Corollary \ref{coro_pre_gen} there is $M'\leq M_0$ such that $X_1\leq \phi_!(M')$ and $|M'|\leq \kappa$. We define $N$ as follows: we let $N_0\leq M_0$ be a pure submodule of $M_0$ containing $M'$ and $X_0$ and such that $|N_0|\leq \kappa$, and $N_1=\phi_!(N_0)\leq \phi_!(M_0)=M_1$. The morphism $N_\phi$ is just the restriction of $M_\phi$. It is now easy to see that $N$ satisfies the properties in the statement.
\end{proof}

\begin{proposition}\label{prop.pure}
In the setting of Theorem \ref{cart_groth}, let $\kappa=\sup\{|\N|,|\mor \C|,|\mor R_c|:c\in \Ob \C\}$ and let $M$ be a cartesian right $R$-module. Let $c\in \Ob \C$ and let $x\in M_c$. Then there exists a cartesian submodule $M_x\leq M$ of type $\kappa$ (i.e., $|M_x|\leq \kappa$) such that $x\in(M_x)_c$ and $(M_x)_d$ pure in $M_d$ for all $d\in \Ob\C$.
\end{proposition}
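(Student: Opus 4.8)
The plan is to realize $M_x$ as the union of a countable increasing chain of submodules of $M$, each of type $\le\kappa$, arranged so that every obstruction both to being cartesian and to component-wise purity is eventually repaired. Two preliminary observations simplify the cartesian requirement. First, since $R$ is right flat (Definition \ref{def.flat.rep}), each $\alpha_!$ is exact; hence for any submodule $N\le M$ and any $(\alpha\colon a\to b)\in\C$, applying $\alpha_!$ to the monomorphism $N_a\hookrightarrow M_a$ and composing with the isomorphism $(M_\alpha)_!\colon \alpha_!(M_a)\overset{\sim}{\to}M_b$ shows that $(N_\alpha)_!\colon \alpha_!(N_a)\to N_b$ is \emph{automatically} a monomorphism. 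Thus $N$ is cartesian precisely when each $(N_\alpha)_!$ is in addition an epimorphism, i.e.\ when every element of $N_b$ lies in the image of $\alpha_!(N_a)\to M_b$. Second, because $M$ is cartesian we may identify $M_b$ with $\alpha_!(M_a)$, so any set of at most $\kappa$ elements of $M_b$ can be ``covered from below'' by a small pure submodule of $M_a$, via Corollary \ref{coro_pre_gen}.

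Next I would describe one enlargement step. Given a submodule $N\le M$ of type $\le\kappa$, I produce $E(N)\supseteq N$, again of type $\le\kappa$, by three closures followed by a structural closure: (i) for every $d\in\Ob\C$, purify $N_d$ inside $M_d$ to a pure subgroup $P_d$ of cardinality $\le\kappa$ with $N_d\subseteq P_d$, using the purification Theorem \ref{card_purification}; (ii) for every $(\alpha\colon a\to b)\in\C$, apply Corollary \ref{coro_pre_gen} to the at most $\kappa$ elements of $N_b$, viewed in $M_b\cong\alpha_!(M_a)$, to obtain $M'_\alpha\le M_a$ of type $\le\kappa$, pure in $M_a$, with each such element lying in $\alpha_!(M'_\alpha)$; (iii) adjoin all the $P_d$ and all the $M'_\alpha$ to the corresponding components and close under the structural maps by iterating Lemma \ref{tech_gen} over the at most $\kappa$ elements thereby introduced. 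Since $|\mor\C|\le\kappa$ and every ingredient has type $\le\kappa$, Lemma \ref{card_lemma} keeps $|E(N)|\le\kappa$.

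I would then take $N^0$ to be any submodule of type $\le\kappa$ with $x\in N^0_c$ (Lemma \ref{tech_gen} applied to $0\le M$ and $x$), set $N^{n+1}=E(N^n)$, and define $M_x=\bigcup_{n\in\N}N^n$. As an increasing union of submodules, $M_x$ is a submodule containing $x$, with $|M_x|\le\kappa$ because $\kappa\ge|\N|$. For purity, the subgroups $P_d$ produced at stage $n$ satisfy $N^n_d\subseteq P_d\subseteq N^{n+1}_d$, so they are cofinal and $(M_x)_d=\bigcup_n P_d$ is a directed union of pure subgroups of $M_d$; since the tensor product is cocontinuous (Lemma \ref{basic_tensor}(1)), such a union is again pure, giving $(M_x)_d$ pure in $M_d$. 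For cartesianness, any element of $(M_x)_b$ already lies in some $N^n_b$, whence step (ii) at stage $n$ places it in $\alpha_!(M'_\alpha)$ with $M'_\alpha\subseteq N^{n+1}_a\subseteq (M_x)_a$; thus it lies in the image of $\alpha_!((M_x)_a)\to M_b$, so each $((M_x)_\alpha)_!$ is epi, and by the first observation it is an isomorphism.

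The main obstacle is exactly the coupling between the two closure requirements: purifying a component introduces elements that then have to be covered from below and re-closed under the structural maps, while those closures in turn enlarge other components and may destroy the purity just achieved. This is why a single pass cannot succeed and the construction must be iterated countably often; the payoff is that purity is recovered in the colimit through the cofinal chain of pure subgroups $P_d$, and cartesianness through the cofinal appearance of every element. The points that deserve genuine care are the element-counting in steps (i)--(iii), ensuring each stage stays within $\kappa$ (this is where $\kappa\ge|\mor R_c|$ and $\kappa\ge|\mor\C|$ enter), and the verification that the identification $M_b\cong\alpha_!(M_a)$ used in step (ii) is compatible with the structural maps, so that the elements ``covered from below'' are indeed the intended ones.
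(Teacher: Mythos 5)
Your proof is correct, but it is organized quite differently from the paper's. The paper well-orders $\mor \C$ and runs a transfinite induction over $\N\times\mor\C$ with the lexicographic order, processing \emph{one} arrow $\alpha\colon c_1\to c_2$ per stage: an inner $\omega$-iteration alternates Lemma \ref{tech_coh} (which produces, at the two components $c_1,c_2$, a genuinely cartesian submodule over the restricted representation $R_{c_1}\to R_{c_2}$ of $[1]$) with closure under all structural maps, so that each $Y_{n,\alpha}$ is exactly cartesian at $\alpha$; cartesianness of the union at a fixed $\beta$ is then extracted from the cofinality of the subfamily $\{Y_{n,\beta}\}_{n\in\N}$, the map $\beta_!(M_x)_{c_1}\to(M_x)_{c_2}$ being a filtered colimit of isomorphisms. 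You instead run a single $\omega$-chain in which every enlargement step treats \emph{all} arrows and all components simultaneously, and no individual stage is cartesian at any arrow; what makes this work is your preliminary observation --- not made explicit in the paper --- that for $R$ right flat and $M$ cartesian, the comparison map $(N_\alpha)_!$ of \emph{any} submodule $N\le M$ is automatically monic (exactness of $\alpha_!$ plus the commuting square with the isomorphism $(M_\alpha)_!$), so only surjectivity must be arranged, and surjectivity is a colimit-achievable property obtained by covering elements from below via Corollary \ref{coro_pre_gen}. This buys you a plain countable induction with no well-ordering of $\mor\C$ and no cofinality argument, at the price of the extra structural lemma; the paper's route keeps the cartesian condition concrete at every stage by localizing it to a representation of $[1]$. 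Both arguments share the same underlying ingredients: Corollary \ref{coro_pre_gen}, the purification Theorem \ref{card_purification}, closure under structural maps via Lemma \ref{tech_gen}, the counting Lemma \ref{card_lemma}, and purity of directed unions of pure submodules (which you correctly justify by cocontinuity of the tensor product and exactness of filtered colimits in $\Ab$). One small point deserving mention: in your step (i) you apply Theorem \ref{card_purification} to $N_d$ of size $\le\kappa$, whereas that theorem is stated for $\lambda=\max\{|\N|,|\mor R_d|\}$, which may be strictly smaller than $\kappa$; the result does hold for every cardinal $\ge\lambda$ (this is how \cite[Lemma 1]{flat_cover_conj} is invoked), and the paper itself relies on this stronger form inside the proof of Corollary \ref{coro_pre_gen}, so your usage is consistent with the paper's own.
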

\begin{proof}
Choose a well-ordering for $\mor \C$ and notice that any initial segment in $\mor \C$ has cardinality $\leq \kappa$. We consider also the poset $\N\times \mor \C$ with the lexicographical order. We proceed by induction on $\N\times \mor \C$ to construct a family $\{Y_{n,\alpha} : \alpha\in \mor \C\}$ of $R$-submodules  of $M$ such that:
\begin{enumerate}[\rm (1)]
\item  $x \in (Y_{0,\alpha_0})_c$, where $\alpha_0$ is the least element of $\mor \C$;
\item  if $(n,\alpha)\leq (m,\beta)\in\N\times \mor \C$ then $Y_{n,\alpha}\leq Y_{m,\beta}$;
\item  given $(n,\alpha\colon c\to d)\in\N\times\mor \C$, the module $(Y_{n,\alpha})_c\to \alpha^*(Y_{n,\alpha})_d$ is cartesian (when considered as a module over the representation $R_c\overset{\alpha}{\to} R_d$ of $[1]$) and $(Y_{n,\alpha})_c\leq M_c$ pure for all $c\in\Ob\C$;
\item  $|Y_{n,\alpha}|\leq \kappa$ for all $(n,\alpha)\in\N\times\mor \C$.
\end{enumerate}
Our $M_x$ will be just the direct union (in $\mod R$) $M_x=\bigcup_{(n,\alpha)\in \N\times\mor \C}Y_{n,\alpha}$, one can deduce easily by (1)--(4) that $x\in (M_x)_c$, that $(M_x)_c$ is pure in $M_c$ and that $|M_x|\leq \kappa$. The fact that $M_x$ is cartesian can be shown as follows:  let $(\beta\colon c_1\to c_2)\in\mor \C$ and notice that the countable family $\{Y_{n,\beta}:n\in \N\}$ is cofinal in $\{Y_{n,\alpha}:(n,\alpha)\in\N\times \mor \C\}$, so that there is a canonical isomorphism $M_x\cong \bigcup_{n\in\N}Y_{n,\beta}$. In particular, the map $\beta_!(M_x)_{c_1}\to (M_x)_{c_2}$ is an isomorphism as it can be identified with the direct limit of the isomorphisms $\beta_!(Y_{n,\beta})_{c_1}\to (Y_{n,\beta})_{c_2}$ with $n\in\N$ (here we are using also that $\beta_!$, being a left adjoint, commutes with colimits).

Thus, let us construct inductively the family $\{Y_{n,\alpha}:(n,\alpha)\in\N\times \mor \C\}$. We start constructing $Y_{0,\alpha_0}$, where $\alpha_0\colon c_1\to c_2$ is the least element in $\mor \C$. We proceed by steps:
\begin{enumerate}[\rm (i)]
\item first we let $X^0_{0}(d)=\{M_\alpha(x):\alpha\in \hom_\C(c,d)\}$,
so that $X_0^0(d)$ is a family of elements of $M_d$, for all $d\in \Ob\C$, such that $X_0^0(d)\leq \kappa$;
\item secondly, we use Lemma \ref{tech_coh} to construct $X^0_{1}(c_i)\leq M_{c_i}$ pure so that $X^0_{1}(c_1) \to \alpha_0^*(X^0_{1}(c_2))$ is a cartesian ($R(c_1)\to R(c_2)$)-submodule of $M_{c_1} \to \alpha_0^*M_{c_2}$, such that $X^0_0(c_i)\subseteq X^0_{1}(c_i)$ (for $i=1,2$). Furthermore, if $d\neq c_1,c_2$, we let $X^0_{1}(d)=X^0_{0}(d)$. Notice thus that $X^0_0(d) \subseteq  X^0_{1}(d)$, and $|X^0_{1}(d)|\leq \kappa$ for all $d\in \Ob\C$;
\item now let $T^0_{0,\alpha_0}$ be the minimal $R$-submodule of $M$ such that $X^0_{1}(d)\subseteq (T^0_{0,\alpha_0})_d$, for all $d\in \Ob\C$;
\item for all $n\in\N$, we iterate inductively the above three steps to construct $T^n_{0,\alpha_0}$. Indeed, suppose we already constructed $T^n_{0,\alpha_0}$ for some $n\in\N$ and proceed as follows 
\begin{itemize}
\item let $X^{n+1}_{0}(d)=(T^n_{0,\alpha_0})_d$ for all $d\in\Ob\C$;
\item construct $X^{n+1}_{1}(c_1) \to \alpha_0^*(X^{n+1}_{1}(c_2))$ using Lemma \ref{tech_coh}, and we let $X^{n+1}_{1}(d)=X^{n+1}_{0}(d)$ for $d\neq c_1,c_2$
\item finally let $T^{n+1}_{0,\alpha_0}$ be the minimal $R$-submodule of $M$ such that $X^{n+1}_{1}(d)\subseteq (T^{n+1}_{0,\alpha_0})_d$, for all $d\in \Ob\C$;
\end{itemize}
\item in this way we constructed a chain $\{T^n_{0,\alpha_0}:n\in\N\}$ of $R$-submodules of $M$. We define $Y_{0,\alpha_0}$ to be the union of this ascending chain.
\end{enumerate}
It is clear that $Y_{0,\alpha_0}$ satisfies conditions (1) and (4), while condition (2) is empty in this case. One should only verify property (3), that is, the morphism $(\alpha_0)_!(Y_{0,\alpha_0})_{c_1}\to (Y_{0,\alpha_0})_{c_2}$ is an isomorphism but this follows by construction, in fact there are ascending chains of $R(c_i)$-modules ($i=1,2$)
$$X^{0}_{1}(c_i)\leq (T^{0}_{0,\alpha_0})_{c_i}\leq X^{1}_{1}(c_i)\leq (T^{1}_{0,\alpha_0})_{c_i}\leq \ldots \leq X^{n}_{1}(c_i)\leq (T^{n}_{0,\alpha_0})_{c_i}\leq X^{n+1}_{1}(c_i)\leq \ldots$$
showing that the map $(\alpha_0)_!(Y_{0,\alpha_0})_{c_1}\to (Y_{0,\alpha_0})_{c_2}$
 is in fact the direct union of the isomorphisms $(\alpha_0)_!X^{n}_{1}(c_1) \to X^{n}_{1}(c_2)$ (for $n\in\N$), and it is therefore an isomorphism.
 
\medskip
Now that we have constructed $Y_{0,\alpha_0}$, let us proceed with the inductive step. Indeed, let $(m,\beta\colon c_1\to c_2)\in\N\times\mor \C$ and suppose we have already constructed $Y_{n,\alpha}$ for all $(n,\alpha)< (m,\beta)$. Then we construct $Y_{m,\beta}$ using the same steps as we did for $Y_{0,\alpha}$, with the obvious change of notation from $T^0_{0,\alpha_0}$ to $T^0_{m,\beta}$ in steps (iii) to (v) and changing step (i) by the following
\begin{enumerate}[\rm (i$'$)]
\item first we let $X^0_{0}(d)=\bigcup_{(n,\alpha)< (m,\beta)}(Y_{n,\alpha})_d$,
so that $X_0^0(d)$ is a family of elements of $M_d$, for all $d\in \Ob\C$, such that $X_0^0(d)\leq \kappa$.
\end{enumerate}
In the same way as we did for $Y_{0,\alpha_0}$, one shows that $Y_{m,\beta}$ satisfies properties (1), (3) and (4), while property (2) holds by construction and (i$'$).
\end{proof}

\begin{corollary}
In the setting of Theorem \ref{cart_groth}, $\cmod R$ has a generator.
\end{corollary}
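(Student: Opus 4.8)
The plan is to adapt the argument that produced a generator for $\mod R$ (the corollary following Lemma \ref{tech_gen}), substituting Proposition \ref{prop.pure} for the role played there by Lemma \ref{tech_gen}. I keep the cardinal $\kappa=\sup\{|\N|,|\mor\C|,|\mor R_c|:c\in\Ob\C\}$ of Proposition \ref{prop.pure}. First I would argue that the cartesian modules $N$ with $|N|\leq\kappa$ form, up to isomorphism, a \emph{set} $\F$: such an $N$ amounts to a family of additive functors $N_c\colon R_c^{op}\to\Ab$ with $\sum_{c}|N_c|\leq\kappa$, together with the structural morphisms $N_\alpha$ and the cartesian identifications, and the number of such structures is bounded by a function of $\kappa$. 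I would then set $G=\bigoplus_{N\in\F}N$; since coproducts of cartesian modules are again cartesian (as recorded in the proof of Theorem \ref{cart_groth}), $G$ is an object of $\cmod R$.

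It remains to verify that $G$ is a generator. I would first note that $\cmod R$ is cocomplete, being an abelian subcategory of $\mod R$ closed under coproducts and cokernels; moreover cokernels in $\cmod R$ agree with those computed in $\mod R$, so a morphism of $\cmod R$ is an epimorphism exactly when it is a componentwise surjection. Hence it suffices to prove that for every $M\in\cmod R$ the images of the morphisms $G\to M$ in $\cmod R$ sum to $M$, equivalently that the canonical map $G^{(\hom_{\cmod R}(G,M))}\to M$ is an epimorphism.

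Fix $M\in\cmod R$. For each $c\in\Ob\C$ and each element $x\in M_c$, Proposition \ref{prop.pure} supplies a cartesian submodule $M_x\leq M$ with $|M_x|\leq\kappa$ and $x\in(M_x)_c$. Choosing an isomorphism $M_x\cong N$ with $N\in\F$ and composing with the inclusion $M_x\hookrightarrow M$ gives a morphism $N\to M$ in $\cmod R$, which by the universal property of the coproduct extends (by zero on the remaining summands) to a morphism $f_x\colon G\to M$ whose image is exactly $M_x$. Since the submodules $M_x$ collectively contain every element of every component $M_c$, their sum — and therefore $\sum_{f\in\hom_{\cmod R}(G,M)}\Im(f)$ — is all of $M$. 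Thus the canonical map is a componentwise surjection, hence an epimorphism in $\cmod R$, and $G$ is the desired generator.

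The genuine content of the corollary is Proposition \ref{prop.pure}, which is already in hand, so the remaining work is essentially formal bookkeeping. I expect the only points that merit a moment's care to be the skeletal smallness of the type-$\kappa$ cartesian modules (so that $\F$, and hence $G$, exist as genuine objects) and the observation that a submodule of $M$ meeting every element of every $M_c$ must coincide with $M$; both follow immediately from the componentwise description of subobjects of $\cmod R$ inside $\mod R$.
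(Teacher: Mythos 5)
Your proof is correct and takes essentially the same approach as the paper: both invoke Proposition \ref{prop.pure} to express every cartesian module as the sum of its cartesian submodules of type $\kappa$, and then take the coproduct of a set $\F$ of isomorphism representatives of such modules as the generator. The additional points you spell out (skeletal smallness of the type-$\kappa$ cartesian modules, and the identification of epimorphisms in $\cmod R$ with componentwise surjections via the agreement of cokernels with $\mod R$) are exactly the routine verifications the paper leaves implicit.
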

\begin{proof}
Let $\kappa=\sup\{|\N|,|\mor \C|,|\mor R_c|:c\in \Ob \C\}$. By the above proposition, given a cartesian right $R$-module $M$,  $c\in \Ob \C$ and $x\in M_c$, there is a cartesian submodule $M_x\leq M$ of type $\kappa$ such that $x\in(M_x)_c$. Then, it is easily seen that 
$\sum_{c\in \Ob\C}\sum_{x\in M_c}M_x=M$. Thus, any cartesian $R$-module $M$ is the sum of its cartesian $R$-submodules of type $\kappa$. The statement can then be derived taking a set $\F$ of representatives of the cartesian modules of type $\kappa$, so that $\bigoplus_{S\in \F}S$ generates $\cmod R$. 
\end{proof}

\section{The representation theorem}\label{sect.repres}

\subsection{The induced change of base}\label{change_of_base}

Let $R$ and $S$ be two rings, let $\phi\colon R\to S$ be an endomorphism and consider the change of base adjunction:
$$\phi_!\colon \mod R\rightleftarrows\mod S\colon \phi^*\,.$$ 
Using the right exactness of $\phi_!$ it is not difficult to see that $\phi_!(F)$ is finitely presented provided $F$ is finitely presented. In this way we obtain a functor
$$\Phi=(\phi_!)\restriction_{\fp R}\colon \fp R\to \fp S\,.$$
Since $\Phi$ is an additive functor between additive categories, it  induces a  change of base adjunction
$$\Phi_!\colon ((\fp R)^{op},\Ab)\rightleftarrows((\fp S)^{op},\Ab)\colon \Phi^*\,.$$
%Just to be more explicit, remember that $\Phi^*$ acts just pre-composing $\Phi$, and $\Phi_!$ sends a functor $T:(\fp R)^{op}\to\Ab$ to the functor $\Phi_!T:(\fp S)^{op}\to\Ab$ that associates to a finitely presented right $S$-module $G$ the following Abelian group
%$$\Phi_!T(G)=T(-)\otimes_{\fp R}\hom_S(G,\Phi(-))\,.$$
%
%Remember that the contravariant Yoneda functor is defined as follows: $Y:\mod R\to ((\fp R)^{op},\Ab)$, such that $M\mapsto \hom_R(-,M)\restriction_{\fp R}$.
We obtain the following squares:
\begin{equation}\xymatrix{
((\fp R)^{op},\Ab)\ar[r]^{\Phi_!}& ((\fp S)^{op},\Ab) &&((\fp R)^{op},\Ab)& ((\fp S)^{op},\Ab)\ar[l]_{\Phi^*} \\
\mod R\ar[r]^{\phi_!}\ar[u]&\mod S\ar[u] &&\mod R\ar[u]&\mod S\ar[l]_{\phi^*}\ar[u]
}\end{equation}
where the vertical arrows are the contravariant Yoneda embeddings $Y$. 

\begin{proposition}\label{commuting_squares}
In the above setting, given $N\in\mod S$ and $M\in\mod R$,  
\begin{enumerate}[\rm (1)]
\item $\Phi^*Y(N)\cong Y\phi^*(N)$;
\item $\Phi_!Y(M)\cong Y\phi_!(M)$.
\end{enumerate}
Furthermore, the isomorphism in (1) extends to a natural isomorphism of functors $\Phi^*Y\cong Y\phi^*$. 
% re is a natural isomorphism of functors $\omega:\Phi^*Y\tilde\Longrightarrow Y\phi^*$, defined by
%$$\omega_N:\Phi^*Y(N)[=\hom_S(\Phi(-),N)\restriction_{\fp R}]\to Y\phi^*(M)[=\hom_R(-,\phi^*(N))\restriction_{\fp R}]$$
%such that, for all $F\in \fp R$, $(\omega_N)_F(\phi:F\otimes_R S\to N)=(\psi:F\to N\otimes_SS)$, such that $\psi(f)=\phi(f\otimes 1_S)\otimes 1_S$. \NB usare l'hom invece del tensore nel dimenticante???\NB
\end{proposition}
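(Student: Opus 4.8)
The plan is to obtain (1) directly from the change of base adjunction and then bootstrap (2) from (1). For (1), I would unwind both functors objectwise. For $N\in\mod S$ and $K\in\fp R$, using $\Phi=(\phi_!)\restriction_{\fp R}$ one computes $\Phi^*Y(N)(K)=Y(N)(\Phi K)=\hom_S(\phi_!K,N)$, whereas $Y\phi^*(N)(K)=\hom_R(K,\phi^*N)$. These are identified by the adjunction bijection $\hom_S(\phi_!K,N)\cong\hom_R(K,\phi^*N)$ of the pair $(\phi_!,\phi^*)$. Naturality of this bijection in $K$ makes it a morphism in $((\fp R)^{op},\Ab)$, and naturality in $N$ upgrades the whole family to a natural isomorphism of functors $\Phi^*Y\cong Y\phi^*$; this is the entire content of (1).

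For (2), write $\theta\colon\Phi^*Y\xrightarrow{\sim}Y\phi^*$ for the isomorphism just produced. Since $(\phi_!,\phi^*)$ and $(\Phi_!,\Phi^*)$ are adjunctions, $\theta$ has a mate
$$\bar\theta\colon\Phi_!Y\longrightarrow Y\phi_!,$$
namely the composite $\epsilon^{\Phi}_{Y\phi_!}\circ\Phi_!(\theta^{-1})_{\phi_!}\circ\Phi_!\big(Y(\eta^{\phi})\big)$, where $\eta^{\phi}$ is the unit of $(\phi_!,\phi^*)$ and $\epsilon^{\Phi}$ the counit of $(\Phi_!,\Phi^*)$. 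The remaining task is to prove $\bar\theta$ is an isomorphism, which I would do by checking it on finitely presented modules and then extending along filtered colimits.

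On $K\in\fp R$ the comparison is forced to be an isomorphism. Here $Y(K)=\hom_R(-,K)\restriction_{\fp R}$ is exactly the representable functor $H_K$ in $((\fp R)^{op},\Ab)$; extension of scalars sends representables to representables, $\Phi_!H_K\cong H_{\Phi K}$ (this is the computation $\phi_!(H_a)\cong H_{\phi(a)}$ already carried out in the proof of the earlier lemma, applied to $\Phi$); and $\Phi K=\phi_!K$ lies in $\fp S$, so that $Y\phi_!(K)=H_{\Phi K}$ as well. Thus both sides agree with $H_{\Phi K}$ and $\bar\theta_K$ is the canonical identification. To reach an arbitrary $M\in\mod R$, I would write $M=\varinjlim K_i$ as a filtered colimit of finitely presented modules and use that both $\Phi_!Y$ and $Y\phi_!$ preserve filtered colimits: $\phi_!$ and $\Phi_!$ are left adjoints, hence cocontinuous, while each $Y$ preserves filtered colimits because $\hom$ out of a finitely presented object does and colimits in these functor categories are computed objectwise. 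Then $\bar\theta_M=\varinjlim\bar\theta_{K_i}$ is a filtered colimit of isomorphisms, hence an isomorphism.

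The main obstacle is confirming that the abstractly defined mate $\bar\theta_K$ really coincides, on a representable $H_K$, with the canonical chain $\Phi_!H_K\cong H_{\Phi K}=Y\phi_!(K)$: this is where the two units and counits together with the Yoneda identifications must be tracked simultaneously, and it is the only step demanding genuine care. By contrast, part (1), the colimit-preservation bookkeeping, and the density of $\fp R$ in $\mod R$ are all routine.
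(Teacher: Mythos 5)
Your proposal is correct, but for part (2) it takes a genuinely different route from the paper. The paper never constructs a global comparison map at all: it proves (2) objectwise, using the explicit tensor description of the left Kan extension, $\Phi_!Y(M)(G)=\hom_R(-,M)\restriction_{\fp R}\otimes\hom_S(G,\Phi(-))$, together with the identity $H_F\otimes N\cong N(F)$ for representables (Lemma \ref{basic_tensor}), first for finitely presented $F$ and then for arbitrary $M=\varinjlim_i F_i$ by pushing the direct limit through the tensor product and through $\hom_S(G,-)$; so the two arguments share the same ``finitely presented case plus direct-limit bootstrap'' skeleton, but differ in how the finitely presented case is handled and in what the comparison map is. Your mate construction $\bar\theta=\varepsilon^{\Phi}_{Y\phi_!}\circ\Phi_!(\theta^{-1})_{\phi_!}\circ\Phi_!(Y\eta^{\phi})$ buys something the paper's pointwise computation leaves implicit: it produces a single natural transformation $\Phi_!Y\to Y\phi_!$, so that the isomorphism in (2) is automatically natural in $M$ --- a fact the paper later uses in Lemma \ref{cart_coro_Y} (``using the natural isomorphisms $\Phi^*Y\cong Y\phi^*$ and $\Phi_!Y\cong Y\phi_!$'') without ever quite proving it. The price is exactly the step you flag, and one caution: your phrase ``the comparison is forced to be an isomorphism'' is too quick as written, since two objects being abstractly isomorphic to $H_{\Phi K}$ does not make a given map between them invertible --- indeed, mates of isomorphisms are not isomorphisms in general (this is the content of Beck--Chevalley conditions). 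Fortunately the flagged verification does close in two lines: the adjunct of $\bar\theta_K$ under $(\Phi_!,\Phi^*)$ is $\theta^{-1}_{\phi_!K}\circ Y(\eta^{\phi}_K)\colon H_K\to\Phi^*Y(\phi_!K)$, which by Yoneda corresponds to the element of $\hom_S(\phi_!K,\phi_!K)$ obtained as the $(\phi_!,\phi^*)$-adjunct of $\eta^{\phi}_K$, namely $\id_{\phi_!K}$ by the triangle identity; hence $\bar\theta_K$ is precisely the canonical isomorphism $\Phi_!H_K\cong H_{\Phi K}$, and your filtered-colimit argument (both $\Phi_!Y$ and $Y\phi_!$ preserve direct limits, for the reasons you give) finishes the proof. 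With that computation inserted, your argument is complete and arguably slightly stronger than the paper's, since it delivers naturality of (2) explicitly.
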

\begin{proof}
(1) Just by definition, 
$$\Phi^*Y(N)=\hom_S(\Phi(-),N)\restriction_{\fp R}=\hom_S(\phi_!(-),N)\restriction_{\fp R}\ \ \text{ and }\ \ \hom_R(-,\phi^*(N))\restriction_{\fp R}=Y\phi^*(N)\,,$$ so the isomorphism $\Phi^*Y(N)\cong Y\phi^*(N)$ follows by the fact that $\phi_!$ is left adjoint to $\phi^*$. It is now clear that this isomorphism is natural and it extends to maps to give the natural isomorphism $\Phi^*Y\cong Y\phi^*$.
%$\cong\,,$$
%where the isomorphism is the $\omega_N$ described in the statement.\NB  All the verifications are standard since $\varepsilon$ is the counit of the usual tensor-hom adjunction. One proves analogously that $\Phi^*Y\cong Y\phi^*$.

\smallskip\noindent
(2) Let now $F$ be a finitely presented right $R$-module and let us prove that $\Phi_!Y(F)=Y\phi_!(F)$. Indeed, for all $G\in\fp S$,
$$\Phi_!Y(F)(G)=\hom_R(-,F)\otimes \hom_S(G,\Phi(-))\cong\hom_S(G,\Phi(F))=\hom_S(G,\phi_!(F))=Y\phi_!(F)(G)\,.$$
Take now a general right $R$-module $M$ and write it as a direct limit of finitely presented modules $M\cong\varinjlim_{i} F_i$. Then, for any finitely presented module $F$ you get $\hom_R(F,\varinjlim_{i} F_i)\cong\varinjlim_{i} \hom_R(F,F_i)$, so there is an isomorphism $\hom_R(-,\varinjlim_{i} F_i)\restriction_{\fp R}\cong \varinjlim_{i} \hom_{\fp R}(-,F_i)$ in $((\fp R)^{op},\Ab)$. Then,
\begin{align*}
\Phi_!Y(M)(G)&=\hom_R(-,M)\restriction_{\fp R}\otimes \hom_S(G,\Phi(-))\\
&\cong(\varinjlim_{i} \hom_{\fp R}(-,F_i))\otimes \hom_S(G,\Phi(-))\\
&\cong\varinjlim_{i} (\hom_{\fp R}(-,F_i)\otimes \hom_S(G,\Phi(-)))\\
&\cong\varinjlim_{i} (\hom_S(G,\Phi(F_i)))\\
&\cong\hom_S(G,\varinjlim_{i}\Phi(F_i))\\
&\cong \hom_S(G,\phi_!(\varinjlim_{i}F_i))= \hom_S(G,\phi_!(M))\,,
\end{align*}
where we used that the tensor product (of modules), the tensor product of functors and the covariant hom-functor corepresented by a finitely presented object commute with direct limits. 
\end{proof}

As a corollary to the above proposition we obtain that both $\Phi_!$ and $\Phi^*$ preserve flat functors:

\begin{corollary}\label{coro_rest_flat}
In the above notation, $\Phi^*$ restricts to a functor 
$$\Flat((\fp S)^{op},\Ab)\to\Flat((\fp R)^{op},\Ab)$$ and $\Phi_!$ restricts to a functor $\Flat((\fp R)^{op},\Ab)\to \Flat((\fp S)^{op},\Ab)$.
\end{corollary}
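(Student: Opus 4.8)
The plan is to reduce the entire statement to Crawley-Boevey's Representation Theorem (Theorem \ref{CB_rep_Th}), together with the compatibility isomorphisms already established in Proposition \ref{commuting_squares}. Recall that Theorem \ref{CB_rep_Th} identifies $\Flat((\fp R)^{op},\Ab)$ with the essential image of the contravariant Yoneda functor $Y\colon \mod R\to ((\fp R)^{op},\Ab)$, and likewise identifies $\Flat((\fp S)^{op},\Ab)$ with the essential image of $Y\colon \mod S\to ((\fp S)^{op},\Ab)$. So the whole point is that ``being flat'' is the same as ``being isomorphic to some $Y(M)$'', and this characterization holds on both sides.

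First I would treat $\Phi^*$. Given a flat functor $\mathcal N\in\Flat((\fp S)^{op},\Ab)$, Theorem \ref{CB_rep_Th} produces $N\in\mod S$ (concretely $N=\mathcal N(S)$) with $\mathcal N\cong Y(N)$. Applying the natural isomorphism $\Phi^*Y\cong Y\phi^*$ of Proposition \ref{commuting_squares}(1) then gives $\Phi^*(\mathcal N)\cong \Phi^*Y(N)\cong Y\phi^*(N)$. Since $\phi^*(N)\in\mod R$, the object $Y\phi^*(N)$ lies in the essential image of the Yoneda functor for $R$, and hence is flat again by Theorem \ref{CB_rep_Th}. Thus $\Phi^*$ carries $\Flat((\fp S)^{op},\Ab)$ into $\Flat((\fp R)^{op},\Ab)$, and since these are full subcategories the restriction is automatically a functor.

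The argument for $\Phi_!$ is entirely symmetric, using part (2) of Proposition \ref{commuting_squares} in place of part (1). Starting from $\mathcal M\in\Flat((\fp R)^{op},\Ab)$, I would write $\mathcal M\cong Y(M)$ with $M\in\mod R$, invoke $\Phi_!Y(M)\cong Y\phi_!(M)$, and observe that $\phi_!(M)\in\mod S$, so $\Phi_!(\mathcal M)$ is isomorphic to an object in the essential image of the Yoneda functor for $S$ and is therefore flat.

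There is no serious obstacle here: all the technical content has been front-loaded into Proposition \ref{commuting_squares} and into Crawley-Boevey's theorem. The only point demanding a little care is purely bookkeeping, namely keeping track of \emph{which} Yoneda functor (for $R$ or for $S$) is being used at each stage, since the flatness criterion is applied in both categories; and noting that $\phi^*$ and $\phi_!$ genuinely land in $\mod R$ and $\mod S$ respectively, which is immediate as these are the change-of-base functors along $\phi$.
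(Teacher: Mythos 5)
Your proof is correct and is essentially identical to the paper's own argument: the paper likewise takes a flat functor, writes it as $Y(M)$ via Crawley-Boevey's Theorem \ref{CB_rep_Th}, applies the isomorphisms $\Phi^*Y\cong Y\phi^*$ and $\Phi_!Y\cong Y\phi_!$ from Proposition \ref{commuting_squares}, and concludes flatness by Theorem \ref{CB_rep_Th} again, handling $\Phi_!$ ``similarly.'' Your bookkeeping remarks (which Yoneda functor is in play, and that only object-wise isomorphisms are needed) match the paper's intent exactly.
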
 
\begin{proof}
We have to show that, given an object $\mathcal M$ in $\Flat((\fp S)^{op},\Ab)$, then $\Phi^*(\mathcal M)$ is flat. Indeed, by Crawley-Boevey's Theorem $\mathcal M\cong Y(M)$ for some $M\in \mod S$, and so, by Proposition \ref{change_of_base}, $\Phi^*(\mathcal M)\cong \Phi^*Y(M)\cong Y\phi^*(M)$, which is flat by Crawley-Boevey's Theorem. The second statement follows similarly.
\end{proof}

By the Crawley-Boevey's Theorem, if we restrict the codomain of $Y\colon \mod R\to ((\fp R)^{op},\Ab)$ to the full subcategory of flat functors, then $Y$ has a quasi-inverse $Y^{-1}\colon \Flat((\fp R)^{op},\Ab)\to \mod R$. Using also Corollary \ref{coro_rest_flat}, we obtain the following squares:
 
\begin{equation}\xymatrix@C=20pt{
\Flat((\fp R)^{op},\Ab)\ar[r]^{\Phi_!}\ar@/_10pt/[d]& \Flat((\fp S)^{op},\Ab)\ar@/_10pt/[d] &\Flat((\fp R)^{op},\Ab)\ar@/_10pt/[d]& \Flat((\fp S)^{op},\Ab)\ar[l]_{\Phi^*}\ar@/_10pt/[d] \\
\mod R\ar[r]^{\phi_!}\ar@/_10pt/[u]&\mod S\ar@/_10pt/[u] &\mod R\ar@/_10pt/[u]&\mod S\ar[l]_{\phi^*}\ar@/_10pt/[u]
}\end{equation}

\begin{corollary}
In the above notation, the following natural isomorphisms of functors hold true:
$$(\Phi^*)\restriction_{\Flat((\fp S)^{op},\Ab)}\cong Y\phi^*Y^{-1}\ \ \text{ and }\ \ (\Phi_!)\restriction_{\Flat((\fp R)^{op},\Ab)}\cong Y\phi_!Y^{-1}\,.$$
\end{corollary}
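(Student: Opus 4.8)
The plan is to deduce both isomorphisms formally from Proposition \ref{commuting_squares} together with Crawley-Boevey's Theorem \ref{CB_rep_Th}. The crucial input from the latter is that, once the codomain of $Y$ is restricted to flat functors, $Y^{-1}$ becomes a genuine quasi-inverse; in particular the composite $YY^{-1}$ is naturally isomorphic to the identity functor on $\Flat((\fp S)^{op},\Ab)$, and likewise on $\Flat((\fp R)^{op},\Ab)$. Corollary \ref{coro_rest_flat} guarantees moreover that $(\Phi^*)\restriction_{\Flat((\fp S)^{op},\Ab)}$ and $(\Phi_!)\restriction_{\Flat((\fp R)^{op},\Ab)}$ actually land in the flat functor categories, so that both sides of each claimed isomorphism have the same domain and codomain and the statement is well-typed.

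For the first isomorphism I would start from the natural isomorphism $\Phi^*Y\cong Y\phi^*$ of functors $\mod S\to ((\fp R)^{op},\Ab)$, which is precisely the ``furthermore'' clause of Proposition \ref{commuting_squares}. Precomposing (whiskering) both sides with the functor $Y^{-1}\colon \Flat((\fp S)^{op},\Ab)\to \mod S$ yields a natural isomorphism $\Phi^*YY^{-1}\cong Y\phi^*Y^{-1}$. Now whiskering the natural isomorphism $YY^{-1}\cong \id_{\Flat((\fp S)^{op},\Ab)}$ (supplied by Crawley-Boevey) with $\Phi^*$ identifies the left-hand side with $(\Phi^*)\restriction_{\Flat((\fp S)^{op},\Ab)}$, and composing the two isomorphisms gives exactly the desired $(\Phi^*)\restriction_{\Flat((\fp S)^{op},\Ab)}\cong Y\phi^*Y^{-1}$.

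The second isomorphism is obtained by the identical precomposition trick, this time with $Y^{-1}\colon \Flat((\fp R)^{op},\Ab)\to \mod R$: from a \emph{natural} isomorphism $\Phi_!Y\cong Y\phi_!$ one gets $\Phi_!YY^{-1}\cong Y\phi_!Y^{-1}$, and again $YY^{-1}\cong\id$ on the flat functors collapses the left-hand side to $(\Phi_!)\restriction_{\Flat((\fp R)^{op},\Ab)}$. The one point requiring care, and which I expect to be the main (indeed the only real) obstacle, is the naturality of the isomorphism $\Phi_!Y\cong Y\phi_!$, since Proposition \ref{commuting_squares} records naturality explicitly only for the $\Phi^*$-square, stating the $\Phi_!$-isomorphism pointwise. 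To settle this I would observe that both composites $\Phi_!Y$ and $Y\phi_!$ commute with direct limits: $\phi_!$ and $\Phi_!$ are left adjoints, hence cocontinuous, while $Y=\hom_R(-,-)\restriction_{\fp R}$ commutes with direct limits because the objects at which it is evaluated are finitely presented. Since the two functors already agree naturally on finitely presented modules (this is the first half of the proof of Proposition \ref{commuting_squares}, where the isomorphism $\Phi_!Y(F)\cong Y\phi_!(F)$ is a plain Yoneda computation), and every module is a direct limit of finitely presented ones, the pointwise isomorphism assembles into a natural isomorphism of functors. With this naturality in hand, the formal argument of the previous paragraph applies verbatim.
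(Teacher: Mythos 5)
Your proof is correct, and while your treatment of the first isomorphism coincides exactly with the paper's (whisker $\Phi^*Y\cong Y\phi^*$ with $Y^{-1}$ and use $YY^{-1}\cong\id$ on flat functors), your handling of the second isomorphism takes a genuinely different route. The paper sidesteps the naturality issue you identified entirely: it observes that $Y\phi_!Y^{-1}$ is left adjoint to $Y\phi^*Y^{-1}$ (composites of left adjoints, quasi-inverses being two-sided adjoints) and that $(\Phi_!)\restriction_{\Flat((\fp R)^{op},\Ab)}$ is left adjoint to $(\Phi^*)\restriction_{\Flat((\fp S)^{op},\Ab)}$; since the right adjoints are isomorphic by the first part, the left adjoints are isomorphic by uniqueness of adjoints. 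Your argument instead upgrades the pointwise isomorphism $\Phi_!Y(M)\cong Y\phi_!(M)$ of Proposition \ref{commuting_squares}(2) to a natural one, using that both composites preserve direct limits ($\phi_!$ and $\Phi_!$ are left adjoints, and $Y$ preserves filtered colimits because its test objects are finitely presented and colimits in $((\fp S)^{op},\Ab)$ are computed pointwise) and that they agree naturally on the dense subcategory $\fp R$, via the canonical filtered presentation of any module by finitely presented ones; this standard Kan-extension-style density argument is sound, including the needed naturality in $F\in\fp R$ of the co-Yoneda computation. What each approach buys: the paper's argument is shorter and purely formal, but it only produces the isomorphism after restriction to the flat subcategory; yours proves the strictly stronger statement that $\Phi_!Y\cong Y\phi_!$ as functors on all of $\mod R$, which sharpens Proposition \ref{commuting_squares}(2) from an objectwise to a natural isomorphism and then specializes to the corollary by the same whiskering as in the first half.
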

\begin{proof}
For the first isomorphism use the fact that $\Phi^*Y\cong Y\phi^*$ (see Proposition \ref{commuting_squares}) and that, $YY^{-1}\cong \id_{\Flat((\fp S)^{op},\Ab)}$, so that $\Phi^*\cong \Phi^{*}YY^{-1}\cong Y\phi^*Y^{-1}$. For the second isomorphism use the fact that $Y\phi_!Y^{-1}$ is a left adjoint to $Y\phi^*Y^{-1}$ (as the composition of left adjoints is  left adjoint and quasi-inverses are left and right adjoints) and that $(\Phi_!)\restriction_{\Flat((\fp R)^{op},\Ab)}$ is a left adjoint to $(\Phi^*)\restriction_{\Flat((\fp S)^{op},\Ab)}$. Hence, $(\Phi_!)\restriction_{\Flat((\fp R)^{op},\Ab)}\cong Y\phi_!Y^{-1}$, as they are adjoint to isomorphic functors.
\end{proof}

We have now all the background needed to prove the following 

\begin{lemma}\label{cart_coro_Y}
Let $\phi\colon R\to S$ be a ring homomorphism and let $\Phi=(\phi_!)\restriction_{\fp R}\colon \fp R\to \fp S$. Given a morphism $f\colon M\to \phi^* N$, with $M\in\mod R$ and $N\in \mod S$, the following are equivalent:
\begin{enumerate}[\rm (1)]
\item the adjoint morphism $g\colon \phi_!M\to N$ is an isomorphism;
\item the adjoint morphism $G\colon \Phi_!Y(M)\to Y(N)$ to $F=Y(f)\colon Y(M)\to Y(\phi^*N)\cong \Phi^*Y(N)$ is an isomorphism.
\end{enumerate}
%In the setting of Construction \ref{const_[1]}, suppose  we are given a morphism $\phi:M\to \alpha^*N$, with $M\in\mod{R(c)}$ and $N\in\mod {R(d)}$. Let also $\phi'=\alpha^*((\eta_d)_N)\circ \phi:M\to \alpha^*Y_d^{-1}Y_dN$ and 
%$$Y_c\phi':Y_cM\to Y_c\alpha^*Y_d^{-1}Y_dN=A^*Y_dN\,.$$
%Then the adjoint morphism $Y_c(\phi')_!$ to $Y_c(\phi')$ (with respect to $(A_!,A^*)$) is an isomorphism if and only if the adjoint morphism to $\phi$ (with respect to $(\alpha_!,\alpha^*)$) $\phi_!:\alpha_!M\to N$ is an isomorphism.
\end{lemma}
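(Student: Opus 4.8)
The plan is to reduce everything to Crawley--Boevey's Theorem~\ref{CB_rep_Th}, which asserts that $Y\colon \mod S\to \Flat((\fp S)^{op},\Ab)$ is an equivalence of categories; in particular $Y$ reflects isomorphisms, so $g\colon \phi_!M\to N$ is an isomorphism if and only if $Y(g)$ is an isomorphism in $((\fp S)^{op},\Ab)$. Thus it suffices to compare $Y(g)$ with $G$. Write $\theta_N\colon \Phi^*Y(N)\xrightarrow{\sim}Y\phi^*N$ and $\xi_M\colon \Phi_!Y(M)\xrightarrow{\sim}Y\phi_!M$ for the natural isomorphisms of Proposition~\ref{commuting_squares}(1) and (2), so that by construction $F=\theta_N^{-1}\circ Y(f)$. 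The claim I would establish is that
$$G=Y(g)\circ \xi_M\,;$$
since $\xi_M$ is an isomorphism, this immediately yields ``$G$ iso $\iff Y(g)$ iso $\iff g$ iso'', which is exactly the equivalence (1)$\iff$(2).

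To prove the claim I would express both adjoint transposes through the counits of the two change-of-base adjunctions. Denote by $\varepsilon$ and $\varepsilon^{\Phi}$ the counits of $(\phi_!,\phi^*)$ and $(\Phi_!,\Phi^*)$, so that $g=\varepsilon_N\circ \phi_!(f)$ and $G=\varepsilon^{\Phi}_{Y(N)}\circ \Phi_!(F)$. Substituting $F=\theta_N^{-1}\circ Y(f)$ into the second formula, and using the naturality of $\xi$ at the morphism $f\colon M\to \phi^*N$ (which gives $Y(\phi_!f)\circ \xi_M=\xi_{\phi^*N}\circ \Phi_!Y(f)$) to rewrite $Y(g)\circ\xi_M$, one sees that the common factor $\Phi_!Y(f)$ appears on the right of both expressions. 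Hence the identity $G=Y(g)\circ\xi_M$ follows once we establish the single ``mate'' identity
$$\varepsilon^{\Phi}_{Y(N)}\circ \Phi_!(\theta_N^{-1})=Y(\varepsilon_N)\circ \xi_{\phi^*N}\qquad(\star)$$
of morphisms $\Phi_!Y(\phi^*N)\to Y(N)$. This expresses precisely the compatibility of the isomorphisms $\theta$ and $\xi$ of Proposition~\ref{commuting_squares} with the counits of the two adjunctions, i.e. that they are mates of one another.

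Verifying $(\star)$ is the main obstacle. I would carry it out by evaluating both natural transformations on the corepresentable generators $H_a$, with $a\in\Ob(\fp R)$, of $((\fp R)^{op},\Ab)$ and tracing elements: here $\theta_N$ is literally the tensor--hom adjunction isomorphism $\hom_S(\phi_!(-),N)\cong\hom_R(-,\phi^*N)$ used in the proof of Proposition~\ref{commuting_squares}(1), while $\xi$ is assembled from the commutation of $\Phi_!$ with direct limits together with the associativity of the tensor product, as in the proof of Proposition~\ref{commuting_squares}(2). With these explicit descriptions and the standard formulas for $\varepsilon$ and $\varepsilon^{\Phi}$, the identity $(\star)$ becomes a finite diagram chase. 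Alternatively, one may bypass this computation by invoking the Corollary preceding the lemma, which identifies the restriction of $\Phi_!$ (resp.\ $\Phi^*$) to flat functors with $Y\phi_!Y^{-1}$ (resp.\ $Y\phi^*Y^{-1}$), exhibiting $(\Phi_!,\Phi^*)$ on the flat subcategories as an adjunction conjugate to $(\phi_!,\phi^*)$ via the equivalence $Y$; since adjoint transposition is preserved under conjugation by an equivalence, the transpose $G$ of $F$ corresponds to $Y$ applied to the transpose $g$ of $f$, which gives the claim directly.
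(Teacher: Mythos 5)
Your argument is correct and essentially the paper's own: the mate identity $(\star)$ you isolate is precisely the commutativity of the hom-set diagram the paper's proof asserts (built from the adjunction isomorphisms of $(\phi_!,\phi^*)$ and $(\Phi_!,\Phi^*)$ together with the natural isomorphisms $\Phi^*Y\cong Y\phi^*$ and $\Phi_!Y\cong Y\phi_!$ of Proposition \ref{commuting_squares}), after which both proofs conclude that $G$ equals $Y(g)$ composed with an isomorphism and invoke the fact that $Y$ reflects isomorphisms. Your proposed verification of $(\star)$ by chasing elementary tensors $u\otimes v$ (and likewise your conjugate-adjunction shortcut) merely makes explicit what the paper leaves implicit in declaring its diagram commutative.
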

\begin{proof}
Consider the following commutative square:
$$
\xymatrix
{
\hom(YM,\Phi^*YN)\ar[rr]|\cong&&\hom(\Phi_!YM,YN)\\
\hom(YM,Y\phi^*N)\ar[u]|\cong&&\hom(Y\phi_!M,YN)\ar[u]|\cong\\
\hom(M,\phi^*N)\ar[rr]|\cong\ar[u]|\cong&&\hom(\phi_!M,N)\ar[u]|\cong
}
$$
where the horizontal arrows are given by the adjointness of  $(\phi_!,\phi^*)$ and $(\Phi_!,\Phi^*)$, while the vertical arrows are constructed applying the contravariant Yoneda functors and using the natural isomorphisms $\Phi^*Y\cong Y\phi^*$ and $\Phi_!Y\cong Y\phi_!$. By this diagram one can see that the morphism $G$ is $Y(g)$ composed with an isomorphism. Since $Y$ reflects isomorphisms, $G$ is an isomorphism if and only if $g$ is an isomorphism.
\end{proof}

In what follows we will need to work with the fibers of the covariant Yoneda functor  
$$Y^*\colon (\fp R)^{op}\to (\fp R,\Ab)$$ 
that associates to a finitely presented right $R$-module $F$ the covariant hom-functor  $\hom_R(F,-)\restriction_{\fp R}$. In this particular case there is a nice description of the fibers. Indeed, fix an object $H\colon \fp R\to \Ab$ in the functor category, then
\begin{enumerate}[\rm --]
\item the objects of $(\fp R)^{op}_{/H}$ are pairs $(F,f)$, where $F\in\fp R$ and $f\in H(F)$ (in fact, it is possible to identify $\hom_{(\fp R,\Ab)}(\hom_R(F,-),H)$ and $H(F)$);
\item a morphism $T\colon (F,f)\to (F',f')$ in $(\fp R)^{op}_{/H}$ is a homomorphism $T\colon F'\to F$ in $\fp R$ such that $H(T)\colon H(F')\to H(F)$ sends $f'\mapsto f$.
\end{enumerate}

\begin{proposition}\label{induced_exact}
In the above notation, if $\phi_!\colon \mod R\to \mod S$ is exact, then $\Phi_!\colon ((\fp R)^{op},\Ab)\to ((\fp S)^{op},\Ab)$ is exact. 
\end{proposition}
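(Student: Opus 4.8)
The plan is to reduce the statement to a flatness criterion for left $\fp R$-modules and then to verify that criterion using the hypothesis. Recall that, by the very definition of the scalar extension, for every $G\in\fp S$ one has $\Phi_!(M)(G)=M\otimes_{\fp R}K_G$, where $K_G=\hom_{\fp S}(G,\Phi(-))=\hom_S(G,\phi_!(-))\restriction_{\fp R}$ is a left $\fp R$-module. Since kernels and cokernels in $((\fp S)^{op},\Ab)$ are computed objectwise and $\Phi_!$ is right exact (it is a left adjoint), the functor $\Phi_!$ is exact if and only if, for every $G\in\fp S$, the functor $-\otimes_{\fp R}K_G$ is exact, i.e. $K_G$ is a flat left $\fp R$-module. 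So it suffices to prove that each $K_G$ is flat.

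To this end I would invoke the flat-module criterion of \cite{flat_func} in its left-module form: $K_G$ is flat if and only if the fiber $(\fp R)^{op}_{/K_G}$ of the covariant Yoneda functor $Y^*$ is filtered from above. Using the explicit description of these fibers recalled just before the statement, an object of $(\fp R)^{op}_{/K_G}$ is a pair $(F,f)$ with $F\in\fp R$ and $f\in K_G(F)=\hom_S(G,\phi_!F)$, i.e. a morphism $f\colon G\to\phi_!F$ in $\mod S$; and a morphism $(F,f)\to(F',f')$ is a homomorphism $T\colon F'\to F$ in $\fp R$ with $\phi_!(T)\circ f'=f$. This fiber is non-empty (for instance $(0,0)$ is an object), so only the two filtering conditions remain. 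For the first, given $(F_1,f_1)$ and $(F_2,f_2)$ I take $F_3=F_1\oplus F_2\in\fp R$, let $f_3\colon G\to\phi_!F_1\oplus\phi_!F_2=\phi_!(F_1\oplus F_2)$ be the map with components $f_1,f_2$, and let $T_i\colon F_3\to F_i$ be the projections; then $\phi_!(T_i)\circ f_3=f_i$, yielding morphisms $(F_1,f_1)\to(F_3,f_3)$ and $(F_2,f_2)\to(F_3,f_3)$. This step requires no hypothesis.

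The second, and crucial, step is the coequalizing of a parallel pair, and this is where exactness of $\phi_!$ enters. Given $T,T'\colon F_2\to F_1$ in $\fp R$ with $\phi_!(T)\circ f_2=f_1=\phi_!(T')\circ f_2$, set $D=T-T'$, so that $\phi_!(D)\circ f_2=0$; the morphism I must produce is some $S\colon F_3\to F_2$ with $D\circ S=0$ (this translates the desired identity $S\circ T=S\circ T'$ in the fiber) together with $f_3\colon G\to\phi_!F_3$ satisfying $\phi_!(S)\circ f_3=f_2$. I form the kernel $\iota\colon K=\ker(D)\to F_2$ in $\mod R$; since $\phi_!$ is exact it preserves this kernel, so $\phi_!(\iota)$ identifies $\phi_!K$ with $\ker(\phi_!D)$. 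As $\phi_!(D)\circ f_2=0$, the map $f_2$ factors as $\phi_!(\iota)\circ\tilde f$ for a unique $\tilde f\colon G\to\phi_!K$. Writing $K=\varinjlim_j F_3^{(j)}$ as a filtered colimit of finitely presented modules with structure maps $u_j\colon F_3^{(j)}\to K$, and using that $\phi_!$ commutes with colimits together with the finite presentation of $G$, the map $\tilde f$ factors through some $\phi_!F_3^{(j)}$, say $\phi_!(u_j)\circ f_3=\tilde f$ for suitable $f_3\colon G\to\phi_!F_3^{(j)}$. Putting $F_3=F_3^{(j)}$ and $S=\iota\circ u_j$, one checks $\phi_!(S)\circ f_3=\phi_!(\iota)\circ\tilde f=f_2$ and $D\circ S=D\circ\iota\circ u_j=0$, as required.

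The hard part is exactly this coequalizing step: the kernel $K=\ker(D)$ need not be finitely presented, so one cannot argue inside $\fp R$ directly. The argument uses the hypothesis in an essential and twofold way—exactness of $\phi_!$ to guarantee $\phi_!K=\ker(\phi_!D)$, so that $f_2$ lifts to $\phi_!K$, and the local finite presentation of $\mod R$ together with $G\in\fp S$ to push that lift down onto a finitely presented $F_3$. Once both filtering conditions are established, the cited criterion gives flatness of every $K_G$, and hence the exactness of $\Phi_!$.
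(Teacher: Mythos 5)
Your proof is correct and takes essentially the same route as the paper's: both reduce the exactness of $\Phi_!$ to the flatness of each left $\fp R$-module $\hom_S(G,\phi_!(-))\restriction_{\fp R}$ (via evaluation at $G\in\fp S$ and the tensor description of $\Phi_!$), then verify the filtered-from-above criterion of \cite{flat_func} with the same three checks, including the key parallel-pair step. In particular, your treatment of that step—forming $\ker(T-T')$ in $\mod R$, using exactness of $\phi_!$ to identify $\phi_!\ker(T-T')$ with $\ker(\phi_!(T-T'))$, and then using the finite presentation of $G$ to factor the lift through a finitely presented module approximating the kernel—is exactly the paper's argument.
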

\begin{proof}
For any finitely presented right $S$-module $G$ consider the evaluation functor $\mathrm{ev}_G \colon ((\fp S)^{op},\Ab) \rightarrow \Ab$ such that $\mathrm{ev}_G(F(-))=F(G)$. One can see that $\Phi_!$ is exact if and only if the composition $\ev_G\circ \Phi_!$ is exact for all $G\in \fp S$. On the other hand, there is a natural isomorphism of functors $\ev_G\circ \Phi_!\cong (-\otimes_{\fp R} \Hom_S(G,\Phi(-)))$. Thus, we are reduced to verify that the functor
$$\Hom_S(G,\Phi(-))\colon \fp R\to \Ab$$
is flat (see \cite{flat_func}),which is equivalent to say that the fiber $\mathcal F_G=((\fp R)^{op})_{/\Hom_S(G,\Phi(-))}$ of the Yoneda embedding $(\fp R)^{op}\to (\fp R,\Ab)$ over $\Hom_S(G,\Phi(-))\in (\fp R,\Ab)$ is filtered from above. Indeed,  $\mathcal F_G$ is not empty since it contains the object $(0,0\in\Hom_S(G,0))$. Furthermore, given two objects $(A, a\in\Hom_S(G,\Phi(A))$ and $(A',a'\Hom_S(G,\Phi(A'))$, we can consider  the direct product $A\times A'\in\fp R$ with the two canonical projections $\pi\colon A\times A'\to A$ and $\pi'\colon  A\times A'\to A'$. Then, $\pi$ and $\pi'$ induce morphisms in $\mathcal F_G$ respectively from $(A, a\in\Hom_S(G,\Phi(A))$ and from $(A', a'\in\Hom_S(G,\Phi(A'))$ to $(A\times A', (a,a')\in\Hom_S(G,\Phi(A\times A')))$ (here we are using implicitly the fact that $\Hom_S(G,\Phi(A\times A'))\cong \Hom_S(G,\Phi(A))\times\Hom_S(G,\Phi(A'))$).

It remains to check the condition on a pair of parallel morphisms 
$$\alpha_1,\alpha_2\colon (A, a\in\Hom_S(G,\Phi(A))\rightrightarrows(A', a'\in\Hom_S(G,\Phi(A')))\,,$$
that correspond to two morphisms $\alpha_1,\alpha_2\colon A'\rightrightarrows A$ such that $\Phi(\alpha_1)\circ a'=\Phi(\alpha_2)\circ a'=a$. Indeed, we have to find a finitely presented module $B\in \fp R$, a homomorphism $b\colon G\to \Phi(B)$ and a morphism $\beta\colon B\to A'$ such that $\Phi(\beta)\circ b=a'$ and $\alpha_1\beta=\alpha_2\beta$. Let $\alpha=\alpha_1-\alpha_2$ and consider its kernel $k\colon \ker(\alpha)\to A'$ in $\mod R$ (let us stress the fact that $\ker(\alpha)$ may very well not live in $\fp R$). Since $\phi_!$ is exact, it commutes with kernels and so $\ker(\Phi(\alpha))=\phi_!(\ker(\alpha))$. Thus we obtain a morphism $G\to \phi_!(\ker(\alpha))$ such that the following diagram commutes
$$\xymatrix{
\ker(\Phi(\alpha))\ar[rr]&&\Phi(A')\ar[rr]^{\Phi(\alpha)}&& \Phi(A)\\
G\ar@{.>}[u]^{\exists!}\ar[rru]^{a'}
}$$
Now write $\ker(\alpha)$ as a direct limit of finitely presented modules $\ker(\alpha)=\varinjlim_IB_i$ and notice that, since $G$ is finitely presented, there exists $j\in I$ such that the morphism $G\to \phi_!(\ker(\alpha))=\varinjlim_I\Phi(B_i)$ factors through the structural map $\Phi(B_j)\to \phi_!(\ker(\alpha))$. Let $B=B_j$, denote by $b\colon G\to B$ the map we obtained and let $\beta\colon B\to A'$ be the composition of $B\to \ker(\alpha)$ and $\ker(\alpha)\to A'$. It is now easy to check that $\beta\colon (A',a')\to (B,b)$ satisfies the required conditions. 
\end{proof}

\subsection{The induced representation}

Let $R\colon \C\to \Ring$ be a strict representation. The aim of this section is to define an induced representation $R_{fp}\colon \C\to \Add$ and to construct a functor $Y\colon \cmod R\to \cmod {R_{fp}}$.

\begin{definition}\label{ind_rep_def}
Let $R\colon \C\to \Ring$ be a strict representation. We define the pseudofunctor $R_{fp}\colon \C\to \Add$ as follows:
\begin{enumerate}[\rm --]
\item $R_{fp}(c)=\fp {R_c}$, for all $c\in \Ob\C$;
\item $R_{fp}(\alpha)=\alpha_!\restriction_{\fp {R_c}}\colon \fp {R_c}\to \fp {R_d}$, for all $(\alpha\colon c\to d)\in \C$;
\item given $c\overset{\alpha}{\to} d\overset{\beta}{\to} e$, we let $\mu_{\beta,\alpha}\colon R_{fp}(\beta)R_{fp}(\alpha)\to R_{fp}(\beta\alpha)$ be the morphism described in Lemma \ref{functoriality}, that is, given $F\in \fp {R_c}$, 
\begin{align*}\mu_{\beta,\alpha}\colon R_{fp}(\beta)R_{fp}(\alpha)F=(F\otimes_{R_{c}}R_d)\otimes_{R_d}R_e&\longrightarrow F\otimes_{R_c}R_e=R_{fp}(\beta\alpha)F\\
(f\otimes r_1)\otimes r_2&\longmapsto f\otimes R_\beta(r_1)r_2\,;\end{align*}
\item given $c\in \Ob\C$, $\delta_c\colon \id_{R_{fp}(c)}\to R_{fp}(\id_c)$ is the natural transformation such that, for all $F\in\fp {R_{c}}$,
\begin{align*}
(\delta_c)_F\colon \id_{R_{fp}(c)}F=F&\longrightarrow F\otimes_{R_c}R_c=R_{fp}(\id_c)\\
f&\longmapsto f\otimes 1_{R_c}\,.
\end{align*}
\end{enumerate}
\end{definition}

It is an easy exercise on tensor products in categories of modules to check the axioms (Rep.1) and (Rep.2) that make $R_{fp}$ a representation. 

\begin{lemma}\label{always_left_flat}
In the notation of Definition \ref{ind_rep_def}, $R$ is a right flat representation if and only if $R_{fp}$ is right flat.  Furthermore, $R_{fp}$ is always left flat.
\end{lemma}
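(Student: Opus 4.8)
The plan is to reduce the whole statement to a single morphism $\alpha\colon c\to d$ of $\C$ and then compare the two exactness conditions head-on. Fix such an $\alpha$ and abbreviate $\phi=R_\alpha\colon R_c\to R_d$ and $\Phi=R_{fp}(\alpha)=\phi_!\restriction_{\fp {R_c}}\colon \fp {R_c}\to \fp {R_d}$. By Definition \ref{def.flat.rep}, $R$ is right flat exactly when $\phi_!\colon \mod {R_c}\to \mod {R_d}$ is exact for every $\alpha$, while $R_{fp}$ is right flat exactly when $\Phi_!\colon ((\fp {R_c})^{op},\Ab)\to ((\fp {R_d})^{op},\Ab)$ is exact for every $\alpha$. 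Thus the first assertion amounts to the equivalence ``$\phi_!$ exact $\iff \Phi_!$ exact'', which I will prove one implication at a time.

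The forward implication is immediate: if $\phi_!$ is exact then $\Phi_!$ is exact by Proposition \ref{induced_exact}. For the converse I would argue through the contravariant Yoneda functor $Y$. Since $\Hom_{R_c}(F,-)$ is left exact for every $F\in\fp {R_c}$, the functor $Y\colon \mod {R_c}\to ((\fp {R_c})^{op},\Ab)$ preserves monomorphisms (monomorphisms in the functor category are detected objectwise). Hence, given any monomorphism $f\colon A\to B$ in $\mod {R_c}$, the map $Y(f)$ is a monomorphism, and applying the exact functor $\Phi_!$ shows that $\Phi_!Y(f)$ is a monomorphism too. Transporting this along the natural isomorphism $\Phi_!Y\cong Y\phi_!$ of Proposition \ref{commuting_squares}(2), I obtain that $Y(\phi_!f)$ is a monomorphism; evaluating at the free module $R_d\in\fp {R_d}$ and using $\Hom_{R_d}(R_d,-)\cong\id$ identifies this component with $\phi_!f$ itself, so $\phi_!f$ is a monomorphism. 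As $\phi_!$ is always right exact, this proves that $\phi_!$ is exact, i.e. that $R$ is right flat.

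For the final statement I would show that each $\Phi$ is left flat with no hypothesis on $R$. By definition $\Phi$ is left flat iff $\Phi^{op}$ is right flat, i.e. iff the extension of scalars for left modules $\Phi_!^{\ell}\colon (\fp {R_c},\Ab)\to (\fp {R_d},\Ab)$ is exact; since exactness in a functor category is objectwise, this holds iff $\ev_G\circ\Phi_!^{\ell}$ is exact for every $G\in\fp {R_d}$. Exactly as in the proof of Proposition \ref{induced_exact} (applied to left modules), one gets a natural isomorphism $\ev_G\circ\Phi_!^{\ell}\cong \Hom_{R_d}(\Phi(-),G)\otimes_{\fp {R_c}}-$, so it suffices to see that the right $\fp {R_c}$-module $\Hom_{R_d}(\Phi(-),G)$ is flat. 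Here the tensor-hom adjunction intervenes: $\Hom_{R_d}(\Phi(F),G)=\Hom_{R_d}(F\otimes_{R_c}R_d,G)\cong\Hom_{R_c}(F,\phi^{*}G)$, naturally in $F$, so that $\Hom_{R_d}(\Phi(-),G)\cong Y(\phi^{*}G)$ is the contravariant Yoneda image of the $R_c$-module $\phi^{*}G$. By Crawley-Boevey's Theorem \ref{CB_rep_Th} every such image is flat, and left flatness of $\Phi$ follows; since $\alpha$ was arbitrary, $R_{fp}$ is always left flat.

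The main obstacle is the converse of the first assertion: descending from exactness of $\Phi_!$ on the large functor category back to exactness of $\phi_!$ on modules. The crux is that $Y$ both preserves monomorphisms (left exactness of $\Hom$) and reflects them (evaluation at the free module of rank one), which, combined with the naturality of $\Phi_!Y\cong Y\phi_!$, lets a monomorphism be carried across intact; one must check that this isomorphism really is natural in the module variable, as is guaranteed by its construction in Proposition \ref{commuting_squares}. I expect no serious difficulty in the ``always left flat'' part beyond correctly bookkeeping the variances in the left Kan extension and recognizing the relevant functor as a Yoneda image.
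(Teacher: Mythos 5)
Your proof is correct, and it diverges from the paper's in two interesting ways. For the first assertion, the paper simply declares the equivalence a ``direct consequence of Proposition \ref{induced_exact}'', even though that proposition only yields the forward implication ($\phi_!$ exact implies $\Phi_!$ exact); your converse argument --- pushing a monomorphism through the exact $\Phi_!$, transporting it along $\Phi_!Y\cong Y\phi_!$, and reflecting it back by evaluating at the rank-one free module $R_d$, where right exactness of $\phi_!$ finishes the job --- supplies exactly the step the paper leaves implicit, and your caveat about naturality of $\Phi_!Y\cong Y\phi_!$ in the module variable is well placed, since Proposition \ref{commuting_squares} states naturality explicitly only for $\Phi^*Y\cong Y\phi^*$ (though the paper itself later uses naturality of both, e.g.\ in the proof of Lemma \ref{cart_coro_Y}). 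For the ``always left flat'' claim, both arguments reduce identically to showing that $\hom_{\fp{R_d}}(R_{fp}(\alpha)(-),G)$ is a flat right $\fp{R_c}$-module for each $G\in\fp{R_d}$, but then genuinely part ways: the paper notes that $(\fp{R_c})^{op}$ is finitely complete, so by the Oberst--R\"ohrl criterion \cite{flat_func} flatness amounts to left exactness, which holds because the functor is the composite of the left exact $R_{fp}(\alpha)^{op}$ with $\hom_{\fp{R_d}}(-,G)$; you instead use the tensor-hom adjunction to identify the functor, naturally in the $\fp{R_c}$-variable, with the Yoneda image $Y(\phi^*G)$ of the honest $R_c$-module $\phi^*G$, and then invoke Crawley-Boevey's Theorem \ref{CB_rep_Th}. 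Both routes are sound: the paper's stays inside the functor-category formalism and needs only the elementary flatness criterion, while yours is more conceptual in that it exhibits the flat functor as the image of a concrete module --- consonant with the representation-theoretic viewpoint of Section \ref{sect.repres} --- at the modest cost of invoking the full representation theorem where a lighter criterion would do.
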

\begin{proof}
The first part of the statement is a direct consequence of Proposition \ref{induced_exact}, so it remains to show that $R_{fp}$ is always left flat. Indeed, given $(\alpha\colon c\to d)\in \C$ we should prove that the functor 
$$\hom_{\fp{R_d}}(R_{fp}(\alpha)(-),-)\otimes_{\fp {R_c}}-\colon (\fp {R_c},\Ab)\to (\fp {R_d},\Ab)$$
is exact. Equivalently, we have to prove that 
$$\hom_{\fp{R_d}}(R_{fp}(\alpha)(-),K)\colon (\fp {R_c})^{op}\to \Ab$$ 
is a flat functor for all $K\in\fp {R_d}$. Since $(\fp {R_c})^{op}$ is finitely complete, it is enough to show that 
$$\hom_{\fp{R_d}}(R_{fp}(\alpha)(-),K)\colon (\fp {R_c})^{op}\to \Ab$$ 
is left exact (see \cite{flat_func}), which is true as $\hom_{\fp{R_d}}(R_{fp}(\alpha)(-),K)$ is the composition of the left exact\footnote{Indeed, $\alpha_!$ is right exact as it is left adjoint, so that $R_{fp}(\alpha)$ is right exact, as it is the restriction of $\alpha_!$ to a finitely cocomplete subcategory, thus, $R_{fp}(\alpha)^{op}$ is left exact.} functor $R_{fp}(\alpha)^{op}\colon (\fp {R_c})^{op}\to (\fp {R_d})^{op}$ followed by  $\hom_{\fp{R_d}}(-,K)\colon (\fp {R_d})^{op}\to \Ab$.
\end{proof}

We can now start to relate the two categories $\mod R$ and $\mod {R_{fp}}$. 

\begin{definition}\label{yoneda_global}
Let $R\colon \C\to \Ring$ be a strict representation and let $Y_c\colon \mod R_c\to ((\fp R_c)^{op},\Ab)$ be the contravariant Yoneda functor, for all $c\in \Ob\C$. We define a functor $Y\colon \mod R\to \mod{R_{fp}}$ as follows:
\begin{enumerate}[\rm --]
\item given a right $R$-module $M$, we let $Y(M)$ be such that $Y(M)_c=Y_c(M_c)$, for all $c\in \Ob\C$. Furthermore, 
$$Y(M)_\alpha\colon Y_c(M_c)\overset{Y_c(M_\alpha)}{\longrightarrow}  Y_c\alpha^*M_d\overset{adjunction}{\longrightarrow} A^*Y_dM_d\,,$$ 
for all $\alpha\in \C$, where the map $Y_c\alpha^*M_d\to A^*Y_cM_c$ sends a morphism $f\in\hom_{R_c}(N,\alpha^*M_d)$ to its adjoint morphism $(\varepsilon_\alpha)_{M_c}\circ\alpha_!(f)\in\Hom_{R_d}(\alpha_!N,M_d)=A^*Y_dM_d$ (here $\varepsilon_\alpha$ is the counit of the adjunction $(\alpha_!,\alpha^*)$ as described at the beginning of Section \ref{change_of_base_sec});
\item given a morphism $\phi\colon M\to N$ in $\mod R$, we let $Y(\phi)\colon Y(M)\to Y(N)$ be the morphism such that $Y(\phi)_c=Y_c(\phi_c)$, for all $c\in \Ob\C$.
\end{enumerate}
\end{definition}

Let us verify that the above definition is correct:

\begin{lemma}
In the notation of Definition \ref{yoneda_global}, $Y\colon \mod R\to \mod{R_{fp}}$ is a well-defined functor.
\end{lemma}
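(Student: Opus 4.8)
The plan is to unwind the definition so that each structural map of $Y(M)$ is expressed through the natural isomorphism supplied by Proposition~\ref{commuting_squares}, and then to reduce the module axioms for $Y(M)$ over the pseudofunctor $R_{fp}$ to the (much simpler) module axioms for $M$ over the \emph{strict} representation $R$. For a morphism $(\alpha\colon c\to d)\in\C$ I write $\Phi_\alpha=R_{fp}(\alpha)=(R_\alpha)_!\restriction_{\fp{R_c}}$, and I let
$$\theta_\alpha\colon Y_c\alpha^*\overset{\sim}{\longrightarrow}(\Phi_\alpha)^*Y_d$$
be the natural isomorphism of Proposition~\ref{commuting_squares}(1) applied to the ring homomorphism $R_\alpha\colon R_c\to R_d$, whose component at $M_d$ sends $f\colon F\to\alpha^*M_d$ to $(\varepsilon_\alpha)_{M_d}\circ\alpha_!(f)$. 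With this notation the structural map is exactly $Y(M)_\alpha=\theta_{\alpha,M_d}\circ Y_c(M_\alpha)$, and in particular it lands, as it must, in $(\Phi_\alpha)^*Y_d(M_d)$. Two of the three things to check are then immediate: functoriality of $Y$ (preservation of identities and composites) follows at once from the componentwise formula $Y(\phi)_c=Y_c(\phi_c)$ and the functoriality of each $Y_c$; and the fact that $Y(\phi)\colon Y(M)\to Y(N)$ is a morphism of $R_{fp}$-modules follows by applying $Y_c$ to the defining square of $\phi$ and then invoking the naturality of $\theta_\alpha$ in its module argument.

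It therefore remains to verify that $Y(M)$ satisfies (Mod.1) and (Mod.2). For (Mod.2) I would use that $R$ is strict, so $M_{\id_c}=\id_{M_c}$ and $\id_c^*=\id$, whence $Y(M)_{\id_c}=\theta_{\id_c,M_c}$; the required identity $(\delta_c*\id)\circ Y(M)_{\id_c}=\id$ then amounts to the compatibility of $\theta_{\id_c}$ with the unit constraint $\delta_c$ of $R_{fp}$. I would check this by evaluating both sides on an element $f\colon F\to M_c$, using the description of $\theta_{\id_c}$ through the counit $\varepsilon_{\id_c}$ on one side and the explicit formula $(\delta_c)_F\colon f\mapsto f\otimes 1_{R_c}$ of Definition~\ref{ind_rep_def} on the other.

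The essential point, and the main obstacle, is (Mod.1) for a pair $c\overset{\alpha}{\to}d\overset{\beta}{\to}e$ with $\gamma=\beta\alpha$. Substituting $Y(M)_\alpha=\theta_{\alpha,M_d}\circ Y_c(M_\alpha)$ (and similarly for $\beta$ and $\gamma$) and using that $M$, being a module over the \emph{strict} $R$, satisfies the un-twisted identity $M_\gamma=\alpha^*M_\beta\circ M_\alpha$ with $\gamma^*=\alpha^*\beta^*$ literally, the axiom collapses to the purely coherence-theoretic identity
$$(\sigma^{fp}_{\beta,\alpha})^{-1}\circ\theta_{\gamma,M_e}=(\Phi_\alpha)^*(\theta_{\beta,M_e})\circ\theta_{\alpha,\beta^*M_e},$$
relating $\theta_\alpha,\theta_\beta,\theta_\gamma$ to the composition constraint $\sigma^{fp}_{\beta,\alpha}$ of $R_{fp}$ (built from $\mu^{fp}_{\beta,\alpha}$ as in Lemma~\ref{change_of_two}), which is what governs (Mod.1) for $R_{fp}$. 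I expect the cleanest route is to evaluate both sides on an arbitrary element $f\colon F\to\gamma^*M_e$ of $Y_c\gamma^*M_e$: the left-hand composite produces $(\varepsilon_\gamma)_{M_e}\circ\gamma_!(f)$ transported along the comparison $\tau_{\beta,\alpha}$ of Lemma~\ref{change_of_two}, while the right-hand composite produces $(\varepsilon_\beta)_{M_e}\circ\beta_!\big((\varepsilon_\alpha)_{M_e}\circ\alpha_!(f)\big)$, and the two get identified precisely by the commuting squares of Lemma~\ref{unit_of_two} together with the explicit formula for $\mu^{fp}_{\beta,\alpha}$ recorded in Definition~\ref{ind_rep_def} and Lemma~\ref{functoriality}. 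Conceptually this identity expresses that the family $(Y_c,\theta_\alpha)$ is a pseudonatural transformation between the base-change pseudofunctors attached to $R$ and to $R_{fp}$, and it is this single coherence — rather than any isolated commuting square — that carries the real content of the lemma.
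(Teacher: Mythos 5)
Your proof is correct and takes essentially the same route as the paper: both reduce (Mod.1) to the counit coherence of Lemma \ref{unit_of_two}, using that strictness of $R$ makes $\sigma_{\beta,\alpha}$ the identity and that $\tau_{\beta,\alpha}$ for $R$ coincides with the composition constraint of $R_{fp}$, then check the identity on elements, treating (Mod.2) and functoriality as routine. (One cosmetic slip: in your right-hand composite the counit component should be $(\varepsilon_\alpha)_{\beta^*M_e}$ rather than $(\varepsilon_\alpha)_{M_e}$.)
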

\begin{proof}
Given a right $R$-module $M$, let us verify that $Y(M)$ is a right $R_{fp}$-module. Let us fix some notation first: given $(\alpha\colon c\to d),\, (\beta\colon d\to e)\in\C$, let $\gamma=\beta\alpha\colon c\to e$ be their composition, let $(\alpha_!,\alpha^*)$, $(\beta_!,\beta^*)$ and $(\gamma_!,\gamma^*)$ be the change of base adjunctions relative respectively to $R_\alpha\colon R_c\to R_d$, $R_\beta\colon R_d\to R_e$ and $R_\gamma\colon R_c\to R_e$, while we let $(A_!,A^*)$, $(B_!,B^*)$ and $(C_!,C^*)$ be the change of base adjunctions relative respectively to $R_{fp}(\alpha)\colon \fp {R_c}\to \fp {R_d}$, $R_{fp}(\beta)\colon \fp {R_d}\to \fp{R_e}$ and $R_{fp}(\gamma)\colon \fp {R_c}\to \fp{R_e}$.

\smallskip
Let us verify first the axiom (Mod.1), that is, we should show that the following diagram commutes:
$$
\xymatrix{
Y_cM_c\ar@/_32pt/[rrrrdd]|{Y_cM_\gamma}\ar[rr]^{Y(M)_\alpha}\ar[rrd]|{Y_cM_\alpha}&&A^*Y_dM_d\ar[rr]^{A^*Y(M)_\beta}\ar[rrd]|{A^*Y_dM_\beta}&&A^*B^*Y_eM_e\\
&&Y_c\alpha^*M_d\ar[u]|{\varepsilon_\alpha\circ\alpha_!(-)}\ar[rrd]|{Y_c\alpha^*M_\beta}&&A^*Y_d\beta^*M_e\ar[u]|{\varepsilon_\beta\circ\beta_!(-)}&&C^*Y_eM_e\ar@/_10pt/[ull]|{\mu_{\beta,\alpha}*\id_{M_e}}\\
&&&&Y_e\alpha^*\beta^*M_e(=Y_e\gamma^*M_e)\ar[u]|{\varepsilon_\alpha\circ\alpha_!(-)}\ar@/_10pt/[rru]|{\varepsilon_\gamma\circ\gamma_!(-)}&&}
$$
In fact, almost everything commutes either by definition, naturality of transformations, or since $R$ is a strict representation; let us only comment on the commutativity of the triangle on the right-hand side. Indeed, by Lemma \ref{unit_of_two}, $\varepsilon_\beta\circ \beta_!(\varepsilon_\alpha)_{\beta^*}=\varepsilon_\gamma\circ(\tau_{\beta,\alpha})_{\gamma^*}\circ \beta_!\alpha_!(\sigma_{\beta,\alpha})=\varepsilon_\gamma\circ (\tau_{\beta,\alpha})_{\gamma^*}$ (in fact, $\sigma_{\beta,\alpha}$ is the identity since $R$ is strict). Furthermore, in our case $\tau_{\beta,\alpha}$ (relative to the representation $R$) and $\sigma_{\beta,\alpha}$ (relative to the representation $R_{fp}$) coincide. Now it is easy to check commutativity on elements.

\smallskip
Verifying the axiom (Mod.2) is just an exercise on definitions and tensor products. Similarly, it is not hard to check that $Y$ respects composition of morphisms and identities. Thus, $Y$ is a well-defined functor as desired.
\end{proof}

The following corollary, which is a direct consequence of Lemma \ref{cart_coro_Y}, shows that the functor $Y\colon \mod R\to \mod {R_{fp}}$ described above, induces by restriction a functor $\cmod R\to \cmod{R_{fp}}$.

\begin{corollary}\label{coro_cart}
In the notation of Definition \ref{yoneda_global}, let $M\in\mod R$. Then, $M$ is cartesian if and only if $Y(M)$ is a cartesian.
\end{corollary}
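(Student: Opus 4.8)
The plan is to reduce the statement to Lemma \ref{cart_coro_Y} by testing the cartesian condition one morphism of $\C$ at a time. Since the condition in Definition \ref{def_cart_mod} is quantified separately over each $(\alpha\colon c\to d)\in\C$, it suffices to fix a single such $\alpha$ and show that the structural morphism $M_\alpha$ has invertible adjoint if and only if the structural morphism $Y(M)_\alpha$ does. To this end I would apply Lemma \ref{cart_coro_Y} with the ring homomorphism $\phi=R_\alpha\colon R_c\to R_d$ (which is a genuine ring homomorphism since $R\colon\C\to\Ring$ is strict and ring-valued), so that $\Phi=R_{fp}(\alpha)=\alpha_!\restriction_{\fp{R_c}}$, taking $f=M_\alpha\colon M_c\to\alpha^*(M_d)$, with $M_c\in\mod{R_c}$ playing the role of ``$M$'' and $M_d\in\mod{R_d}$ that of ``$N$''.

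The one step requiring care is the notational identification of the data of Lemma \ref{cart_coro_Y} with that of Definition \ref{yoneda_global}. First I would note that condition (1) of the lemma, namely that $g=(M_\alpha)_!\colon\alpha_!(M_c)\to M_d$ be an isomorphism, is exactly the cartesian condition for $M$ at $\alpha$. Then I would unwind the morphism $F=Y_c(f)$ of the lemma: by construction it is $Y_c(M_\alpha)$ followed by the adjunction isomorphism $Y_c\alpha^*M_d\cong\Phi^*Y_d(M_d)$, which is precisely the formula defining the structural map $Y(M)_\alpha\colon Y_c(M_c)\to A^*Y_d(M_d)$ in Definition \ref{yoneda_global}. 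Hence the adjoint morphism $G\colon\Phi_!Y_c(M_c)\to Y_d(M_d)$ of condition (2) is nothing but the adjoint $(Y(M)_\alpha)_!$ that appears in the cartesian condition for the $R_{fp}$-module $Y(M)$ at $\alpha$.

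With this matching in place, Lemma \ref{cart_coro_Y} yields immediately that $(M_\alpha)_!$ is an isomorphism if and only if $(Y(M)_\alpha)_!$ is. Letting $\alpha$ range over all morphisms of $\C$, I conclude that $M$ satisfies the cartesian condition at every $\alpha$ if and only if $Y(M)$ does, i.e.\ $M$ is cartesian if and only if $Y(M)$ is cartesian. The only real obstacle is the bookkeeping of the previous paragraph, identifying $F$ with $Y(M)_\alpha$ and $G$ with $(Y(M)_\alpha)_!$; once this is spelled out the result is a direct application of the lemma, with no further computation needed.
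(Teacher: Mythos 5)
Your proof is correct and takes essentially the same route as the paper, which offers no written argument beyond declaring Corollary \ref{coro_cart} ``a direct consequence of Lemma \ref{cart_coro_Y}'': applying that lemma with $\phi=R_\alpha$ and $f=M_\alpha$ separately for each $(\alpha\colon c\to d)\in\C$ is exactly the intended reduction. Your careful matching of the lemma's $F$ with the structural map $Y(M)_\alpha$ of Definition \ref{yoneda_global} (both use the same adjunction isomorphism $Y_c\alpha^*M_d\cong A^*Y_dM_d$ from Proposition \ref{commuting_squares}) and of $G$ with $(Y(M)_\alpha)_!$ supplies precisely the bookkeeping the paper leaves implicit, with no gap.
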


\subsection{The Representation Theorem}

We can now state our main result after introducing the following notation: 

\begin{definition}\label{def_flat_cart_rep}
Let $R\colon \C\to \Ring$ be a strict representation of the small category $\C$ and let $R_{fp}\colon \C\to \Add$ be the induced representation (see Definition \ref{ind_rep_def}). Let $\lFlat(\mod {R_{fp}})$ (resp., $\lFlat(\cmod {R_{fp}})$) be the full subcategory of $\mod R_{fp}$ ($\cmod {R_{fp}}$), whose objects are the (cartesian) modules $M$ such that $M_c\in \Flat((\fp {R_c})^{op},\Ab)$, for all $c\in \Ob\C$. 
\end{definition}

The following corollary is a direct consequence of Proposition \ref{flat_obj_loc}:

\begin{corollary}
In the notation of Definition \ref{def_flat_cart_rep}, let $M\in \mod {R_{fp}}$. If $\C$ is a poset, then $M\in \lFlat(\mod {R_{fp}})$ if and only if $M$ is flat in the sense of Definition \ref{def_flat_module_rep}.
\end{corollary}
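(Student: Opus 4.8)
The plan is to apply Proposition~\ref{flat_obj_loc} verbatim, taking the representation there to be $R_{fp}\colon\C\to\Add$ rather than $R$. First I would unwind the definitions to check that the two conditions in the statement match the two sides of that proposition. On one hand, by Definition~\ref{def_flat_cart_rep} an object $M\in\mod{R_{fp}}$ lies in $\lFlat(\mod{R_{fp}})$ precisely when each component $M_c$ is a flat functor in $((\fp{R_c})^{op},\Ab)$, that is, when $M_c$ is flat as a module over $R_{fp}(c)=\fp{R_c}$, for all $c\in\Ob\C$. On the other hand, ``flat in the sense of Definition~\ref{def_flat_module_rep}'' means that $M\otimes_{R_{fp}}-$ is exact, which is exactly the notion of a flat right $R_{fp}$-module appearing in Proposition~\ref{flat_obj_loc} when that proposition is read for the representation $R_{fp}$.

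With these identifications in place, the forward implication is immediate: the first (unconditional) half of Proposition~\ref{flat_obj_loc}, applied to $R_{fp}$, says that if every $M_c$ is flat in $((\fp{R_c})^{op},\Ab)$ then $M$ is flat. For the converse I would invoke the second half of Proposition~\ref{flat_obj_loc}, whose hypotheses are that $\C$ be a poset and that the ambient representation be left flat. The poset hypothesis is assumed in the statement, and the left-flatness hypothesis is supplied for free: by Lemma~\ref{always_left_flat} the induced representation $R_{fp}$ is \emph{always} left flat, regardless of whether $R$ itself is right flat. Hence the biconditional half of Proposition~\ref{flat_obj_loc} applies and yields that $M$ is flat if and only if each $M_c$ is flat in $((\fp{R_c})^{op},\Ab)$, which is exactly the asserted equivalence.

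The only content beyond bookkeeping is the observation that the left-flatness hypothesis needed for the converse is automatic here, so no flatness assumption on $R$ is required; everything else is a matter of matching the notation of Definition~\ref{def_flat_cart_rep} with that of Proposition~\ref{flat_obj_loc}. I therefore expect no genuine obstacle: the result is a direct specialization of Proposition~\ref{flat_obj_loc} to the representation $R_{fp}$, with Lemma~\ref{always_left_flat} used to discharge its remaining hypothesis.
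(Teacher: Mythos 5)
Your proof is correct and follows essentially the same route as the paper: the paper's own proof likewise observes via Lemma~\ref{always_left_flat} that $R_{fp}$ is always left flat, and then applies Proposition~\ref{flat_obj_loc} (using the poset hypothesis) to the representation $R_{fp}$ to obtain the equivalence. Your extra bookkeeping, splitting off the unconditional forward implication and matching Definition~\ref{def_flat_cart_rep} with the componentwise flatness condition in Proposition~\ref{flat_obj_loc}, is exactly what the paper leaves implicit.
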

\begin{proof}
By Proposition \ref{always_left_flat}, $R_{fp}$ is a left flat representation of a poset, so that Proposition \ref{flat_obj_loc} directly applies to give the desired conclusion.
\end{proof}

\begin{theorem}\label{locally_CB_rep}
Let $\C$ be a small category and let $R\colon \C\to \Ring$ be a strict representation. Then the Yoneda functor $Y\colon \mod R\to \mod R_{fp}$ induces equivalences
$$\mod R\cong \lFlat(\mod {R_{fp}})\ \  \text{ and }\ \ \cmod R\cong \lFlat(\cmod {R_{fp}})\,.$$
\end{theorem}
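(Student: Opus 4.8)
The plan is to prove that the Yoneda functor $Y\colon\mod R\to\mod {R_{fp}}$ of Definition \ref{yoneda_global} is fully faithful with essential image exactly $\lFlat(\mod {R_{fp}})$, and then to deduce the cartesian statement formally from Corollary \ref{coro_cart}. The whole argument is driven by the componentwise Crawley--Boevey equivalences and their compatibility with change of base. Indeed, by Theorem \ref{CB_rep_Th} each contravariant Yoneda functor $Y_c\colon\mod {R_c}\to((\fp {R_c})^{op},\Ab)$ is fully faithful and restricts to an equivalence $Y_c\colon\mod {R_c}\overset{\sim}{\to}\Flat((\fp {R_c})^{op},\Ab)$ with quasi-inverse $Y_c^{-1}$. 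In particular $Y(M)_c=Y_c(M_c)$ is a flat functor for every $M\in\mod R$ and every $c\in\Ob\C$, so $Y$ factors through the full subcategory $\lFlat(\mod {R_{fp}})$.

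First I would establish full faithfulness. A morphism $\phi\colon M\to N$ in $\mod R$ is a family $(\phi_c\colon M_c\to N_c)_c$ commuting with the structural maps, and $Y(\phi)=(Y_c(\phi_c))_c$; faithfulness is then immediate from the faithfulness of each $Y_c$. For fullness, given $\Psi\colon Y(M)\to Y(N)$ in $\mod {R_{fp}}$, each component $\Psi_c$ equals $Y_c(\phi_c)$ for a unique $\phi_c\colon M_c\to N_c$ by fullness of $Y_c$, and it remains to check that $(\phi_c)_c$ commutes with the structural maps of $M$ and $N$. This is precisely the translation, under the natural isomorphism $A^*Y_d\cong Y_c\alpha^*$ of Proposition \ref{commuting_squares} and the definition of $Y(M)_\alpha$, $Y(N)_\alpha$, of the compatibility squares saying that $\Psi$ is a morphism in $\mod {R_{fp}}$; hence $\phi=(\phi_c)_c\in\mod R$ with $Y(\phi)=\Psi$.

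Next comes essential surjectivity onto $\lFlat(\mod {R_{fp}})$, which carries the real work. Given $N\in\lFlat(\mod {R_{fp}})$, I would set $M_c=Y_c^{-1}(N_c)$, so that flatness of $N_c$ gives a natural isomorphism $Y_c(M_c)\cong N_c$. For $\alpha\colon c\to d$ the structural map $N_\alpha\colon N_c\to A^*N_d$ becomes, via these isomorphisms together with $A^*Y_d\cong Y_c\alpha^*$, a morphism $Y_c(M_c)\to Y_c(\alpha^*M_d)$, hence the image $Y_c(M_\alpha)$ of a unique $M_\alpha\colon M_c\to\alpha^*M_d$. The hard part will be verifying that the family $(M_c,M_\alpha)$ satisfies the module axioms (Mod.1) and (Mod.2): the axioms for $N$ involve the coherence isomorphisms $\mu,\delta$ of the \emph{pseudofunctor} $R_{fp}$, whereas $R$ is strict, so one must show that the transported maps obey the strict relations $M_{\beta\alpha}=\alpha^*M_\beta\circ M_\alpha$ and $M_{\id_c}=\id_{M_c}$. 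This is exactly the bookkeeping already carried out in the proof that $Y$ is well defined: the compatibility of the isomorphisms $A^*Y_d\cong Y_c\alpha^*$ with units and counits across composable base changes (Lemma \ref{unit_of_two}) matches the coherence data of $R_{fp}$ on the functor side with the trivial coherence of $R$ after applying $Y_c^{-1}$. Once $M\in\mod R$ is produced, the isomorphisms $Y_c(M_c)\cong N_c$ are compatible with the structural maps, giving $Y(M)\cong N$.

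Finally, for the cartesian case I would invoke Corollary \ref{coro_cart}, which states that $M\in\mod R$ is cartesian if and only if $Y(M)$ is. Thus $Y$ sends $\cmod R$ into the cartesian objects of $\lFlat(\mod {R_{fp}})$, which is $\lFlat(\cmod {R_{fp}})$ by definition; conversely any cartesian $N\in\lFlat(\mod {R_{fp}})$ is $Y(M)$ for some $M$, and that $M$ is cartesian by the ``only if'' direction of Corollary \ref{coro_cart}. Hence the equivalence $\mod R\cong\lFlat(\mod {R_{fp}})$ restricts to $\cmod R\cong\lFlat(\cmod {R_{fp}})$, as desired.
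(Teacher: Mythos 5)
Your proposal is correct and follows essentially the same route as the paper's proof: componentwise application of the Crawley--Boevey equivalences $Y_c$, definition of $M_c=Y_c^{-1}(N_c)$ with structural maps transported along the natural isomorphism $A^*Y_d\cong Y_c\alpha^*$, and reduction of the cartesian statement to Corollary \ref{coro_cart} via full faithfulness. If anything, your fullness argument is slightly more explicit than the paper's (you verify compatibility of the family $(\phi_c)_c$ with the structural maps, which the paper leaves implicit), but the substance is identical.
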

\begin{proof}
Using Theorem \ref{CB_rep_Th}, it is easy to see that the image of $Y$ is in $\lFlat(\mod {R_{fp}})$. Let us show first that $Y$ induces the  equivalence on the left-hand side. Indeed, we have to show that $Y$ is fully faithful and that the image of $Y$ is isomorphism-dense in  $\lFlat(\mod {R_{fp}})$:

\smallskip
{\em Essential surjectivity}: given $\mathcal M\in\lFlat(\mod {R_{fp}})$, $\mathcal M_c\in\Flat((\fp {R_c})^{op},\Ab)$ for all $c\in \Ob \C$ and so we can define $M_c=Y_c^{-1}(\mathcal M_c)=\mathcal M_c(R_c)$ (so that $\mathcal M_c\cong \hom_{R(c)}(-,M_c)\restriction_{\fp {R(c)}}$). 

Let us briefly describe the right $R_c$-module $Y_c^{-1}(A^*\mathcal M_d)=(A^*\mathcal M_d)(R_c)$. Indeed, as an Abelian group this is exactly $\mathcal M_d(R_c\otimes_{R_c}R_d)$. Furthermore, given $r\in R_c$, consider the following homomorphism of right $R_d$-modules:
$$\lambda_r\colon R_c\otimes_{R_c}R_d\to R_c\otimes_{R_c}R_d \ \text{ such that } \ \lambda_r(r_1\otimes r_2)=(rr_1)\otimes r_2\,.$$
Since $\mathcal M_d$ is contravariant, we obtain a right $R_c$-module structure on  $\mathcal M_d(R_c\otimes_{R_c}R_d)$, where the right multiplication by $r\in R_c$ acts as $\mathcal M_d(\lambda_r)$. Similarly, $\alpha^*\mathcal M_d(R_d)$ is a right $R_c$-module, where the right multiplication by $r\in R_c$ acts as $\mathcal M_d(\lambda_{R_\alpha(r)})$. 

Applying $\mathcal M_d$ to the isomorphism of left $R_c$-right $R_d$-modules $R_c\otimes_{R_c}R_d\to R_d$ such that $(r_1\otimes r_2)\mapsto R_\alpha(r_1)r_2$, with obtain an isomorphism of right $R_c$-modules 
$$\xi_\alpha\colon Y_c^{-1}(A^*\mathcal M_d)\to \alpha^*Y_d^{-1}\mathcal M_d\,.$$

We define $M_\alpha\colon M_c\to \alpha^*M_d$ as the following composition:
$$\xymatrix{ M_c=Y_c^{-1}\mathcal M_c\ar[rr]^{Y_c^{-1}\mathcal M_\alpha}&&Y_c^{-1}(A^*\mathcal M_d)\ar[rr]^{\xi_\alpha}&&\alpha^*Y_d^{-1}\mathcal M_d=\alpha^*M_d}\,.$$

With this definition, it is not difficult to show that $M$ is a right $R$-module and that $Y(M)$ is isomorphic to $\mathcal M$ since, being $\mathcal M$ locally flat, $\mathcal M_c\cong Y_cY_c^{-1}\mathcal M_c$, for all $c\in\Ob\C$. Thus, any locally flat right $R_{fp}$-module is isomorphic to a module in the image of $Y$. 

\smallskip
{\em Full faithfulness}: let $\Phi\colon \mathcal M\to \mathcal N$ be a morphism in $\lFlat(\mod {R_{fp}})$. Then, we can define $\phi\colon M\to N$ (with $M_c=Y_c^{-1}\mathcal M_c$ and $N_c=Y_c^{-1}\mathcal M_c$ as above), as the morphism such that $\phi_c=Y_c^{-1}\Phi_c$, then $Y(\phi)=\Phi$, proving that $Y$ is full. For the faithfulness it is enough to use that each $Y_c$ is faithful. 

\smallskip
The second equivalence in the statement  follows using the already proved full faithfulness, and the fact that $Y(M)\in \cmod {R_{fp}}$ if and only if $M\in\cmod R$ (by Corollary \ref{coro_cart}), so that the essential surjectivity proved above restricts to cartesian modules. 
\end{proof}

The above theorem allows for a good purity theory in $\mod R$ and $\cmod R$, as the following definition and corollary show:

\begin{definition}\label{def.pure}
Let $\C$ be a small category and let $R\colon \C\to \Ring$ be a strict representation.  A short exact sequence $0\to N\to M\to M/N\to 0$ in $\mod R$ (resp., $\cmod R$) is said to be {\em pure-exact} provided $0\to N_c\to M_c\to M_c/N_c\to 0$ is pure-exact for all $c\in\Ob\C$.
\end{definition}

In fact, when $\C$ is a poset, the above notion  coincides with the intuitive idea of pure exactness:

\begin{lemma}
Let $\C$ be a poset and $R\colon \C\to \Ring$ be a strict representation. Then a short exact sequence $0\to N\to M\to M/N\to 0$ in $\mod R$ (resp., $\cmod R$) is pure exact if and only if 
$$0\to N\otimes X\to M\otimes X\to M/N\otimes X\to 0$$
is a short exact sequence of Abelian groups for all $X\in \mod R$. 
\end{lemma}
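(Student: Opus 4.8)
The plan is to reduce both implications to a single componentwise statement about ordinary modules over the rings $R_c$. By Definition \ref{def_flat_module_rep} and the remark following it, for any $X\in\lmod R$ the tensored complex
$$N\otimes_R X\longrightarrow M\otimes_R X\longrightarrow (M/N)\otimes_R X$$
is precisely the coproduct over $c\in\Ob\C$ of the complexes
$$N_c\otimes_{R_c}X_c\longrightarrow M_c\otimes_{R_c}X_c\longrightarrow (M/N)_c\otimes_{R_c}X_c\,,$$
with all connecting maps diagonal. Since in $\Ab$ kernels, images and cokernels commute with arbitrary coproducts, a direct sum of sequences of Abelian groups is exact if and only if each summand is; hence $0\to N\otimes_R X\to M\otimes_R X\to (M/N)\otimes_R X\to 0$ is exact precisely when each $0\to N_c\otimes_{R_c}X_c\to M_c\otimes_{R_c}X_c\to (M/N)_c\otimes_{R_c}X_c\to 0$ is. This observation is the engine of both directions.

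For the forward implication, I would assume the sequence is pure-exact, i.e. (Definition \ref{def.pure}) that $0\to N_c\to M_c\to (M/N)_c\to 0$ is pure-exact in $\mod{R_c}$ for every $c$. Given an arbitrary $X\in\lmod R$, I apply purity at each $c$ with the test module $K=X_c\in\lmod{R_c}$; this yields exactness of the $c$-th component sequence, and the reduction above then delivers exactness of the total sequence.

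For the converse, I would fix $c\in\Ob\C$ together with an arbitrary test module $K\in\lmod{R_c}$ and feed the global hypothesis the single left $R$-module $X=\ext^*_c(K)$. This is where the poset hypothesis is used: since $\C$ is a poset and $R$ is strict, $\hom_\C(c,c)=\{\id_c\}$ and $(\id_c)_!\cong\id$, so the $c$-component of $\ext^*_c(K)$ is naturally isomorphic to $K$ itself. The hypothesis makes $0\to N\otimes_R\ext^*_c(K)\to M\otimes_R\ext^*_c(K)\to (M/N)\otimes_R\ext^*_c(K)\to 0$ exact, and by the reduction its $c$-component $0\to N_c\otimes_{R_c}K\to M_c\otimes_{R_c}K\to (M/N)_c\otimes_{R_c}K\to 0$ is exact as well; in particular $N_c\otimes_{R_c}K\to M_c\otimes_{R_c}K$ is injective. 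As $K$ and $c$ were arbitrary, each $N_c$ is pure in $M_c$, so the sequence is pure-exact.

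The only genuinely non-formal point, and the step I expect to require the most care, is the verification that $\ext^*_c(K)$ is a well-defined left $R$-module whose $c$-component recovers $K$. I emphasize that, in contrast with Remark \ref{rem_ex}, this uses $\ext^*_c$ only as a functor into $\lmod R$ and never its exactness, so no flatness of $R$ is invoked. Finally, the cartesian case needs no separate treatment: the tensor product and the notion of pure-exactness (Definition \ref{def.pure}) are identical whether computed in $\cmod R$ or in $\mod R$, and $X$ still ranges over all of $\lmod R$ (the auxiliary module $\ext^*_c(K)$ need not be cartesian), so the same chain of equivalences applies verbatim to a short exact sequence of cartesian modules.
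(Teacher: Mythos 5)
Your proof is correct and is essentially the argument the paper intends (the lemma is stated there without proof, the routine argument being the one you give): the diagonal, componentwise decomposition of $-\otimes_R-$ from Definition \ref{def_flat_module_rep}, purity at each $c$ tested against $K=X_c$ in one direction, and the test modules $X=\ext^*_c(K)$ with $X_c\cong K$ in the other, exactly mirroring the proof of Proposition \ref{flat_obj_loc}. Your two side remarks are also accurate: the converse uses $\ext^*_c$ only as a functor, so no left flatness of $R$ (Remark \ref{rem_ex}) is needed, and the statement's ``$X\in\mod R$'' must be read as $X\in\lmod R$, as forced by the definition of the tensor product.
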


The following corollary is a direct consequence of Theorem \ref{locally_CB_rep} and Corollary \ref{purity_th}:

\begin{corollary}\label{cor.pure}
Let $R\colon \C\to \Ring$ be a strict representation of the small category $\C$. Then a short exact sequence $0\to N\to M\to M/N\to 0$ in $\mod R$ is  pure-exact if and only if $0\to Y(N)\to Y(M)\to Y(M/N)\to 0$ is exact in $\mod {R_{fp}}$. 
\end{corollary}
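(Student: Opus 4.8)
The plan is to reduce the statement to an object-by-object application of the classical purity criterion recorded in Corollary \ref{purity_th}. The starting observation is that exactness in $\mod {R_{fp}}$ is detected componentwise: as established in the proof that a module category over a representation is Abelian, kernels and cokernels in $\mod {R_{fp}}$ are computed componentwise (this uses only that each restriction-of-scalars functor is exact), so a sequence in $\mod {R_{fp}}$ is exact if and only if its image under each evaluation functor $\ev_c$ is exact in $((\fp {R_c})^{op},\Ab)$. The same componentwise formation of cokernels in $\mod R$ gives $(M/N)_c = M_c/N_c$ for every $c\in\Ob\C$.

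Next I would identify the components explicitly. By the very definition of the functor $Y$ (Definition \ref{yoneda_global}), one has $Y(M)_c = Y_c(M_c)$ on objects and $Y(\phi)_c = Y_c(\phi_c)$ on morphisms, where $Y_c\colon \mod{R_c}\to ((\fp {R_c})^{op},\Ab)$ is the ordinary contravariant Yoneda functor. Combined with $(M/N)_c = M_c/N_c$, this shows that the $c$-component of the sequence $0\to Y(N)\to Y(M)\to Y(M/N)\to 0$ is precisely
$$0\to Y_c(N_c)\to Y_c(M_c)\to Y_c(M_c/N_c)\to 0,$$
that is, the image under $Y_c$ of the $c$-component $0\to N_c\to M_c\to M_c/N_c\to 0$ of the original sequence.

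Chaining the first two steps, exactness of $0\to Y(N)\to Y(M)\to Y(M/N)\to 0$ in $\mod{R_{fp}}$ is equivalent to exactness of $0\to Y_c(N_c)\to Y_c(M_c)\to Y_c(M_c/N_c)\to 0$ in $((\fp {R_c})^{op},\Ab)$ for every $c\in\Ob\C$. Applying Corollary \ref{purity_th} to each ring $R_c$, this last condition holds for all $c$ if and only if $0\to N_c\to M_c\to M_c/N_c\to 0$ is pure-exact in $\mod{R_c}$ for all $c$, which is exactly the definition of pure-exactness of $0\to N\to M\to M/N\to 0$ in $\mod R$ (Definition \ref{def.pure}). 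Combining these equivalences yields the claim.

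I expect no serious obstacle: the genuine mathematical content lives entirely in the ring-theoretic Corollary \ref{purity_th}, and the only point requiring care is the bookkeeping of the first two paragraphs, namely that exactness in $\mod{R_{fp}}$, the formation of the quotient $M/N$, and the action of $Y$ are all compatible with evaluation at each $c\in\Ob\C$. Once this compatibility is made precise, the global equivalence factors as a family of the known local equivalences, one for each object of $\C$.
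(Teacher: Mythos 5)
Your proof is correct and is essentially the paper's argument made explicit: the paper offers no written-out proof, stating only that the corollary is ``a direct consequence of Theorem \ref{locally_CB_rep} and Corollary \ref{purity_th}'', and your componentwise unwinding (exactness in $\mod {R_{fp}}$ checked under each $\ev_c$, together with $Y(M)_c = Y_c(M_c)$ and $(M/N)_c = M_c/N_c$) is precisely that intended reduction to the classical ring-by-ring statement. A minor bonus of your write-up is that it shows the full Representation Theorem \ref{locally_CB_rep} is not actually needed for this particular corollary --- only Definition \ref{yoneda_global} and Corollary \ref{purity_th} enter.
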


\section{Pure injective envelopes and the pure derived category}
\subsection{Covers, envelopes and cotorsion pairs}
Throughout this section the symbol $\mathcal G$ will denote an exact category.
\begin{definition}
Let $\mathcal{L}$ be a strictly full subcategory of $\mathcal{G}$. A morphism
$\phi \colon  M \rightarrow L$ of $\mathcal{G}$ is said to be an
\emph{$\mathcal{L}$-preenvelope} of $M$ if $L \in \mathcal{L}$ and if
$\Hom(L,L')\rightarrow \Hom (M,L') \rightarrow 0$ is exact for every
$L' \in \mathcal{L}$. If any morphism $f\colon L \rightarrow L$ such that
$f\circ \phi= \phi$ is an isomorphism, then it is called an
\emph{$\mathcal{L}$-envelope} of $M$. If the class $\mathcal{L}$ is such that
every object has an $\mathcal{L}$-(pre)envelope, then $\mathcal{L}$ is
called a \emph{(pre)enveloping class}. The dual notions are those of
$\mathcal{L}$-(pre)covers and (pre)covering class.
\end{definition}
The (pre)envelopes and (pre)covers use to take the name of the class over which they are constructed. Thus the notions of injective (pre)envelopes, pure-injective (pre)envelopes, flat (pre)covers, etc. appear naturally in the exact categories where the corresponding classes can be defined.

Given a class of objects $\mathcal F$ in $\mathcal G$, we will denote by $\mathcal F^\perp$ the class
$$\mathcal F^\perp=\{C\in \Ob\mathcal G: \ \Ext^1_{\mathcal G}(F,C)=0,\ \forall F\in \mathcal F\},$$ and by $^{\perp}\mathcal F$ the class 
$$^{\perp}\mathcal F=\{G\in \Ob\mathcal G:\ \Ext^1_{\mathcal G}(G,D)=0,\ \forall D\in \mathcal F\}.$$
A pair of classes $(\mathcal F,\mathcal C)$ in $\mathcal G$ is called a \emph{cotorsion pair} provided that $\mathcal F^{\perp}=\mathcal C$ and $^{\perp}\mathcal C=\mathcal F$.  The cotorsion pair $(\mathcal F,\mathcal C)$ is said to be \emph{complete} provided that for each $M\in \mathcal G$ there are admissible short exact sequences
$0\to C\to F\to M\to 0$ and $0\to M\to C'\to F'\to 0$ where $F,F'\in \mathcal F$ and $C,C'\in \mathcal C$. Then $0\to M\to C'$ is a $\mathcal C$-preenvelope of $M$ with cokernel in $\mathcal F$. Such preenvelopes are named \emph{special} preenvelopes. Dually, $F\to M\to 0$ is a special $\mathcal F$-precover.

A class $\mathcal F$ in $\mathcal G$ is said to be \emph{resolving} if it is closed under kernels of admissible epimorphisms in $\mathcal F$.
A cotorsion pair $(\mathcal F,\mathcal C)$ in $\mathcal G$ is called \emph{hereditary} whenever $\Ext^n_{\mathcal G}(F,C)=0$, for all $F\in \mathcal F$, $C\in \mathcal C$ and $n\geq 1$.

A pair of classes $(\mathcal F,\mathcal F^{\perp})$ is \emph{cogenerated by a set $\mathcal S\subseteq \mathcal F$} provided that $C\in \mathcal F^\perp $ if and only if $ \Ext^1_{\mathcal G}(F,C)=0,\ \forall F\in \mathcal S$. If the pair $(\mathcal F,\mathcal F^{\perp})$ is cogenerated by a set, and $\mathcal F$ is closed under extensions and direct limits (indeed a weaker condition suffices) we get from \cite[Theorem 2.5]{EEGO2} that every object $M$ has a special $\mathcal F^{\perp}$-preenvelope. One standard way of getting the ``cogenerated by a set" condition, is to check that the class $\mathcal F$ is \emph{Kaplansky} and closed under direct limits. We recall now the definition of a Kaplansky class:
\begin{definition}
Let $\mathcal G$ be a Grothendieck category. Let $\mathcal F$ be a class of objects in $\mathcal G$ and let $\kappa$ be a regular cardinal. We say that $\mathcal F$ is a \emph{$\kappa$-Kaplansky class} if for each $Z\subseteq F$, with $F\in \mathcal F$ and where $Z$ is $\kappa$-presentable, there exists a $\kappa$-presentable object $S\neq 0$ such that $Z\subseteq S\subseteq F$ and $S,F/S\in \mathcal F$. We say that $\mathcal F$ is a {\em Kaplansky class} if it is $\kappa$-Kaplanksy for some regular cardinal $\kappa$.
\end{definition}
\subsection{Pure injective envelopes}
The first application of Theorem \ref{locally_CB_rep} is the existence of pure-injective envelopes in $\cmod R$. We recall that $E\in \cmod R$ is \emph{pure-injective} if every pure exact sequence  $0\to E\to M\to N\to 0$ in $\cmod R$ (in the sense of Definition \ref{def.pure}) splits.

The cartesian modules belonging to $\lFlat(\cmod {R_{fp}})^{\perp}$ will be called \emph{cotorsion} modules.
\begin{lemma}\label{lem.Kap}
Let $R\colon \C\to \Ring$ be a strict representation. The class $ \lFlat(\cmod {R_{fp}})$ in $\cmod {R_{fp}}$ is a Kaplansky class.
As a consequence every $M\in \cmod {R_{fp}}$ has a cotorsion envelope whose cokernel belongs to  $\lFlat(\cmod {R_{fp}})$.
\end{lemma}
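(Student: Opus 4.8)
The plan is to establish the Kaplansky property directly inside $\cmod {R_{fp}}$, using that by Lemma \ref{always_left_flat} the induced representation $R_{fp}$ is right flat (as $R$ is), so that $\cmod {R_{fp}}$ is Grothendieck by Theorem \ref{cart_groth} and, crucially, Proposition \ref{prop.pure} is available \emph{for the representation $R_{fp}$ itself}. First I would fix a regular cardinal $\kappa$ large enough to meet three demands at once: it dominates the bound $\sup\{|\N|,|\mor\C|,|\mor(\fp {R_c})| : c\in\Ob\C\}$ that governs Proposition \ref{prop.pure} applied to $R_{fp}$; every subobject of $\cmod {R_{fp}}$ of type $\leq\kappa$ (i.e. with $|{\cdot}|\leq\kappa$ in the sense of the size function $|M|=\sum_c|M_c|$) is $\kappa$-presentable; and every $\kappa$-presentable object is generated by at most $\kappa$ of its elements. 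Since $\cmod {R_{fp}}$ is locally presentable, arranging these simultaneously is routine cardinal arithmetic.

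Given $Z\subseteq F$ with $F\in\lFlat(\cmod {R_{fp}})$ and $Z$ being $\kappa$-presentable, I would extract a set $X$ of at most $\kappa$ elements of $F$ generating $Z$, and then invoke the straightforward family version of Proposition \ref{prop.pure}, obtained by running its transfinite construction over $\N\times\mor\C$ with the whole set $X$ as seed data rather than a single element. This yields a cartesian submodule $S\leq F$ of type $\leq\kappa$ with $X\subseteq S$ and $S_c$ pure in $F_c$ for every $c\in\Ob\C$. Because $Z$ is generated by $X\subseteq S$ and $S$ is a subobject, $Z\subseteq S$; by the choice of $\kappa$ the object $S$ is $\kappa$-presentable, and $S\neq 0$ whenever $Z\neq 0$.

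It then remains to see that $S$ and $F/S$ both lie in $\lFlat(\cmod {R_{fp}})$. Both are cartesian: $S$ by construction, and $F/S$ as a cokernel in the abelian category $\cmod {R_{fp}}$ (cokernels of cartesian modules are always cartesian). For local flatness, fix $c\in\Ob\C$ and recall that $((\fp {R_c})^{op},\Ab)$ is a module category over a ring with enough idempotents, in which a pure submodule of a flat module is flat with flat quotient. Since $F_c$ is a flat functor and $S_c$ is pure in $F_c$, both $S_c$ and $(F/S)_c=F_c/S_c$ are flat; as $c$ was arbitrary, $S,F/S\in\lFlat(\cmod {R_{fp}})$. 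This proves $\lFlat(\cmod {R_{fp}})$ is $\kappa$-Kaplansky.

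For the consequence I would first record that $\lFlat(\cmod {R_{fp}})$ is closed under direct limits (flat functors are closed under filtered colimits, and the cartesian condition is preserved by colimits because each $\alpha_!$ is a left adjoint) and under extensions (it is an exact abelian subcategory of $\cmod {R_{fp}}$ whose components are closed under extensions of flat functors). Combining the Kaplansky property with closure under direct limits gives, by the criterion recalled just before the definition of a Kaplansky class (see \cite[Theorem 2.5]{EEGO2}), that the cotorsion pair $\bigl(\lFlat(\cmod {R_{fp}}),\lFlat(\cmod {R_{fp}})^{\perp}\bigr)$ is cogenerated by a set and hence complete. Moreover, closure under direct limits makes $\lFlat(\cmod {R_{fp}})$ a covering class by the results of \cite{Sergio_Enochs_Rel_homo}; a standard application of Wakamatsu's lemma to the complete pair then upgrades the special preenvelopes to cotorsion envelopes, the cokernel of each lying in ${}^{\perp}\bigl(\lFlat(\cmod {R_{fp}})^{\perp}\bigr)=\lFlat(\cmod {R_{fp}})$. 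The main obstacle I anticipate is the cardinal bookkeeping of the first paragraph, together with verifying that Proposition \ref{prop.pure} genuinely upgrades from a single element to a $\kappa$-sized family; once that is in place, the transfer of flatness across a pure subobject and the cotorsion-pair formalities are routine.
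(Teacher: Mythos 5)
Your proposal is correct and follows essentially the same route as the paper: its own proof obtains the Kaplansky property exactly as you do, by running the transfinite construction of Proposition \ref{prop.pure} for the right flat representation $R_{fp}$ with the purity machinery of Appendix A (so that componentwise purity inside flat functors puts both $S$ and $F/S$ in $\lFlat(\cmod{R_{fp}})$), and then derives the cotorsion envelopes from closure under extensions and direct limits via \cite[Theorem 2.5]{EEGO2} together with Xu's results, which is precisely the Wakamatsu-style upgrade from special preenvelopes to envelopes that you invoke. The only caveat, which you share with the paper's own proof rather than introduce, is that applying Proposition \ref{prop.pure} to $R_{fp}$ tacitly uses that $R$ (equivalently $R_{fp}$, by Lemma \ref{always_left_flat}) is right flat, a hypothesis not made explicit in the statement of the lemma.
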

\begin{proof}
The first part follows as in Proposition \ref{prop.pure}, using Appendix A. Then, since $\lFlat(\cmod {R_{fp}})$ is closed under extensions and direct limits, a standard argument (see \cite[Proposition 2]{flat_cover_conj}) shows that the pair $(\lFlat(\cmod {R_{fp}}, \lFlat(\cmod {R_{fp}})^{\perp})$ is cogenerated by a set. Therefore we can apply \cite[Theorem 2.5]{EEGO2} to infer that every $M\in \cmod {R_{fp}}$ has a cotorsion preenvelope with cokernel in $\lFlat(\cmod {R_{fp}})$. Now, by \cite[Proposition 2.2.1 and Theorem 2.2.2]{Xu} it follows that $M$ has a cotorsion envelope with cokernel in $\lFlat(\cmod {R_{fp}}$.
\end{proof}
\begin{theorem}\label{theor.purei.env}
Let $R\colon \C\to \Ring$ be a strict representation. Every $M\in \cmod {R_{fp}}$ has a pure-injective envelope.
\end{theorem}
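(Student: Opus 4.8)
The plan is to obtain pure-injective envelopes as the envelopes for the \emph{pure exact structure} on $\cmod{R_{fp}}$, produced through a complete cotorsion pair, exactly in the spirit of Lemma~\ref{lem.Kap}. I work under the standing hypothesis that $R$ is right flat, so that $R_{fp}$ is right flat by Lemma~\ref{always_left_flat} and hence $\cmod{R_{fp}}$ is Grothendieck by Theorem~\ref{cart_groth}. Equip $\cmod{R_{fp}}$ with the exact structure $\mathcal E$ whose conflations are the \emph{componentwise pure} short exact sequences, namely those $0\to A\to B\to C\to 0$ for which each $0\to A_c\to B_c\to C_c\to 0$ is pure in the locally finitely presented functor category $((\fp {R_c})^{op},\Ab)$; this is the evident analogue for $R_{fp}$ of Definition~\ref{def.pure}. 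An object is pure-injective precisely when it is injective with respect to $\mathcal E$, so the statement amounts to showing that the class $\mathrm{PInj}$ of $\mathcal E$-injectives is enveloping.

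First I would establish enough pure-injectives, that is, special pure-injective preenvelopes. The heart of the matter is that $\mathcal E$ is \emph{generated by a set}. Adapting the transfinite construction of Proposition~\ref{prop.pure} together with the cardinality bookkeeping of Appendix~A --- but reading ``$(Y_{n,\alpha})_c$ pure in $M_c$'' inside $((\fp {R_c})^{op},\Ab)$ rather than tracking flatness --- one shows that, for $\kappa=\sup\{|\N|,|\mor \C|,|\mor R_c|:c\in\Ob\C\}$, every cartesian module is the $\kappa$-directed union of cartesian, componentwise-pure submodules of type $\kappa$, and every $\mathcal E$-conflation is a $\kappa$-directed colimit of conflations between modules of type $\kappa$. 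Hence the pair $(\cmod{R_{fp}},\mathrm{PInj})$ is cogenerated by the set of (isomorphism representatives of) pure monomorphisms whose terms are of type $\kappa$, and the cotorsion-pair argument of Lemma~\ref{lem.Kap}, carried out now relative to the exact structure $\mathcal E$, shows this pair to be complete. Completeness yields, for each $M\in\cmod{R_{fp}}$, a conflation $0\to M\to E\to F\to 0$ in $\mathcal E$ with $E$ pure-injective, i.e. a special pure-injective preenvelope.

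Finally I would promote preenvelopes to envelopes. The class $\mathrm{PInj}$ is closed under arbitrary products (products are exact and computed componentwise) and under pure subobjects, both closures being inherited componentwise from the locally finitely presented categories $((\fp {R_c})^{op},\Ab)$, in which pure-injective envelopes exist by Herzog's theorem. Thus, just as in the concluding step of Lemma~\ref{lem.Kap}, the existence of special pure-injective preenvelopes upgrades to the existence of pure-injective envelopes via \cite[Proposition~2.2.1 and Theorem~2.2.2]{Xu}, giving the claim. I expect the main obstacle to be the set-generation step: one must guarantee that the small pure submodules produced in the induction stay \emph{cartesian}, so that the colimit presentation and the whole cotorsion-pair argument take place inside $\cmod{R_{fp}}$ and not merely in $\mod{R_{fp}}$. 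This is exactly the cartesian refinement performed in Proposition~\ref{prop.pure}, and verifying that it persists once the flatness data there is replaced by componentwise purity (using the purification results of Appendix~A) is where the genuine work lies.
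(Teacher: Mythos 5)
You have the right general silhouette (a complete cotorsion pair yielding special preenvelopes, then an upgrade to envelopes), but the load-bearing step is missing, and it is precisely the step the paper's proof is designed to avoid. You propose to run the cotorsion-pair machinery \emph{relative to} the pure exact structure $\mathcal E$, yet every result you invoke for this --- the ``Kaplansky implies cogenerated by a set'' argument of \cite[Proposition 2]{flat_cover_conj}, the special preenvelope theorem \cite[Theorem 2.5]{EEGO2}, and the envelope upgrade \cite[Proposition 2.2.1 and Theorem 2.2.2]{Xu} --- is a statement about the \emph{absolute} $\Ext^1$ of a Grothendieck or module category. None of them applies, as cited, to $\Ext^1_{\mathcal E}$; a relative Eklof--Trlifaj/small-object argument for the pure-exact structure (including your unproved deconstructibility claim that every $\mathcal E$-conflation is a $\kappa$-directed colimit of conflations of type $\kappa$) is exactly what would have to be established, and your sketch simply assumes it when it says ``the cotorsion-pair argument of Lemma \ref{lem.Kap}, carried out now relative to $\mathcal E$.'' The paper instead transports the problem so that only absolute homological algebra is ever needed: by Theorem \ref{locally_CB_rep} and Corollary \ref{cor.pure}, the functor $Y$ takes pure-exact sequences of $\cmod R$ to \emph{ordinary} exact sequences in $\lFlat(\cmod{R_{fp}})$, so pure-injectives in $\cmod R$ correspond to the cotorsion (absolute-$\Ext$-orthogonal) objects of $\lFlat(\cmod{R_{fp}})$. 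Lemma \ref{lem.Kap}, proved entirely in the usual exact structure of the Grothendieck category $\cmod{R_{fp}}$, then provides a cotorsion envelope of $Y(M)$ with cokernel in $\lFlat(\cmod{R_{fp}})$; extension-closure of $\lFlat(\cmod{R_{fp}})$ keeps this envelope locally flat, and applying $Y^{-1}$ produces the pure-injective envelope of $M$, exactly as in Herzog's proof \cite[Theorem 6]{Herzog}. This transport along $Y$ --- the main ingredient of the paper's argument --- never appears in your proposal.

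Two further defects. First, your closing step asserts that the class of pure-injectives is closed under pure subobjects; this is false already for abelian groups (every group is a pure subgroup of its pure-injective hull, and most groups are not pure-injective). The adjacent appeal to componentwise behaviour is also unsound: an object of $\cmod{R_{fp}}$ whose components $M_c$ are pure-injective in $((\fp{R_c})^{op},\Ab)$ need not be $\mathcal E$-injective, since componentwise splittings need not assemble into a splitting of modules over the representation; what the Xu upgrade actually requires is a Wakamatsu-type lemma together with closure of the left class under direct limits, again in the relative setting you have not secured. Second, you prove a statement about all of $\cmod{R_{fp}}$ with componentwise functor-category purity, whereas the paper's proof --- and its notion of purity (Definition \ref{def.pure}), which is only defined for strict representations into $\Ring$ --- concerns $M\in\cmod R$; the ``$R_{fp}$'' in the statement is a slip (compare the Corollary in the Introduction and the proof itself). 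Under $Y$ the category $\cmod R$ corresponds only to the locally flat part of $\cmod{R_{fp}}$, so your assertion is genuinely different from the one the paper establishes and cannot be recovered from its argument, since the correspondence between pure-injectivity and the cotorsion property is only available for locally flat cartesian modules.
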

\begin{proof}
Once we have established the equivalence $\cmod R\cong \lFlat(\cmod {R_{fp}})$ the proof follows the same lines as the proof given by Herzog in \cite[Theorem 6]{Herzog}; the main ingredients being Corollary \ref{cor.pure} and the fact that pure-injectives in $\cmod R$ are in 1-1 correspondence with cotorsion cartesian modules in $\lFlat(\cmod {R_{fp}})$. Then, given $M\in \cmod R$, its pure injective envelope is the cartesian module $E\in\cmod R$, such that $Y(E)$ is the cotorsion envelope of $Y(M)$ in $\lFlat(\cmod {R_{fp}})$.
\end{proof}

\subsection{Induced cotorsion pairs in a Grothendieck category}
\begin{proposition}\label{prop.induc}
Let $\mathcal G$ be a Grothendieck category. Let $\mathcal F$ be a strictly full additive subcategory which is resolving and closed under taking extensions. Let us denote by $(\mathcal F,\Ext|_{\mathcal F})$ the induced exact category on $\mathcal F$.
Consider a subclass $\mathcal L\subseteq \mathcal F$ such that:
\begin{enumerate}[\rm (1)]
\item $\mathcal L$ is Kaplansky;
\item $\mathcal L$ is closed under direct limits and extensions;
\item $\mathcal L$ contains a generator of $(\mathcal F,\Ext|_{\mathcal F})$.
\end{enumerate}
Then the induced pair $(\mathcal L,\mathcal L^{\perp}\cap \mathcal F)$ is a hereditary and complete cotorsion pair in $(\mathcal F,\Ext|_{\mathcal F})$.
\end{proposition}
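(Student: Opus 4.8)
The plan is to transport the whole problem into the ambient Grothendieck category $\G$, build the cotorsion pair there out of a cogenerating set, and then descend to the exact category $(\F,\Ext|_\F)$ using the hypotheses on $\F$ and the generator. The first step is to identify the Yoneda Ext groups. Since $\F$ is closed under extensions, an admissible short exact sequence of $(\F,\Ext|_\F)$ is exactly a short exact sequence of $\G$ with all three terms in $\F$, so $\Ext^1_\F(A,B)\cong\Ext^1_\G(A,B)$ for $A,B\in\F$; because $\F$ is moreover resolving and, through the generator $G\in\L$, admits admissible deflations $G^{(I)}\twoheadrightarrow F$ onto every $F\in\F$, one can resolve inside $\F$ and propagate this to all degrees, obtaining $\Ext^n_\F(A,B)\cong\Ext^n_\G(A,B)$. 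Consequently orthogonality, completeness and heredity may all be tested with $\Ext_\G$, and $\mathcal{L}^\perp\cap\F=\{C\in\F:\Ext^1_\G(L,C)=0\text{ for all }L\in\L\}$.

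Next I would show that the pair is cogenerated by a set. Fixing a regular cardinal $\kappa$ witnessing hypothesis (1), the Kaplansky argument (as in \cite[Proposition 2]{flat_cover_conj}, compare Lemma \ref{lem.Kap}) expresses every $L\in\L$ as a continuous union of a chain with $\kappa$-presentable successive quotients in $\L$; taking a set $\mathcal{S}$ of representatives of the $\kappa$-presentable objects of $\L$ then gives $\L=\mathrm{Filt}(\mathcal{S})$. Closure of $\L$ under extensions and direct limits, together with the Eklof lemma, yields $\mathrm{Filt}(\mathcal{S})\subseteq\L$ and $\mathcal{S}^\perp=\L^\perp$, so $(\L,\L^\perp)$ is cogenerated by the set $\mathcal{S}$.

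I would then obtain completeness. As $\L$ is closed under extensions and direct limits and $(\L,\L^\perp)$ is cogenerated by $\mathcal{S}$, \cite[Theorem 2.5]{EEGO2} supplies special $\L$-precovers and special $\L^\perp$-preenvelopes for every object of $\G$. Hypothesis (3) is what keeps these inside $\F$: for $M\in\F$ I would start the precover from an admissible deflation $G^{(I)}\twoheadrightarrow M$ (with $G^{(I)}\in\L\subseteq\F$, since $\L$ is closed under coproducts) and realize the left-hand term as an $(\mathcal{S}\cup\{G\})$-filtered object, so that all syzygies and the kernel remain in $\F$ by resolving-ness and extension-closure; dually the cokernel of the preenvelope stays in $\F$. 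Thus both special sequences are admissible in $(\F,\Ext|_\F)$. That $(\L,\mathcal{L}^\perp\cap\F)$ is a cotorsion pair now follows formally: the inclusion $\L\subseteq{}^\perp(\mathcal{L}^\perp\cap\F)\cap\F$ is automatic, and conversely if $X\in\F$ satisfies $\Ext^1_\F(X,\mathcal{L}^\perp\cap\F)=0$, then its special precover $0\to C\to L\to X\to 0$ splits, exhibiting $X$ as a direct summand of $L\in\L$; since $\L=\mathrm{Filt}(\mathcal{S})$ is closed under direct summands by the Hill lemma, $X\in\L$.

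The step I expect to be the main obstacle is heredity. For a complete cotorsion pair it is equivalent to $\L$ being resolving in $(\F,\Ext|_\F)$ (equivalently, $\mathcal{L}^\perp\cap\F$ being coresolving), so the target is to prove that $\L$ is closed under kernels of admissible epimorphisms, which by the degreewise Ext-agreement is genuinely a statement about the vanishing of $\Ext^{\ge 2}_\G(L,C)$. The plan is a d\'evissage: resolve $L\in\L$ by coproducts of the generator $G$ with syzygies kept in $\F$, reduce via the $\mathcal{S}$-filtration and the Eklof lemma to the $\kappa$-presentable generators $S\in\mathcal{S}$, and then attempt to promote the relevant syzygies back into $\L$ by a pushout-and-split argument (push a special $(\mathcal{L}^\perp\cap\F)$-preenvelope of a syzygy along its inclusion, split off the $\mathcal{L}^\perp$-summand of the resulting extension of two objects of $\L$, and invoke summand-closure). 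The subtle point is that this manipulation must produce honest $\Ext^{\ge 2}$-vanishing rather than merely reshuffle it, and it is exactly here that resolving-ness of $\F$, the generator, and the Hill-lemma closure have to be combined; the natural device is to arrange the Kaplansky filtration compatibly with a generator-resolution, so that syzygies of $\mathcal{S}$-filtered objects are again $\mathcal{S}$-filtered and hence lie in $\L$.
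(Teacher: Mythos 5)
Your construction of the cotorsion pair and of the approximations follows the paper's skeleton: Kaplansky plus closure under direct limits and extensions gives that $(\L,\L^{\perp})$ is cogenerated by a set, whence special $\L^{\perp}$-preenvelopes in $\G$ by \cite[Theorem 2.5]{EEGO2}; the generator, the resolving property of $\F$ and extension-closure then keep Salce's pushout inside $(\F,\Ext|_{\F})$, exactly as in the paper's pushout diagram; and the identification ${}^{\perp_{\F}}(\L^{\perp}\cap\F)=\L$ via a split special precover matches the paper --- indeed your explicit appeal to summand-closure of $\mathrm{Filt}(\mathcal S)$ via the Hill lemma (cf.\ \cite{Sto}) is more careful than the paper, which silently concludes $S\in\L$ from the splitting $T\cong D\oplus S$. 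Two slips in this portion are repairable: \cite[Theorem 2.5]{EEGO2} does not supply special $\L$-precovers of \emph{arbitrary} objects of $\G$ (outside $\F$ there are no $\L$-epimorphisms with which to start Salce's argument, since hypothesis (3) only generates $\F$), though you immediately restrict to $M\in\F$, where the generator saves the construction; and your claimed isomorphism $\Ext^n_{\F}\cong\Ext^n_{\G}$ for $n\geq 2$ is unsubstantiated --- coproducts of $G$ are not $\Ext_{\G}$-projective, so resolving by them inside $\F$ computes no ambient Ext --- a claim that is harmless here only because the paper defines hereditary directly in terms of $\Ext^n_{\G}$.

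The genuine gap is heredity, and you identify the failure mode yourself without resolving it. Your pushout-and-split device makes no progress: given $0\to K\to L_1\to L_2\to 0$ with $L_1,L_2\in\L$ and a special preenvelope $0\to K\to D\to M\to 0$, the pushout sequence $0\to D\to T\to L_2\to 0$ does split (as $\Ext^1_{\G}(L_2,D)=0$) and summand-closure puts $D\in\L$; but then the \emph{same} object $K$ is again the kernel of an admissible epimorphism $D\to M$ between objects of $\L$ --- literally the configuration you started from, so the obstruction has been reshuffled, not killed. The fallback you propose, arranging the Kaplansky filtration so that ``syzygies of $\mathcal S$-filtered objects are again $\mathcal S$-filtered,'' is not a consequence of hypotheses (1)--(3): the kernel of a deflation $G^{(I)}\twoheadrightarrow F$ onto a filtered object inherits no filtration, and if such a statement were formal, every deconstructible class containing a generator would automatically be resolving. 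The paper closes the step by a different move: it replaces a given $2$-extension $0\to C\to T\to H\to L\to 0$ by an \emph{equivalent} one $0\to C\to T'\to H'\to L\to 0$ in which $H'$ and $\ker(H'\to L)$ both lie in $\L$; then the left-hand splice $0\to C\to T'\to \ker(H'\to L)\to 0$ represents a class in $\Ext^1_{\G}(\ker(H'\to L),C)=0$, so the $2$-extension class vanishes outright, and the cases $n>2$ follow by induction. Your plan never reaches a representative whose second syzygy lies in $\L$, and that is precisely the point at which genuine vanishing, rather than a reshuffling of the class, is produced.
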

\begin{proof}
Let us denote by $\perp_{\mathcal F}$ the orthogonal computed inside $(\mathcal F,\Ext|_{\mathcal F})$. Therefore
$$\mathcal L^{\perp_{\mathcal F}}=\{C\in \mathcal F:\ \Ext^1_{\mathcal G}(L,C)=0, \forall L\in \mathcal L\}.$$ 
It is then clear that $\mathcal L^{\perp_{\mathcal F}}=\mathcal L^{\perp}\cap\mathcal F$. Now let $S\in ^{\perp_{\mathcal F}}\!\!\!\mathcal L$, so $S\in \mathcal F$. We need to show that $S\in \mathcal L$. By the hypothesis, there exists an epimorphism $L\to S\to 0$, with $L\in \mathcal L$. Since $\mathcal F$ is resolving, $K=\ker(L\to S)\in \mathcal F$.  Now, there exists a short exact sequence $0\to K\to D\to M\to 0$ with $M\in \mathcal L$ and $D\in \mathcal L^{\perp}$. Let us construct the pushout diagram along $K\to D$ and $K\to L$:  $$\xymatrix{ & 0\ar[d] & 0\ar[d] &  &\\
0\ar[r] & K\ar[d]\ar[r] & L \ar[r] \ar[d]\ar[r] &S\ar@{=}[d]\ar[r] &0\\
0\ar[r] & D\ar[d]\ar[r] & T\ar[d] \ar[r] &S\ar[r] &0 \\ & M\ar@{=}[r]\ar[d] & M\ar[d] &  &\\& 0 & 0 &  & }$$
Then, since $L,M\in \mathcal L$, it follows that $T\in\mathcal L$. Analogously, since $K,M\in \mathcal F$, $D\in \mathcal L^{\perp}\cap\mathcal F=\mathcal L^{\perp_{\mathcal F}}$. So the sequence $0\to D\to T\to S\to 0$ splits and hence $S\in \mathcal L$.

Now, let us see that $(\mathcal L,\mathcal L^{\perp_{\mathcal F}})$ is a complete cotorsion pair in $(\mathcal F,\Ext|_{\mathcal F})$. Let $F\in \mathcal F$. By the hypothesis on $\mathcal L$ there exists a short exact sequence $0\to F\to T\to L\to 0$ with $T\in \mathcal L^{\perp}$ and $L\in \mathcal L$. Since $\mathcal L\subseteq \mathcal F$ and $\mathcal F$ is closed under extensions, we follow that $T\in \mathcal L^{\perp_{\mathcal F}}=\mathcal L^{\perp}\cap\mathcal F$. So the cotorsion pair $(\mathcal L,\mathcal L^{\perp}\cap \mathcal F)$ in $(\mathcal F,\Ext|_{\mathcal F})$ has enough injectives. Now let $S\in \mathcal F$. The same proof as before gives us a short exact sequence $0\to D\to T\to S\to 0$ with $T\in \mathcal L$ and $D\in\mathcal L^{\perp_{\mathcal F}}$. Therefore $(\mathcal L,\mathcal L^{\perp_{\mathcal F}})$ has enough projectives, whence it is a complete cotorsion pair in $(\mathcal F,\Ext|_{\mathcal F})$.

Let us finally see that the complete cotorsion pair $(\mathcal L,\mathcal L^{\perp_{\mathcal F}})$ is also hereditary. Let us see that $\Ext^2_{\mathcal G}(L,C)=0$, for each $ L\in \mathcal L$ and $C\in \mathcal L^{\perp_{\mathcal F}}$. Then the argument will follow by an easy induction. So let $0\to C\to T\to H\to L\to 0$ be an extension in $\Ext^2_{\mathcal G}(L,C)$. Since $\mathcal L$ contains a generator of $\mathcal F$ and $\mathcal F$ is resolving, we can find an equivalent sequence $0\to C\to T'\to H'\to L\to 0$ to the given one, with $H'$ (and hence $\ker(H'\to L)$) in $\mathcal L$. So it is equivalent to the zero element in $ \Ext^2_{\mathcal G}(L,C)$.
\end{proof}

Now we can apply Proposition \ref{prop.induc} to some particular instances to get induced cotorsion pairs. Since we will be always dealing with the class $\lFlat(\cmod {R_{fp}})$ in $\cmod {R_{fp}}$, we will denote it simply by $\lFlat$. We introduce the notation $\Ch(\lFlat)$ for the chain complexes $M\in \Ch(\cmod {R_{fp}})$ such that $M_n\in \lFlat$, $\forall n\in \mathbb{Z}$. And we denote by $\widetilde{\lFlat}$ the class of acyclic complexes $L\in \Ch(\lFlat)$ for which $Z_n L\in \lFlat$. Here $Z_nL$  stands for the \emph{n-th cycle cartesian module} of $L$. 

\begin{corollary}
Let $R$ be a strict representation and $\cmod {R_{fp}}$ the induced category of cartesian modules on $R_{fp}$:
\begin{enumerate}[\rm (1)]
\item  the pair $(\lFlat,\lFlat^{\perp}\cap \lFlat)$ in $(\lFlat,\Ext|_{\lFlat})$ is a hereditary complete cotorsion pair;
\item  the pair $(\Ch(\lFlat),\Ch(\lFlat)^{\perp}\cap \Ch(\lFlat))$ in $(\Ch(\lFlat),\Ext|_{\Ch(\lFlat)})$ is a hereditary complete cotorsion pair;
\item  the pair $(\widetilde{\lFlat},\widetilde{\lFlat}^{\perp}\cap  \Ch(\lFlat))$ in $(\Ch(\lFlat),\Ext|_{\Ch(\lFlat)})$ is a hereditary complete cotorsion pair.
\end{enumerate}

\end{corollary}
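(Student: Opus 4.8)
The plan is to obtain all three parts as instances of Proposition \ref{prop.induc}, by selecting in each case an ambient Grothendieck category $\mathcal G$, a resolving, extension-closed subcategory $\mathcal F$, and a subclass $\mathcal L\subseteq\mathcal F$ verifying the three hypotheses (Kaplansky, closed under direct limits and extensions, and containing a generator of $(\mathcal F,\Ext|_{\mathcal F})$). Throughout I use that $\cmod{R_{fp}}$ is Grothendieck (Theorem \ref{cart_groth}), hence so is the complex category $\Ch(\cmod{R_{fp}})$, and that $\lFlat$ is Kaplansky and closed under direct limits and extensions by Lemma \ref{lem.Kap} and the argument in its proof. The one new ingredient needed is that $\lFlat$ is \emph{resolving} in $\cmod{R_{fp}}$: if $F_1\to F_0$ is an admissible epimorphism with $F_0,F_1\in\lFlat$, its kernel $K$ lies again in $\lFlat$. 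This is checked componentwise through the Crawley--Boevey equivalence (Theorem \ref{CB_rep_Th}), where it reduces to the classical fact that the kernel of a surjection of flat modules with flat cokernel is flat (a one-line $\mathrm{Tor}$ computation, using that $F_0$ flat forces $\mathrm{Tor}_1(K,-)\cong\mathrm{Tor}_2(F_0,-)=0$).

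For part (1) I take $\mathcal G=\cmod{R_{fp}}$ and $\mathcal F=\mathcal L=\lFlat$. Resolvingness and closure under extensions of $\mathcal F$ are exactly the facts just recalled, while hypotheses (1)--(2) on $\mathcal L=\lFlat$ are Lemma \ref{lem.Kap}; hypothesis (3) is immediate since $\lFlat\cong\cmod R$ is Grothendieck (Theorem \ref{locally_CB_rep}, Theorem \ref{cart_groth}) and hence has a generator, which tautologically lies in $\mathcal L$. Proposition \ref{prop.induc} then delivers (1). Part (2) is formally identical with $\mathcal G=\Ch(\cmod{R_{fp}})$ and $\mathcal F=\mathcal L=\Ch(\lFlat)$: kernels, extensions and filtered colimits of complexes are computed degreewise, so the four closure properties of $\lFlat$ pass termwise to $\Ch(\lFlat)$, and the generator is furnished by $\Ch(\lFlat)\cong\Ch(\cmod R)$.

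Part (3) is the substantial case, since now $\mathcal L=\widetilde{\lFlat}$ is a proper subclass of $\mathcal F=\Ch(\lFlat)$ and hypothesis (3) is no longer automatic. Keeping $\mathcal G=\Ch(\cmod{R_{fp}})$, $\mathcal F=\Ch(\lFlat)$, I would verify: (i) $\widetilde{\lFlat}$ is closed under extensions, because an extension of two acyclic complexes is acyclic by the long exact homology sequence and its cycle modules sit in short exact sequences whose outer terms lie in $\lFlat$, so they stay in $\lFlat$ by closure under extensions; (ii) $\widetilde{\lFlat}$ is closed under direct limits, since filtered colimits are exact in $\Ch(\cmod{R_{fp}})$, $Z_n$ commutes with them, and $\lFlat$ is closed under direct limits; (iii) $\widetilde{\lFlat}$ contains a generator of $(\Ch(\lFlat),\Ext|_{\Ch(\lFlat)})$, produced as the disk complexes $D^n(G)$ (with $G$ in degrees $n$ and $n-1$ joined by the identity) for $G$ ranging over a generating set of $\lFlat$: each $D^n(G)$ is contractible with cycle modules $0$ and $G$, hence lies in $\widetilde{\lFlat}$, and coproducts of such disks admit an admissible epimorphism onto any object of $\Ch(\lFlat)$ (admissible because the kernel is degreewise flat, by resolvingness). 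The crux is (iv): that $\widetilde{\lFlat}$ is a Kaplansky class. Here I would lift the Kaplansky property of $\lFlat$ to complexes by the standard complex-wise argument, building for a $\kappa$-presentable subcomplex $Z$ of $L\in\widetilde{\lFlat}$ an ascending countable chain of subcomplexes that alternately absorbs $Z$ and corrects cycles so as to remain in $\lFlat$, exactly as the single Kaplansky step of Lemma \ref{lem.Kap} is iterated transfinitely in Proposition \ref{prop.pure}. With (i)--(iv) established, Proposition \ref{prop.induc} yields (3).

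The main obstacle will be step (iv): transporting the Kaplansky property from the locally flat cartesian modules to the class $\widetilde{\lFlat}$ of acyclic complexes with locally flat cycles, while simultaneously bounding cardinality and keeping every cycle module inside $\lFlat$. This is precisely the point where the transfinite bookkeeping of Proposition \ref{prop.pure} must be redone one level up, for complexes rather than modules; once the Kaplansky filtration is in place, the remaining closure properties and the identification of the disk generators are routine.
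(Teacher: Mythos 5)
Your proposal follows the paper's proof in its skeleton exactly: all three parts are deduced from Proposition \ref{prop.induc} with the very same choices of $\mathcal G$, $\mathcal F$ and $\mathcal L$ (namely $\mathcal G=\cmod{R_{fp}}$, $\mathcal F=\mathcal L=\lFlat$ in (1); $\mathcal G=\Ch(\cmod{R_{fp}})$, $\mathcal F=\mathcal L=\Ch(\lFlat)$ in (2); and $\mathcal G=\Ch(\cmod{R_{fp}})$, $\mathcal F=\Ch(\lFlat)$, $\mathcal L=\widetilde{\lFlat}$ in (3)), and with the same disk complexes $D^n(G)$, $G\in\lFlat$, as the generating set in (3). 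Your explicit supplements are correct and harmless: the $\mathrm{Tor}$ argument for resolvingness of $\lFlat$ is the standard componentwise verification that the paper compresses into the phrase ``by Appendix A,'' and your remark that condition (3) of Proposition \ref{prop.induc} holds in parts (1) and (2) because $\lFlat\cong\cmod R$ is Grothendieck fills a point the paper leaves implicit. The one genuine divergence is how the Kaplansky hypothesis is discharged in (2) and (3): the paper simply cites \v{S}\v{t}ov\'{\i}\v{c}ek \cite[Corollary 2.7 and Theorem 4.2(1),(2)]{Sto}, which states precisely that if a class is Kaplansky (deconstructible) in a Grothendieck category, then so are the class of complexes with terms in it and the class of acyclic complexes with cycles in it, whereas you propose to redo the transfinite bookkeeping of Proposition \ref{prop.pure} one level up. That route is viable --- it is in essence how \v{S}\v{t}ov\'{\i}\v{c}ek's theorem is proved --- but be aware that your claim in part (2) that the Kaplansky property ``passes termwise'' is too quick as stated: a $\kappa$-presentable subcomplex cannot be absorbed by one termwise application of the Kaplansky property of $\lFlat$, since the purified terms need not assemble into a subcomplex; one must iterate countably (or transfinitely) to close under the differentials, exactly as in your sketch for (3). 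So the citation buys the paper exactly the step you identify as your ``main obstacle''; your hands-on plan is sound but amounts to reproving that theorem, and should either be carried out in full or replaced by the reference.
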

\begin{proof}
We will exhibit, in each of the three cases, two classes $\mathcal L$ and $\mathcal F$ that satisfy the hypotheses of Proposition \ref{prop.induc}, which gives therefore the statement.

(1) Take $\mathcal L=\mathcal F=\lFlat$ and $\mathcal G=\cmod {R_{fp}}$ in Proposition \ref{prop.induc}. By Appendix A, the class $\lFlat$ is clearly closed under kernel of epimorphisms in $\cmod {R_{fp}}$. Clearly, it is also closed under extensions and direct limits. Finally, $\mathcal L$ is Kaplansky by Lemma \ref{lem.Kap}.

(2) Take $\mathcal G=\Ch(\cmod {R_{fp}})$, $\mathcal L=\mathcal F=\Ch(\lFlat)$ in Proposition \ref{prop.induc}. Since $\cmod {R_{fp}}$ is Grothendieck (Theorem \ref{cart_groth}), $\Ch(\cmod {R_{fp}})$ is also Grothendieck. Since $\lFlat$ is resolving, closed under extensions and direct limits, the class $\mathcal F$ will also fulfill these properties. Finally, the class $\mathcal L$ is Kaplansky by \cite[Corollary 2.7 and Theorem 4.2(1)]{Sto}.

(3) Take $\mathcal G=\Ch(\cmod {R_{fp}})$, $\mathcal L=\widetilde{\lFlat}$ and $\mathcal F=\Ch(\lFlat)$ in Proposition \ref{prop.induc}. The class $\mathcal L$ is closed under extensions and direct limits because the acyclic chain complexes form a thick subcategory in $\Ch(\cmod {R_{fp}})$ and $\lFlat$ is closed under extensions and direct limits. The chain complexes of $\mathcal L$ of the form $\ldots \to 0\to G\stackrel{1_G}{\to}G\to 0\to \ldots$, with $G\in \lFlat$, form a generating set for $(\mathcal F,\Ext|_{\Ch(\lFlat)})$, so condition (3) in Proposition \ref{prop.induc} is also satisfied. Finally $\mathcal L$ is Kaplansky by \cite[Corollary 2.7 and Theorem 4.2(2)]{Sto}.
\end{proof}

\subsection{The pure derived category of cartesian modules}

In this subsection we apply Theorem \ref{locally_CB_rep} to define the derived category of $\cmod R$ relative to the pure-exact structure we introduced in Definition \ref{def.pure}. As we already commented in the Introduction, this notion extends and relates the various proposals of pure derived categories on a scheme already appeared in the literature (see \cite[\S 1. Corollary]{EGO} for a quasi-separated scheme and \cite[\S\S 2.5]{MS} for a Noetherian separated scheme).

\begin{theorem}\label{theor.der.pur}
Let $R\colon \C\to \Ring$ be a strict representation. Let $\mathcal E$ be the pure-exact structure  in $\cmod R$ coming from  Definition \ref{def.pure}, and let $\Ch(\cmod R)$ be the category of unbounded complexes of cartesian modules. Then there is an exact injective model category structure on $\Ch(\cmod R)$ with respect to the induced degree-wise exact structure from $\mathcal E$, such that
\begin{enumerate}[\rm --]
\item the trivial objects are the pure acyclic complexes;
\item every chain complex is cofibrant;
\item  the trivially fibrant objects are the injective complexes in $\Ch(\cmod R),\mathcal E)$, that is, the contractible complexes with pure-injective components.
\end{enumerate}
The corresponding homotopy category is the pure derived category $\mathcal D_{\rm pure}(\cmod R)$.
\end{theorem}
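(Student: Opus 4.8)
The plan is to transport the whole problem across the equivalence of the Representation Theorem (Theorem \ref{locally_CB_rep}). Under $\cmod R \cong \lFlat(\cmod{R_{fp}})$ the pure-exact structure $\mathcal E$ of Definition \ref{def.pure} corresponds, by Corollary \ref{cor.pure}, to the exact structure $\Ext|_{\lFlat}$ inherited by $\lFlat = \lFlat(\cmod{R_{fp}})$ from the Grothendieck category $\cmod{R_{fp}}$. Hence the degreewise pure-exact structure on $\Ch(\cmod R)$ is carried to the degreewise exact structure on $\Ch(\lFlat)$, and it suffices to produce the injective model structure there. I would obtain it from Hovey's correspondence between hereditary model structures and compatible complete cotorsion pairs, in the version available for exact categories via the efficient-exact-category machinery of \cite{Sto}.

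First I would record the two cotorsion pairs that make up the Hovey triple. The second pair is exactly part (3) of the preceding Corollary, namely $(\widetilde{\lFlat}, \widetilde{\lFlat}^{\perp} \cap \Ch(\lFlat))$, a hereditary complete cotorsion pair in $(\Ch(\lFlat), \Ext|_{\Ch(\lFlat)})$; this plays the role of (trivial cofibrations, fibrations). The first pair is $(\Ch(\lFlat), \mathcal I)$, where $\mathcal I$ is the class of injective objects of the degreewise exact structure; this forces every object to be cofibrant and plays the role of (cofibrations, trivial fibrations). Setting $\mathcal Q = \Ch(\lFlat)$, $\mathcal R = \widetilde{\lFlat}^{\perp} \cap \Ch(\lFlat)$ and $\mathcal W = \widetilde{\lFlat}$, the Gillespie-Hovey recipe yields the model structure once one checks compatibility of the two pairs, which here reduces to the single equality of cores $\mathcal Q \cap \mathcal I = \widetilde{\lFlat} \cap (\widetilde{\lFlat}^{\perp} \cap \Ch(\lFlat))$: the left side is $\mathcal I$ itself, while the right side is $\widetilde{\lFlat} \cap \widetilde{\lFlat}^{\perp}$, which by heredity is precisely the class of contractible complexes whose components lie in $\lFlat \cap \lFlat^{\perp}$, i.e. are pure-injective. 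With this in hand the trivial objects are $\mathcal W = \widetilde{\lFlat}$ (the pure acyclic complexes), every complex is cofibrant, and the trivially fibrant objects are $\mathcal W \cap \mathcal R = \mathcal I$, the contractible complexes with pure-injective components; the weak equivalences are the maps with pure acyclic cone, that is, the \emph{pure quasi-isomorphisms}, so the resulting homotopy category is $\mathcal D_{\mathrm{pure}}(\cmod R)$.

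The step I expect to be the main obstacle is checking that the ambient category genuinely supports this machinery. Two points need care. First, one must know that $(\cmod R, \mathcal E) \cong (\lFlat, \Ext|_{\lFlat})$ is an efficient exact category (in particular weakly idempotent complete, with exact direct limits and a generator) possessing enough injectives, so that $\Ch(\lFlat)$ falls under the Gillespie-Hovey framework and the first cotorsion pair $(\Ch(\lFlat), \mathcal I)$ is complete; the existence of enough injectives is exactly the content of the pure-injective envelope theorem (Theorem \ref{theor.purei.env}) transported across the equivalence, while $\cmod{R_{fp}}$ being Grothendieck (Theorem \ref{cart_groth}) supplies the remaining structural hypotheses. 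Second, one must identify $\mathcal I$ concretely: a standard argument shows that the injective objects of the degreewise exact structure on $\Ch(\lFlat)$ are exactly the contractible complexes with pure-injective components, which is what makes the core computation above come out correctly. Once these identifications are in place, the remaining verifications — heredity of the pairs, thickness of $\mathcal W$, and the description of the weak equivalences as pure quasi-isomorphisms — are formal consequences of the Hovey correspondence.
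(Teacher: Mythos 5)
Your overall route is the paper's own: transport across Theorem \ref{locally_CB_rep} (via Corollary \ref{cor.pure}) to the exact category $(\Ch(\lFlat),\Ext|_{\Ch(\lFlat)})$, run Hovey's correspondence in its exact-category form (\cite{hovey2}, \cite[Corollary 3.4]{G4}) on the triple $\mathcal Q=\Ch(\lFlat)$, $\mathcal W=\widetilde{\lFlat}$, $\mathcal R=\widetilde{\lFlat}^{\perp}\cap\Ch(\lFlat)$, and reduce everything to the single compatibility $\mathcal I=\mathcal W\cap\mathcal R$. This is exactly what the paper does, except that where you say the identification of $\widetilde{\lFlat}\cap\widetilde{\lFlat}^{\perp}$ follows ``by heredity'', the paper gives the actual computation: for $G\in\mathcal W\cap\mathcal R$ the identity $1_G$ is null-homotopic, so $G$ is contractible, and each component $G_n$ is cotorsion because $\Ext_{\Ch(\cmod{R_{fp}})}(D^n(F),G)\cong\Ext_{\cmod{R_{fp}}}(F,G_n)$ with $D^n(F)\in\widetilde{\lFlat}$ for every $F\in\lFlat$. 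That step should be written out, but it is standard and not a gap.

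The one genuine flaw is your sourcing of the completeness of the first cotorsion pair $(\Ch(\lFlat),\mathcal I)$. You assert that enough injectives in $\Ch(\lFlat)$ ``is exactly the content of the pure-injective envelope theorem (Theorem \ref{theor.purei.env}) transported across the equivalence''. That theorem gives enough injectives in the exact category $(\lFlat,\Ext|_{\lFlat})$ of \emph{objects}, and this does not lift to complexes by the usual device: the standard embedding of a complex $X$ into a product of disk complexes $\prod_n D^n(E_n)$ breaks down here, because $\lFlat$ is not in general closed under products (locally flat modules behave like flat quasi-coherent sheaves in this respect), so the would-be injective complex need not lie in $\Ch(\lFlat)$ at all. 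The paper avoids this: completeness of $(\Ch(\lFlat),\Ch(\lFlat)^{\perp}\cap\Ch(\lFlat))$ is part (2) of the corollary following Proposition \ref{prop.induc}, obtained by applying Proposition \ref{prop.induc} inside the Grothendieck category $\Ch(\cmod{R_{fp}})$ with the Kaplansky-class input of \cite[Corollary 2.7 and Theorem 4.2(1)]{Sto}; that is, the deconstructibility argument is run directly at the level of complexes, not transported from degree zero. Since you already invoke part (3) of that corollary for the second pair, the repair is immediate: cite its part (2) in place of Theorem \ref{theor.purei.env}, after which your proof coincides with the paper's.
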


\begin{proof}
Let us denote $\perp_{\Ch(\lFlat)}$ by $\perp_{\Ch(\mathcal F)}$. First we will show that the pairs $(\Ch(\lFlat),\Ch(\lFlat)^{\perp_{\Ch(\mathcal F)}})$ and 
$(\widetilde{\lFlat},\widetilde{\lFlat}^{\perp_{\Ch(\mathcal F)}})$ in $\Ch(\lFlat)$ satisfy the Hovey correspondence in its version for exact categories (see \cite{hovey2} and \cite[Corollary 3.4]{G4}). We will denote by $\mathcal R$ the class $\widetilde{\lFlat}^{\perp_{\Ch(\mathcal F)}}$ and by $\mathcal W$ the class $\widetilde{\lFlat}$. It is clear that $\mathcal W$ is a thick subcategory of $\Ch(\lFlat)$. That is, if two terms in a short exact sequence $0\to W_1\to W_2\to W_3\to $ in $\Ch(\lFlat)$ are in $\mathcal W$, then the third one is also in $\mathcal W$. Notice that the chain complexes in $\Ch(\lFlat)^{\perp_{\Ch(\mathcal F)}}$ are the contractible complexes in $\Ch(\lFlat)$ that are cotorsion in each degree. We ought to see that this class coincides with $\mathcal R\cap\mathcal W$. By its definition, for each $G\in \mathcal R\cap \mathcal W$, the identity map $1_G\colon G\to G$ is null homotopic. Hence the chain complex $G$ is contractible. Finally let us see that each degree $G_n$ in $G$ is cotorsion. So let $F\in \lFlat$. Then $\Ext_{\cmod {R_{fp}})}(F,G_n)\cong\Ext_{\Ch(\cmod {R_{fp}})}(D^n(F),G)$, where $D^n(F)$ is the chain complex with $F$ in degree $n$ and $n-1$ and 0 otherwise. All the maps are zero except $d_n=1_F$. Therefore $D^n(F)\in \widetilde{\lFlat}$, so $\Ext_{\Ch(\cmod {R_{fp}})}(D^n(F),G)=0$. Thus $G_n\in \lFlat$ is cotorsion. So these pairs define a model structure in $(\Ch(\lFlat),\Ext_{\Ch(\lFlat)})$. The model structure is injective in the sense that all complexes of $\Ch(\lFlat)$ are cofibrant and the trivially fibrant objects (the class $\mathcal R\cap \mathcal W$) are the injectives in $\Ch(\lFlat)$.

We now use Theorem \ref{locally_CB_rep} to obtain our statement. In fact the pure acyclic cochain complexes in $(\Ch(\cmod R),\mathcal E)$ correspond to the  complexes in the class $\widetilde{\lFlat}$ inside $(\Ch(\lFlat),\Ext|_{\Ch(\lFlat)})$. The injective model structure in $\Ch(\lFlat)$ is determined by the complete cotorsion pair $(\mathcal W, \mathcal R)$ above. The class $\mathcal W$ corresponds with the class of pure acyclic chain complexes in $(\Ch(\cmod R),\mathcal E)$. And the class $\mathcal R$ corresponds with the class of chain complexes $M$ such that any map $E\to M$ is null homotopic, for each pure acyclic chain complex $E$. Finally the class $\mathcal R\cap \mathcal W$ corresponds to the contractible complexes of pure-injective cartesian modules. Hence we get the corresponding injective model structure on $(\Ch(\cmod R),\mathcal E)$ as described.
\end{proof}

\begin{remark}
Given a strict representation $R$, there are corresponding versions of theorems \ref{theor.purei.env} and \ref{theor.der.pur} for the category $\mod R$.
\end{remark}

\appendix
\section{Purity in functor categories}

In this appendix we closely analyse the notion of purity in functor categories, reducing it to the notion of purity in categories of modules over unitary rings (with just one object): in fact, given a small preadditive category $\C$, we first reduce the theory of purity in $(\C^{op},\Ab)$ to the category of unitary modules over a ring with enough idempotents $R_\C$, and then to purity in a full subcategory of the category of modules over a ring $R_\C^*$ (with unit), that contains $R_\C$ as an ideal. Let us start recalling the following

\begin{definition}
Let $\C$ be a small preadditive category and let $N\leq M\in (\C^{op},\Ab)$. We say that $N$ is {\em pure} in $M$ if the sequence $0\to N\otimes_\C K\to M\otimes_\C K$ is exact for all $K\in (\C,\Ab)$.
\end{definition}
The reductions described above will allow us to deduce the following Theorem from results of \cite{flat_cover_conj}.

\begin{theorem}\label{card_purification}
Let $\C$ be a small preadditive category, let $\lambda=\max\{|\N|,|\mor \C|\}$.
Then, given $N\leq M\in(\C^{op},\Ab)$ with $|N|\leq \lambda$, there exists $N_*\leq M$ pure such that $N\leq N_*$ and $|N_*|\leq \lambda$. 
\end{theorem}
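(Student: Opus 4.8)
The plan is to reduce the statement to a purification result in a category of modules over a genuine unital ring, where I can invoke \cite{flat_cover_conj}. First I would form the \emph{functor ring} $R_\C=\bigoplus_{a,b\in\Ob\C}\hom_\C(a,b)$, a (non-unital) ring with the complete set of orthogonal idempotents $\{e_a=\id_a:a\in\Ob\C\}$, and recall the standard equivalence between $(\C^{op},\Ab)$ and the category of \emph{unitary} right $R_\C$-modules, i.e.\ those $M$ with $M=\sum_a Me_a$. Adjoining a unit produces the ring $R_\C^*=\Z\oplus R_\C$ (a Dorroh-type extension) containing $R_\C$ as a two-sided ideal, so each unitary $R_\C$-module is in particular an object of $\mod{R_\C^*}$. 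Two easy but essential bookkeeping facts are that unitary modules are closed under passage to $R_\C^*$-submodules (if $n\in N\le M$ with $M$ unitary then $n=\sum_a ne_a$ with each $ne_a\in N$), and that $|R_\C^*|\le\lambda$: each $\hom_\C(a,b)$ has cardinality at most $|\mor\C|\le\lambda$, and a direct sum of at most $\lambda$ groups of size $\le\lambda$ again has size $\le\lambda$ since $\lambda^{<\omega}=\lambda$.

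The crux of the reduction is the compatibility of the two relevant tensor products. For any right module $M$ and left module $K$ one checks that the defining relations of $M\otimes_{R_\C}K$ and of $M\otimes_{R_\C^*}K$ coincide, since a generator $m(z,r)\otimes k-m\otimes(z,r)k$ over $R_\C^*=\Z\oplus R_\C$ collapses to $mr\otimes k-m\otimes rk$; hence $M\otimes_{R_\C}K\cong M\otimes_{R_\C^*}K$ naturally. Consequently an inclusion of unitary modules that is pure in $\mod{R_\C^*}$ (tested against all left $R_\C^*$-modules) is a fortiori pure in $(\C^{op},\Ab)$ (tested only against the unitary left $\C$-modules $K$, regarded as $R_\C^*$-modules via $1\cdot k=k$). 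This is precisely the direction of the dictionary the argument needs, and I expect reconciling the two notions of purity --- this tensor identification together with the precise translation of purity through the equivalence $(\C^{op},\Ab)\simeq\mathrm{Un}\text{-}R_\C$ --- to be the main conceptual obstacle.

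With these reductions in place the proof is short. Given $N\le M$ in $(\C^{op},\Ab)$ with $|N|\le\lambda$, I would view them as unitary modules over the unital ring $R_\C^*$ of cardinality $\le\lambda$ and apply the cardinality-controlled purification lemma of \cite{flat_cover_conj}: over a ring of size $\le\lambda$, any submodule of size $\le\lambda$ sits inside an $R_\C^*$-pure submodule $N_*$ with $|N_*|\le\lambda$. (The substance of that lemma is the familiar chain construction, adjoining at each of countably many stages solutions of the at most $\lambda$ finite consistent systems of linear equations with constants in the current stage, which keeps the cardinality at $\lambda$.) The resulting $N_*$ is an $R_\C^*$-submodule of the unitary module $M$, hence unitary, so it corresponds to a subobject $N\le N_*\le M$ in $(\C^{op},\Ab)$ with $|N_*|\le\lambda$; and being $R_\C^*$-pure it is pure in $(\C^{op},\Ab)$ by the tensor identification above. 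This yields the desired $N_*$ and completes the proof.
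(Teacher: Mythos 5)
Your proof is correct and is essentially the paper's own argument: the paper likewise passes from $(\C^{op},\Ab)$ to unitary modules over the functor ring $R_\C$ with enough idempotents (Gabriel's equivalence), then to the full subcategory $\sigma(R_\C)\subseteq \mod{R_\C^*}$ over the Dorroh unitalization $R_\C^*=\Z\times R_\C$, proves exactly your two tensor-product compatibilities (Lemmas \ref{pres_tens_1} and \ref{pres_tens_2}) to transfer purity, and concludes with \cite[Lemma 1]{flat_cover_conj} together with the cardinality bound $|R_\C^*|\leq\lambda$ and the lattice/cardinality preservation of the two equivalences. Your finite-support description of $R_\C$ and the observation that $R_\C^*$-submodules of unitary modules are unitary match the paper's Proposition on $U$, so there is nothing missing.
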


\subsection{From $(\C^{op},\Ab)$ to $\mod {R_\C}$}

Let $\C$ be a small preadditive category and consider the ring $R_\C$ which is the ring of $|\Ob  \C|\times |\Ob \C|$ matrices $(r_{ji})_{i,j\in \Ob\C}$ with $r_{ji}\colon i\to j$ and with a finite number of non-zero terms in each row and in each column. Given a morphism $\phi$ in $\C$, we identify $\phi$ with the matrix in $R_\C$ whose unique non-trivial entry is $\phi$. With this notation one can see that $\{\id_c:c\in \Ob\C\}$ is a complete set of pairwise orthogonal idempotents in $R_\C$.

\begin{definition}
A unitary right $R_\C$-module is an Abelian group $M$ together with a right $R_\C$ action
$$M\times R_\C\to M \ \ \ (m,r)\mapsto mr$$
such that, for all $m,n\in M$ and $r,s\in R_\C$:
\begin{enumerate}[\rm (1)]
\item $m(r+s)=mr+ms$;
\item $(m+n)r=mr+nr$;
\item $(mr)s=m(rs)$;
\item $MR_\C=\{mr:m\in M,r\in R_\C\}=M$.
\end{enumerate} 
Given two unitary right $R_\C$-modules $M$ and $N$, a homomorphism $\phi\colon M\to N$ is a homomorphism of right $R_\C$-modules if $\phi(mr)=\phi(m)r$ for all $m\in M$, $r\in R_\C$. We denote by $\mod {R_\C}$ the category of unitary right $R_\C$-modules. The category $\lmod {R_\C}$ of unitary left $R_\C$-modules is defined similarly. 
\end{definition}

From now on, unless explicitly stated, all the modules over $R_\C$ we consider are unitary. The following result is proved in \cite[Chapitre II, \S1]{Gabriel}:

\begin{proposition}
In the above notation, there are inverse equivalences of categories
$$S\colon (\C^{op},\Ab)\leftrightarrows \mod {R_\C}\colon T\,,$$
where $S$ is defined as follows:
\begin{enumerate}[\rm --]
\item given $F\colon \C^{op}\to \Ab$, $S(F)=\bigoplus_{c\in \Ob \C}F(c)$ with $(r_{dc})_{d,c}\in R_\C$ acting on $S(F)$ as the matrix $(F(r_{dc}))_{d,c}$, where $F(r_{dc})\colon F(d)\to F(c)$;
\item given $\alpha\colon F\rightarrow G$, $S(\alpha)\colon \bigoplus_{c\in \Ob \C}F(c)\to \bigoplus_{c\in \Ob \C}G(c)$ is the diagonal $\Ob\C$ matrix whose entry corresponding to $c\in \Ob \C$ is $\alpha_c$;
\end{enumerate}
while $T$ is defined as follows:
\begin{enumerate}[\rm --]
\item given a right $R_\C$-module $M$, $T(M)\colon \C^{op}\to \Ab$ is such that 
$$c\mapsto M\id_c \ \ \ (f\colon c\to d)\mapsto (M\id_d\overset{\cdot f}{\longrightarrow} M\id_c)$$
for all $c,d\in \Ob \C$ and $f\in \hom_\C(c,d)$;
\item given a homomorphism of right $R_\C$-modules, $\phi\colon M\to N$ we let $T(\phi)\colon T(M)\rightarrow T(N)$ be the natural transformation whose component at $c\in \Ob\C$ is  $T(\phi)_c\colon M\id_c\to N\id_c$, such that $T(\phi)_c(m\id_c)=\phi(m)\id_c$.
\end{enumerate}
\end{proposition}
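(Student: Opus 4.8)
The plan is to check that $S$ and $T$ are well-defined additive functors and then to exhibit natural isomorphisms $T\circ S\cong\id_{(\C^{op},\Ab)}$ and $S\circ T\cong\id_{\mod{R_\C}}$, so that $S$ and $T$ are mutually quasi-inverse. For well-definedness of $S$, one checks that the assignment $(r_{dc})\mapsto(F(r_{dc}))$ really defines a unitary right $R_\C$-action on $\bigoplus_{c\in\Ob\C}F(c)$: additivity in the scalar is clear, while the associativity axiom $(mr)s=m(rs)$ unwinds, entry by entry, to the contravariant functoriality of $F$ together with the definition of matrix multiplication in $R_\C$; unitarity holds because the identity matrix acts as the identity. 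For $T$, one checks that $T(M)(f)=(\cdot\,f)$ indeed maps $M\id_d$ into $M\id_c$ for $f\colon c\to d$ (since $f=\id_d f\id_c$), that $T(M)$ is a contravariant additive functor (functoriality being exactly associativity of the $R_\C$-action, and $T(M)(\id_c)=\id$), and that $T(\phi)$ is a natural transformation for any $R_\C$-linear $\phi$. Both assignments are visibly additive on hom-groups.

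The isomorphism $T\circ S\cong\id$ is essentially an identity. Given $F\in(\C^{op},\Ab)$, the idempotent $\id_c\in R_\C$ acts on $S(F)=\bigoplus_d F(d)$ as the projection onto the summand $F(c)$, so there is a canonical identification $T(S(F))(c)=S(F)\id_c=F(c)$. Moreover, for a morphism $f\colon c\to d$, right multiplication by the matrix $f$ carries the summand $F(d)$ to the summand $F(c)$ via $F(f)$, so that $T(S(F))(f)=F(f)$ under these identifications. These identifications are natural in $F$, giving $T\circ S\cong\id_{(\C^{op},\Ab)}$.

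The step $S\circ T\cong\id$ is the heart of the argument, and I expect it to be the main obstacle. Here $S(T(M))=\bigoplus_{c\in\Ob\C}M\id_c$, and the natural candidate for the isomorphism is the summation map $\theta_M\colon\bigoplus_c M\id_c\to M$, $(m_c)_c\mapsto\sum_c m_c$. Proving that $\theta_M$ is bijective amounts to establishing the Peirce decomposition $M=\bigoplus_c M\id_c$ for every unitary module $M$. Directness of the sum follows from orthogonality of the idempotents: if $m\id_c=\sum_{c'\neq c}m_{c'}\id_{c'}$, then right-multiplying by $\id_c$ and using $\id_{c'}\id_c=\delta_{c'c}\id_c$ forces $m\id_c=0$. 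Spanning is exactly where unitarity is used: writing $m=\sum_k m_k r_k$ with the $r_k\in R_\C$ of finite support, every $m_k r_k$ already lies in $\bigoplus_{c\in F}M\id_c$ for the finite set $F$ of column-indices occurring in the $r_k$, whence $m\in\bigoplus_c M\id_c$. Finally one checks that $\theta_M$ intertwines the $R_\C$-action transported by $S\circ T$ with the original action on $M$, and that $\theta$ is natural in $M$; this completes the proof that $S$ and $T$ are inverse equivalences. The only delicate point is the spanning argument, which genuinely requires restricting to unitary modules together with the finite support of the elements of $R_\C$; without unitarity the summation map need not be surjective.
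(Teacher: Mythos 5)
Your argument is correct in substance, but there is nothing in the paper to compare it with: the paper offers no proof of this proposition and simply defers to Gabriel \cite[Chapitre II, \S 1]{Gabriel}. What you wrote out --- the functoriality checks, the identification $S(F)\id_c=F(c)$ giving $T\circ S\cong\id$, and the Peirce decomposition $M=\bigoplus_{c\in\Ob\C}M\id_c$ via orthogonality of the idempotents (directness) and unitarity (spanning) giving $S\circ T\cong\id$ --- is precisely the standard, essentially Gabriel's, argument, and you correctly isolate unitarity as the point where surjectivity of the summation map could fail.

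One caveat deserves to be made explicit. Your spanning step assumes that every $r\in R_\C$ has finite \emph{total} support (``the finite set of column-indices occurring in the $r_k$''), whereas the paper literally defines $R_\C$ as the ring of matrices with finitely many nonzero entries \emph{in each row and each column}. These differ when $\Ob\C$ is infinite: the row-and-column-finite ring contains the full diagonal matrix $e=(\delta_{dc}\,\id_c)_{d,c}$, and then, for instance, $M=\prod_{c\in\Ob\C}F(c)$ with the componentwise action is unitary (since $m=me$) while $\bigoplus_c M\id_c$ consists only of the finitely supported elements, so $S\circ T\cong\id$ --- and hence the proposition itself --- fails for that ring. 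The intended ring is Gabriel's $\bigoplus_{c,d}\hom_\C(c,d)$, i.e.\ matrices of finite support; this is also the only reading under which the paper's claim that $\{\id_c:c\in\Ob\C\}$ is a \emph{complete} set of orthogonal idempotents holds, and under it your spanning argument is exactly right. There is, however, a small internal tension in your write-up: you justify unitarity of $S(F)$ by saying ``the identity matrix acts as the identity,'' but under the correct finite-support reading there is no identity matrix when $\Ob\C$ is infinite; instead observe that each $m\in S(F)$ satisfies $m=m\cdot\bigl(\sum_{c\in F_0}\id_c\bigr)$ for a suitable finite set $F_0\subseteq\Ob\C$. With these two points fixed, your proof is complete and matches the cited source.
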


Of course the above proposition has an analogous version for left modules. Abusing notations we use the same notations for the functors on left modules
$$S\colon (\C,\Ab)\leftrightarrows \lmod {R_\C}\colon T\,.$$

In the following lemma we show that in fact the above equivalences of categories respect the tensor product and so they can be used to translate problems about purity in the functor category $(\C^{op},\Ab)$ into analogous problems in $\mod R_\C$. Recall that the tensor product of (right and left) $R_\C$-modules is defined exactly as for modules over rings with $1$.

\begin{lemma}\label{pres_tens_1}
In the above notation, there is a canonical isomorphism of functors $(\C^{op},\Ab)\times (\C,\Ab)\to \Ab$:
$$-\otimes_{\C}-\cong S(-)\otimes_{R_\C} S(-)\,.$$
\end{lemma}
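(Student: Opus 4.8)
The plan is to exhibit an explicit map on generators and then check that it is well-defined, bijective, and natural. The point to keep in mind is the asymmetry in the two constructions: by definition $F\otimes_\C G$ is a quotient of the \emph{diagonal} sum $\bigoplus_{c\in\Ob\C}F(c)\otimes_\Z G(c)$ by the subgroup $T$ generated by the elements $F(r)(x)\otimes y-x\otimes G(r)(y)$, whereas $S(F)\otimes_{R_\C}S(G)$ is a quotient of the \emph{full} sum $S(F)\otimes_\Z S(G)=\bigoplus_{c,d\in\Ob\C}F(c)\otimes_\Z G(d)$ by the $R_\C$-balancing relations. The first thing I would do is unwind the module actions: identifying a single morphism $\phi\colon a\to b$ with the corresponding elementary matrix in $R_\C$, the right action on $S(F)$ sends $x\in F(b)$ to $F(\phi)(x)\in F(a)$ and annihilates the summands outside $b$, while the left action on $S(G)$ sends $y\in G(a)$ to $G(\phi)(y)\in G(b)$ and annihilates the summands outside $a$. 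In particular each idempotent $\id_c$ acts as the projection onto the $c$-indexed summand.

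I would then define $\Theta\colon F\otimes_\C G\to S(F)\otimes_{R_\C}S(G)$ as the map induced by the inclusions $F(c)\hookrightarrow S(F)$ and $G(c)\hookrightarrow S(G)$, sending the class of $x\otimes y$ (with $x,y$ in the $c$-th summand) to $x\otimes y$. Using the computation above, a generator $F(r)(x)\otimes y-x\otimes G(r)(y)$ of $T$ maps to $(x\cdot r)\otimes y-x\otimes(r\cdot y)=0$ in the $R_\C$-tensor product, so $\Theta$ descends to the quotient. For the inverse I would define $\Psi$ out of $\bigoplus_{c,d}F(c)\otimes_\Z G(d)$ by sending each off-diagonal summand ($c\neq d$) to $0$ and the diagonal summand $F(c)\otimes_\Z G(c)$ by the canonical projection into $F\otimes_\C G$; the key verification is that $\Psi$ annihilates the balancing relations $m\phi\otimes n-m\otimes\phi n$. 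By bilinearity it suffices to treat a single morphism $\phi\colon a\to b$, and a short case analysis on the supports of $m\in F(c)$ and $n\in G(d)$ shows that both terms vanish unless $m\in F(b)$ and $n\in G(a)$, in which case the surviving contributions are exactly $F(\phi)(m)\otimes n$ and $m\otimes G(\phi)(n)$, which agree in $F\otimes_\C G$ precisely by the relation defining $T$.

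Finally I would check that $\Theta$ and $\Psi$ are mutually inverse. The composite $\Psi\Theta$ is visibly the identity on diagonal generators. For $\Theta\Psi$, the crucial observation is that every generator $m\otimes n$ of $S(F)\otimes_{R_\C}S(G)$ can be diagonalized: writing $m=\sum_c m\id_c$ and $n=\sum_d\id_d n$ and using the balancing relations together with the orthogonality of the idempotents $\id_c$, one obtains $m\otimes n=\sum_c(m\id_c)\otimes(\id_c n)$, a sum of diagonal tensors lying in the image of $\Theta$; this simultaneously proves surjectivity of $\Theta$ and that $\Theta\Psi=\id$. Naturality in both variables is then routine, since for $\alpha\colon F\to F'$ and $\beta\colon G\to G'$ the functor $S$ turns $\alpha,\beta$ into the diagonal matrices with entries $\alpha_c,\beta_c$, so $S(\alpha)\otimes S(\beta)$ restricts to $\alpha_c\otimes\beta_c$ on each diagonal summand, matching $\alpha\otimes\beta$ under $\Theta$. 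I expect the main obstacle to be exactly the well-definedness of $\Psi$: carefully tracking which summands survive the two module actions and confirming that the only surviving relation is the defining relation of the functor-category tensor product.
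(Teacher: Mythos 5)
Your argument is correct and is in substance the paper's own: the paper likewise includes the diagonal sum $A=\bigoplus_{c}M(c)\otimes_\Z N(c)$ into the full sum $B=\bigoplus_{c,d}M(c)\otimes_\Z N(d)$, first quotients by the idempotent relations $x\id_c\otimes y-x\otimes \id_c y$ (its subgroup $K'$) to realize exactly your diagonalization $m\otimes n\equiv\sum_c m\id_c\otimes \id_c n$, and then matches the remaining balancing relations with the defining relations of $-\otimes_\C-$, which corresponds precisely to your verifications that $\Theta$ and $\Psi$ are well defined and mutually inverse. The only point deserving a word is your reduction ``by bilinearity'' to a single morphism $\phi$: a general $r\in R_\C$ may have infinitely many nonzero entries (only each row and column is finite), so it need not be a finite sum of elementary matrices, but since $m$ and $n$ have finite support one may replace $r$ by the finite truncation $r'$ consisting of the entries whose row index meets the support of $m$ or whose column index meets the support of $n$, for which $mr=mr'$ and $rn=r'n$, and then bilinearity does apply.
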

\begin{proof}
Given $M\in (\C^{op},\Ab)$ and $N\in (\C,\Ab)$, 
$$M\otimes_\C N=\left(\bigoplus_{c\in\Ob\C}M(c)\otimes_\Z N(c)\right)/H$$
where $H$ is the subgroup generated by the elements of the from $M(\alpha)(m)\otimes n-m\otimes N(\alpha)(n)$, with $m\in M(d)$, $n\in N(d)$ and $\alpha\in \hom_\C(c,d)$. To shorten notations we let $A=\bigoplus_{c\in\Ob\C}M(c)\otimes_\Z N(c)$, so that $M\otimes_\C N=A/H$. Similarly, 
$$S(M)\otimes_{R_\C} S(N)=\left(\bigoplus_{c\in \Ob\C}M(c)\otimes_\Z\bigoplus_{c\in \Ob\C}N(c)\right)/K=\left(\bigoplus_{c,d\in\Ob\C}M(c)\otimes_\Z N(d)\right)/K\,,$$
where $K$ is the subgroup generated by the elements of the from $xr\otimes y-x\otimes ry$, with $x\in S(M)$, $y\in S(N)$ and $r\in R_\C$. Again to shorten notations we let $B=\bigoplus_{c,d\in\Ob\C}M(c)\otimes_\Z N(d)$, so that $S(M)\otimes_{R_\C} S(N)=B/K$. Let $\phi\colon A\to B$ be the map that identifies $A$ with a sub-direct sum of $B$ in the obvious way and let $K'$ be the subgroup of $K$ generated by the elements $xr\otimes y-x\otimes ry$ with $x\in S(M)$, $y\in S(N)$ and $r=\id_c$ for some $c\in \Ob\C$. It is not difficult to see that $\phi$ induces an isomorphism $\pi\circ \phi=\bar\phi\colon A\to B/K'$, where $\pi$ is the projection $\pi\colon B\to B/K'$. In fact, $\bar\phi$ is surjective since, given $(m_c\otimes n_d)_{c,d}\in B$, 
$$(m_c\otimes n_d)_{c,d}-\phi(m_c\otimes n_c)_{c}=\sum_{c\in \Ob\C}((m_c\id_c\otimes n_d)_{c,d}-(m_c\otimes \id_c n_d)_{c,d}) \in K'$$ 
Similarly, if $\phi(m_c\otimes n_c)_c\in K'$, it means that there exist $x_1,\dots,x_n\in S(M)\,,\ y_1,\dots,y_n\in S(N)$ and $c_1,\dots, c_n\in \Ob\C$ such that  
$$\phi(m_c\otimes n_c)_c=\sum_{i=1}^n(x_i\id_{c_i}\otimes y_i-x_i\otimes \id_{c_i}y_i)\,.$$
Notice that the components corresponding to $(c,c)$, $c\in \Ob\C$, are zero on the right hand side, showing that $(m_c\otimes n_c)_c=0$. This proves the injectivity of $\phi$.\\
One can prove similarly that $\bar\phi(H)=K/K'$, showing that $\bar\phi$ induces an isomorphism
$$\phi_{M,N}\colon M\otimes_\C N\to S(M)\otimes_{R_\C} S(N)\,.$$
It is not difficult to show that $\phi_{M,N}$ is natural in both variables and so it gives an isomorphism of functors. 
\end{proof}

\subsection{From $\mod {R_\C}$ to $\sigma(R_\C)$}

Now we want to identify $\mod R_\C$ with a suitable full subcategory of $\mod R^*_\C$, where $R_\C^*$ is a ring (with unit) containing $R_\C$ as a subring. More precisely, as an Abelian group $R_\C^*=\Z\times R_\C$, while multiplication is defined as follows:
$$R_\C^*\times R_\C^*\to R_\C^*\ \ \ ((m,r),(n,s))\mapsto (mn,ms+rn+rs)\,.$$ 
It is easily seen that the above multiplication is associative and it is compatible with the sum in $\Z\times R_\C$, furthermore the element $(1,0)$ is a unit and the map $R_\C\to R_\C^*$ such that $r\mapsto (0,r)$ is an injective homomorphism of rings (without unit). The ring homomorphism $R_\C\to R_\C^*$ allows one to construct a forgetful functor
$$\mod R_\C^*\to \mod R_\C\,.$$
On the other hand, there is a canonical functor 
$$\mod R_\C\to \mod R_\C^*$$
that sends a right $R_\C$-module $M$ to the right $R_\C^*$-module whose underlying Abelian group is $M$ and 
$$M\times R_\C^*\to M\ \ \ (m,(n,r))=mn+mr\,.$$
In particular, $R_\C$ can be viewed naturally as a right $R_\C^*$-module.

\begin{definition}
In the above notation, we let $\sigma(R_\C)=\{M\in\mod R_\C^*: \exists R_C^{(I)}\to M\to 0\}$ be the full subcategory of $\mod R_\C^*$ of all the modules generated by $R_\C$. Analogously one defines $(R_\C)\sigma\subseteq \lmod{R_\C^*}$.
\end{definition}

Notice also that, given a morphism $\phi\colon M\to N$ of right $R_\C$-modules, $\phi$ is an endomorphism of $R_\C^*$-modules when we endow $M$ and $N$ with their canonical right $R_\C^*$-module structure as described above. 

The following result is proved in \cite[\S49]{wisbauer}:

\begin{proposition}
In the above notation, there is an equivalence of categories $U\colon \mod R_\C\to \sigma(R_\C)$, assigning to $M\in \mod R_\C$ the corresponding $R_\C^*$-module $M_{R_\C^*}$. Furthermore, given a right $R_\C$-module $M$,
\begin{enumerate}[\rm (1)]
\item $U$ induces a bijection between the lattice of $R_\C$-submodules of $M$ and that of $R_\C^*$-submodules of $U(M)$;
\item $|M|=|U(M)|$ (as $M$ and $U(M)$ have the same structure as Abelian groups).
\end{enumerate}
\end{proposition}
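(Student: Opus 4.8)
The plan is to exhibit an explicit quasi-inverse for $U$ and to reduce all three assertions to one observation about $\sigma(R_\C)$, namely the characterization
$$\sigma(R_\C)=\{N\in\mod{R_\C^*}\ :\ NR_\C=N\}\,.$$
For the inclusion ``$\subseteq$'' I would use that $R_\C$ has enough idempotents, so that the ideal $R_\C$ of $R_\C^*$ is idempotent, $R_\C R_\C=R_\C$ (every morphism $\phi\colon i\to j$ equals $\id_j\,\phi$); then $R_\C^{(I)}R_\C=R_\C^{(I)}$ and any quotient $N$ of $R_\C^{(I)}$ satisfies $NR_\C=N$. For ``$\supseteq$'', given $N$ with $NR_\C=N$, the maps $\lambda_n\colon R_\C\to N$, $s\mapsto n\cdot(0,s)$, are $R_\C^*$-linear for each $n\in N$, and together they give a morphism $R_\C^{(N)}\to N$ whose image is $\sum_{n}nR_\C=NR_\C=N$, hence an epimorphism.

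With this at hand, $U$ is well defined. A direct check shows that $m\cdot(n,r)=mn+mr$ is a unital right $R_\C^*$-action: associativity is precisely the multiplication law of $R_\C^*$ combined with the biadditivity and associativity of the $R_\C$-action, and $(1,0)$ acts as the identity. Moreover $U(M)R_\C=MR_\C=M$ because $M$ is unitary, so $U(M)\in\sigma(R_\C)$ by the characterization above. Functoriality is the remark made just before the statement: an $R_\C$-linear $\phi\colon M\to N$ satisfies $\phi(mn+mr)=\phi(m)n+\phi(m)r$ and is therefore automatically a morphism of $R_\C^*$-modules.

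Next I would build the quasi-inverse $V\colon\sigma(R_\C)\to\mod{R_\C}$ by restricting scalars along the (non-unital) inclusion $R_\C\hookrightarrow R_\C^*$; the restricted $R_\C$-module is unitary exactly because $N=NR_\C$ for $N\in\sigma(R_\C)$, so $V$ indeed lands in $\mod{R_\C}$. The two composites are identities on the nose, not merely up to natural isomorphism: in $VU(M)$ the $R_\C$-action is $m\cdot(0,r)=mr$, the original one, while for any unital $R_\C^*$-module $N$ the splitting $n\cdot(k,r)=n\cdot(k,0)+n\cdot(0,r)=kn+nr$ rebuilds the whole $R_\C^*$-action of $UV(N)$ out of its restriction to $R_\C$. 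Hence $U$ is an equivalence (indeed an isomorphism) of categories.

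Finally, parts (1) and (2) are immediate consequences. Since $U$ leaves the underlying abelian group untouched, $|M|=|U(M)|$ directly from the definition of $|\cdot|$. For the lattice statement, a subgroup $M'\le M$ is a sub-$R_\C^*$-module of $U(M)$ iff $M'R_\C^*\subseteq M'$; writing $R_\C^*=\Z\cdot(1,0)+R_\C$ and noting that the $\Z$-part stabilizes every subgroup, this holds iff $M'R_\C\subseteq M'$, i.e.\ iff $M'$ is an $R_\C$-submodule. Thus the $R_\C$- and $R_\C^*$-submodules of $M$ and $U(M)$ are literally the same subgroups, and since meets and joins are computed in the common ambient group the correspondence is an isomorphism of lattices. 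The only genuinely nontrivial step is the characterization of $\sigma(R_\C)$ at the start; granting it, the equivalence and both refinements are routine bookkeeping with the two module structures.
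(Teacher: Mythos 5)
Your proof is correct, and it is in fact more informative than what the paper itself offers: the paper gives no argument for this proposition at all, deferring entirely to the citation \cite[\S 49]{wisbauer}. Your route --- first proving the characterization $\sigma(R_\C)=\{N\in \mod {R_\C^*}\colon NR_\C=N\}$ (using that the ideal $R_\C$ is idempotent for the inclusion one way, and the $R_\C^*$-linear right multiplications $\lambda_n$ for the other), and then observing that $U$ and restriction of scalars along $R_\C\hookrightarrow R_\C^*$ are \emph{strictly} inverse functors, via the splitting $(k,r)=k(1,0)+(0,r)$ and unitality of $(1,0)$ --- is essentially the standard treatment of the Dorroh-type unital overring that underlies Wisbauer's account, here carried out in a self-contained way; all the individual verifications (associativity of the extended action, automatic $R_\C^*$-linearity of $R_\C$-linear maps, $U(M)R_\C=M$) check out. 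One small point deserves an explicit line that you did not write: in part (1) you identify the $R_\C^*$-submodules of $U(M)$ with the $R_\C$-stable subgroups of $M$, whereas the proposition concerns the lattice of $R_\C$-submodules of $M$ as an object of $\mod {R_\C}$, i.e.\ of \emph{unitary} submodules. The two notions coincide because $R_\C$ has local units arising from the idempotents $\id_c$: every element $m$ of a unitary module satisfies $m=me$ for a suitable idempotent $e\in R_\C$ built from the $\id_c$, so any $R_\C$-stable subgroup $M'$ satisfies $M'R_\C=M'$ and is automatically unitary. With that one-line supplement your correspondence in (1) is literally the identity on subgroups, hence a lattice isomorphism, and part (2) is immediate exactly as you say, since $U$ does not change the underlying Abelian group.
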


Of course the above proposition has its analog for left module. We denote again by 
$$U\colon \lmod {R_\C}\to (R_\C)\sigma$$
the equivalence of categories on the left. 

\begin{lemma}\label{pres_tens_2}
In the above notation, there is a canonical isomorphism of functors $\mod {R_\C}\times \lmod{R_\C}\to \Ab$:
$$-\otimes_{R_\C}-\cong U(-)\otimes_{R^*_\C} U(-)\,.$$
\end{lemma}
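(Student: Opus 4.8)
The plan is to observe that both tensor products in the statement are realized as quotients of one and the same abelian group, namely $M\otimes_\Z N$, and then to check that the two subgroups by which one quotients out actually coincide. Fix a unitary right $R_\C$-module $M$ and a unitary left $R_\C$-module $N$. By definition $M\otimes_{R_\C}N=(M\otimes_\Z N)/T$, where $T$ is the subgroup generated by the elements $mr\otimes n-m\otimes rn$ with $m\in M$, $n\in N$ and $r\in R_\C$. On the other hand, since $U$ leaves the underlying abelian group unchanged, $U(M)\otimes_{R^*_\C}U(N)=(M\otimes_\Z N)/T^*$, where $T^*$ is generated by the elements $m\rho\otimes n-m\otimes\rho n$ with $\rho\in R^*_\C$, the products $m\rho$ and $\rho n$ being computed via the canonical $R^*_\C$-module structures described before the lemma.

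First I would carry out the only nontrivial computation, which is to show $T=T^*$. Write a generic element of $R^*_\C$ as $\rho=(k,r)$ with $k\in\Z$ and $r\in R_\C$. By the definition of $U$ one has $m\rho=mk+mr$ and $\rho n=kn+rn$, so that
$$m\rho\otimes n-m\otimes\rho n=(mk\otimes n-m\otimes kn)+(mr\otimes n-m\otimes rn)\,.$$
Since $mk\otimes n=k(m\otimes n)=m\otimes kn$ by $\Z$-bilinearity, the first parenthesis vanishes and the generator reduces to $mr\otimes n-m\otimes rn$, a generator of $T$; this gives $T^*\subseteq T$. Conversely every generator $mr\otimes n-m\otimes rn$ of $T$ is recovered by taking $\rho=(0,r)$, whence $T\subseteq T^*$ and therefore $T=T^*$. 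Consequently the identity automorphism of $M\otimes_\Z N$ descends to an isomorphism $\phi_{M,N}\colon M\otimes_{R_\C}N\to U(M)\otimes_{R^*_\C}U(N)$.

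Finally I would verify naturality: given morphisms $\phi\colon M\to M'$ and $\psi\colon N\to N'$ of right (resp.\ left) $R_\C$-modules, both $\phi\otimes_{R_\C}\psi$ and $U(\phi)\otimes_{R^*_\C}U(\psi)$ are the maps induced on the respective quotients by the same $\Z$-linear map $\phi\otimes_\Z\psi\colon M\otimes_\Z N\to M'\otimes_\Z N'$; since $\phi_{M,N}$ and $\phi_{M',N'}$ are themselves induced by identity maps at the level of $M\otimes_\Z N$, the relevant square commutes automatically, yielding the asserted isomorphism of functors $-\otimes_{R_\C}-\cong U(-)\otimes_{R^*_\C}U(-)$. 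There is no serious obstacle here; the single point requiring care is the cancellation of the ``integer part'' $(mk\otimes n-m\otimes kn)$ of the $R^*_\C$-action, which rests precisely on the fact that $U$ does not alter the underlying abelian group and that the $\Z$-action on the modules agrees with the integer scalars of $\otimes_\Z$. The argument is formally parallel to that of Lemma \ref{pres_tens_1}, but considerably shorter, as no reindexing of direct sums is involved.
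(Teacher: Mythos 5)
Your proof is correct and follows essentially the same route as the paper's: both realize the two tensor products as quotients of $M\otimes_{\mathbb Z}N$, prove the two relation subgroups are equal by decomposing $(k,r)=(k,0)+(0,r)$ and cancelling the integer part via $\mathbb Z$-bilinearity, and then note that naturality is automatic since all maps are induced by the identity on $M\otimes_{\mathbb Z}N$. No gaps; your write-up is if anything slightly more explicit about naturality than the paper's.
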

\begin{proof}
Let $M\in \mod {R_\C}$ and $N\in \lmod{R_\C}$. Then,
$$M\otimes_{R_\C}N=(M\otimes_\Z N)/H \text{ and }U(M)\otimes_{R^*_\C}U(N)=(M\otimes_\Z N)/K\,,$$
where $H=\langle mr\otimes n-m\otimes rn\colon r\in R_\C,m\in M,n\in N \rangle$ and $K=\langle mr\otimes n-m\otimes rn\colon r\in R^*_\C,m\in M,n\in N \rangle$. Thus, it is enough to show that $H=K$ as subgroups of $M\otimes_\Z N$. Since $R_\C$ is a subring of $R^*_\C$, it is clear that $H\subseteq K$. On the other hand, given $(k,r)\in R^*_\C$, $m\in M$ and $n\in N$ we have to show that $m(k,r)\otimes n-m\otimes (k,r)n\in H$. But in fact, $(k,r)=(k,0)+(0,r)$ and so
\begin{align*}m(k,r)\otimes n-m\otimes (k,r)n&=m(k,0)\otimes n+m(0,r)\otimes n-m\otimes (k,0)n-m\otimes (0,r)n\\
&=(m(k,0)\otimes n-m\otimes (k,0)n)+(m(0,r)\otimes n-m\otimes (0,r)n)\\
&=0+(m(0,r)\otimes n-m\otimes (0,r)n)\in H\,.
\end{align*}
On can verify that the isomorphism $M\otimes_{R_\C}N\to U(M)\otimes_{R^*_\C}U(N)$ such that $m\otimes n\mapsto m\otimes n$ is natural in both variables.
\end{proof}

Now we have all the tools to give the following

\begin{proof}[Proof of Theorem \ref{card_purification}]
Given $N\leq M$ as in the statement, consider $US(N)\leq US(M)$ in $\sigma(R_\C)$. Notice that $|R_\C^*|=\max\{|\N|,|R_\C|\}=\max\{|\N|,\mor \C\}$, so $|R_\C^*|\leq \lambda$, and $|US(N)|=|S(N)|=|N|\leq \lambda$.Thus, we can use \cite[Lemma 1]{flat_cover_conj} to show that there is a pure submodule $K_*$ of $US(M)$ such that $|K_*|\leq \lambda$ and $US(N)$. Since both $U$ and $S$ induce isomorphisms of lattices of submodules and preserve cardinality, we obtain $N_*\leq M$ such that $N\leq N_*$ and $|N_*|\leq \lambda$. To conclude it is enough to verify that $N_*$ is pure in $M$, but this is easy to see using Lemmas \ref{pres_tens_1} and \ref{pres_tens_2}. 
\end{proof}

\medskip\noindent
{\bf Acknowledgement.} {We would like to thank Professor Haynes Miller for helpful comments and suggestions.}

%Address of the author:

%Simone Virili - {\tt simone@mat.uab.cat}
%
%Departament de Matem\`atiques, Universitat Aut\`onoma de Barcelona
%
%Edifici C - 08193 Bellaterra (Barcelona), Spain.

\end{document}